\documentclass[12pt]{amsart}
\usepackage{mathrsfs}
\usepackage{amsfonts}
\usepackage[centertags]{amsmath}
\usepackage{amssymb}
\usepackage{amsthm}
\usepackage{graphicx}
\usepackage{float}
\usepackage[all]{xy}
\usepackage{tikz-cd}
\usepackage{caption}
\usepackage[colorlinks]{hyperref}
\hypersetup{
bookmarksnumbered,
pdfstartview={FitH},
breaklinks=true,
linkcolor=blue,
urlcolor=blue,
citecolor=blue,
bookmarksdepth=2
}

\usepackage{enumerate}
\usepackage[a4paper, portrait, margin=0.9in]{geometry}
\usepackage{rotating}
\usetikzlibrary{matrix,arrows,decorations.pathmorphing}
\usepackage{listing}
\usepackage{color}
\usepackage{adjustbox}
\usepackage{array}
\usepackage{combelow}

\theoremstyle{plain}
   
   \newtheorem{theorem}{Theorem}[section]
   \newtheorem{proposition}[theorem]{Proposition}
   
   \newtheorem{lemma}[theorem]{Lemma}
   \newtheorem{corollary}[theorem]{Corollary}

\theoremstyle{definition}
   \newtheorem{definition}[theorem]{Definition}

   \newtheorem{remark}[theorem]{Remark}

\numberwithin{equation}{section}

\DeclareMathOperator{\id}{id}

\DeclareMathOperator{\End}{End}

\newcommand\scalemath[2]{\scalebox{#1}{\mbox{\ensuremath{\displaystyle #2}}}}

\usetikzlibrary{arrows,automata,positioning}

\title{Shifted quantum affine superalgebras} 
\author{Piao-Juan Hu, Jian-Rong Li, Yan-Feng Luo$^\dag$}\thanks{$\dag$ Corresponding author}
\address{Piao-Juan Hu: School of Mathematics and Statistics, Lanzhou University, Lanzhou 730000, P. R. China.}
\email{hupj21@lzu.edu.cn}
\address{Jian-Rong Li: Faculty of Mathematics, University of Seville, Calle Tarfia s/n, 41012 Seville, Spain}
\email{lijr07@gmail.com}
\address{Yan-Feng Luo: School of Mathematics and Statistics, Lanzhou University, Lanzhou 730000, P. R. China.}
\email{luoyf@lzu.edu.cn}
\date{}

\begin{document}
\allowdisplaybreaks[3]
\raggedbottom
\emergencystretch=2em

\begin{abstract}
For a quantum affine superalgebra with a fixed Drinfeld current presentation, we study shifted quantum affine superalgebras through a two-sided shifted Drinfeld current presentation, in which the leading degrees of the Cartan--Drinfeld currents are modified while the remaining current and Serre relations are retained.

We then focus on the standard-parity type A algebras
$\mathcal U_q^{\mu_+,\mu_-}(\widehat{\mathfrak{sl}}_{m|n})$.
For arbitrary shifts and $m,n\ge1$, $m\ne n$, we prove a triangular decomposition. When $m,n>1$, we obtain PBW--Drinfeld bases for arbitrary shifts and a Levendorskii-type finite presentation of the one-sided algebra $\mathcal U_q^{0,\mu}$ for every antidominant $\mu$. For antidominant two-sided shifts, we construct shifted RTT realizations and prove the corresponding Ding--Frenkel isomorphisms.

We construct a shifted formal Drinfeld coproduct for arbitrary two-sided shifts, coassociative in a completed tensor product. Shifted RTT realizations yield non-completed one-sided coproducts in the antidominant region, made explicit on finitely many generators by the finite presentation; injective shift homomorphisms extend them to arbitrary one-sided shifts. Their ternary coassociativity holds whenever the middle shift is antidominant, including boundary cases. Finally, for two-sided antidominant shifts $\mu_+,\mu_-\in\Lambda^-$, a shifted full general-linear Ding--Frenkel isomorphism identifies the special-linear simple-current algebra as the fixed algebra under common scalar rescalings and yields coassociative two-sided non-completed coproducts.
\end{abstract}

\subjclass[2020]{Primary 17B37; Secondary 17B67, 16T20.}

\keywords{shifted quantum affine superalgebras, Drinfeld realizations, RTT realizations, Ding--Frenkel isomorphisms, PBW--Drinfeld bases, Levendorskii-type finite presentations, coproducts.}
\maketitle

\setcounter{tocdepth}{1}
\tableofcontents

\section{Introduction}

\subsection{Background and motivation}

Quantum affine superalgebras are quantum deformations of affine Lie superalgebras and occur naturally in the study of graded Yang--Baxter equations and quantum integrable systems \cite{BGZ1990,FLV1991,ZGB1991}.  Their structure is described through several complementary presentations.  In untwisted type A, Drinfeld--Jimbo and Drinfeld current realizations are available for arbitrary parity sequences \cite{Yam1999,BM2021b}, while RTT realizations of the general-linear quantum affine superalgebra and their relation with Drinfeld currents were developed in \cite{FHS1997,GZ1998,Zyz1997,Zh2016}.  Since the shifted construction studied here is formulated at the level of Drinfeld currents, the choice and availability of a current presentation are essential.  Section~\ref{section: quantum affine superalgebra} recalls the precise status needed in this paper, including the corresponding results for several other affine super types.

Shifted quantum affine algebras in the non-super setting were introduced by Finkelberg and Tsymbaliuk in connection with quantized K-theoretic Coulomb branches of three-dimensional gauge theories \cite{FT2019}.  Their representation theory and truncations were subsequently developed by Hernandez \cite{Her2023}, and the relation between Borel and shifted category $\mathcal O$ was further studied in \cite{HN2026}.  Cluster-algebra structures on Grothendieck rings were established in \cite{GHL2024}; further links with $R$-matrices, Hall algebras, and Lax-matrix constructions appear in \cite{Pin2025,GS2025,FPT2022}.  A twisted counterpart of the shifted quantum affine theory was recently introduced in \cite{LLL2026}. Related developments in the quantum symmetric-pair direction include shifted affine $\imath$-quantum groups of split ADE and their GKLO-type representations \cite{LiPrzezdziecki2026}, as well as shifted affine $\imath$-quantum groups of quasi-split ADE type, together with their GKLO-type representations and truncations, in \cite{LWW2026}.  Negu\cb{t} and Wang \cite{NW2026} develop folded and twisted quantum loop/toroidal algebras in a shuffle framework, including shifted folded quantum loop algebras; in finite type this recovers the shifted twisted quantum affine setting. Related shifted toroidal directions include shifted toroidal and elliptic quiver BPS algebras \cite{GLY2022}, shifted quiver quantum toroidal algebras \cite{NW2022}, shifted quantum toroidal $\mathfrak{gl}_{1}$ and quantum affine $\mathfrak{sl}_{2}$ algebras \cite{Bou2023}, and the appearance of a shifted quantum toroidal algebra in the cyclotomic DAHA setting \cite{Wen2024}.  A recent super example involving a shifted quantum toroidal algebra of type $\mathfrak{gl}_{1|1}$ appears in \cite{KOS2026}.  These developments show that shifted quantum affine and toroidal algebras connect quantum group representation theory with geometry, integrable systems, cluster algebras, Hall algebras, and related gauge-theoretic constructions.

Shifted quantum loop and toroidal superalgebras were defined by Negu\cb{t} and Tsymbaliuk \cite[Remarks~2.6 and~2.7]{NT2026} using shifted Drinfeld doubles.  From the current-presentation point of view, the shifted construction is well suited to the super setting: the shift is implemented by changing the leading degrees of the Cartan--Drinfeld currents while leaving the root-current and Serre relations unchanged. For the purposes of this paper, in Definition~\ref{def:general-shifted-qas} we formulate shifted quantum affine superalgebras in terms of a two-sided shifted current presentation associated with a fixed Drinfeld current presentation of the underlying quantum affine superalgebra. Proposition~\ref{prop:comparison-NT-general} shows that, whenever the corresponding zero-shift current presentation fits the Drinfeld super-double framework of \cite{NT2026}, the two shifted constructions agree after the corresponding reindexing of the currents and adjoining inverses to the leading Cartan generators.

The main part of the paper concerns the standard-parity shifted quantum affine superalgebras
$\mathcal U_q^{\mu_+,\mu_-}(\widehat{\mathfrak{sl}}_{m|n})$.
Here genuinely super phenomena enter the structural arguments. The distinguished simple root $\alpha_m$ is isotropic, so $(\alpha_m,\alpha_m)=0$ and the usual mode recursion using $h_{m,\pm1}$ degenerates; in the finite presentation we instead propagate the odd current through a neighboring even Cartan direction.  The degree-four odd Serre relation must be controlled simultaneously in several loop indices.  In the RTT realization, graded signs enter the Gauss decomposition and the finite-generator formulas separate naturally into the three regimes $i<m$, $i=m$, and $i>m$.  Thus the results below are not obtained by merely inserting parity signs into the non-super theory.

\subsection{Main results}

We now summarize the main results.

\medskip\noindent 
\textbf{Definition A (two-sided shifted current presentation).}
For a quantum affine superalgebra equipped with a fixed Drinfeld current presentation and integral shift data $\mathbf b^\pm$, Definition~\ref{def:general-shifted-qas} gives a two-sided shifted current presentation of the shifted quantum affine superalgebra associated with these data.

For the remainder of this subsection, we write
$\mathcal U_q^{\mu_+,\mu_-}:=
\mathcal U_q^{\mu_+,\mu_-}(\widehat{\mathfrak{sl}}_{m|n})$
for the standard-parity type A shifted quantum affine superalgebras.

\medskip\noindent
\textbf{Theorem B (triangular decomposition and PBWD bases).}
For $m,n\geq1$ with $m\ne n$ and arbitrary $\mu_+,\mu_-\in\Lambda$, multiplication gives a vector-superspace isomorphism
\[
(\mathcal U_q^{\mu_+,\mu_-})^{<}\otimes
(\mathcal U_q^{\mu_+,\mu_-})^{0}\otimes
(\mathcal U_q^{\mu_+,\mu_-})^{>}
\xrightarrow{\ \sim\ }
\mathcal U_q^{\mu_+,\mu_-}.
\]
The positive and negative factors are independent of the shift and have the same current presentations as the corresponding halves of the unshifted quantum affine superalgebra
$\mathcal U_q(\widehat{\mathfrak{sl}}_{m|n})$.
When $m,n>1$, the PBW--Drinfeld bases of \cite{Tsy2021} therefore give a PBWD basis of $\mathcal U_q^{\mu_+,\mu_-}$; see Theorem~\ref{thm:shifted-triangular} and Corollary~\ref{cor:shifted-pbw}.  

The approach of Negu\cb{t} and Tsymbaliuk \cite[Remarks~2.6 and~2.7]{NT2026} via shifted Drinfeld super-doubles and shuffle realizations provides alternative proofs of the triangular decomposition and the PBWD basis statement after matching the shift parameters and current conventions.

\medskip\noindent
\textbf{Theorem C (Levendorskii-type finite presentation).}
Assume $m,n>1$ and $m\ne n$.  For every antidominant $\mu\in\Lambda^{-}$ and every decomposition $\mu=\mu_1+\mu_2$ with $\mu_1,\mu_2\in\Lambda^{-}$, Theorem~\ref{thm:finite iso} gives a finite presentation of
$\mathcal U_q^{0,\mu}(\widehat{\mathfrak{sl}}_{m|n})$ in terms of finitely many root-current modes, the leading Cartan coefficients, and the modes $h_{i,\pm1}$.  All other current modes and Cartan coefficients are recovered recursively.  The isotropic node requires a separate recursion through a neighboring even Cartan direction.

\medskip\noindent
\textbf{Theorem D (shifted formal Drinfeld coproduct).}
For $m,n\geq1$ with $m\ne n$, the formal Drinfeld current formulas extend to arbitrary two-sided shifts and define a coassociative coproduct in a completed tensor product of shifted quantum affine superalgebras; see Theorem~\ref{thm:formal-shifted-coproduct} and Proposition~\ref{prop:formal-shifted-coassociativity}.  This construction is independent of the Levendorskii-type finite presentation of Theorem~\ref{thm:finite iso}; the latter re-enters only for the non-completed one-sided coproducts of Section~\ref{section: shifted RTT and coproducts}, where it makes the transported RTT coproduct explicit on a finite generating set.

\medskip\noindent
\textbf{Theorem E (injective shift homomorphisms).}
Assume $m,n>1$ and $m\ne n$.  For $\mu\in\Lambda$ and $\nu_1,\nu_2\in\Lambda^{-}$, the current assignments
\[
x_i^+(z)\mapsto(1-z^{-1})^{-\alpha_i(\nu_1)}x_i^+(z),\qquad
x_i^-(z)\mapsto(1-z^{-1})^{-\alpha_i(\nu_2)}x_i^-(z),
\]
together with the corresponding assignment for $\phi_i^{\pm}(z)$, define an injective homomorphism
\[
\iota_{\mu,\nu_1,\nu_2}:\mathcal U_q^{0,\mu}\longrightarrow
\mathcal U_q^{0,\mu+\nu_1+\nu_2}.
\]
These homomorphisms satisfy the natural composition law; see Theorem~\ref{th:iota mu nu1 nu2 is injective}.  Their injectivity is proved using the triangular decomposition together with the shuffle realization of the shift-independent positive and negative halves from \cite{Tsy2021}.

\medskip\noindent
\textbf{Theorem F (non-completed one-sided coproducts).}
Assume $m,n>1$ and $m\ne n$.  For arbitrary $\mu_1,\mu_2\in\Lambda$, there is a unique superalgebra homomorphism
\[
\Delta_{\mu_1,\mu_2}:\mathcal U_q^{0,\mu_1+\mu_2}
\longrightarrow
\mathcal U_q^{0,\mu_1}\otimes\mathcal U_q^{0,\mu_2}
\]
characterized by compatibility with the injective shift homomorphisms of Theorem~\ref{th:iota mu nu1 nu2 is injective}; see Theorem~\ref{Th: commutative diagram}.  For strictly antidominant $\mu_1,\mu_2$, the Levendorskii-type finite presentation of Theorem~\ref{thm:finite iso} reduces the coproduct to finitely many boundary generators, whose images are given explicitly in Proposition~\ref{prop:explicit-strict-antidominant-one-sided-coproduct}.  The associated ternary identity holds whenever the middle shift is antidominant, including boundary points of the antidominant chamber, i.e. allowing $\alpha_i(\mu_2)=0$ for some $i$; see Theorem~\ref{thm:boundary-rtt-current-coassociativity}.  At zero shift, after imposing the central reduction $\phi_{i,0}^+\phi_{i,0}^-=1$, the homomorphism $\Delta_{0,0}$ descends to the ordinary coproduct on $\mathcal U_q(\widehat{\mathfrak{sl}}_{m|n})$; consequently it induces the usual tensor-product action on modules.

\medskip\noindent
\textbf{Theorem G (shifted RTT realizations and two-sided antidominant coproducts).}
Assume $m,n>1$ and $m\ne n$.  For every $\mu_+,\mu_-\in\Lambda^{-}$, choose weakly decreasing lifts $\mathbf d^+,\mathbf d^-$ as in Section~\ref{subsec:shifted-super-rtt}.  The simple Gauss currents of the shifted RTT superalgebra $\mathscr U^{\mathrm{rtt}}_{\mathbf d^+,\mathbf d^-}$ associated with the Perk--Schultz $R$-matrix \cite{PS1981} recalled in Section~\ref{subsec:rtt-gauss} define an isomorphism
\[
\mathcal U_q^{\mu_+,\mu_-}(\widehat{\mathfrak{sl}}_{m|n})
\xrightarrow{\ \sim\ }
\mathscr U^{\mathrm{rtt},\mathrm{sl}}_{\mathbf d^+,\mathbf d^-};
\]
see Theorem~\ref{thm:shifted-super-df-isomorphism}.  Injectivity is proved by a three-factor RTT coefficient argument with two zero-shift outer factors.  Theorem~\ref{thm:shifted-full-gl-df-isomorphism} gives the corresponding shifted full general-linear quantum affine superalgebra Ding--Frenkel isomorphism, and Proposition~\ref{prop:shifted-rtt-scalar-fixed-algebra} identifies the special-linear simple-current subalgebra as the fixed algebra under common scalar rescalings.  The RTT matrix coproduct therefore restricts to the simple-current subalgebras and transports to the coassociative two-sided antidominant coproducts of Theorem~\ref{thm:two-sided-antidominant-current-coproduct}.

\subsection{Proof strategy and organization}

The main results above use three complementary mechanisms.  The Levendorskii-type finite presentation reconstructs all modes from finitely many generators, with a separate propagation argument at the isotropic root.  Independently, the formal Drinfeld current formulas give the shifted formal coproduct in a completed tensor product, while the injective shift homomorphisms are obtained using the structural properties of the shifted current presentation together with the shuffle realization.  On the RTT side, the shifted RTT algebras and the three-factor coefficient argument give the shifted Ding--Frenkel isomorphisms.  Transporting the RTT matrix coproduct through these isomorphisms produces the antidominant one- and two-sided non-completed coproducts.  For one-sided antidominant shifts, the finite presentation then makes this transported coproduct explicit on finitely many generators, using the zero-shift RTT coefficient calculations of Appendix~\ref{app:rtt-coproduct-proof} as a template; the injective shift homomorphisms finally extend the one-sided coproduct to arbitrary shifts.

Section~\ref{section: quantum affine superalgebra} fixes the Drinfeld--Jimbo, Drinfeld, and RTT conventions for the unshifted quantum affine superalgebra.  Section~\ref{section: Shifted quantum affine superalgebra} presents the two-sided shifted current presentation used in this paper and establishes the structural results needed in the subsequent sections.  Section~\ref{section: Levendorskii type presentation of shifted quantum affine superalgebra} establishes the Levendorskii-type finite presentation for the one-sided antidominant algebra $\mathcal U_q^{0,\mu}$.  Section~\ref{section: Coproduct} constructs the shifted formal Drinfeld coproduct in a completed tensor product and proves the injectivity of the shift homomorphisms.  Section~\ref{section: shifted RTT and coproducts} develops the shifted RTT realizations, the shifted Ding--Frenkel isomorphisms, the antidominant one- and two-sided non-completed coproducts, the finite-generator formulas in the one-sided antidominant case, and the extension of the one-sided coproduct to arbitrary shifts.  The appendices contain the longer finite-presentation proofs, the zero-shift RTT coefficient calculations used as a template for the shifted formulas, and the finite-generator transport calculations.

\section{Quantum affine superalgebras}\label{section: quantum affine superalgebra}
We review the Drinfeld current presentations available for quantum affine superalgebras and then specialize to the Drinfeld--Jimbo, Drinfeld, and RTT realizations of $U_q(\widehat{\mathfrak{sl}}_{m|n})$ used below.

\subsection{Quantum affine superalgebras and Drinfeld realizations}
Let $\widehat{\mathfrak g}$ be an affine contragredient Lie superalgebra of finite growth, and let $\Pi$ be a choice of simple roots. In the sense of Yamane \cite{Yam1999}, the corresponding quantum affine superalgebra $U_q^{\mathrm{DJ}}(\widehat{\mathfrak g},\Pi)$ is defined by Chevalley generators attached to $\Pi$, together with the Cartan, Chevalley, and parity-dependent Serre and higher-order Serre relations. Different choices of $\Pi$ may give different Cartan data for the same Lie superalgebra, so the chosen simple system is part of the presentation. Yamane established this Drinfeld--Jimbo framework for the affine Lie superalgebras of finite growth and obtained the Drinfeld new realization for untwisted type A.

Unlike the purely even case, there is not yet a single Drinfeld current presentation known to be equivalent to the Drinfeld--Jimbo presentation for every affine super type and every choice of simple roots.  The principal cases relevant to the definition in Section~\ref{section: Shifted quantum affine superalgebra} are as follows.  For the quantum affine superalgebra $U_q(\widehat{\mathfrak{sl}}_{m|n})$, the new Drinfeld presentation is available for arbitrary parity sequences and is isomorphic to the corresponding Drinfeld--Jimbo presentation; see \cite{Yam1999} and the formulation in \cite[Section~2 and Proposition~3.3]{BM2021b}.  Closely related current and RTT realizations of $U_q(\widehat{\mathfrak{gl}}_{m|n})$ were studied in \cite{FHS1997,GZ1998,Zyz1997}, and a complete type A $R$-matrix--Drinfeld isomorphism was recently established in \cite{XLZ2026}.  For the quantum affine superalgebra of type $D^{(1)}(2,1;x)$, a Drinfeld second realization and its isomorphism with the Drinfeld--Jimbo realization were established in \cite{HSTY2008}.  Drinfeld realizations of the quantum affine superalgebras of types $\mathfrak{osp}(1|2n)^{(1)}$, $\mathfrak{sl}(1|2n)^{(2)}$, and $\mathfrak{osp}(2|2n)^{(2)}$ were constructed in \cite{XZ2016}; the rank-one case $U_q(\widehat{\mathfrak{osp}}(1|2))$ had appeared earlier in \cite{Ding1999,DF2000}.  For $U_q(\mathfrak{osp}(2m+1|2n)^{(1)})$ with the standard parity sequence, Wu--Lin--Zhang \cite{WLZ2025} proved the isomorphism between the Drinfeld and Drinfeld--Jimbo presentations.  For orthosymplectic quantum affine superalgebras of types $B,C,D$ with arbitrary parity, Bezerra--Futorny--Kashuba \cite{BFK2026} formally introduced new Drinfeld current algebras and proved that the corresponding Drinfeld--Jimbo quantum affine superalgebras are surjective homomorphic images; they conjecture that these maps are isomorphisms.

\subsection{Cartan data}
We use the standard $\mathbb Z_2$-graded conventions for Lie superalgebras; see \cite{Kac1977}.  The parity of a homogeneous element $x$ is denoted by $|x|\in\mathbb Z_2$, and all brackets below are supercommutators unless a $q$-parameter is displayed explicitly.

For $m,n\in\mathbb{Z}_{>0}$ with $m\neq n$ and $M=m+n\geq 3$, consider the Lie superalgebra $\mathfrak{sl}_{m|n}$.  We use three index sets throughout:
\[
I:=\{1,\dots,M-1\},\qquad
\widehat I:=\mathbb Z/M\mathbb Z,\qquad
J:=\{1,\dots,M\}.
\]
Thus $I$ indexes the finite simple roots, $\widehat I$ indexes the affine simple roots, and $J$ indexes the standard basis of the vector representation and the rows and columns in the RTT realization.  We represent the affine vertex by $0\in\widehat I$.  Let $\widehat{\mathfrak{sl}}_{m|n}$ be the corresponding affine Lie superalgebra.  Since different choices of the root system can lead to different Cartan matrices and Dynkin diagrams, these choices are parametrized by $M$-tuples of $\pm1$ having exactly $m$ entries equal to $1$. Set
\begin{align*}
\mathcal{S}_{m|n}=\{(s_{1},\cdots,s_{M}) \mid s_{a}\in\{1,-1\},\#\{a \mid s_{a}=1\}=m\}.
\end{align*}
An element $\mathbf{s}=(s_{1},\cdots,s_{M})\in\mathcal{S}_{m|n}$ is called a parity sequence.  We extend its entries cyclically by $s_{a+M}=s_a$ for all $a\in\mathbb Z$; in particular, $s_0=s_M$.  All affine Dynkin indices are understood modulo $M$.  In particular, $\mathbf{s}=(1,\cdots,1,-1,\cdots,-1)$ is called the standard parity sequence.

Given a parity sequence $\mathbf{s}\in\mathcal{S}_{m|n}$, the corresponding Cartan matrix $A^{\mathbf{s}}=(A^{\mathbf{s}}_{i,j})_{i,j\in I}$ and the affine Cartan matrix $\widehat{A}^{\mathbf{s}}=(A^{\mathbf{s}}_{i,j})_{i,j\in \widehat I}$ are given by
\begin{align*}
A_{i,j}^{\mathbf{s}}=(s_{i}+s_{i+1})\delta_{i,j}-s_{i}\delta_{i,j+1}-s_{i+1}\delta_{i+1,j}\quad \text{for $i,j\in \widehat I$.}
\end{align*}
Note that $|\det A^{\mathbf{s}}|=|m-n|$ and $\det \widehat{A}^{\mathbf{s}}=0$. Denote by $\mathfrak{sl}_{\mathbf{s}}$ (resp. $\widehat{\mathfrak{sl}}_{\mathbf{s}}$) the superalgebra corresponding to the Cartan matrix $A^{\mathbf{s}}$ (resp. the affine Cartan matrix $\widehat{A}^{\mathbf{s}}$).

In this paper, we consider the Cartan matrix $A=(A_{i,j})_{i,j\in I}$ associated with the standard parity sequence. Then $A$ is of the following form:
\vspace{0.7cm}
\[
A = \begin{bmatrix}
2 & -1 &   &   &   &   & \\
-1 & \ddots & \ddots &   &   &   &  \\
& \ddots & 2 & -1 &   &   &   \\
&   & -1 & 0 & 1 &   &  \\
&   &   & 1 & -2 & \ddots & \\
&   &   &    & \ddots & \ddots & 1  \\
&   &   &   &   & 1 & -2
\end{bmatrix}.
\begin{tikzpicture}[overlay,remember picture]
\draw[<-,dashed] (-3.4,2.2) to (-3.4,2.7) node[above] {$m$};
\draw[<-,dashed] (0,0.1) to (0.5,0.1) node[right] {$m$};
\end{tikzpicture}
\]

Fix $q\in\mathbb{C}^{*}$ that is not a root of unity. For $k\in\mathbb{Z}$, define $[k]_{q}=\frac{q^{k}-q^{-k}}{q-q^{-1}}$. We also use the notation $[x,y]_{a}=xy-(-1)^{|x||y|}ayx$. For simplicity, we write $[x,y]_{1}=[x,y]$. We will use the identities:
\begin{align}\label{eqs:three terms}
\begin{split}
[[x,y]_{a},z]_{b}=[x,[y,z]_{c}]_{abc^{-1}}+(-1)^{|y||z|}c[[x,z]_{bc^{-1}},y]_{ac^{-1}},\\
[x,[y,z]_{a}]_{b}=[[x,y]_{c},z]_{abc^{-1}}+(-1)^{|x||y|}c[y,[x,z]_{bc^{-1}}]_{ac^{-1}}.
\end{split}
\end{align}
The multiplication on the tensor product is given by $(x\otimes y)(z\otimes w)=(-1)^{|y||z|}xz\otimes yw$. More generally, $(a_{1}\otimes\cdots\otimes a_{l})(b_{1}\otimes\cdots\otimes b_{l})=(-1)^{\sum_{i>j}|a_{i}||b_{j}|}a_{1}b_{1}\otimes\cdots\otimes a_{l}b_{l}$.

Let $\overline{\alpha}_{i}$ (resp. $\overline{\Lambda}_{i}$), $i\in I$, denote the simple roots (resp. fundamental weights) of $\mathfrak{sl}_{m|n}$. Let $\Lambda_{0}$ denote the affine fundamental weight, and $\delta$ be the null root of $\widehat{\mathfrak{sl}}_{m|n}$. Set $\Lambda_{i}=\overline{\Lambda}_{i}+\Lambda_{0}$ and $\alpha_{i}=\overline{\alpha}_{i}$ for $i\in I$, and set $\alpha_{0}=\delta-\sum_{i\in I}\alpha_{i}$. Then $\alpha_{i}$ (resp. $\Lambda_{i}$), $i\in \widehat I$, are the simple roots (resp. fundamental weights) of $\widehat{\mathfrak{sl}}_{m|n}$. Let $(\cdot\mid\cdot)$ be the nondegenerate supersymmetric invariant bilinear form given by $(\overline{\alpha}_{i}\mid\overline{\alpha}_{j})=A_{i,j}$, $(\overline{\alpha}_{i}\mid\overline{\Lambda}_{j})=\delta_{ij}$ for $i,j\in I$, $(\Lambda_{0}\mid\Lambda_{0})=(\delta\mid\delta)=(\delta\mid\overline{\alpha}_{i})=(\delta\mid\overline{\Lambda}_{i})=0$ for $i\in I$, and $(\Lambda_{0}\mid\delta)=1$. For $i\in\widehat I$, the parity of the simple root is
\[
|\alpha_i|=\frac{1-s_i s_{i+1}}{2},
\]
where the cyclic convention above is used.  For every $a\in\mathbb Z$, set $q_a:=q^{s_a}$; thus $q_{a+M}=q_a$ and, in particular, $q_0=q_M$.  Separately, for the vector/matrix index set $J$, define
\[
|a|:=\frac{1-s_a}{2}\in\mathbb Z_2\qquad(a\in J).
\]
For the standard parity sequence, $|a|=\overline 0$ for $1\le a\le m$ and $|a|=\overline 1$ for $m+1\le a\le M$.  Thus $|a|$ always denotes vector parity, whereas $|\alpha_i|$ denotes the parity of a simple root.

\subsection{Quantum affine superalgebra}
For the remainder of this section we specialize to untwisted type A. We work with the level-zero quotient, so the affine central element is specialized to $1$; in the Drinfeld--Jimbo presentation this is the relation $t_0t_1\cdots t_{M-1}=1$.
The following is the level-zero quotient of Yamane's Drinfeld--Jimbo presentation \cite[Proposition~6.7.1]{Yam1999}.
\begin{definition}
Assume $m,n\geq1$ with $m\ne n$, and use the standard parity sequence.
The Drinfeld--Jimbo realization $\mathcal{U}^{\mathrm{DJ}}_{q}(\widehat{\mathfrak{sl}}_{m|n})$ of the quantum affine superalgebra is the superalgebra generated by $e_{i}$, $f_{i}$, $t_{i}^{\pm1}$, $i\in\widehat I$, subject to the following relations:
\begin{gather}
t_{i}t_{j}=t_{j}t_{i},\quad t_{i}^{\pm1}t_{i}^{\mp1}=1,\quad t_0t_1\cdots t_{M-1}=1,\label{re:1}\\
t_{i}e_{j}t_{i}^{-1}=q^{ A_{i,j}}e_{j},\quad t_{i}f_{j}t_{i}^{-1}=q^{ -A_{i,j}}f_{j},\label{re:2}\\
[e_{i},f_{j}]=\frac{\delta_{ij}}{q_{i}-q_{i}^{-1}}(t_{i}-t_{i}^{-1}),\label{re:3}\\
[e_{i},e_{j}]=[f_{i},f_{j}]=0 \quad\text{for $A_{i,j}=0$},\label{re:4}\\
[e_{i},[e_{i},e_{i\pm1}]_{q^{-A_{i,i\pm1}}}]_{q^{A_{i,i\mp1}}}=[f_{i},[f_{i},f_{i\pm1}]_{q^{-A_{i,i\pm1}}}]_{q^{A_{i,i\mp1}}}=0 \quad\text{for $i\notin\{0,m\} $},\label{re:5}\\
[e_{i},[e_{i+1},[e_{i},e_{i-1}]_{q^{s_{i}}}]_{q^{s_{i+1}}}]=[f_{i},[f_{i+1},[f_{i},f_{i-1}]_{q^{s_{i}}}]_{q^{s_{i+1}}}]=0\quad\text{for $i\in\{0,m\}$ and $mn\neq 2$},\label{re:6}
\end{gather}
\begin{gather}
\begin{split}
&[e_{i+1},[e_{i-1},[e_{i+1},[e_{i-1},e_{i}]_{q^{-A_{i-1,i}}}]_{q^{A_{i+1,i+1}}}]]_{q^{-A_{i+1,i-1}}}\\&=[e_{i-1},[e_{i+1},[e_{i-1},[e_{i+1},e_{i}]_{q^{-A_{i+1,i}}}]_{q^{A_{i-1,i-1}}}]]_{q^{-A_{i-1,i+1}}} \quad \text{for $A_{i,i}\neq0$ and $mn=2$},\label{re:7}
\end{split}
\end{gather}
\begin{gather}
\begin{split}
&[f_{i+1},[f_{i-1},[f_{i+1},[f_{i-1},f_{i}]_{q^{-A_{i-1,i}}}]_{q^{A_{i+1,i+1}}}]]_{q^{-A_{i+1,i-1}}}\\&=[f_{i-1},[f_{i+1},[f_{i-1},[f_{i+1},f_{i}]_{q^{-A_{i+1,i}}}]_{q^{A_{i-1,i-1}}}]]_{q^{-A_{i-1,i+1}}} \quad \text{for $A_{i,i}\neq0$ and $mn=2$}\label{re:8}
\end{split}
\end{gather}
The parity of the generators is given by
$|e_{i}|=|f_{i}|=|\alpha_{i}|$ and $|t_{i}|=0$.
\end{definition}

The following is Yamane's new Drinfeld realization \cite[Theorem~8.5.1]{Yam1999} after passing to the level-zero quotient.
\begin{definition}\label{Def:Drinfeld realization for quantum affine superalgebra}
Assume $m,n\geq1$ with $m\ne n$, and use the standard parity sequence.
The quantum affine superalgebra
$\mathcal{U}_{q}(\widehat{\mathfrak{sl}}_{m|n}):=\mathcal{U}_{q}^{\mathrm{Dr}}(\widehat{\mathfrak{sl}}_{m|n})$
is the superalgebra generated by $x_{i,s}^{\pm}$, $h_{i,r}$, and $k_{i}^{\pm1}$, where $i\in I$, $s\in\mathbb{Z}$, and $r\in\mathbb{Z}\setminus\{0\}$, subject to the following relations.  Whenever a relation below contains an index $i\pm1$, it is imposed only for those signs for which $i\pm1\in I$:
\begin{gather}
k_{i}k_{j}=k_{j}k_{i},\quad k_{i}^{\pm1}k_{i}^{\mp1}=1,\label{QTSArelation1}\\
k_{i}x_{j,s}^{\pm}k_{i}^{-1}=q^{\pm A_{i,j}}x_{j,s}^{\pm},\label{QTSArelation2}\\
[k_{i},h_{j,r}]=[h_{i,r_{1}},h_{j,r_{2}}]=0,\label{QTSArelation3}\\
[h_{i,r},x_{j,s}^{\pm}]=\pm\frac{[rs_{i}A_{i,j}]_{q}}{r}x_{j,s+r}^{\pm},\label{QTSArelation4}\\
[x_{i,s}^{+},x_{j,l}^{-}]=\frac{\delta_{ij}}{q_{i}-q_{i}^{-1}}(\phi_{i,s+l}^{+}-\phi_{i,s+l}^{-}),\label{QTSArelation5}\\
[x_{i,s}^{\pm},x_{j,l}^{\pm}]=0 \quad\text{for $A_{i,j}=0$},\label{QTSArelation6}\\
x_{i,s+1}^{\pm}x_{j,l}^{\pm}-q^{\pm A_{i,j}}x_{i,s}^{\pm}x_{j,l+1}^{\pm}=q^{\pm A_{i,j}}x_{j,l}^{\pm}x_{i,s+1}^{\pm}-x_{j,l+1}^{\pm}x_{i,s}^{\pm}\quad\text{for $A_{i,j}\neq0$},\label{QTSArelation7}\\
\mathrm{Sym}_{k_1,k_2}[x_{i,k_{1}}^{\pm},[x_{i,k_{2}}^{\pm},x_{i\pm1,l}^{\pm}]_{q}]_{q^{-1}}=0 \quad\text{for $i\in I\setminus\{m\}$},\label{QTSArelation8}\\
\mathrm{Sym}_{k_1,k_2}[x_{m,k_{1}}^{\pm},[x_{m+1,l_{1}}^{\pm},[x_{m,k_{2}}^{\pm},x_{m-1,l_{2}}^{\pm}]_{q}]_{q^{-1}}]=0\quad\text{if $m,n>1$}.\label{QTSArelation9}
\end{gather}
Here
\begin{gather*}
\phi_{i}^{\pm}(z)=\sum_{r\in\mathbb{Z}_{\geq 0}}\phi_{i,\pm r}^{\pm}z^{\mp r}=k_{i}^{\pm1}\exp(\pm(q_{i}-q_{i}^{-1})\sum_{r=1}^{\infty}h_{i,\pm r}z^{\mp r}),
\end{gather*}
and the parity of the generators is given by
$|x_{i,r}^{\pm}|=|\alpha_{i}|$ and $|h_{i,r}|=|k_{i}|=0$.
\end{definition}

The relation with the Drinfeld--Jimbo presentation is made precise in \cite[Theorem~5.2]{Zh2014}: the homomorphism from the affine Drinfeld--Jimbo presentation is surjective and its kernel is generated by the affine central element minus \(1\).  Since we work throughout with the level-zero quotient, this yields a unique superalgebra isomorphism
\[\mathcal{U}^{\mathrm{DJ}}_{q}(\widehat{\mathfrak{sl}}_{m|n})\xrightarrow{\ \sim\ } \mathcal{U}_{q}(\widehat{\mathfrak{sl}}_{m|n}),
\]
such that
\begin{align}\label{iso:DJ to D}
\begin{split}
& e_{i} \mapsto x_{i,0}^{+},\quad f_{i} \mapsto x_{i,0}^{-},\quad t_{i}^{\pm1}\mapsto k_{i}^{\pm1},\quad \text{for $i\in I$}, \quad t_{0}^{\pm1} \mapsto \phi_{\theta}^{\mp}, \\
& e_{0} \mapsto (-1)^{m+n-1}q^{n-m}[\cdots[x_{1,1}^{-},x_{2,0}^{-}]_{q_{2}},\dots,x_{m+n-1,0}^{-}]_{q_{m+n-1}}\phi_{\theta}^{-},\\
& f_{0} \mapsto \phi_{\theta}^{+}[\cdots[x_{1,-1}^{+},x_{2,0}^{+}]_{q_{2}},\dots,x_{m+n-1,0}^{+}]_{q_{m+n-1}},
\end{split}
\end{align}
where $\phi_{\theta}^{\pm}:=k_{1}^{\pm1}\cdots k_{m+n-1}^{\pm1}$.  The affine Cartan relation $t_{0}\cdots t_{M-1}=1$ is therefore preserved.

\subsection{RTT realization and recursive Gauss-generator formulas}\label{subsec:rtt-gauss}

We recall the RTT presentation of the type $A$ quantum affine general (and special) linear superalgebra and its relation with the Drinfeld presentation. Drinfeld-current realizations for $U_q(\widehat{\mathfrak{gl}}_{m|n})$ were constructed in \cite{FHS1997,Zyz1997}, and the RTT realization together with the Ding--Frenkel homomorphism was further used in \cite{Zh2016}.  As noted in \cite{XLZ2026}, the earlier constructions rely on the super Ding--Frenkel correspondence, while a proof of injectivity of the loop-case RTT--Drinfeld map was not supplied.  A complete proof of the $R$-matrix--Drinfeld isomorphism for $U_q(\widehat{\mathfrak{gl}}_{m|n})$, together with the corresponding special-linear result for $U_q(\widehat{\mathfrak{sl}}_{m|n})$, was recently given in \cite{XLZ2026}.

We continue to use $J=\{1,\dots,M\}$, $M=m+n$, for the matrix indices fixed above. Throughout this subsection we assume $m,n>1$.  For later use, for $i,j,k\in J$ set
\[
\epsilon_{ijk}:=(-1)^{(|i|+|k|)(|k|+|j|)}.
\]
For $i\in I$, set
\begin{equation}\label{formula for tau i}
\tau_i:=q^{\sum_{a=1}^{i}s_a}=
\begin{cases}
q^i,&1\le i\le m,\\
q^{2m-i},&m+1\le i\le m+n-1.
\end{cases}
\end{equation}

Let $V=\oplus_{i\in J}\mathbb{C}v_{i}$ be the vector superspace with $|v_{i}|=|i|$. Define the elementary matrices $\mathsf E_{ij}\in\End(V)$ by $\mathsf E_{ij}v_{k}=\delta_{jk}v_{i}$ for $i,j,k\in J$. They form a basis of the vector superspace $\End(V)$, with $|\mathsf E_{ij}|=|i|+|j|$. The Perk--Schultz $R$-matrix (cf. \cite{PS1981}) is
\begin{align}\label{eq:Perk-Schultz-R}
R(z,w)&=\sum_{i\in J}(zq_{i}-wq_{i}^{-1})\mathsf E_{ii}\otimes \mathsf E_{ii}+(z-w)\sum_{i\neq j}\mathsf E_{ii}\otimes \mathsf E_{jj}\notag\\
&\quad+w\sum_{j<i}(q_{j}-q_{j}^{-1})\mathsf E_{ij}\otimes \mathsf E_{ji}+z\sum_{j<i}(q_{i}-q_{i}^{-1})\mathsf E_{ji}\otimes \mathsf E_{ij}.
\end{align}
The RTT algebra $\mathcal{U}_{q}^{\mathrm{rtt}}(\widehat{\mathfrak{gl}}_{m|n})$ of \cite{FHS1997,Zyz1997,Zh2016,XLZ2026} is the superalgebra generated by $s_{ij}^{(u)}$, $t_{ij}^{(u)}$, $i,j\in J$, $u\in\mathbb{Z}_{\geq0}$, with $|s_{ij}^{(u)}|=|t_{ij}^{(u)}|=|i|+|j|$, subject to the following defining relations:
\begin{align*}
&R_{23}(z,w)T_{12}(z)T_{13}(w)=T_{13}(w)T_{12}(z)R_{23}(z,w),\\&
R_{23}(z,w)S_{12}(z)S_{13}(w)=S_{13}(w)S_{12}(z)R_{23}(z,w),\\&
R_{23}(z,w)T_{12}(z)S_{13}(w)=S_{13}(w)T_{12}(z)R_{23}(z,w),\\&
R_{23}(z,w)S_{12}(z)T_{13}(w)=T_{13}(w)S_{12}(z)R_{23}(z,w),\\&
t_{ij}^{(0)}=s_{ji}^{(0)}=0\quad \text{for $1\leq j<i\leq m+n$},\\&
t_{ii}^{(0)}s_{ii}^{(0)}=s_{ii}^{(0)}t_{ii}^{(0)}=1\quad \text{for $i\in J$}.
\end{align*}
where the matrices $T(z)$, $S(z)$ are given by
\begin{align*}
&T(z)=\sum_{i,j\in J}t_{ij}(z)\otimes \mathsf E_{ij}
\in(\mathcal{U}_{q}^{\mathrm{rtt}}(\widehat{\mathfrak{gl}}_{m|n})\otimes \End V)[[z^{-1}]],
\quad t_{ij}(z)=\sum_{r\in\mathbb{Z}_{\geq0}}t_{ij}^{(r)}z^{-r},\\&
S(z)=\sum_{i,j\in J}s_{ij}(z)\otimes \mathsf E_{ij}
\in(\mathcal{U}_{q}^{\mathrm{rtt}}(\widehat{\mathfrak{gl}}_{m|n})\otimes \End V)[[z]],
\quad s_{ij}(z)=\sum_{r\in\mathbb{Z}_{\geq0}}s_{ij}^{(r)}z^{r},
\end{align*} 
Write
\[
T(z)^{-1}=\sum_{i,j\in J}t'_{ij}(z)\otimes \mathsf E_{ij},
\qquad
S(z)^{-1}=\sum_{i,j\in J}s'_{ij}(z)\otimes \mathsf E_{ij}.
\]
The quantum Berezinians are defined by
\begin{align*}
qB(T(z))
&=\sum_{\lambda\in S_m}(-q)^{-l(\lambda)}
t_{1,\lambda(1)}(z)t_{2,\lambda(2)}(q^2z)\cdots
t_{m,\lambda(m)}(q^{2m-2}z)\\
&\times
\sum_{\mu\in S_n}(-q)^{-l(\mu)}
t'_{m+1,m+\mu(1)}(q^{2m-2}z)
t'_{m+2,m+\mu(2)}(q^{2m-4}z)\cdots
t'_{m+n,m+\mu(n)}(q^{2m-2n}z),\\
qB(S(z))
&=\sum_{\lambda\in S_m}(-q)^{-l(\lambda)}
s_{1,\lambda(1)}(z)s_{2,\lambda(2)}(q^2z)\cdots
s_{m,\lambda(m)}(q^{2m-2}z)\\
&\times
\sum_{\mu\in S_n}(-q)^{-l(\mu)}
s'_{m+1,m+\mu(1)}(q^{2m-2}z)
s'_{m+2,m+\mu(2)}(q^{2m-4}z)\cdots
s'_{m+n,m+\mu(n)}(q^{2m-2n}z).
\end{align*}
Here \(t'_{ab}(z)\) and \(s'_{ab}(z)\) denote matrix coefficients of the inverse matrices, not inverses of the individual coefficients \(t_{ab}(z)\) and \(s_{ab}(z)\), see \cite{JLZ2025}.

Following the quantum Berezinian reduction used in \cite{JLZ2025,XLZ2026}, we use the special-linear RTT realization
\[
\mathcal{U}_{q}^{\mathrm{rtt}}(\widehat{\mathfrak{sl}}_{m|n})
:=
\mathcal{U}_{q}^{\mathrm{rtt}}(\widehat{\mathfrak{gl}}_{m|n})
\Big/
\big\langle qB(T(z))-1,\ qB(S(z))-1\big\rangle,
\]
where the brackets denote the ideal generated by all coefficients of the two displayed series.  The quantum Berezinian is group-like for the RTT coproduct, so this quotient inherits the coproduct used below.

Here, we use the following convention. Let $A,B,C$ be three superalgebras, and let $T=\sum_{i}a_{i}\otimes b_{i}\in A\otimes B$. For $1\leq k<l\leq 3$, define 
\begin{align*}
& T_{12}:=\sum_{i}a_{i}\otimes b_{i}\otimes 1\in A\otimes B\otimes C,\quad T_{13}:=\sum_{i}a_{i}\otimes 1\otimes b_{i}\in A\otimes C\otimes B,\\ & T_{23}:=\sum_{i}1\otimes a_{i}\otimes b_{i}\in C\otimes A\otimes B.
\end{align*}
More generally, for $1\leq k<l\leq r$, the elements $T_{kl}$ are defined in the natural way by introducing $r-3$ additional auxiliary superalgebras. We shall not use $T_{kl}$ with $k>l$.

Note that $S(z)$ and $T(z)$ admit the following Gauss decomposition
\begin{align*}
T(z) &= \tilde{F}^{+}(z)\tilde{G}^{+}(z)\tilde{E}^{+}(z) \\
&= \Big(\sum_{j<i} \tilde{f}_{ij}^{+}(z)\otimes \mathsf E_{ij} + 1\Big)\Big(\sum_i \tilde{g}_i^{+}(z)\otimes \mathsf E_{ii}\Big)\Big(\sum_{j<i} \tilde{e}_{ji}^{+}(z)\otimes \mathsf E_{ji} + 1\Big),\\[2mm]
S(z) &= \tilde{F}^{-}(z)\tilde{G}^{-}(z)\tilde{E}^{-}(z) \\
&= \Big(\sum_{j<i} \tilde{f}_{ij}^{-}(z)\otimes \mathsf E_{ij} + 1\Big)\Big(\sum_i \tilde{g}_i^{-}(z)\otimes \mathsf E_{ii}\Big)\Big(\sum_{j<i} \tilde{e}_{ji}^{-}(z)\otimes \mathsf E_{ji} + 1\Big).
\end{align*}
Then, with all products taken in the graded tensor product,
\begin{gather*}
t_{ii}(z)=\tilde{g}_{i}^{+}(z)+\sum_{j<i}\epsilon_{iij}\tilde{f}_{ij}^{+}(z)\tilde{g}_{j}^{+}(z)\tilde{e}_{ji}^{+}(z),\\ 
t_{ij}(z)=\tilde{f}_{ij}^{+}(z)\tilde{g}_{j}^{+}(z)+\sum_{j'<j}\epsilon_{ijj'}\tilde{f}_{ij'}^{+}(z)\tilde{g}_{j'}^{+}(z)\tilde{e}_{j'j}^{+}(z),\quad\text{for $j<i$},\\
t_{ji}(z)=\tilde{g}_{j}^{+}(z)\tilde{e}_{ji}^{+}(z)+\sum_{j'<j}\epsilon_{jij'}\tilde{f}_{jj'}^{+}(z)\tilde{g}_{j'}^{+}(z)\tilde{e}_{j'i}^{+}(z),\quad\text{for $j<i$},\\
s_{ii}(z)=\tilde{g}_{i}^{-}(z)+\sum_{j<i}\epsilon_{iij}\tilde{f}_{ij}^{-}(z)\tilde{g}_{j}^{-}(z)\tilde{e}_{ji}^{-}(z),\\
s_{ij}(z)=\tilde{f}_{ij}^{-}(z)\tilde{g}_{j}^{-}(z)+\sum_{j'<j}\epsilon_{ijj'}\tilde{f}_{ij'}^{-}(z)\tilde{g}_{j'}^{-}(z)\tilde{e}_{j'j}^{-}(z),\quad\text{for $j<i$},\\
s_{ji}(z)=\tilde{g}_{j}^{-}(z)\tilde{e}_{ji}^{-}(z)+\sum_{j'<j}\epsilon_{jij'}\tilde{f}_{jj'}^{-}(z)\tilde{g}_{j'}^{-}(z)\tilde{e}_{j'i}^{-}(z)\quad\text{for $j<i$}.
\end{gather*}
For example, the contribution of the $j$-th diagonal Gauss factor to the $(i,i)$ entry is
\[
(\tilde f_{ij}^{\pm}\otimes\mathsf E_{ij})
(\tilde g_j^{\pm}\otimes\mathsf E_{jj})
(\tilde e_{ji}^{\pm}\otimes\mathsf E_{ji})
=
\epsilon_{iij}\tilde f_{ij}^{\pm}\tilde g_j^{\pm}\tilde e_{ji}^{\pm}
\otimes\mathsf E_{ii},
\]
which explains both the index $j$ on the diagonal factor and the sign $\epsilon_{iij}$ in the first and fourth formulas.

With the sign $\epsilon_{ijk}$ fixed above, the RTT algebra
$\mathcal{U}_{q}^{\mathrm{rtt}}(\widehat{\mathfrak{gl}}_{m|n})$ has the coproduct
\begin{align*}
\Delta^{\mathrm{rtt}}(s_{ij}^{(u)})=\sum_{a=0}^{u}\sum_{k\in J}\epsilon_{ijk}s_{ik}^{(a)}\otimes s_{kj}^{(u-a)},\quad \Delta^{\mathrm{rtt}}(t_{ij}^{(u)})=\sum_{a=0}^{u}\sum_{k\in J}\epsilon_{ijk}t_{ik}^{(a)}\otimes t_{kj}^{(u-a)}.
\end{align*}
The quantum Berezinian is group-like for this coproduct; hence the above coproduct descends to the special-linear quotient.
Let $X_{i}^{+}(z)=\tilde{e}_{i,i+1}^{+}(z)-\tilde{e}_{i,i+1}^{-}(z)=\sum_{r}X_{i,r}^{+}z^{-r}$, $X_{i}^{-}(z)=\tilde{f}_{i+1,i}^{+}(z)-\tilde{f}_{i+1,i}^{-}(z)=\sum_{r}X_{i,r}^{-}z^{-r}$, $K_{l}^{\pm}(z)=\tilde{g}_{l}^{\pm}(z)$ for $i\in I$ and $l\in J$. 
\begin{theorem}[{\cite{DF1993,Zh2016}; \cite[Theorem~1 and Section~4]{Zyz1997}}]\label{th: RTT and Drinfeld realization}
For type A with the standard parity sequence, assume $m,n>1$ and $m\ne n$.
As a superalgebra $\mathcal{U}_{q}^{\mathrm{rtt}}(\widehat{\mathfrak{gl}}_{m|n})$ is generated by the coefficients of $X_{i}^{\pm}(z)$, $K_{j}^{\pm}(z)$, $i\in I$, $j\in J$, subject to the relations
\begin{align*}
&K_{i}^{\epsilon}(z)K_{j}^{\epsilon'}(w)=K_{j}^{\epsilon'}(w)K_{i}^{\epsilon}(z)\quad\text{for $\epsilon,\epsilon'\in\{+,-\}$},\\
&K_{i}^{\epsilon}(z)X_{j}^{\epsilon'}(w)=X_{j}^{\epsilon'}(w)K_{i}^{\epsilon}(z)
 \quad\text{for } i\notin\{j,j+1\},\\
&K_{i}^{\epsilon}(z)X_{i}^{\pm}(w)=\left(\frac{q_{i}z-q_{i}^{-1}w}{z-w}\right)^{\mp1}X_{i}^{\pm}(w)K_{i}^{\epsilon}(z),\\
&K_{i+1}^{\epsilon}(z)X_{i}^{\pm}(w)=\left(\frac{q_{i+1}^{-1}z-q_{i+1}w}{z-w}\right)^{\mp1}X_{i}^{\pm}(w)K_{i+1}^{\epsilon}(z),\\
&X_{i}^{\epsilon}(z)X_{j}^{\epsilon}(w)-X_{j}^{\epsilon}(w)X_{i}^{\epsilon}(z)=0\quad\text{if } |i-j|\ge 2,\\
&X_{m}^{\epsilon}(z)X_{m}^{\epsilon}(w)+X_{m}^{\epsilon}(w)X_{m}^{\epsilon}(z)=0,\\
&(q_{i}^{\mp1}z-q_{i}^{\pm1}w)X_{i}^{\pm}(z)X_{i}^{\pm}(w)=(q_{i}^{\pm1}z-q_{i}^{\mp1}w)X_{i}^{\pm}(w)X_{i}^{\pm}(z)\quad\text{if } i\neq m,\\
&(q_{i+1}z-q_{i+1}^{-1}w)X_{i}^{+}(z)X_{i+1}^{+}(w)=(z-w)X_{i+1}^{+}(w)X_{i}^{+}(z),\\
&(z-w)X_{i}^{-}(z)X_{i+1}^{-}(w)=(q_{i+1}z-q_{i+1}^{-1}w)X_{i+1}^{-}(w)X_{i}^{-}(z),\\
&[X_{i}^{+}(z),X_{j}^{-}(w)]=\delta_{ij}(q_{i}-q_{i}^{-1})\delta\!\left(\frac{z}{w}\right)\Bigl(K_{i+1}^{+}(z)K_{i}^{+}(z)^{-1}-K_{i+1}^{-}(w)K_{i}^{-}(w)^{-1}\Bigr),\\
&\mathrm{Sym}_{z_{1},z_{2}}[X_{i}^{\epsilon}(z_{1}),[X_{i}^{\epsilon}(z_{2}),X_{i\pm1}^{\epsilon}(w)]_{q}]_{q^{-1}}=0\quad\text{if } i\neq m,\\
&\mathrm{Sym}_{z_{1},z_{2}}[X_{m}^{\epsilon}(z_{1}),[X_{m+1}^{\epsilon}(y),[X_{m}^{\epsilon}(z_{2}),X_{m-1}^{\epsilon}(w)]_{q}]_{q^{-1}}]=0.
\end{align*}
\end{theorem}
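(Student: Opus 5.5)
The statement is the super Ding--Frenkel theorem. I would prove it in two halves: first that the Gauss coefficients satisfy the listed relations, then that the relations are also sufficient.

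\emph{Step 1: reduction to rank-two blocks.} From the Gauss decomposition and the RTT relations I would derive the relations among the currents $\tilde e^{\pm}_{i,i+1}$, $\tilde f^{\pm}_{i+1,i}$ and $\tilde g^{\pm}_l$. The standard device is the embedding, or quasideterminant, argument. For a principal submatrix of $T(z)$ and $S(z)$ indexed by consecutive indices $\{a,\dots,b\}\subset J$, the entries of $T(z)^{-1}$ and $S(z)^{-1}$ restricted to the complementary corner satisfy RTT relations of smaller size. This is obtained by conjugating with the inverse, via a graded version of the Ding--Frenkel "$\psi_k$" homomorphisms. It reduces every relation to computations in $\mathcal U_q^{\rm rtt}(\widehat{\mathfrak{gl}}_{2})$, $\mathcal U_q^{\rm rtt}(\widehat{\mathfrak{gl}}_{1|1})$, and rank-three blocks $\mathfrak{gl}_{2|1}$, $\mathfrak{gl}_{1|2}$, $\mathfrak{gl}_3$.

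\emph{Step 2: the blocks.} For $\mathfrak{gl}_2$ and $\mathfrak{gl}_{1|1}$ I would expand the Perk--Schultz $R$-matrix entrywise. The block with index set $\{i,i+1\}$ has parity $|\alpha_i|$. This entrywise expansion gives:
\begin{itemize}
\item the $K$–$X$ relations, with factor $q_i$ or $q_{i+1}^{-1}$ according to the diagonal entry of $R$;
\item the quadratic $XX$ relations, which at $i=m$ become the anticommutation $X_m(z)X_m(w)+X_m(w)X_m(z)=0$ because $q_m q_{m+1}=1$;
\item the $[X^+,X^-]$ relation with the $\delta$-function, obtained by combining the $TT$, $SS$ and mixed $TS$ relations with $X=\tilde e^+-\tilde e^-$.
\end{itemize}
Rank-three blocks give the relations between $X_i$ and $X_{i+1}$, together with the commutation for $|i-j|\ge2$. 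The Serre relations would then follow, as in Ding--Frenkel, from expressing $\tilde e_{i,i+2}$ in two ways as a $q$-commutator of simple currents. For the quartic relation at $m$, I would use a rank-four block $\mathfrak{gl}_{2|2}$ and the same two-way expression of $\tilde e_{m-1,m+2}$. This is where $m,n>1$ is needed.

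\emph{Step 3: generation.} Using the recursive Gauss formulas, each non-simple $\tilde e_{ji}^{\pm}$ and $\tilde f_{ij}^{\pm}$ is a $q$-commutator of simple modes, by the rank-three identities. Together with the $\tilde g$'s this recovers every $t_{ij}^{(r)}$ and $s_{ij}^{(r)}$.

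\emph{Step 4: sufficiency, the main obstacle.} Sufficiency of the relations is the delicate part. As the text notes, earlier proofs of injectivity in the loop case were incomplete. I would construct the inverse map from the abstract algebra defined by the relations. Define $\tilde e_{ji}$ and $\tilde f_{ij}$ by the $q$-commutator recursion and reconstruct $T$ and $S$ by the Gauss formulas. Then verify the RTT relations using the triangular decomposition of the algebra defined by the current relations. An alternative is a PBW spanning argument comparing with the PBWD basis of \cite{Tsy2021} for $U_q(\widehat{\mathfrak{sl}}_{m|n})$, tensored with the Cartan currents. This should show that both algebras have matching graded bases, which forces the surjection to be injective. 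I would rely on \cite{XLZ2026} for the injectivity step where necessary.
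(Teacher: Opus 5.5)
The paper does not prove this theorem. It is quoted from \cite{DF1993,Zyz1997,Zh2016}, and the text preceding it says explicitly that injectivity of the loop-case RTT--Drinfeld map in the super setting was only supplied in \cite{XLZ2026}. Your Steps~1--3 follow the same Ding--Frenkel route as those sources: the Gauss coordinates satisfy the current relations by reduction to small consecutive blocks, and every $t_{ij}^{(r)},s_{ij}^{(r)}$ is recovered from simple modes through the $q$-commutator recursion. There is one slip. A rank-three block contains only the adjacent pair $X_i,X_{i+1}$, so the relations with $j\ge i+2$ need a separate input. One option is that the Gauss currents of the Schur complement of the upper-left $(j-1)\times(j-1)$ corner supercommute with the entries of that corner.

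The genuine gap is Step~4, which is exactly the part the paper delegates to \cite{XLZ2026}.
\begin{itemize}
\item Rebuilding $T(z),S(z)$ from recursively defined Gauss coordinates and ``verifying the RTT relations using the triangular decomposition'' just restates the problem. Deriving all four RTT relations from the current relations is the hard computation, and you give no mechanism for it.
\item The PBWD alternative does not work as stated. The weight- and loop-graded pieces of both algebras are infinite-dimensional: they contain all powers of $K^{+}_{a,0}$, all products $K^{+}_{a,r}K^{-}_{a,-r}$, and so on. So ``matching graded bases'' cannot force injectivity by counting. What you need is linear independence of the images of the PBWD monomials inside the RTT algebra. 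That requires an independent tool, such as a PBW theorem for the RTT side or a coefficient-separation argument, which you do not provide.
\item For the relations exactly as displayed, injectivity is false. Consider the one-dimensional representation $X_i^{\pm}(z)\mapsto 0$, $K_a^{+}(z)\mapsto c$, $K_a^{-}(z)\mapsto d$. It satisfies every displayed relation for arbitrary $c,d\neq0$, including the mixed one, since both Cartan ratios equal $1$. But in $\mathcal U_q^{\mathrm{rtt}}(\widehat{\mathfrak{gl}}_{m|n})$ one has $t_{aa}^{(0)}=\tilde g^{+}_{a,0}$ and $s_{aa}^{(0)}=\tilde g^{-}_{a,0}$, and $t_{aa}^{(0)}s_{aa}^{(0)}=1$ forces $cd=1$. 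So the source of your inverse map must be the presentation with $K^{\pm}_{a,0}$ invertible and $K^{+}_{a,0}K^{-}_{a,0}=1$ added.
\end{itemize}
With these corrections your proposal lands exactly where the paper does: the homomorphism and generation come from the classical Ding--Frenkel argument, and injectivity comes from \cite{XLZ2026}.
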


Recall the scalar $\tau_i$ from \eqref{formula for tau i}.  By Theorem~\ref{th: RTT and Drinfeld realization}, together with the special-linear RTT--Drinfeld isomorphism of \cite[Proposition~4.8]{XLZ2026} (see also \cite[Section~1.1]{Zh2018}), we obtain the following corollary.
\begin{corollary}\label{coro: iso from Drinfeld realization to RTT realization}
For type A with the standard parity sequence, assume $m,n>1$ and $m\ne n$.
There exists a unique superalgebra isomorphism
\begin{align*}
\gamma: \mathcal{U}_{q}(\widehat{\mathfrak{sl}}_{m|n})\rightarrow \mathcal{U}_{q}^{\mathrm{rtt}}(\widehat{\mathfrak{sl}}_{m|n})
\end{align*}
such that
\begin{align*}
&x_{i,\geq}^{+}(z)\mapsto  \frac{\tilde{e}_{i,i+1}^{+}(z\tau_{i})}{q_{i}-q_{i}^{-1}},\quad  x_{i,<}^{+}(z)\mapsto \frac{\tilde{e}_{i,i+1}^{-}(z\tau_{i})}{q_{i}-q_{i}^{-1}},\\&
x_{i,>}^{-}(z)\mapsto \frac{\tilde{f}_{i+1,i}^{+}(z\tau_{i})}{q_{i}-q_{i}^{-1}},\quad x_{i,\leq}^{-}(z)\mapsto \frac{\tilde{f}_{i+1,i}^{-}(z\tau_{i})}{q_{i}-q_{i}^{-1}},\quad \phi_{i}^{\pm}(z)\mapsto \tilde{g}_{i+1}^{\pm}(\tau_{i}z)(\tilde{g}_{i}^{\pm}(\tau_{i}z))^{-1},
\end{align*}
where
\begin{align*}
&x_{i,\geq}^{+}(z)=\sum_{r\geq0}x_{i,r}^{+}z^{-r},\quad x_{i,<}^{+}(z)=-\sum_{r<0}x_{i,r}^{+}z^{-r},\\&
x_{i,>}^{-}(z)=\sum_{r>0}x_{i,r}^{-}z^{-r},\quad x_{i,\leq}^{-}(z)=-\sum_{r\leq0}x_{i,r}^{-}z^{-r},\quad
\phi_{i}^{\pm}(z)=\sum_{s\in\mathbb{Z}_{\geq0}}\phi_{i,\pm s}^{\pm}z^{\mp s}.
\end{align*}
Moreover, this isomorphism intertwines the coproduct $\Delta$ with $\Delta^{\mathrm{rtt}}$.
\end{corollary}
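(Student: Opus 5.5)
The plan is to obtain $\gamma$ by composing known isomorphisms and then pin down the normalizations. First, Theorem~\ref{th: RTT and Drinfeld realization} presents $\mathcal U_q^{\mathrm{rtt}}(\widehat{\mathfrak{gl}}_{m|n})$ by the Gauss currents $X_i^\pm(z)$ and $K_j^\pm(z)$. I will define $\gamma$ on the generators of Definition~\ref{Def:Drinfeld realization for quantum affine superalgebra} by the displayed formulas. Concretely, the full current is
\[
x_i^+(z)=x^+_{i,\ge}(z)-x^+_{i,<}(z)\mapsto \frac{X_i^+(z\tau_i)}{q_i-q_i^{-1}},
\]
similarly for $x_i^-(z)$, and $\phi_i^\pm(z)\mapsto \tilde g^\pm_{i+1}(\tau_i z)\tilde g^\pm_i(\tau_i z)^{-1}$. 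The splitting into $\ge$ and $<$ parts is consistent because $\tilde e^+_{i,i+1}(z)$ lies in $z^{0}\mathbb C[[z^{-1}]]$ and $\tilde e^-_{i,i+1}(z)$ lies in $z\,\mathbb C[[z]]$. The latter holds since $s_{ji}^{(0)}=0$ for $j>i$, which forces the upper Gauss coordinate of $S$ to start at positive degree on one side; for the $f$'s the opposite vanishing $t^{(0)}_{ij}=0$ for $i>j$ gives the $>$ and $\le$ splitting.

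The relation check proceeds current-by-current. Substituting $z\mapsto \tau_i z$ into the $K$--$X$ relations converts the factors $\frac{q_iz-q_i^{-1}w}{z-w}$ and $\frac{q_{i+1}^{-1}z-q_{i+1}w}{z-w}$ into the combined factor of $\phi_i(z)x_j^\pm(w)$ prescribed by \eqref{QTSArelation4}. The key point here is that $\tau_{i\pm1}/\tau_i=q_{i+1}^{\pm}$-type shifts produce exactly the $q^{\pm A_{i,j}}$ twists, including at the isotropic node, where $\tau_{m+1}=\tau_m q^{-1}$. The $[X^+,X^-]$ relation then yields \eqref{QTSArelation5}: the normalization $(q_i-q_i^{-1})^{-2}$ cancels against the $(q_i-q_i^{-1})$ factor on the RTT side, giving the factor $1/(q_i-q_i^{-1})$. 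The quadratic and Serre relations \eqref{QTSArelation6}--\eqref{QTSArelation9} are homogeneous under a common rescaling of variables, so they follow directly from the RTT ones after the shift. This gives a homomorphism into the $\mathfrak{gl}$ RTT algebra, and composing with the quotient gives a homomorphism into $\mathcal U_q^{\mathrm{rtt}}(\widehat{\mathfrak{sl}}_{m|n})$.

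Bijectivity I will import from \cite[Proposition~4.8]{XLZ2026}, which identifies $\mathcal U_q^{\mathrm{rtt}}(\widehat{\mathfrak{sl}}_{m|n})$ with the subalgebra generated by the simple Gauss currents and ratios $\tilde g_{i+1}\tilde g_i^{-1}$ modulo the Berezinian relations. Surjectivity uses the fact that the quantum Berezinian relation $qB=1$ expresses the product of the $\tilde g_j$ (with the super-alternating shifted arguments) as $1$. Together with the ratios, this recovers each $\tilde g_j$ in the quotient, and the non-simple Gauss coordinates are then recovered from nested $q$-commutators of simple ones. Uniqueness is automatic because the images of generators are prescribed.

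Compatibility with coproducts will be checked on $\phi_{i,0}^{\pm}$, $x^{\pm}_{i,0}$ and $x^{\pm}_{i,\mp1}$, which generate via \eqref{iso:DJ to D}. The images are read off from the low-order terms $t^{(0)},t^{(1)},s^{(0)},s^{(1)}$. I expect the main obstacle to be this step, particularly at $i=m$, where the sign $\epsilon_{ijk}$ from the graded tensor product must match the Drinfeld--Jimbo coproduct signs. I would first verify it at zero order, where it reduces to the finite $U_q(\mathfrak{gl}_{m|n})$ statement, and then at the affine generators $e_0,f_0$ using $t^{(1)}_{M1}$ and $s^{(1)}_{1M}$.
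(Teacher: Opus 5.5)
Your construction of $\gamma$ is the paper's: rescale the currents of Theorem~\ref{th: RTT and Drinfeld realization} by $\tau_i$ and defer bijectivity to Proposition~4.8 of \cite{XLZ2026}. The gap is your surjectivity mechanism, which fails when $|m-n|\ge 2$.

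Because $T^{(0)}$ is upper triangular, the constant term of $qB(T(z))$ is $\prod_{a\le m}\tilde g_a^{(0)}\prod_{a>m}(\tilde g_a^{(0)})^{-1}$. Writing $\tilde g_a^{(0)}=\tilde g_1^{(0)}\cdot(\text{ratios})$, the relation $qB=1$ determines only $(\tilde g_1^{(0)})^{m-n}$ in terms of ratio data. So $\tilde g_1^{(0)}$, and hence the individual $\tilde g_j$, are not recovered.

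No argument can repair this inside the quotient as literally defined. Take $\zeta\ne1$ with $\zeta^{m-n}=1$ and compare two one-dimensional representations of $\mathcal U_q^{\mathrm{rtt}}(\widehat{\mathfrak{sl}}_{m|n})$: $T,S\mapsto\mathrm{Id}$, and $T\mapsto\zeta\,\mathrm{Id}$, $S\mapsto\zeta^{-1}\mathrm{Id}$. Both satisfy the defining relations: the RTT relations are homogeneous, $t_{ii}^{(0)}s_{ii}^{(0)}\mapsto 1$, $qB(T)\mapsto\zeta^{m-n}=1$, and $qB(S)\mapsto\zeta^{n-m}=1$. They agree on every simple Gauss current (sent to $0$) and every diagonal ratio (sent to $1$), hence on $\operatorname{im}\gamma$. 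They differ on $t_{11}^{(0)}=\tilde g_1^{(0)}$. So for, e.g., $(m,n)=(2,4)$, $\gamma$ is not onto the Berezinian quotient of Section~\ref{subsec:rtt-gauss}, and your argument is valid only for $|m-n|=1$.

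The paper's one-line proof does not address this either; it just invokes the quantum Berezinian reduction and \cite{XLZ2026}. So you should not try to prove surjectivity onto the full quotient. The statement becomes correct if the target is the subalgebra generated by the coefficients of the simple Gauss currents and of the ratios $\tilde g_{i+1}^{\pm}(\tilde g_i^{\pm})^{-1}$. This is the zero-shift analogue of $\mathscr U^{\mathrm{rtt},\mathrm{sl}}_{\mathbf d^+,\mathbf d^-}$, equivalently the fixed algebra of the $\zeta$-rescalings as in Proposition~\ref{prop:shifted-rtt-scalar-fixed-algebra}. Surjectivity is then tautological, and only injectivity needs the cited input.

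With that target, the coproduct claim also needs $\Delta^{\mathrm{rtt}}$ to map the subalgebra into its tensor square. This holds because rescaling either tensor factor commutes with $\Delta^{\mathrm{rtt}}$; compare Proposition~\ref{prop:two-sided-antidominant-rtt-closure}. Your generator check on $x_{i,0}^{\pm}$, $k_i$ and the corner entries $t_{M1}^{(1)}$, $s_{1M}^{(1)}$ is then a reasonable and more explicit substitute for the paper's appeal to the Hopf-ideal property.

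Two minor slips:
\begin{itemize}
\item The triangularity you need is $s_{ji}^{(0)}=0$ for $j<i$, not $j>i$.
\item The neighbouring relation \eqref{QTSArelation7} is not preserved by a common rescaling. It is the relative factor $\tau_{i+1}/\tau_i=q_{i+1}$ that turns $(q_{i+1}z-q_{i+1}^{-1}w)X_i^+(z)X_{i+1}^+(w)=(z-w)X_{i+1}^+(w)X_i^+(z)$ into the Drinfeld form, exactly as for the $K$--$X$ relations.
\end{itemize}
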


\begin{proof}
The special-linear RTT--Drinfeld isomorphism follows from Theorem~\ref{th: RTT and Drinfeld realization} and the quantum Berezinian reduction; see also \cite[Proposition~4.8]{XLZ2026}.  The displayed formulas agree with our normalization of the Gauss currents.  Since the quantum Berezinian ideal is a Hopf ideal, the identification also intertwines the induced RTT coproduct with $\Delta$.
\end{proof}

Recursive formulas for Gauss coefficients in the classical specialization modulo $q-1$ appear in \cite[Lemma~3.13]{XLZ2026}.  The particular quantum coefficient-level recursions for the four Gauss modes $\tilde e_{ji}^{(0)}$, $\tilde f_{ij}^{(0)}$, $\tilde e_{ji}^{(-1)}$, and $\tilde f_{ij}^{(1)}$ needed for our coproduct calculations are established directly from the RTT relations in Proposition~\ref{prop: formulas for eji0,eji-1,fij0,fij1}.

\section{Shifted quantum affine superalgebra}\label{section: Shifted quantum affine superalgebra}
In this section, we formulate the two-sided shifted current presentation of quantum affine superalgebras used in this paper.  We then specialize to standard-parity type A and prove the triangular decomposition and PBWD basis.

Let $U_q(\widehat{\mathfrak g})$ be a quantum affine superalgebra with a fixed Drinfeld current presentation relative to a simple system $\Pi=\{\alpha_i\}_{i\in I_{\Pi}}$, denoted by $U_q^{\mathrm{Dr}}(\widehat{\mathfrak g},\Pi)$.  Write the root currents as $x_i^{\pm}(z)$ and the Cartan--Drinfeld currents as $\phi_i^{\pm}(z)$. 

Shifted quantum loop and toroidal superalgebras were defined by Negu\cb{t} and Tsymbaliuk \cite[Remarks~2.6 and~2.7]{NT2026} using shifted Drinfeld doubles. We now record the two-sided shifted current presentation that will be used throughout this paper.

\begin{definition}\label{def:general-shifted-qas}
Fix integral shift data
\[
\mathbf b^{+}=(b_i^{+})_{i\in I_{\Pi}},\qquad
\mathbf b^{-}=(b_i^{-})_{i\in I_{\Pi}}\in\mathbb Z^{I_{\Pi}}.
\]
We denote by
$U^{\mathrm{Dr}}_{q,\mathbf b^{+},\mathbf b^{-}}
(\widehat{\mathfrak g},\Pi)$
the shifted quantum affine superalgebra in the two-sided shifted current
presentation used below.  This presentation is obtained  from the chosen current presentation by replacing the ordinary Cartan expansions by
\[
\phi_i^{\pm}(z)=\sum_{r\ge -b_i^{\pm}}\phi_{i,\pm r}^{\pm}z^{\mp r},
\qquad
\phi_{i,\mp b_i^{\pm}}^{\pm}\ \text{invertible}\qquad(i\in I_{\Pi}),
\]
declaring coefficients outside these prescribed supports to be zero, and interpreting all current relations coefficientwise with these shifted expansions.  The Cartan--Cartan, Cartan--root, mixed, same-sign current, and Serre relations are otherwise the same as in the chosen Drinfeld presentation.  The two leading Cartan coefficients
$\phi_{i,-b_i^{+}}^{+}$ and $\phi_{i,b_i^{-}}^{-}$
are taken to be independent invertible generators.  In particular, even in the zero-shift case we do not impose
$\phi_{i,0}^{+}\phi_{i,0}^{-}=1$;
the usual zero-shift algebra is recovered only after imposing this relation separately. Any affine central charge, derivation, or other generators present in the chosen current presentation are left unchanged, together with their defining relations.
\end{definition}

We next compare the two-sided shifted current presentation above with the shifted Drinfeld super-double construction of Negu\cb{t} and Tsymbaliuk \cite{NT2026}. Their construction starts from positive and negative current halves associated with quiver data, together with the corresponding Cartan part and Hopf pairing, and forms a Drinfeld super-double; the super version is obtained by incorporating the parity data into this double construction.  The shift leaves the positive and negative current halves unchanged and is encoded in the mixed commutator between the positive and negative root currents.

\begin{proposition}\label{prop:comparison-NT-general}
Suppose that, after adjoining inverses to the leading Cartan generators, the zero-shift algebra
$U^{\mathrm{Dr}}_{q,\mathbf 0,\mathbf 0}(\widehat{\mathfrak g},\Pi)$
of Definition~\ref{def:general-shifted-qas} is identified with a Drinfeld super-double of the form considered in
\cite[Remark~2.6]{NT2026}, with compatible root and Cartan currents and mixed-commutator constants.
Assume also that the colorwise loop regradings preserve the positive and negative current relations, as in the quiver-with-parameters framework of \cite{NT2026}.

For integral shift data $\mathbf b^\pm=(b_i^\pm)_{i\in I_\Pi}$, set $r_i=b_i^++b_i^-$.  After adjoining inverses to the leading Cartan generators, the shifted Drinfeld super-double of \cite[Remark~2.7]{NT2026} with shift $\mathbf r=(r_i)_{i\in I_\Pi}$ is isomorphic to $U^{\mathrm{Dr}}_{q,\mathbf b^+,\mathbf b^-}(\widehat{\mathfrak g},\Pi)$.  More precisely,
\[
e_{i,k}\mapsto x_{i,k-b_i^+}^+,\qquad f_{i,k}\mapsto x_{i,k}^-,\qquad \varphi_{i,d}^+\mapsto\phi_{i,d-b_i^+}^+,\qquad \varphi_{i,d}^-\mapsto\phi_{i,b_i^--d}^- .
\]
Equivalently,
\[
e_i(z)\mapsto z^{-b_i^+}x_i^+(z),\qquad f_i(z)\mapsto x_i^-(z),\qquad \varphi_i^+(z)\mapsto z^{-b_i^+}\phi_i^+(z),\qquad \varphi_i^-(z)\mapsto z^{b_i^-}\phi_i^-(z).
\]
\end{proposition}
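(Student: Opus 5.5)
The plan is to build the isomorphism by comparing defining relations on both sides. The displayed assignment is a bijection on generators, and its inverse is explicit:
\[
x_{i,k}^+\mapsto e_{i,k+b_i^+},\qquad \phi_{i,d}^+\mapsto\varphi_{i,d+b_i^+}^+,\qquad \phi_{i,d}^-\mapsto\varphi_{i,b_i^--d}^-.
\]
So it suffices to show that each assignment sends defining relations to relations. We work with the shifted double of \cite[Remark~2.7]{NT2026} in its current form: the positive half, the negative half and the Cartan parts are those of the unshifted double, and only the mixed commutator changes. In that commutator, $[e_i(z),f_j(w)]$ is $\delta_{ij}$ times a constant times $\delta(z/w)(\varphi_i^+(z)-\varphi_i^-(z))$, with $\varphi_i^+$ supported in degrees $z^{-d}$, $d\ge0$, and $\varphi_i^-$ supported in degrees $z^{d}$, $d\ge r_i$. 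The leading coefficients $\varphi_{i,0}^+$ and $\varphi_{i,r_i}^-$ are made invertible.

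\textbf{Support check.} The substitution $\varphi_i^+(z)\mapsto z^{-b_i^+}\phi_i^+(z)$ sends degree $d\ge0$ to $\phi^+_{i,d-b_i^+}$, with leading index $-b_i^+$. The substitution $\varphi_i^-(z)\mapsto z^{b_i^-}\phi_i^-(z)$ sends degree $d\ge r_i$ to $\phi^-_{i,b_i^--d}$, which has index at most $-b_i^+$. These ranges need to match the supports prescribed in Definition~\ref{def:general-shifted-qas}. This forces a consistent sign convention for the degrees of $\phi^-$, and I would check it first. If it fails, I would switch to the normalization in which $f_i$ carries the $z^{b_i^-}$ factor. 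In that case the same bookkeeping goes through, because only the sum $r_i$ enters the double.

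\textbf{Relations other than the mixed one.} These are the same-sign current relations, the Serre relations, Cartan--Cartan, and Cartan--root. By hypothesis they coincide with those of the zero-shift presentation. The positive-current and Serre relations for $e_i(z)$ are homogeneous in each color. Multiplying $e_i(z)$ by $z^{-b_i^+}$ is then a colorwise loop regrading, and by the second hypothesis it preserves these relations. The Cartan--root relations have the form $\varphi_i^\pm(z)e_j(w)=g_{ij}(z/w)\,e_j(w)\varphi_i^\pm(z)$. They are invariant under multiplying $\varphi_i^\pm(z)$ by $z^c$ and $e_j(w)$ by $w^{c'}$, since such scalar monomials commute past everything. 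Cartan--Cartan relations are quadratic and homogeneous in the same way. Each of these relations therefore transports verbatim.

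\textbf{Mixed relation (main obstacle).} Here I would use the identity $z^{-b_i^+}\delta(z/w)=w^{-b_i^+}\delta(z/w)$. It gives
\[
[z^{-b_i^+}x_i^+(z),x_j^-(w)]=\delta_{ij}c_i\,\delta(z/w)\,z^{-b_i^+}\bigl(\phi_i^+(z)-\phi_i^-(z)\bigr).
\]
The image of the right-hand side of the shifted double is
\[
\delta(z/w)\bigl(z^{-b_i^+}\phi_i^+(z)-z^{b_i^-}\phi_i^-(z)\bigr),
\]
so the two sides differ in the factor in front of $\phi^-$. The real task is to reconcile $z^{b_i^-}$ with $z^{-b_i^+}$, that is, to match the index conventions for $\phi^-$ in the two presentations.

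Comparing coefficients gives $[x^+_{i,k-b_i^+},x^-_{i,l}]\propto\phi^+_{i,k+l-b_i^+}-\phi^-_{i,\ldots}$. This is exactly relation \eqref{QTSArelation5} read coefficientwise with the shifted expansions, once the $\phi^-$ modes are indexed by total loop degree; on that indexing the mode $\varphi^-_{i,d}$ lands on $\phi^-_{i,-d+\cdots}$. So I would first rewrite the $\phi^-$ part of the double in the variable $z^{-1}$, then check degree by degree that the mode maps exactly as displayed.

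\textbf{Conclusion.} The constants $c_i$ are compatible by hypothesis, and both sides have the leading Cartan coefficients invertible. The two homomorphisms are therefore mutually inverse on generators, and this proves the isomorphism.
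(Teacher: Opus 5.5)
There is a genuine gap, located exactly at the step you call the main obstacle, and it also affects your support check. You model the shifted double with $\varphi_i^-(z)$ supported in degrees $z^d$, $d\ge r_i$, and with an unshifted mixed commutator $\delta(z/w)(\varphi_i^+(z)-\varphi_i^-(z))$. In that model the displayed assignment is not a homomorphism when $r_i\ne0$, for three reasons:
\begin{itemize}
\item by your own computation, the $\phi^-$-indices land in $(-\infty,-b_i^+]$ instead of the prescribed $(-\infty,b_i^-]$;
\item the leading generator $\varphi^-_{i,r_i}$ goes to $\phi^-_{i,-b_i^+}$ rather than to the invertible $\phi^-_{i,b_i^-}$;
\item the mixed relation leaves the factor $z^{b_i^-}$ where $z^{-b_i^+}$ is required.
\end{itemize}
These are not index conventions that can be ``reconciled'' afterwards. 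Moving a power of $z$ onto $f_i$ would produce a different map from the one in the statement. So, as written, the proposal does not prove the proposition.

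The missing idea is where the shift lives in the double. The assignment $\varphi^-_{i,d}\mapsto\phi^-_{i,b_i^--d}$ sends the leading coefficient to $\phi^-_{i,b_i^-}$ only if $\varphi_i^-(z)=\sum_{d\ge0}\varphi^-_{i,d}z^d$ starts in degree $0$, just as $\varphi_i^+$ does. The shift $r_i$ is then carried entirely by the mixed relation, which is the form used in the proof:
\[
[e_{i,k},f_{j,\ell}]=\delta_{ij}\kappa_i\bigl(\varphi^+_{i,k+\ell}\mathbf 1_{k+\ell\ge0}-\varphi^-_{i,r_i-k-\ell}\mathbf 1_{k+\ell\le r_i}\bigr).
\]
In currents this reads $[e_i(z),f_j(w)]=\delta_{ij}\kappa_i\,\delta(z/w)\bigl(\varphi_i^+(z)-z^{-r_i}\varphi_i^-(z)\bigr)$.

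With this form, the image of the right-hand side is
\[
\delta(z/w)\bigl(z^{-b_i^+}\phi_i^+(z)-z^{b_i^--r_i}\phi_i^-(z)\bigr)=z^{-b_i^+}\delta(z/w)\bigl(\phi_i^+(z)-\phi_i^-(z)\bigr),
\]
because $r_i-b_i^-=b_i^+$. This is exactly what your left-hand computation gives. Coefficientwise, with $n=k+\ell$, both Cartan terms go to $\phi^\pm_{i,n-b_i^+}$. The indicators $n\ge0$ and $n\le r_i$ become exactly the shifted support conditions $n-b_i^+\ge-b_i^+$ and $n-b_i^+\le b_i^-$. The support check for $\varphi^-$ now gives indices $\le b_i^-$, as required.

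The rest of your plan agrees with the paper's argument: colorwise regrading for the same-sign and Serre relations, invariance of the Cartan relations under scalar monomials, and the explicit inverse.
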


\begin{proof}
Write $\varphi_i^+(z)=\sum_{d\ge0}\varphi_{i,d}^+z^{-d}$ and $\varphi_i^-(z)=\sum_{d\ge0}\varphi_{i,d}^-z^d$.  Definition~\ref{def:general-shifted-qas} gives
$\phi_i^+(z)=z^{b_i^+}\sum_{d\ge0}\phi_{i,d-b_i^+}^+z^{-d}$ and
$\phi_i^-(z)=z^{-b_i^-}\sum_{d\ge0}\phi_{i,b_i^--d}^-z^d$.
Thus the displayed assignment identifies the Cartan supports and sends $\varphi_{i,0}^\pm$ to the corresponding leading Cartan coefficients.

The positive and negative halves are unchanged by the shift.  The map on the positive half is the colorwise loop regrading $e_{i,k}\mapsto x_{i,k-b_i^+}^+$, while the negative half is unchanged.  Hence the same-sign current and Serre relations are preserved by the hypothesis on the loop regradings.  The Cartan--Cartan and Cartan--root relations are preserved as well, since the extra powers of the spectral parameters occur identically on both sides.

It remains to check the mixed relation.  In the notation of \cite[Remark~2.7]{NT2026},
\[
[e_{i,k},f_{j,\ell}]=\delta_{ij}\kappa_i\bigl(\varphi_{i,k+\ell}^+\mathbf1_{k+\ell\ge0}-\varphi_{i,-k-\ell+r_i}^-\mathbf1_{k+\ell\le r_i}\bigr),
\]
where $\kappa_i$ is the constant determined by the chosen unshifted normalization.  Put $n=k+\ell$.  Since $r_i=b_i^++b_i^-$, the two Cartan terms map respectively to $\phi_{i,n-b_i^+}^+$ and $\phi_{i,n-b_i^+}^-$.  Moreover, $n\ge0$ is equivalent to $n-b_i^+\ge-b_i^+$, while $n\le r_i$ is equivalent to $n-b_i^+\le b_i^-$.  These are exactly the two shifted Cartan support conditions in Definition~\ref{def:general-shifted-qas}.  Hence the mixed relation becomes
\[
[x_{i,k-b_i^+}^+,x_{j,\ell}^-]=\delta_{ij}\kappa_i\bigl(\phi_{i,n-b_i^+}^+-\phi_{i,n-b_i^+}^-\bigr),
\]
with coefficients outside the prescribed supports understood to be zero, which is precisely the shifted mixed relation.

Thus the displayed assignment defines a homomorphism after adjoining inverses to $\varphi_{i,0}^\pm$.  Its inverse is given by
\[
x_{i,k}^+\mapsto e_{i,k+b_i^+},\qquad x_{i,k}^-\mapsto f_{i,k},\qquad \phi_{i,s}^+\mapsto\varphi_{i,s+b_i^+}^+,\qquad \phi_{i,s}^-\mapsto\varphi_{i,b_i^--s}^-,
\]
so the homomorphism is an isomorphism.
\end{proof}

From this point on, we study the shifted quantum affine superalgebras $\mathcal U_q^{\mu_{+},\mu_{-}}(\widehat{\mathfrak{sl}}_{m|n})$ of type A with the standard parity sequence. We identify $I_{\Pi}$ with the finite-node set $I=\{1,\dots,M-1\}$ fixed in Section~2. Let $\Lambda=\bigoplus_{i\in I}\mathbb{Z}\Lambda_i$ be the weight lattice, and let $\Lambda^{+}$ (resp. $\Lambda^{-}$) denote the set of dominant (resp. antidominant) weights $\beta\in\Lambda$ such that $(\alpha_i\mid\beta)\geq0$ (resp. $(\alpha_i\mid\beta)\leq0$) for all $i\in I$. 

\begin{definition}\label{Def: shifted quantum affine superalgebra}
Assume $m,n\geq1$ with $m\ne n$, use the standard parity sequence, and let $\mu_{+},\mu_{-}\in\Lambda$ with $l_i^{\pm}=(\alpha_i\mid\mu_{\pm})$.
The shifted quantum affine superalgebra $\mathcal{U}^{\mu_{+},\mu_{-}}_{q}(\widehat{\mathfrak{sl}}_{m|n})$ is the superalgebra generated by $x_{i,k}^{\pm}$, $\phi_{i,\pm m_{i}^{\pm}}^{\pm}$, and $(\phi_{i,\mp l_{i}^{\pm}}^{\pm})^{-1}$, $i\in I$, $k\in \mathbb{Z}$, $m_{i}^{\pm}\in\mathbb{Z}_{\geq -l_{i}^{\pm}}$, subject to the following relations.  As above, a relation containing $i\pm1$ is imposed only when the corresponding neighboring index belongs to $I$:
\begin{gather}
[\phi_{i}^{\pm}(z),\phi_{j}^{\pm}(w)]=[\phi_{i}^{\pm}(z),\phi_{j}^{\mp}(w)]=0,\quad \phi_{i,\mp l_{i}^{\pm}}^{\pm}(\phi_{i,\mp l_{i}^{\pm}}^{\pm})^{-1}=(\phi_{i,\mp l_{i}^{\pm}}^{\pm})^{-1}\phi_{i,\mp l_{i}^{\pm}}^{\pm}=1,\label{SQASrelation1}\\
(z-q^{\pm A_{i,j}}w)x_{i}^{\pm}(z)x_{j}^{\pm}(w)=(q^{\pm A_{i,j}}z-w)x_{j}^{\pm}(w)x_{i}^{\pm}(z)\quad\text{for $A_{i,j}\neq0$},\label{SQASrelation2}\\
[x_{i}^{\pm}(z),x_{j}^{\pm}(w)]=0\quad \text{for $A_{i,j}=0$},\label{SQASrelation3}\\
(z-q^{\pm A_{i,j}}w)\phi_{i}^{\epsilon}(z)x_{j}^{\pm}(w)=(q^{\pm A_{i,j}}z-w)x_{j}^{\pm}(w)\phi_{i}^{\epsilon}(z) \quad \text{for $\epsilon\in\{+,-\}$},\label{SQASrelation4}\\
[x_{i}^{+}(z),x_{j}^{-}(w)]=\frac{\delta_{ij}}{q_{i}-q_{i}^{-1}}\delta(\frac{z}{w})(\phi_{i}^{+}(z)-\phi_{i}^{-}(z)),\label{SQASrelation5}\\
\mathrm{Sym}_{z_1,z_2}[x_{i}^{\pm}(z_1),[x_{i}^{\pm}(z_2),x_{i\pm1}^{\pm}(w)]_{q}]_{q^{-1}}=0\quad\text{for $i\in I\setminus\{m\}$},\label{SQASrelation6}\\
\mathrm{Sym}_{z_1,z_2}[x_{m}^{\pm}(z_1),[x_{m+1}^{\pm}(y),[x_{m}^{\pm}(z_2),x_{m-1}^{\pm}(w)]_{q}]_{q^{-1}}]=0\quad\text{if $m,n>1$}.\label{SQASrelation7}
\end{gather}
where
\begin{gather*}
x_{i}^{\pm}(z)=\sum_{r\in\mathbb{Z}}x_{i,r}^{\pm}z^{-r},\quad \phi_{i}^{\pm}(z)=\sum_{r\in\mathbb{Z}_{\geq -l_{i}^{\pm}}}\phi_{i,\pm r}^{\pm}z^{\mp r},\quad \delta(z)=\sum_{r\in\mathbb{Z}}z^{r},
\end{gather*}
and the parity of the generators is given by $|x_{i,k}^{\pm}|=|\alpha_{i}|$ and $|\phi_{i,\pm m_{i}^{\pm}}^{\pm}|=0$.
\end{definition}

When $m=1$ or $n=1$, relation~\eqref{SQASrelation7} is omitted because one of the two neighboring simple roots of the isotropic node is absent.  This agrees with the corresponding rank-boundary convention in \cite[Definition~3.1]{Zh2014}.

Let us introduce another set of Cartan generators $h_{i,\pm r}$, $i\in I$, $r\in \mathbb{Z}_{>0}$, in place of $\phi_{i,\pm m_{i}^{\pm}}^{\pm}$, $i\in I$, $m_{i}^{\pm}\in\mathbb{Z}_{>-l_{i}^{\pm}}$, by
\begin{gather*}
\phi_{i}^{\pm}(z)=z^{\pm l_{i}^{\pm}}\phi^{\pm}_{i,\mp l_{i}^{\pm}}\exp(\pm(q_{i}-q_{i}^{-1})\sum\limits_{r'>0}h_{i,\pm r'}z^{\mp r'}).
\end{gather*}
Then relations \eqref{SQASrelation1}--\eqref{SQASrelation7} are equivalent to the following:
\begin{gather}
[\phi_{i,r}^{\pm},\phi_{j,r'}^{\pm}]=[\phi_{i,r}^{\pm},\phi_{j,r'}^{\mp}]=0,\quad \phi_{i,\mp l_{i}^{\pm}}^{\pm}(\phi_{i,\mp l_{i}^{\pm}}^{\pm})^{-1}=(\phi_{i,\mp l_{i}^{\pm}}^{\pm})^{-1}\phi_{i,\mp l_{i}^{\pm}}^{\pm}=1,\label{SQASrelation11}\\
x_{i,r+1}^{\pm}x_{j,s}^{\pm}-q^{\pm A_{i,j}}x_{i,r}^{\pm}x_{j,s+1}^{\pm}=q^{\pm A_{i,j}}x_{j,s}^{\pm}x_{i,r+1}^{\pm}-x_{j,s+1}^{\pm}x_{i,r}^{\pm}\quad\text{for $A_{i,j}\neq0$},\label{SQASrelation21}\\
[x_{i,r}^{\pm},x_{j,s}^{\pm}]=0\quad \text{for $A_{i,j}=0$},\label{SQASrelation31}\\
\phi_{i,\mp l_{i}^{\pm}}^{\pm}x_{j,s}^{+}=q^{\pm A_{i,j}}x_{j,s}^{+}\phi_{i,\mp l_{i}^{\pm}}^{\pm},\quad [h_{i,r},x_{j,s}^{+}]=\frac{[rs_{i}A_{i,j}]_{q_{i}}}{r}x_{j,s+r}^{+}\quad\text{for $r\neq0$},\label{SQASrelation41}\\
\phi_{i,\mp l_{i}^{\pm}}^{\pm}x_{j,s}^{-}=q^{\mp A_{i,j}}x_{j,s}^{-}\phi_{i,\mp l_{i}^{\pm}}^{\pm},\quad [h_{i,r},x_{j,s}^{-}]=-\frac{[rs_{i}A_{i,j}]_{q_{i}}}{r}x_{j,s+r}^{-}\quad\text{for $r\neq0$}.\label{SQASrelation42}\\
[x_{i,r}^{+},x_{j,s}^{-}]=\frac{\delta_{ij}}{q_{i}-q_{i}^{-1}}(\phi_{i,r+s}^{+}-\phi_{i,r+s}^{-}),\label{SQASrelation51}\\
\mathrm{Sym}_{k_1,k_2}[x_{i,k_1}^{\pm},[x_{i,k_2}^{\pm},x_{i\pm1,r}^{\pm}]_{q}]_{q^{-1}}=0\quad\text{for $i\in I\setminus\{m\}$},\label{SQASrelation61}\\
\mathrm{Sym}_{k_1,k_2}[x_{m,k_1}^{\pm},[x_{m+1,r_1}^{\pm},[x_{m,k_2}^{\pm},x_{m-1,r_2}^{\pm}]_{q}]_{q^{-1}}]=0\quad\text{if $m,n>1$}.\label{SQASrelation71}
\end{gather}

Let $\mathcal{U}^{\mu_{+},\mu_{-}}_{q}(\widehat{\mathfrak{sl}}_{m|n})^{>}$, $\mathcal{U}^{\mu_{+},\mu_{-}}_{q}(\widehat{\mathfrak{sl}}_{m|n})^{<}$, and $\mathcal{U}^{\mu_{+},\mu_{-}}_{q}(\widehat{\mathfrak{sl}}_{m|n})^{0}$ be the subalgebra generated by $\{x_{i,k}^{+}\}_{i\in I, k\in\mathbb{Z}}$, $\{x_{i,k}^{-}\}_{i\in I, k\in\mathbb{Z}}$, and $\{(\phi_{i,\mp l_{i}^{\pm}}^{\pm})^{-1}, \phi_{i,\pm m_{i}^{\pm}}^{\pm}\}_{i\in I, m_{i}^{\pm}\in\mathbb{Z}_{\geq -l_{i}^{\pm}}}$, respectively.

The two shift parameters can be redistributed by reindexing the currents.
\begin{remark}
For type A with the standard parity sequence, let $m,n\geq1$ with $m\ne n$ and let $\mu_{+},\mu_{-}\in\Lambda$.
Reindexing the homogeneous current relations gives an isomorphism
\[
\mathcal{U}^{\mu_{+},\mu_{-}}_{q}(\widehat{\mathfrak{sl}}_{m|n})
\xrightarrow{\ \sim\ }
\mathcal{U}^{0,\mu_{+}+\mu_{-}}_{q}(\widehat{\mathfrak{sl}}_{m|n}),
\]
defined by
\[
x_{i}^{-}(z)\mapsto x_{i}^{-}(z),\qquad
x_{i}^{+}(z)\mapsto z^{l_{i}^{+}}x_{i}^{+}(z),\qquad
\phi_{i}^{\pm}(z)\mapsto z^{l_{i}^{+}}\phi_{i}^{\pm}(z).
\]
Thus, as an abstract current algebra, the isomorphism class depends only on the total shift $\mu=\mu_{+}+\mu_{-}$; we write $\mathcal U_q^{\mu}:=\mathcal U_q^{0,\mu}$.  We nevertheless retain the two-sided notation below because the positive and negative Cartan valuations are separately visible in the shifted RTT realization and in the corresponding coproducts.
\end{remark}

We next record the central reduction that recovers the ordinary zero-shift algebra.
\begin{proposition}\label{prop:central-leading-reduction}
For type A with the standard parity sequence, let $m,n\geq1$ with $m\ne n$ and let $\mu_{+},\mu_{-}\in\Lambda$.
For every $i\in I$ the element
$c_i:=\phi_{i,-l_i^{+}}^{+}\phi_{i,l_i^{-}}^{-}$
and its inverse are central in $\mathcal U_q^{\mu_{+},\mu_{-}}(\widehat{\mathfrak{sl}}_{m|n})$.  For the zero shift, let $J_0$ be the ideal of $\mathcal U_q^{0,0}$ generated by $c_i-1$, $i\in I$.  Then
\[
\mathcal U_q^{0,0}(\widehat{\mathfrak{sl}}_{m|n})/J_0
\simeq
\mathcal U_q(\widehat{\mathfrak{sl}}_{m|n}).
\]
\end{proposition}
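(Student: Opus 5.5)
To prove centrality of $c_i$, it suffices to check that $c_i$ commutes with all generators of $\mathcal U_q^{\mu_+,\mu_-}$. Commutation with the Cartan generators is immediate from \eqref{SQASrelation11}.

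For the root generators, use the leading-coefficient parts of \eqref{SQASrelation41} and \eqref{SQASrelation42}, both extracted from \eqref{SQASrelation4} as the leading coefficient in $z$. These give
\[
\phi_{i,-l_i^+}^{+}\,x_{j,s}^{\pm}=q^{\pm A_{i,j}}x_{j,s}^{\pm}\,\phi_{i,-l_i^+}^{+},\qquad
\phi_{i,l_i^-}^{-}\,x_{j,s}^{\pm}=q^{\mp A_{i,j}}x_{j,s}^{\pm}\,\phi_{i,l_i^-}^{-}.
\]
The second identity follows by reading \eqref{SQASrelation4} for $\epsilon=-$ at the lowest power $z^{-l_i^-}$. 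For $\phi^-$ the leading term of $(z-q^{\pm A}w)$ is instead $-q^{\pm A}w$ against $-w$, which produces the inverse factor. The displayed relation \eqref{SQASrelation41} should therefore be read with sign $\pm$ attached to $\phi^\pm$; I would rederive this directly from \eqref{SQASrelation4} rather than rely on the compressed notation.

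Multiplying the two identities, the factors $q^{\pm A_{i,j}}$ and $q^{\mp A_{i,j}}$ cancel, so $c_i$ commutes with every $x_{j,s}^\pm$. Since $c_i$ is even, no parity signs arise. Centrality of $c_i^{-1}$ then follows, because the inverse of a central invertible element is central.

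For the isomorphism, specialize to $\mu_\pm=0$. Define
\[
\Psi:\mathcal U_q^{0,0}/J_0\to\mathcal U_q(\widehat{\mathfrak{sl}}_{m|n})
\]
on generators by $x_{i,k}^\pm\mapsto x_{i,k}^\pm$ and $\phi^\pm_{i,\pm r}\mapsto\phi^\pm_{i,\pm r}$. In particular this sends $\phi_{i,0}^\pm\mapsto k_i^{\pm1}$ and $(\phi^{\pm}_{i,0})^{-1}\mapsto k_i^{\mp1}$.

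Define $\Phi$ in the opposite direction by
\[
k_i^{\pm1}\mapsto\phi_{i,0}^{\pm},\qquad h_{i,r}\mapsto h_{i,r},\qquad x^\pm_{i,k}\mapsto x^\pm_{i,k},
\]
with $h_{i,r}$ defined through the exponential reparametrization recalled after Definition~\ref{Def: shifted quantum affine superalgebra}. Note that $k_i^{-1}\mapsto\phi^-_{i,0}=(\phi^+_{i,0})^{-1}$ modulo $J_0$.

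To verify that $\Psi$ is well defined, compare relations. At zero shift, \eqref{SQASrelation11}--\eqref{SQASrelation71} coincide with \eqref{QTSArelation1}--\eqref{QTSArelation9}. The only mismatch is that Definition~\ref{Def:Drinfeld realization for quantum affine superalgebra} forces $\phi_{i,0}^+\phi_{i,0}^-=k_ik_i^{-1}=1$, which is exactly the relation imposed by $J_0$. Relation \eqref{SQASrelation51} at zero shift matches \eqref{QTSArelation5}, with the same Cartan supports. The $h$-$x$ relations \eqref{QTSArelation4} and \eqref{SQASrelation41}--\eqref{SQASrelation42} are the same formulas, and the standard equivalence between \eqref{SQASrelation4} and its mode form (already asserted in the paper) covers the passage between currents and modes. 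Conversely, $\Phi$ respects \eqref{QTSArelation1}--\eqref{QTSArelation9}, since each of these holds in the quotient.

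The composites $\Psi\Phi$ and $\Phi\Psi$ are the identity on generators, so $\Psi$ is an isomorphism.

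The main point to be careful about is bookkeeping. The Drinfeld presentation of Definition~\ref{Def:Drinfeld realization for quantum affine superalgebra} has a single $k_i$ whose inverse plays the role of $\phi_{i,0}^-$. The shifted presentation has two independent invertible leading coefficients $\phi^\pm_{i,0}$, each with its own formal inverse. These are identified precisely by the relation $c_i=1$.
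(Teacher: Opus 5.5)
Your proposal is correct and follows the paper's own argument. Centrality comes from the leading coefficients $\phi^+_{i,-l_i^+}$ and $\phi^-_{i,l_i^-}$ acting on root modes by the mutually inverse factors $q^{\pm A_{i,j}}$, and at zero shift the relation $c_i=1$ identifies $\phi^-_{i,0}$ with $(\phi^+_{i,0})^{-1}$, so that $k_i=\phi^+_{i,0}$ turns the shifted relations into Definition~\ref{Def:Drinfeld realization for quantum affine superalgebra}; your explicit inverse maps $\Psi,\Phi$ and the lowest-power extraction for $\phi^-$ just spell out details the paper leaves implicit.
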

\begin{proof}
The leading coefficient $\phi_{i,-l_i^{+}}^{+}$ acts on $x_{j,r}^{\pm}$ by the factor $q^{\pm A_{i,j}}$, whereas $\phi_{i,l_i^{-}}^{-}$ acts by the inverse factor.  Their product therefore commutes with every root current and every Cartan generator.  At zero shift the relations $c_i=1$ identify $\phi_{i,0}^{-}$ with $(\phi_{i,0}^{+})^{-1}$.  Setting $k_i=\phi_{i,0}^{+}$ then turns the defining relations into the Drinfeld presentation in Definition~\ref{Def:Drinfeld realization for quantum affine superalgebra}.  This is the super analogue of the central reduction in \cite[Remark~5.3]{FT2019}.
\end{proof}

The triangular decomposition uses the following stability statement.
\begin{lemma}\label{lem:shifted-relation-stability}
For type A with the standard parity sequence, let $m,n\geq1$ with $m\ne n$ and $\mu_{+},\mu_{-}\in\Lambda$.
Let $\widetilde{\mathcal U}^{\mu_{+},\mu_{-}}$ be the auxiliary current superalgebra obtained from Definition~\ref{Def: shifted quantum affine superalgebra} by imposing the Cartan relations, the Cartan--root relations, and the mixed commutator \eqref{SQASrelation5}, but not yet imposing the same-sign quadratic and Serre relations.  Its positive and negative root-current subalgebras are therefore free superalgebras on the modes $x_{i,r}^{+}$ and $x_{i,r}^{-}$.

Let $I^{\pm}$ be the sums of the subspaces spanned by the coefficients of the same-sign quadratic relations \eqref{SQASrelation2}--\eqref{SQASrelation3}.  After quotienting by the two-sided ideals generated by $I^{+}+I^{-}$, let $J^{\pm}$ be the sums of the subspaces spanned by the coefficients of the ordinary Serre relations \eqref{SQASrelation6}.  After quotienting by those relations as well, let $O^{\pm}$ be the subspaces spanned by the coefficients of the odd Serre relation \eqref{SQASrelation7} when $m,n>1$, and set $O^{\pm}=0$ when $m=1$ or $n=1$.  Then, in the corresponding successive quotients,
\[
[I^{\pm},x_{k,r}^{\mp}]=0,\qquad
[J^{\pm},x_{k,r}^{\mp}]=0,\qquad
[O^{\pm},x_{k,r}^{\mp}]=0
\]
for every $k\in I$ and $r\in\mathbb Z$.  Moreover each of the spaces $I^{\pm}$, $J^{\pm}$, and $O^{\pm}$ is stable under every adjoint operator $\operatorname{ad}(h_{u,s})$, $u\in I$, $s\ne0$.
\end{lemma}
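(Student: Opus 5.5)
We argue throughout with generating currents rather than modes, in the spirit of the classical argument for the triangular decomposition of quantum affine algebras (Hernandez, Tsymbaliuk).  The underlying observation is that the only relation linking $x^+$ and $x^-$ in $\widetilde{\mathcal U}^{\mu_+,\mu_-}$ is the mixed commutator \eqref{SQASrelation5}, and the shift changes only which coefficients of $\phi_i^\pm(z)$ vanish.  The Cartan--root relation \eqref{SQASrelation4} is the same for the shifted and unshifted currents.  Every identity below is therefore an identity of formal series in which $\phi_i^\pm(z)$ enter only through \eqref{SQASrelation4}.  Such an identity holds in the shifted setting exactly when it holds in the unshifted one, with no extra work coming from the modified Cartan supports.

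\textbf{Stability under $\operatorname{ad}(h_{u,s})$.}  We handle this first, since it is the easiest step.  By \eqref{SQASrelation41}--\eqref{SQASrelation42}, $\operatorname{ad}(h_{u,s})$ is an even derivation of $\widetilde{\mathcal U}^{\mu_+,\mu_-}$.  On each current it acts by
\[
x_j^\pm(w)\ \longmapsto\ \pm\frac{[s s_u A_{u,j}]_{q_u}}{s}\,w^{s}x_j^\pm(w).
\]
Applying this derivation to a relation in several variables $z_1,z_2,y,w$ multiplies each monomial by $\sum c_k z_k^s$, where the constant $c_k$ depends only on the color of the $k$-th current.  Each of the three relation types is homogeneous in colors: the relations \eqref{SQASrelation2}--\eqref{SQASrelation3} are, and the Serre relations \eqref{SQASrelation6}--\eqref{SQASrelation7} are in addition symmetric in the two variables of equal color.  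Consequently the multiplier is a polynomial that commutes with the relation and, in the Serre case, is symmetric in the symmetrized variables.  Multiplying a generating relation by a Laurent polynomial yields again a combination of its coefficients.  Hence $I^\pm$, $J^\pm$ and $O^\pm$ are each stable under $\operatorname{ad}(h_{u,s})$, and this stability passes to the successive quotients.

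\textbf{Quadratic relations.}  For $[I^\pm,x^\mp_{k,r}]=0$, we take the generating series
\[
P(z,w)=(z-q^{\pm A_{ij}}w)\,x_i^\pm(z)x_j^\pm(w)-(q^{\pm A_{ij}}z-w)\,x_j^\pm(w)x_i^\pm(z)
\]
and commute it with $x_k^\mp(v)$ using the super-Leibniz rule.  By \eqref{SQASrelation5} each commutator produces a term $\delta(z/v)\phi_i^{\pm}(z)$ standing next to the other current.  We then move $\phi$ past $x_j^\pm(w)$ with \eqref{SQASrelation4}, which after substituting $z=v$ gives the rational factor $(q^{\pm A_{ij}}z-w)/(z-q^{\pm A_{ij}}w)$.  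The prefactors of $P$ make the two resulting contributions cancel identically; this is precisely the computation in the non-super case.  The only new ingredient is the super sign $(-1)^{|\alpha_i||\alpha_k|}$ in the Leibniz rule.  For the isotropic node with $i=j=m$ (so $A_{mm}=0$ and the relation is an anticommutator) we check this sign case separately.  The resulting identity is an equation of formal distributions multiplied by $\delta$-functions, and we extract its coefficients modewise.

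\textbf{Serre relations.}  For $J^\pm$ and $O^\pm$ we work modulo the earlier relations, as the statement allows.  After commuting with $x_k^\mp(v)$, only $k\in\{i,i\pm1\}$ (respectively $k\in\{m-1,m,m+1\}$) contribute.  Each surviving term is a $\delta$-function times a $\phi$ times an expression in fewer root currents.  Using \eqref{SQASrelation4} we move $\phi$ to one side, and then reduce the remaining cubic or quadratic expression to zero using the quadratic relations already imposed.  The main obstacle is the degree-four odd Serre relation \eqref{SQASrelation7}.  There, commuting with $x_m^\mp(v)$ produces two terms from the two symmetrized copies of $x_m$, each with a different rational factor coming from $\phi_m$.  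We then need to show that their sum lies in the ideal generated by the quadratic relations \eqref{SQASrelation2}--\eqref{SQASrelation3}, which include the anticommutation of $x_m^\pm$ with itself.  For this step we will first try to rewrite the nested $q$-brackets using the identities \eqref{eqs:three terms}, so that the pair $x_m(z_1),x_m(z_2)$ appears adjacent.  The symmetrization should then combine the two $\phi$-factors into a multiple of the quadratic relation for $x_m$ with $x_{m\pm1}$.  We expect this to be exactly the zero-shift computation underlying the triangular decomposition of $\mathcal U_q(\widehat{\mathfrak{sl}}_{m|n})$, and we would cross-check it against \cite{Zh2014}.
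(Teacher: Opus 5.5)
Your framework is the paper's: the shift only changes which coefficients of $\phi_i^{\pm}(z)$ vanish, \eqref{SQASrelation4}--\eqref{SQASrelation5} are untouched, and $\operatorname{ad}(h_{u,s})$-stability follows from the color-homogeneous multiplier $\sum_k c_k z_k^{s}$, as you argue. Your current-level computation for $I^{\pm}$ is also fine. Because you commute with the whole current $x_k^{\mp}(v)$, it gives all modes $r$ at once, replacing the paper's zero-mode-then-propagate step.

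The gap is the step the paper singles out as delicate, $[O^{+},x_{m,r}^{-}]=0$. You leave it as an expectation, and the mechanism you sketch does not apply to it. After supercommuting the quartic element with $x_m^{-}(u)$, one of the two copies of $x_m^{+}$ has been replaced by $\phi_m^{\pm}$, so each $\delta$-component contains a \emph{single} $x_m^{+}$. There is therefore no pair $x_m^{+}(z_1),x_m^{+}(z_2)$ left to make adjacent, and the anticommutation of $x_m^{+}$ with itself plays no role.

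What is needed (Appendix~\ref{app:odd-serre-stability}) is to collect the $\delta(z_2/u)$-terms. Put $E=x_m^{+}$, $L=x_{m-1}^{+}$, $R=x_{m+1}^{+}$. The term in which $E(z_2)$ is the inner current of one symmetrized summand pairs with the term in which it is the outer current of the other, giving
\[
[E(z_1),[R(y),[\phi_m^{\epsilon}(z_2),L(w)]_q]_{q^{-1}}]-[\phi_m^{\epsilon}(z_2),[R(y),[E(z_1),L(w)]_q]_{q^{-1}}].
\]
Normal-order this to $L(w)R(y)E(z_1)\phi_m^{\epsilon}(z_2)$. This uses the exchange factors of $\phi_m^{\epsilon}$ and of $E$ with \emph{both} neighbours, together with $[L,R]=0$ and $[\phi_m^{\epsilon},E]=0$. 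The resulting scalar vanishes because
\[
\bigl(q(A+B)-(q^2+1)AB\bigr)/(1-AB)=(y-q^2w)/(y-w)
\]
is independent of $s$ for $A=\frac{q^{-1}s-w}{s-q^{-1}w}$ and $B=\frac{qs-y}{s-qy}$. One applies this once to the $\phi_m$-factors and once to the $E$-factors.

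Your adjacency/anticommutation idea is instead what disposes of $k=m\pm1$. For $k=m-1$, say, replace $L(w)$ by $\phi_{m-1}(w)$ and normal-order. Once $E(z_2)E(z_1)=-E(z_1)E(z_2)$ is used, both symmetrized summands become multiples of $R(y)E(z_1)E(z_2)\phi_{m-1}(w)$, and they cancel because the relevant ratio of exchange factors is independent of $z$.

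If you would rather cite \cite{Zh2014} than compute, you are at the level of the paper's main-text argument. Those lemmas concern only the zero modes $x_{k,0}^{\mp}$, so you must add the propagation to all $r$ through $\operatorname{ad}(h_{u,\pm1})$. When $k=m$ you need an even neighbour $u$, since $A_{m,m}=0$. Your stability paragraph makes this propagation available.
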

\begin{proof}
This is the shifted version of Lemmas~3.7--3.9 in \cite{Zh2014}.  We spell out why the shift creates no extra boundary terms.  Zhang first proves the three commutator statements for the opposite zero modes $x_{k,0}^{\mp}$.  The computations use only the mixed commutator
\[
[x_{i,r}^{+},x_{j,s}^{-}]
=\frac{\delta_{ij}}{q_i-q_i^{-1}}
(\phi_{i,r+s}^{+}-\phi_{i,r+s}^{-})
\]
and the coefficient identities obtained from the Cartan--root current relation \eqref{SQASrelation4}; no use is made of the unshifted normalization $\phi_{i,0}^{-}=(\phi_{i,0}^{+})^{-1}$.  In the shifted quantum affine superalgebra these two current identities are unchanged.  We adopt the convention that a coefficient $\phi_{i,a}^{\pm}$ outside the prescribed support of $\phi_i^{\pm}(z)$ is zero.  Therefore every coefficient identity occurring in the proofs of \cite[Lemmas~3.7--3.9]{Zh2014} remains valid at the boundary of the shifted support.  The invertibility of the first nonzero Cartan coefficients is not used in these cancellations.  Hence
\[
[I^{\pm},x_{k,0}^{\mp}]=0,
\qquad
[J^{\pm},x_{k,0}^{\mp}]=0,
\qquad
[O^{\pm},x_{k,0}^{\mp}]=0
\]
in the respective successive quotients.  The most delicate identity, present only when $m,n>1$, is the odd-node commutator
$[O^{+},x_{m,r}^{-}]=0$.  For completeness, Appendix~\ref{app:odd-serre-stability}
proves this identity directly at the level of currents for every $r\in\mathbb Z$; in particular,
the proof there does not appeal to the unshifted normalization of the Cartan currents.  When $m=1$ or $n=1$, this third stage is absent.

The stability under $\operatorname{ad}(h_{u,s})$ follows directly from
\eqref{SQASrelation41}--\eqref{SQASrelation42}: applying $\operatorname{ad}(h_{u,s})$ to a coefficient of any one of the quadratic or Serre current relations gives a linear combination of coefficients of the same relation with shifted mode indices.  This again does not depend on the support of the Cartan currents.

It remains only to pass from the opposite zero modes to arbitrary opposite modes.  For a fixed simple root $k$, choose an even Cartan direction $u$ with $A_{u,k}\ne0$: one may take $u=k$ when $k\ne m$, and a neighboring even vertex when $k=m$.  Relation \eqref{SQASrelation41} or \eqref{SQASrelation42} then expresses $x_{k,r\pm1}^{\mp}$ as a nonzero scalar multiple of $[h_{u,\pm1},x_{k,r}^{\mp}]$.  The super Jacobi identity, together with the $\operatorname{ad}(h_{u,\pm1})$-stability just proved, inductively propagates the zero-mode commutator identities to all $r\in\mathbb Z$.
\end{proof}

We can now prove the triangular decomposition.
\begin{theorem}\label{thm:shifted-triangular}
For type A with the standard parity sequence, let $m,n\geq1$ with $m\ne n$.
For arbitrary $\mu_{+},\mu_{-}\in\Lambda$, multiplication induces an isomorphism of $\mathbb C(q)$-vector superspaces
\[
\mathcal{U}^{\mu_{+},\mu_{-}}_{q}(\widehat{\mathfrak{sl}}_{m|n})^{<}
\otimes
\mathcal{U}^{\mu_{+},\mu_{-}}_{q}(\widehat{\mathfrak{sl}}_{m|n})^{0}
\otimes
\mathcal{U}^{\mu_{+},\mu_{-}}_{q}(\widehat{\mathfrak{sl}}_{m|n})^{>}
\xrightarrow{\ \sim\ }
\mathcal{U}^{\mu_{+},\mu_{-}}_{q}(\widehat{\mathfrak{sl}}_{m|n}).
\]
Moreover, the positive and negative factors are independent of the shift: they are presented by the positive and negative parts of \eqref{SQASrelation2}--\eqref{SQASrelation3} and \eqref{SQASrelation6}--\eqref{SQASrelation7}, respectively.  The Cartan factor is the commutative algebra generated by the two invertible leading coefficients $\phi_{i,-l_i^{+}}^{+}$, $\phi_{i,l_i^{-}}^{-}$ and by $h_{i,\pm r}$, $r>0$.
\end{theorem}
\begin{proof}
We follow the successive-quotient argument of \cite[Theorem~3.3]{Zh2014}.  In the auxiliary algebra $\widetilde{\mathcal U}^{\mu_{+},\mu_{-}}$, coefficientwise use of the mixed commutator moves every negative root mode to the left of every positive root mode, while the Cartan--root relations move all Cartan generators between them.  Thus multiplication gives a vector superspace isomorphism
\[
\widetilde U^{-}\otimes\widetilde U^{0}\otimes\widetilde U^{+}
\xrightarrow{\ \sim\ }
\widetilde{\mathcal U}^{\mu_{+},\mu_{-}}.
\]
The shift only changes the allowed Cartan coefficients and hence does not affect this normal-ordering argument.

We now impose the same-sign relations in three stages.  By Lemma~\ref{lem:shifted-relation-stability}, the two-sided ideal generated by $I^{+}+I^{-}$ has the normal-ordered form
\[
\widetilde U^{-}I^{-}\widetilde U^{-}\widetilde U^{0}\widetilde U^{+}
+
\widetilde U^{-}\widetilde U^{0}\widetilde U^{+}I^{+}\widetilde U^{+}.
\]
Consequently the quotient still has triangular decomposition, with its positive and negative factors obtained by quotienting the free halves by the quadratic relations only.  In this quotient the same lemma applies to $J^{\pm}$, and, when $m,n>1$, after imposing the ordinary Serre relations it applies once more to $O^{\pm}$; for $m=1$ or $n=1$ there is no third-stage relation to impose.  Therefore the full same-sign relation ideal has the corresponding normal-ordered form with the complete positive and negative relation ideals on the two sides.  Quotienting yields the triangular decomposition stated in Theorem~\ref{thm:shifted-triangular}. Because at no stage is a relation supported entirely in one factor created from the opposite factor, the positive and negative factors have exactly their stated current presentations, and the Cartan factor remains the indicated commutative Laurent polynomial algebra.
\end{proof}

\begin{corollary}\label{cor:shifted-pbw}
For type A with the standard parity sequence, assume $m,n>1$ and $m\ne n$, and let $\mu_{+},\mu_{-}\in\Lambda$.  Fix PBW--Drinfeld (PBWD) bases $\mathcal B^{>}$ and $\mathcal B^{<}$ of the positive and negative type A quantum loop superalgebras as in \cite[Theorem~5.7 and Remark~5.10]{Tsy2021}, and let $\mathcal B^{0}_{\mu_{+},\mu_{-}}$ be the monomial basis in the commuting generators
\[
(\phi_{i,-l_i^{+}}^{+})^{\pm1},\quad
(\phi_{i,l_i^{-}}^{-})^{\pm1},\quad
h_{i,\pm r}\quad (i\in I,\ r>0).
\]
Then the ordered products $b_{-}b_{0}b_{+}$ with $b_{-}\in\mathcal B^{<}$, $b_{+}\in\mathcal B^{>}$, and $b_0\in\mathcal B^{0}_{\mu_{+},\mu_{-}}$ form a PBWD basis of $\mathcal U_q^{\mu_{+},\mu_{-}}(\widehat{\mathfrak{sl}}_{m|n})$.
\end{corollary}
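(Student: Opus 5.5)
The plan is to combine Theorem~\ref{thm:shifted-triangular} with the PBWD theorem of \cite{Tsy2021}, applied separately to each of the three tensor factors. Multiplication $U^{<}\otimes U^{0}\otimes U^{>}\to\mathcal U_q^{\mu_+,\mu_-}$ is a linear isomorphism. Hence if $\mathcal B^{<}$, $\mathcal B^{0}_{\mu_+,\mu_-}$ and $\mathcal B^{>}$ are bases of the respective factors, the elementary tensors $b_-\otimes b_0\otimes b_+$ form a basis of the tensor product. Their images $b_-b_0b_+$ then form a basis of the algebra. So the whole argument reduces to identifying a basis of each factor.

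For the positive and negative factors, Theorem~\ref{thm:shifted-triangular} says that $(\mathcal U_q^{\mu_+,\mu_-})^{>}$ is presented by the modes $x_{i,r}^+$ subject only to the positive parts of the quadratic relations \eqref{SQASrelation2}--\eqref{SQASrelation3} and the Serre relations \eqref{SQASrelation6}--\eqref{SQASrelation7}, and similarly for $(\mathcal U_q^{\mu_+,\mu_-})^{<}$. This presentation does not involve the shift. It coincides with the current presentation of the positive (respectively negative) half of the unshifted quantum loop superalgebra $U_q(L\mathfrak{sl}_{m|n})$ with standard parity. That is exactly the algebra whose PBWD basis is constructed in \cite[Theorem~5.7 and Remark~5.10]{Tsy2021}.

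The main point to check is that Tsymbaliuk's half is defined by the same relations, and not by a presentation carrying additional relations or living over a different base ring. Tsymbaliuk works for $m,n>1$ and identifies the half, defined by the quadratic and (including the degree-four odd) Serre relations, with its shuffle image. I would match normalizations: the quadratic relations, the ordinary Serre relations, and the odd relation \eqref{SQASrelation71}, possibly after a rescaling $x_{i,r}^\pm\mapsto c_i x_{i,r}^\pm$ or a substitution $q\mapsto q^{-1}$ that preserves the ideals. This is where the hypothesis $m,n>1$ is used, since then all relations including \eqref{SQASrelation7} are present. If needed, I would pass from the base field $\mathbb C(q)$ to specialization at the fixed non-root-of-unity $q$ by the usual argument in \cite{Tsy2021}. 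That argument shows the PBWD monomials remain a basis of the specialized algebra, because the integral form is free.

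For the Cartan factor, Theorem~\ref{thm:shifted-triangular} identifies $(\mathcal U_q^{\mu_+,\mu_-})^{0}$ with the commutative algebra generated by $(\phi^+_{i,-l_i^+})^{\pm1}$, $(\phi^-_{i,l_i^-})^{\pm1}$ and $h_{i,\pm r}$, with no relations beyond commutativity and invertibility. The fact that there are no further relations is exactly the content of the statement that the Cartan factor is the indicated Laurent polynomial algebra: in the auxiliary algebra $\widetilde U^{0}$ these generators are free commuting, and the quotient steps never touch $\widetilde U^{0}$. Its monomial basis is therefore $\mathcal B^{0}_{\mu_+,\mu_-}$. Combining the three bases through the triangular isomorphism completes the argument.
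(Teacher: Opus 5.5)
Your proposal is correct and follows the paper's route. The corollary is read off directly from the triangular decomposition of Theorem~\ref{thm:shifted-triangular}: the shift-independent positive and negative factors are identified with the halves of the unshifted quantum loop superalgebra carrying the PBWD bases of \cite{Tsy2021}, and the Cartan factor is the Laurent polynomial algebra whose monomials form $\mathcal B^{0}_{\mu_+,\mu_-}$. Your specialization aside is not needed for the statement as the paper formulates it, since the paper treats these throughout as $\mathbb C(q)$-vector superspaces.
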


\begin{remark}
After matching the shift parameters and current conventions, the approach of Negu\cb{t} and Tsymbaliuk \cite[Remarks~2.6 and~2.7]{NT2026} via shifted Drinfeld super-doubles and shuffle realizations provides alternative proofs of the triangular decomposition in Theorem~\ref{thm:shifted-triangular} and the PBWD basis statement in Corollary~\ref{cor:shifted-pbw}.
\end{remark}

Set $\mu=\mu_{+}+\mu_{-}$.  We call $\mathcal{U}^{\mu_{+},\mu_{-}}_{q}(\widehat{\mathfrak{sl}}_{m|n})$ dominant (resp. antidominant) if $\mu\in\Lambda^{+}$ (resp. $\mu\in\Lambda^{-}$). For convenience, we write $\alpha_{i}(\mu):=(\alpha_{i}\mid \mu)$.
Thus $\mu\in\Lambda^{+}$ (respectively, $\mu\in\Lambda^{-}$) precisely when $l_i^{+}+l_i^{-}=\alpha_i(\mu)$ is nonnegative (respectively, nonpositive) for every $i\in I$.

\section{Levendorskii-type finite presentation of \texorpdfstring{$\mathcal{U}^{0,\mu}_{q}(\widehat{\mathfrak{sl}}_{m|n})$ for $\mu\in\Lambda^{-}$}{Uq,0,mu(hat sl m,n)}}\label{section: Levendorskii type presentation of shifted quantum affine superalgebra}
Throughout this section and Appendix~\ref{app:finite-presentation-proof}, assume $m,n>1$ and $m\ne n$.  We obtain a finite presentation by retaining finitely many boundary root-current modes, the leading Cartan coefficients, and $h_{i,\pm1}$, and recovering all remaining modes recursively.  At the isotropic node $m$, where $A_{m,m}=0$, the recursion uses the neighboring even modes $h_{m-1,\pm r}$.

\begin{definition}
For type A with the standard parity sequence, assume $m,n>1$ and $m\ne n$.  Let $\mu_{1},\mu_{2}\in\Lambda^{-}$, set $\mu=\mu_{1}+\mu_{2}$, and write
\[
l_{a,i}:=\alpha_i(\mu_a)\quad(a=1,2),\qquad
l_i:=l_{1,i}+l_{2,i}=\alpha_i(\mu).
\]
Here the subscript $a=1,2$ labels the two antidominant summands of the total shift; it is unrelated to the superscript $\pm$ in the two-sided shift notation $l_i^{\pm}$ of the preceding section.  The superalgebra $\mathcal{E}^{\mu_{1},\mu_{2}}_{0}$ is the associative $\mathbb{C}(q)$-superalgebra with generators $x_{i,r}^{+}$, $x_{i,s}^{-}$, $(\phi_{i,0}^{+})^{\pm1}$, $(\phi_{i,l_{i}}^{-})^{\pm1}$, $h_{i,\pm1}$, $i\in I$, $r\in[l_{2,i}-1,0]$, $s\in[l_{1,i},1]$, subject to the following defining relations:
\begin{gather}
\begin{split}\label{SQASrelation111}
(\phi_{i,0}^{+})^{\pm1}, (\phi_{i,l_{i}}^{-})^{\pm1}, h_{i,\pm1}, i\in I, \text{ commute pairwise},\\
(\phi_{i,0}^{+})^{\pm1}(\phi_{i,0}^{+})^{\mp1}=(\phi_{i,l_{i}}^{-})^{\pm1}(\phi_{i,l_{i}}^{-})^{\mp1}=1, 
\end{split}
\end{gather}
\begin{gather}
x_{i,r+1}^{\pm}x_{j,s}^{\pm}-q^{\pm A_{i,j}}x_{i,r}^{\pm}x_{j,s+1}^{\pm}=q^{\pm A_{i,j}}x_{j,s}^{\pm}x_{i,r+1}^{\pm}-x_{j,s+1}^{\pm}x_{i,r}^{\pm}\quad\text{for $A_{i,j}\neq0$},\label{SQASrelation221}
\end{gather}
\begin{gather}
[x_{i,r}^{\pm},x_{j,s}^{\pm}]=0\quad\text{for $A_{i,j}=0$},\label{SQASrelation331}
\end{gather}
\begin{gather}
\phi_{i,0}^{+}x_{j,r}^{+}=q^{A_{i,j}}x_{j,r}^{+}\phi_{i,0}^{+},\quad \phi_{i,l_{i}}^{-}x_{j,r}^{+}=q^{-A_{i,j}}x_{j,r}^{+}\phi_{i,l_{i}}^{-},\quad [h_{i,\pm1},x_{j,r}^{+}]=[s_{i}A_{i,j}]_{q_{i}}x_{j,r\pm1}^{+},\label{SQASrelation441}
\end{gather}
\begin{gather}
\phi_{i,0}^{+}x_{j,s}^{-}=q^{-A_{i,j}}x_{j,s}^{-}\phi_{i,0}^{+},\quad \phi_{i,l_{i}}^{-}x_{j,s}^{-}=q^{A_{i,j}}x_{j,s}^{-}\phi_{i,l_{i}}^{-},\quad [h_{i,\pm1},x_{j,s}^{-}]=-[s_{i}A_{i,j}]_{q_{i}}x_{j,s\pm1}^{-},\label{SQASrelation442}
\end{gather}
\begin{gather}
[x_{i,r}^{+},x_{j,s}^{-}]=0\text{ if $i\neq j$ and }
[x_{i,r}^{+},x_{i,s}^{-}]=
\begin{cases}
\phi_{i,0}^{+}h_{i,1}, &\text{if $r+s=1$},\\
\phi_{i,l_{i}}^{-}h_{i,-1}, &\text{if $r+s=l_{i}-1$},\\
\frac{\phi_{i,0}^{+}-\delta_{l_{i},0}\phi_{i,l_{i}}^{-}}{q_{i}-q_{i}^{-1}}, &\text{if $r+s=0$},\\
\frac{\delta_{l_{i},0}\phi_{i,0}^{+}-\phi_{i,l_{i}}^{-}}{q_{i}-q_{i}^{-1}}, &\text{if $r+s=l_{i}$},\\
0, &\text{if $l_{i}<r+s<0$},
\end{cases}\label{SQASrelation551}
\end{gather}
\begin{gather}
[x_{i,0}^{\pm},[x_{i,0}^{\pm},x_{i\pm1,0}^{\pm}]_{q}]_{q^{-1}}=0\quad\text{for $i\in I\setminus\{m\}$},\label{SQASrelation661}
\end{gather}
\begin{gather}
[x_{m,0}^{\pm},[x_{m+1,0}^{\pm},[x_{m,0}^{\pm},x_{m-1,0}^{\pm}]_{q}]_{q^{-1}}]=0,\label{SQASrelation771}
\end{gather}
\begin{gather}
[h_{i,1},[x_{i,1}^{-},[h_{i,1},x_{i,0}^{+}]]]=0,\quad [h_{i,-1},[x_{i,l_{2,i}-1}^{+},[h_{i,-1},x_{i,l_{1,i}}^{-}]]]=0\quad\text{for $i\in I\setminus\{m\}$},\label{SQASrelation881}
\end{gather}
\begin{gather}
[h_{m-1,1},[x_{m,1}^{-},[h_{m-1,1},x_{m,0}^{+}]]]=0,\quad [h_{m-1,-1},[x_{m,l_{2,m}-1}^{+},[h_{m-1,-1},x_{m,l_{1,m}}^{-}]]]=0,\label{SQASrelation991}
\end{gather}
for any $i\in I$ and integers $r, s$ for which the above relations are well-defined. The parities of the generators are given by $|x_{i,r}^{+}|=|x_{i,s}^{-}|=|\alpha_{i}|$ and $|(\phi_{i,0}^{+})^{\pm1}|=|(\phi_{i,l_{i}}^{-})^{\pm1}|=|h_{i,\pm1}|=0$.
\end{definition}

Now we define $x_{i,r}^{\pm}$ and $\phi_{i,r}^{\pm}$ recursively as follows. For $i\neq m$, set
\begin{align}
&x_{i,r}^{+}:=[s_{i}A_{i,i}]_{q_{i}}^{-1}
\begin{cases}
[h_{i,1},x_{i,r-1}^{+}]\quad\text{if $r>0$},\\
[h_{i,-1},x_{i,r+1}^{+}]\quad\text{if $r<l_{2,i}-1$},
\end{cases}\label{SQASArelation1}\\&
x_{i,r}^{-}:=-[s_{i}A_{i,i}]_{q_{i}}^{-1}
\begin{cases}
[h_{i,1},x_{i,r-1}^{-}]\quad\text{if $r>1$},\\
[h_{i,-1},x_{i,r+1}^{-}]\quad\text{if $r<l_{1,i}$},
\end{cases}\label{SQASArelation2}
\end{align}
and for $i=m$, set
\begin{align}
&x_{i,r}^{+}:=[s_{i-1}A_{i-1,i}]_{q_{i-1}}^{-1}
\begin{cases}
[h_{i-1,1},x_{i,r-1}^{+}]\quad\text{if $r>0$},\\
[h_{i-1,-1},x_{i,r+1}^{+}]\quad\text{if $r<l_{2,i}-1$},
\end{cases}\label{SQASArelation3}\\&
x_{i,r}^{-}:=-[s_{i-1}A_{i-1,i}]_{q_{i-1}}^{-1}
\begin{cases}
[h_{i-1,1},x_{i,r-1}^{-}]\quad\text{if $r>1$},\\
[h_{i-1,-1},x_{i,r+1}^{-}]\quad\text{if $r<l_{1,i}$}.
\end{cases}\label{SQASArelation4}
\end{align}
For each $i\in I$, set
\begin{align}
\phi_{i,r}^{+}:=(q_{i}-q_{i}^{-1})[x_{i,r-1}^{+},x_{i,1}^{-}]\quad\text{if $r>0$},\label{SQASArelation5}\\
\phi_{i,r}^{-}:=-(q_{i}-q_{i}^{-1})[x_{i,r-l_{1,i}}^{+},x_{i,l_{1,i}}^{-}]\quad\text{if $r<l_{i}$}.\label{SQASArelation6}
\end{align}

\begin{theorem}\label{thm:finite iso}
For type A with the standard parity sequence, assume $m,n>1$ and $m\ne n$.  Let $\mu_{1},\mu_{2}\in\Lambda^{-}$ and set $\mu=\mu_{1}+\mu_{2}$.  There is a unique $\mathbb{C}(q)$-superalgebra isomorphism from $\mathcal{E}^{\mu_{1},\mu_{2}}_{0}$ to $\mathcal{U}^{0,\mu}_{q}(\widehat{\mathfrak{sl}}_{m|n})$ given by
\begin{align*}
x_{i,r}^{\pm}\mapsto x_{i,r}^{\pm},\quad \phi_{i,\pm m_{i}^{\pm}}^{\pm}\mapsto\phi_{i,\pm m_{i}^{\pm}}^{\pm},
\end{align*}
for $i\in I$, $r\in\mathbb{Z}$, $m_{i}^{+}\in\mathbb{Z}_{\geq0}$, and $m_{i}^{-}\in\mathbb{Z}_{\geq -l_{i}}$.
\end{theorem}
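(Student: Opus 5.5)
We will construct two mutually inverse homomorphisms, following the Levendorskii-type argument for shifted quantum affine algebras (Finkelberg--Tsymbaliuk, and the Yangian/affine versions of Guay--Nakajima--Wendlandt type). First define $\Psi:\mathcal E^{\mu_1,\mu_2}_0\to\mathcal U_q^{0,\mu}$ on generators by the identity assignment. Here $h_{i,\pm1}$ is sent to the element defined by the exponential expansion of $\phi_i^\pm(z)$. For the zero-shift positive side, $h_{i,1}=(\phi_{i,0}^+)^{-1}\phi_{i,1}^+/(q_i-q_i^{-1})$. On the negative side, $h_{i,-1}$ is determined by $(\phi^-_{i,l_i})^{-1}\phi^-_{i,l_i-1}$.

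Every defining relation of $\mathcal E^{\mu_1,\mu_2}_0$ is a specialization of relations \eqref{SQASrelation11}--\eqref{SQASrelation71}:
\begin{itemize}
\item the quadratic, Serre and Cartan--root relations restrict directly;
\item \eqref{SQASrelation551} follows from \eqref{SQASrelation51} together with the support conditions $\phi^+_{i,k}=0$ for $k<0$ and $\phi^-_{i,k}=0$ for $k>l_i$ (here the antidominance $l_i\le0$ is used to see that the window $l_i<r+s<0$ gives zero);
\item the cubic relations \eqref{SQASrelation881}--\eqref{SQASrelation991} reduce, after rewriting $[h,x]$ as shifted modes, to $[h_{i,1},\phi^+_{i,2}]=0$-type identities, which are instances of Cartan commutativity.
\end{itemize}
Surjectivity of $\Psi$ is clear from \eqref{SQASrelation41}--\eqref{SQASrelation42}. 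Every mode $x^\pm_{i,r}$ is a nonzero multiple of an iterated $\operatorname{ad}h_{u,\pm1}$ applied to a boundary mode. At $i=m$ we use $u=m-1$, since $[s_{m-1}A_{m-1,m}]_{q_{m-1}}\neq0$ while $A_{m,m}=0$. All $\phi^\pm_{i,r}$ are then recovered from \eqref{SQASrelation51}.

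For the inverse direction, define elements $x^\pm_{i,r},\phi^\pm_{i,r}$ in $\mathcal E^{\mu_1,\mu_2}_0$ by the recursions \eqref{SQASArelation1}--\eqref{SQASArelation6}. Then define $h_{i,\pm r}$ for $r\ge2$ from the $\phi$'s via the exponential. The task is to verify that these satisfy all relations of Definition~\ref{Def: shifted quantum affine superalgebra}. Once that is done, $x\mapsto x$ gives a homomorphism $\mathcal U^{0,\mu}_q\to\mathcal E^{\mu_1,\mu_2}_0$ inverse to $\Psi$ on generators, which also yields uniqueness. We carry out the verification in the following order:
\begin{enumerate}
\item Consistency of the recursions: the two definitions of a mode agree on overlaps, and on boundary modes they reproduce the generators. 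This uses \eqref{SQASrelation441}--\eqref{SQASrelation442} at degree one.
\item $[h_{i,\pm1},x^\pm_{j,r}]=\pm[s_iA_{i,j}]x^\pm_{j,r\pm1}$ for all $i,j,r$, not only for the recursion direction. This is proved by induction using $[h_{i,1},h_{j,1}]=0$.
\item The mixed relation $[x^+_{i,r},x^-_{j,s}]$ depends only on $r+s$ and has the support predicted by the shift.
\item The Cartan elements commute pairwise. The key input here is \eqref{SQASrelation881}--\eqref{SQASrelation991}, which give $[h_{i,1},\phi^+_{i,2}]=0$; propagation then yields $[\phi^\pm_{i,r},\phi^\pm_{j,s}]=0$ and $[\phi^+,\phi^-]=0$.
\item The general $[h_{i,r},x^\pm_{j,s}]$ formula, i.e.\ \eqref{SQASrelation4}, obtained by expanding $[\phi^\pm_{i,k},x^\pm_{j,s}]$ through the mixed relation and the quadratic relations.
\item The quadratic and Serre relations for all modes, obtained from the zero-mode versions \eqref{SQASrelation661}--\eqref{SQASrelation771} by repeatedly applying $\operatorname{ad}h_{u,\pm1}$, as in the stability part of Lemma~\ref{lem:shifted-relation-stability}.
\end{enumerate}
To treat the two halves uniformly, we split along $\mu=\mu_1+\mu_2$. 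The positive half lives near $r\in[l_{2,i}-1,0]$ and the negative half near $s\in[l_{1,i},1]$, so each argument is a mirror of the unshifted rank-one computation. The explicit case computations can be deferred to Appendix~\ref{app:finite-presentation-proof}.

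The main obstacle is step (4)--(6) at the isotropic node $m$. There $h_{m,\pm1}$ acts trivially on $x^\pm_{m,r}$, so one cannot symmetrize in the usual way to obtain $[x^\pm_{m,r+1},x^\pm_{m,s}]+[x^\pm_{m,s+1},x^\pm_{m,r}]=0$. We plan to propagate via $\operatorname{ad}h_{m-1,\pm1}$ and use the relation $[h_{m-1,1},h_{m,1}]=0$ to transfer information. The odd relation $x^\pm_{m,r}x^\pm_{m,s}+x^\pm_{m,s}x^\pm_{m,r}=0$ is then handled by applying $\operatorname{ad}h_{m-1,1}$ to the zero-mode relation $(x^\pm_{m,0})^2=0$. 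For the degree-four odd Serre relation \eqref{SQASrelation771}, $\operatorname{ad}h_{m-1,\pm1}$ shifts the modes of $x_{m-1}$ and $x_m$ simultaneously, so we will also need $\operatorname{ad}h_{m+1,\pm1}$ and $\operatorname{ad}h_{m-2,\pm1}$. These allow independent control of the four loop indices, with the coefficient matrix of the three shifting operators shown invertible since $m,n>1$.
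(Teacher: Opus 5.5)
Your overall architecture matches the paper: the identity map $\mathcal E^{\mu_1,\mu_2}_0\to\mathcal U^{0,\mu}_q$, recursive modes, propagation by Cartan modes with the neighbouring even direction at the isotropic node, and a mirror argument for the negative tail. However, steps (5)--(6) rest on a propagation mechanism that cannot work.

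\textbf{Why degree-one modes do not suffice.} Consider any coefficient family symmetric in two loop indices of the same node:
\[
X^{+}(i;r,s)=[x^+_{i,r+1},x^+_{i,s}]_{q^{A_{i,i}}}+[x^+_{i,s+1},x^+_{i,r}]_{q^{A_{i,i}}},\qquad W^{+}(r,s)=[x^+_{m,r},x^+_{m,s}],
\]
or the $(k_1,k_2)$-dependence of the Serre families after the $T$-shifts are separated. On such a family every $\operatorname{ad}h_{u,\pm1}$ acts through $p_{\pm1}=u^{\pm1}+v^{\pm1}$. At the isotropic node, $h_{m,\pm1}$ acts by zero, and $h_{m-1,\pm1}$ and $h_{m+1,\pm1}$ both act through $p_{\pm1}$. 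Write $e_1=u+v$ and $e_2=uv$. Then $p_1=e_1$ and $p_{-1}=e_1e_2^{-1}$, and neither raises the $e_2$-degree in $\mathbb C[e_1,e_2^{\pm1}]=\mathbb C[u^{\pm1},v^{\pm1}]^{\mathfrak S_2}$. All seed relations coming from the finite generator box have $e_2$-degree $\le 0$. So the relations you can generate never reach $e_2$, i.e.\ the $(1,1)$ coefficient.

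Concretely, $\operatorname{ad}h_{m-1,1}$ applied to $W^+(0,0)=0$ gives $W^+(1,0)=0$, and then only $W^+(2,0)+W^+(1,1)=0$. It never gives $W^+(1,1)=0$, so the odd relation $[x^\pm_{m,r},x^\pm_{m,s}]=0$ does not follow as you claim. The same obstruction blocks the same-node quadratic relation and the $(k_1,k_2)$-dependence in \eqref{SQASrelation6}--\eqref{SQASrelation7}. For the quartic relation, your three operators shift $r_1,r_2$ freely but move $(k_1,k_2)$ only through $P_{\pm1}$, so the four indices are not independently controlled. A minor point: $h_{m-2}$ does not exist when $m=2$; then $m\ne n$ forces $n\ge3$, and you should use $h_{m+2}$ instead.

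\textbf{The missing ingredient.} You need the Levendorskii degree-two Cartan elements
\[
h_{i,2}:=\frac{(\phi^+_{i,0})^{-1}\phi^+_{i,2}}{q_i-q_i^{-1}}-\frac{q_i-q_i^{-1}}{2}h_{i,1}^2 ,
\]
with $h_{i,-2}$ defined analogously from $(\phi^-_{i,l_i})^{-1}\phi^-_{i,l_i-2}$. The real job of \eqref{SQASrelation881}--\eqref{SQASrelation991} is to make $h_{i,\pm2}$ commute with the $h_{j,\pm1}$. One then proves
\[
[h_{i,\pm2},x^+_{i,r}]=\tfrac{[2s_iA_{i,i}]_{q_i}}{2}\,x^+_{i,r\pm2},
\]
together with the analogous formulas for $x^-$ and for $h_{m-1,\pm2}$ acting on $x^\pm_{m,r}$. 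The proof commutes a generator-level quadratic relation, e.g.\ $[x^+_{i,0},x^+_{i,-1}]_{q^{A_{i,i}}}=0$ or $[x^+_{m,0},x^+_{m,-1}]=0$, first with $h_{\bullet,1}$ and then with $x^-_{\bullet,1}$. It uses only the mixed commutators at total degree $0,1,2$. Since $\mathbb C[p_{\pm1},p_{\pm2}]=\mathbb C[u^{\pm1},v^{\pm1}]^{\mathfrak S_2}$, this yields all quadratic relations, and later all Serre relations.

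\textbf{The ordering is also circular.} Step (5) needs the all-mode quadratic relations, which step (6) cannot supply without $h_{\pm2}$. Step (3) cannot precede step (4) either: the fact that $[x^+_{i,r},x^-_{i,s}]$ depends only on $r+s=N+1$ is deduced from $[h_{i,1},\phi^+_{i,N}]=0$, and that identity itself uses the quadratic relations. In the paper the quadratic relations come first. The mixed, Cartan--root and Cartan--Cartan relations are then obtained together in one joint induction on the mode degree, with the negative tail transported by the reversal $\mathscr R_i$.
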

\begin{proof}
The proof is a mode-propagation argument.  The finite relations first recover all current modes recursively; one then propagates the mixed commutator, Cartan-current, quadratic-current, and Serre relations by the adjoint actions of $h_{i,\pm1}$ and the derived modes $h_{i,\pm2}$.  The isotropic vertex is handled through the neighboring even Cartan direction specified above.  The complete verification is given in Appendix~\ref{app:finite-presentation-proof}.
\end{proof}

\section{Shifted formal Drinfeld coproducts and shift homomorphisms}\label{section: Coproduct} 

This section develops two constructions used later in the coproduct theory.  The formal Drinfeld current coproduct extends to arbitrary two-sided shifts and is coassociative in a completed tensor product.  For one-sided shifts, we construct the homomorphisms $\iota_{\mu,\nu_1,\nu_2}$ and prove their injectivity; Section~\ref{section: shifted RTT and coproducts} uses them to extend the non-completed coproduct from antidominant to arbitrary shifts.  

\subsection{Shifted formal Drinfeld coproducts} \label{subsec:shifted formula Drinfeld coproducts}
After specializing the affine central charge to level zero, the current coproduct of \cite[Theorem~3]{Zyz1997} gives the formal Drinfeld coproduct of the unshifted algebra
\[
\widetilde\Delta:\mathcal U_q(\widehat{\mathfrak{sl}}_{m|n})
\longrightarrow
\mathcal U_q(\widehat{\mathfrak{sl}}_{m|n})\widehat\otimes
\mathcal U_q(\widehat{\mathfrak{sl}}_{m|n}),
\]
with
\begin{gather}\label{coproduct for Drinfeld realization}
\begin{split}
\widetilde\Delta(x_i^+(z))&=x_i^+(z)\otimes1+\phi_i^-(z)\otimes x_i^+(z),\\
\widetilde\Delta(x_i^-(z))&=1\otimes x_i^-(z)+x_i^-(z)\otimes\phi_i^+(z),\quad \widetilde\Delta(\phi_i^\pm(z))=\phi_i^\pm(z)\otimes\phi_i^\pm(z).
\end{split}
\end{gather}
The same formulas make sense for shifted Cartan supports after passing to a coefficientwise completion.  We use the loop grading
\[
\deg x_{i,r}^{\pm}=r,\qquad \deg\phi_{i,r}^{\pm}=r.
\]
For each fixed total loop degree, the symbol $\widehat\otimes$ allows the one-sided infinite sums in which the degree in the first tensor factor tends to $-\infty$ and that in the second tends to $+\infty$, together with finite shifts of such sums.  For example, a coefficient of $\phi_i^-(z)\otimes x_i^+(z)$ is an allowed sum of the form
\[
\sum_{r\ge r_0}\phi_{i,-r}^-\otimes x_{i,d+r}^+.
\]
All homomorphisms into completed tensor products below are understood to be continuous for this completion.

The shifted supports do not affect preservation of the current relations.
\begin{lemma}\label{lem:formal-coproduct-relations}
Let the source and target shifts be as in Theorem~\ref{thm:formal-shifted-coproduct}, and endow the completed tensor product with the graded multiplication convention fixed in Section~\ref{section: quantum affine superalgebra}.  The assignments in \eqref{coproduct for Drinfeld realization} preserve, coefficientwise, all defining current relations \eqref{SQASrelation1}--\eqref{SQASrelation7}.
\end{lemma}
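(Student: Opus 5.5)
The plan is to verify the relations directly at the level of currents, using the fact that the Drinfeld coproduct formulas involve only the currents $x_i^\pm(z)$ and $\phi_i^\pm(z)$ and never the individual modes or their supports. Every defining relation \eqref{SQASrelation1}--\eqref{SQASrelation7} is a formal identity of series whose coefficients are finite sums of products of modes. So it suffices to check each current identity in $\widehat\otimes$, exactly as in the unshifted setting of \cite[Theorem~3]{Zyz1997}. We must also confirm two further points. First, each coefficient of the images (for example $\sum_r\phi^-_{i,-r}\otimes x^+_{i,d+r}$) is an allowed element of the completion. Second, all products of such coefficients that occur are well defined. Well-definedness follows from the degree bounds: $\phi_i^-(z)$ has modes of degree $\le l_i^-$, so each first-factor degree is bounded above, and for fixed total degree only the prescribed one-sided sums arise.

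The checks go in this order, with signs computed using the graded product $(a\otimes b)(c\otimes d)=(-1)^{|b||c|}ac\otimes bd$.

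First, the Cartan relations \eqref{SQASrelation1}. Group-likeness gives commutativity immediately. Invertibility of the leading coefficients holds because the leading coefficient of $\widetilde\Delta(\phi_i^\pm(z))$ is $\phi^\pm_{i,\mp l^\pm_i}\otimes\phi^\pm_{i,\mp l^\pm_i}$; here we take source shift equal to the sum of target shifts, as fixed in Theorem~\ref{thm:formal-shifted-coproduct}.

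Second, the Cartan--root relations \eqref{SQASrelation4}. We use group-likeness of $\phi$, the relation \eqref{SQASrelation4} in each factor, and the fact that $\phi_i^\epsilon(z)$ commutes with $\phi_j^-(w)$.

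Third, the same-sign quadratic relations \eqref{SQASrelation2}--\eqref{SQASrelation3}. Expanding $\widetilde\Delta(x_i^+(z))\widetilde\Delta(x_j^+(w))$ produces four terms. The pure terms satisfy the relation factorwise. The two cross terms $x_i^+(z)\phi_j^-(w)\otimes x_j^+(w)$ and $\phi_i^-(z)x_j^+(w)\otimes x_i^+(z)$ are exchanged against each other using \eqref{SQASrelation4}. The multiplier $(z-q^{A}w)/(q^Az-w)$ from \eqref{SQASrelation4} matches the prefactors exactly, and the Koszul sign from passing the odd $x^+_m$ past the middle factor supplies the anticommutation at $A_{m,m}=0$.

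Fourth, the mixed relation \eqref{SQASrelation5}. The cross terms cancel by \eqref{SQASrelation4}, which gives $\phi^-_i(z)x_j^-(w)=\frac{\ldots}{\ldots}x_j^-(w)\phi_i^-(z)$. The surviving terms give
\[\frac{\delta_{ij}}{q_i-q_i^{-1}}\delta(z/w)\bigl(\phi_i^+(z)\otimes\phi_i^+(z)-\phi_i^-(z)\otimes\phi_i^-(z)\bigr),\]
after using $\delta(z/w)f(w)=\delta(z/w)f(z)$. This substitution is legitimate in the completion because all of its coefficients lie in the allowed one-sided sums.

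The main obstacle is the Serre relations \eqref{SQASrelation6}--\eqref{SQASrelation7}, above all the quartic odd relation \eqref{SQASrelation7}. The plan is to use the standard fact that the subalgebra generated by $\{x_i^+(z)\otimes1,\ \phi_i^-(z)\otimes x_i^+(z)\}$ behaves like a braided tensor product. The $x^+$-part and the $\phi^-\otimes x^+$-part $q$-commute via \eqref{SQASrelation4}, and this $q$-commutation is independent of supports. Consequently $\widetilde\Delta$ restricted to the positive half is the braided coproduct of the shift-independent positive half of Theorem~\ref{thm:shifted-triangular}. The Serre element is then primitive-like, and its image splits into products of lower Serre elements and quadratic relations in each tensor factor. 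Since none of these manipulations uses anything beyond \eqref{SQASrelation2}, \eqref{SQASrelation4} and the commutativity of $\phi^-$, the unshifted verification of \cite{Zyz1997} carries over verbatim. The negative half is handled symmetrically with $\phi^+$.
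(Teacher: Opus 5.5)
Your proposal is correct and follows essentially the same route as the paper. Both check each relation directly at the level of currents: Cartan relations by group-likeness, Cartan--root relations factorwise, same-sign quadratic relations with the cross terms handled by \eqref{SQASrelation4}, and the mixed relation by cross-term cancellation plus the delta identity. Both then reduce the Serre relations to the unshifted verification of \cite{Zyz1997}, on the grounds that nothing depends on the Cartan supports.

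One wording fix for the quadratic relation. The two cross terms on one side are not exchanged against each other, since their second tensor factors differ. Instead, each is matched with the cross term on the other side that has the same second factor.
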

\begin{proof}
At zero shift the same current identities give the Hopf structure of the unshifted quantum affine superalgebra in \cite[Theorem~3]{Zyz1997}.  The verification uses only the current relations and the graded tensor-product multiplication; it does not use the endpoints of the Cartan supports or a relation identifying the leading coefficient of $\phi_i^+(z)$ with the inverse of the leading coefficient of $\phi_i^-(z)$.  This is the super counterpart of the support-independent argument for arbitrary two-sided shifts in \cite[Lemma~10.2]{FT2019}.

We spell out the mixed relation, where both the super sign and the two Cartan currents occur.  Using the graded tensor-product rule and the Cartan--root exchange relation \eqref{SQASrelation4}, the two mixed cross terms cancel, and hence
\[
[\widetilde\Delta(x_i^+(z)),\widetilde\Delta(x_j^-(w))]
=[x_i^+(z),x_j^-(w)]\otimes\phi_j^+(w)
+\phi_i^-(z)\otimes[x_i^+(z),x_j^-(w)].
\]
Substituting \eqref{SQASrelation5}, this is zero for $i\ne j$.  For $i=j$, the formal delta identity $\delta(z/w)f(w)=\delta(z/w)f(z)$ gives
\begin{align*}
[\widetilde\Delta(x_i^+(z)),\widetilde\Delta(x_i^-(w))]
&=\frac{\delta(z/w)}{q_i-q_i^{-1}}
\bigl(\phi_i^+(z)\otimes\phi_i^+(z)-\phi_i^-(z)\otimes\phi_i^-(z)\bigr)\\
&=\frac{\delta(z/w)}{q_i-q_i^{-1}}
\bigl(\widetilde\Delta(\phi_i^+(z))-\widetilde\Delta(\phi_i^-(z))\bigr),
\end{align*}
which is the image of \eqref{SQASrelation5}.

The Cartan--Cartan relations are immediate, and the Cartan--root relations are preserved because the same exchange relation holds in each tensor factor.  For the same-sign quadratic relations, the pure terms vanish by \eqref{SQASrelation2}--\eqref{SQASrelation3}, while the mixed terms cancel after moving the Cartan current through the root current by \eqref{SQASrelation4}.  Finally, the ordinary Serre and degree-four odd Serre expressions are homogeneous current identities.  After substituting \eqref{coproduct for Drinfeld realization} and moving the Cartan factors to a fixed side, each coefficient is the same graded linear combination of the same-sign current relations and the corresponding Serre expression as in the unshifted verification of \cite[Theorem~3]{Zyz1997}.  The signs at the isotropic node come only from the graded tensor product and the parity of $x_{m,r}^{\pm}$, not from the endpoints of the Cartan supports.  Since a shift changes these endpoints only by fixed integers, all coefficients lie in the completion above.
\end{proof}

We therefore obtain the shifted formal coproduct.
\begin{theorem}\label{thm:formal-shifted-coproduct}
For type A with the standard parity sequence, let $m,n\geq1$ with $m\ne n$.  For any weights $\mu_1^{\pm},\mu_2^{\pm}\in\Lambda$, there is a continuous $\mathbb C(q)$-superalgebra homomorphism
\[
\widetilde\Delta:
\mathcal U_q^{\mu_1^++\mu_2^+,\,\mu_1^-+\mu_2^-}(\widehat{\mathfrak{sl}}_{m|n})
\longrightarrow
\mathcal U_q^{\mu_1^+,\mu_1^-}(\widehat{\mathfrak{sl}}_{m|n})\widehat\otimes
\mathcal U_q^{\mu_2^+,\mu_2^-}(\widehat{\mathfrak{sl}}_{m|n}),
\]
defined by \eqref{coproduct for Drinfeld realization}.
\end{theorem}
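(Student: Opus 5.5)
The plan is to define $\widetilde\Delta$ on generators by the formulas \eqref{coproduct for Drinfeld realization} and then invoke Lemma~\ref{lem:formal-coproduct-relations}. What remains is to check three things: the generator images make sense in the completed tensor product, the supports of the shifted Cartan currents match, and the inverse generators have images.

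\textbf{Cartan part.} Let $l_{a,i}^{\pm}=\alpha_i(\mu_a^{\pm})$ for $a=1,2$.
\begin{itemize}
\item In $\mathcal U_q^{\mu_a^+,\mu_a^-}$ the support of $\phi_i^{\pm}(z)$ begins at $z^{\pm l_{a,i}^{\pm}}$. Hence $\phi_i^\pm(z)\otimes\phi_i^\pm(z)$ begins at $z^{\pm(l_{1,i}^{\pm}+l_{2,i}^{\pm})}$.
\item This is exactly the support prescribed in the source algebra, because $\alpha_i$ is additive.
\item Each coefficient of $\phi_i^\pm(z)\otimes\phi_i^\pm(z)$ is a finite sum, since both factors are one-sided series in the same direction.
\item The leading coefficient maps to $\phi^{\pm}_{i,\mp l_{1,i}^\pm}\otimes\phi^{\pm}_{i,\mp l_{2,i}^\pm}$. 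This is invertible, with inverse the tensor product of the inverses. So the generators $(\phi^{\pm}_{i,\mp l_i^\pm})^{-1}$ are sent to these inverses, and relation \eqref{SQASrelation1} holds.
\end{itemize}

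\textbf{Root currents.} Take the $z^{-d}$ coefficient of $x_i^+(z)\otimes 1+\phi_i^-(z)\otimes x_i^+(z)$. It is
\[
x_{i,d}^+\otimes1+\sum_{r\ge -l_{1,i}^-}\phi_{i,-r}^-\otimes x_{i,d+r}^+ .
\]
The first-factor degree tends to $-\infty$ and the second to $+\infty$, so this sum lies in $\widehat\otimes$ as defined above. The negative currents are handled the same way. The $z^{-d}$ coefficient of $x_i^-(z)\otimes\phi_i^+(z)$ is
\[
\sum_{r\ge -l_{2,i}^+} x_{i,d+r}^-\otimes\phi_{i,-r}^+ .
\]
Here the first degree tends to $+\infty$. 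This is the one point to watch: the completion must admit both kinds of sums. If the convention stated above only allows one direction, I would enlarge $\widehat\otimes$ to the completion with respect to the loop-degree filtration used in \cite[Section~10]{FT2019}. I would then check that products of such elements still have finite coefficients in each bidegree, so that the completed tensor product is an algebra. A product of two allowed sums pairs only finitely many terms of a given total degree, once the degree bounds are fixed.

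\textbf{Universal property and continuity.} Extend the assignment to a homomorphism from the free superalgebra on the generators; this respects parity because $|\phi|=0$. By Lemma~\ref{lem:formal-coproduct-relations}, every coefficient of every defining relation \eqref{SQASrelation1}--\eqref{SQASrelation7} maps to zero. Hence the map factors through $\mathcal U_q^{\mu_1^++\mu_2^+,\mu_1^-+\mu_2^-}$. Continuity is automatic, because each generator of fixed degree maps to an element of fixed total degree.

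\textbf{Main obstacle.} The hard step is not the relations, which the lemma handles. It is making the completion rigorous, so that multiplying two infinite sums occurring in the mixed and Serre relations is well defined and associative. I would first try a bigraded completion: fix the total degree and allow unbounded degree in the first factor, bounded from one side. I would verify that each product coefficient is a finite sum by using the fixed lower bounds coming from the shifts $l_{a,i}^\pm$.
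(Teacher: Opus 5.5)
Your plan is the paper's argument. Both take the relations from Lemma~\ref{lem:formal-coproduct-relations} and check that the Cartan supports add. Both send the leading coefficients to tensor products of invertible leading coefficients, with the inverse generators going to tensor products of inverses. Both observe that the root-current coefficients lie in the completion.

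The obstacle you flag, however, comes from misindexing the positive Cartan current. By definition $\phi_i^+(z)=\sum_{s\ge -l_{2,i}^+}\phi_{i,s}^+z^{-s}$ in the second factor, so its modes have degree bounded below, not $\phi_{i,-r}^+$. The $z^{-d}$ coefficient of $x_i^-(z)\otimes\phi_i^+(z)$ is therefore $\sum_{s\ge -l_{2,i}^+}x_{i,d-s}^-\otimes\phi_{i,s}^+$. Its first-factor degree tends to $-\infty$ and its second-factor degree to $+\infty$. This is the same direction as the $\phi_i^-(z)\otimes x_i^+(z)$ term, so the paper's completion already contains it.

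You should drop the fallback of enlarging $\widehat\otimes$ to admit sums in both directions. Such a space would not be an algebra. Multiply a sum whose first-factor degrees are bounded above by one whose first-factor degrees are bounded below. Then infinitely many pairs of terms land in the same bidegree, and that coefficient is an undefined infinite sum. Your finiteness check for products works precisely because every allowed sum has first-factor degrees bounded above.
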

\begin{proof}
By Lemma~\ref{lem:formal-coproduct-relations}, the formulas preserve all defining current relations.  It remains to check the shifted supports and the leading invertible coefficients.  Write
\[
a_{k,i}^{\pm}:=(\alpha_i\mid\mu_k^{\pm}),\qquad k=1,2.
\]
The leading terms of the Cartan currents in the $k$-th tensor factor are
\[
\phi_{i,-a_{k,i}^+}^+z^{a_{k,i}^+},\qquad
\phi_{i,a_{k,i}^-}^-z^{-a_{k,i}^-}.
\]
Therefore $\widetilde\Delta(\phi_i^+(z))$ has leading power $z^{a_{1,i}^++a_{2,i}^+}$, while $\widetilde\Delta(\phi_i^-(z))$ has leading power $z^{-(a_{1,i}^-+a_{2,i}^-)}$.  Their leading coefficients are tensor products of invertible leading coefficients and hence are invertible, with the prescribed inverse generators mapped to the tensor products of the corresponding inverses.  Thus the two shifts add as required.  The coefficients in the images of $x_i^\pm(z)$ belong to the chosen completion, so the homomorphism is continuous.
\end{proof}

The same current formulas are coassociative in the completed tensor product.
\begin{proposition}\label{prop:formal-shifted-coassociativity}
For type A with the standard parity sequence, let $m,n\geq1$ with $m\ne n$, and let $(\mu_a^+,\mu_a^-)$, $a=1,2,3$, be three pairs of weights.  With the canonical identification of the iterated completed tensor products,
\[
(\widetilde\Delta_{12}\widehat\otimes\mathrm{id})\circ\widetilde\Delta_{(12),3}
=
(\mathrm{id}\widehat\otimes\widetilde\Delta_{23})\circ\widetilde\Delta_{1,(23)}.
\]
Here the subscripts record only the corresponding pairs of shifts.
\end{proposition}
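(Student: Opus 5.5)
Both sides of the identity are continuous superalgebra homomorphisms
\[
\mathcal U_q^{\mu_1^++\mu_2^++\mu_3^+,\,\mu_1^-+\mu_2^-+\mu_3^-}
\longrightarrow
\mathcal U_q^{\mu_1^+,\mu_1^-}\widehat\otimes\mathcal U_q^{\mu_2^+,\mu_2^-}\widehat\otimes\mathcal U_q^{\mu_3^+,\mu_3^-},
\]
each obtained by composing two homomorphisms supplied by Theorem~\ref{thm:formal-shifted-coproduct} (with the shift pairs $(\mu_1+\mu_2,\mu_3)$ and $(\mu_1,\mu_2)$, respectively $(\mu_1,\mu_2+\mu_3)$ and $(\mu_2,\mu_3)$). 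My plan is first to make this precise, and then to compare the two composites on generators. The algebra is generated by the coefficients of $x_i^\pm(z)$, of $\phi_i^\pm(z)$, and by the inverses of the leading Cartan coefficients. By continuity and multiplicativity it therefore suffices to check equality on the three currents $x_i^+(z)$, $x_i^-(z)$, $\phi_i^\pm(z)$, coefficientwise. Inverses of the leading coefficients are then forced, since both composites are unital homomorphisms and inverses are unique.

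Before comparing, I will verify that $\widetilde\Delta_{12}\widehat\otimes\mathrm{id}$ and $\mathrm{id}\widehat\otimes\widetilde\Delta_{23}$ extend continuously to the completed tensor products, and that the two iterated completions embed into one common completion of the triple tensor product. The relevant completion is graded by loop degree: for fixed total degree we allow sums whose degree decreases to $-\infty$ in earlier factors and increases to $+\infty$ in later factors. This step is the main obstacle, because of the nesting of infinite sums. I would handle it by observing that $\widetilde\Delta$ preserves total loop degree. It also sends a coefficient of degree $d$ to sums whose first-factor degrees are bounded above by $d+C$ and whose second-factor degrees are bounded below by $d-C$, where $C$ depends only on the shifts. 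Consequently, applying it in the first (resp.\ last) slot of an allowed sum gives an allowed triple sum, and each coefficient of the result is a finite combination at every fixed triple of degrees. I will use this to define the canonical identification.

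For the computation on the Cartan currents, $\phi_i^\pm(z)\mapsto\phi_i^\pm(z)\otimes\phi_i^\pm(z)\otimes\phi_i^\pm(z)$ on both sides, and the leading coefficients match. For $x_i^+(z)$, the left side gives
\[
(\widetilde\Delta\otimes\mathrm{id})\bigl(x_i^+(z)\otimes1+\phi_i^-(z)\otimes x_i^+(z)\bigr)
= x_i^+\otimes1\otimes1+\phi_i^-\otimes x_i^+\otimes1+\phi_i^-\otimes\phi_i^-\otimes x_i^+ .
\]
The right side gives
\[
x_i^+\otimes1\otimes1+\phi_i^-\otimes\bigl(x_i^+\otimes1+\phi_i^-\otimes x_i^+\bigr),
\]
which is the same expression. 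The case of $x_i^-(z)$ is symmetric, with $\phi_i^+$ on the right. Since $\phi$ is even, the graded tensor-product signs are trivial in these manipulations. They enter only through the parity of $x_{m,r}^\pm$, and that parity is not moved past anything in these formulas. Agreement on generators then yields the proposition.
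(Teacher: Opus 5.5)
This is essentially the paper's proof: the paper likewise reduces to the generating currents, records exactly your three-term images of $x_i^{\pm}(z)$ and the triple product $\phi_i^\pm(z)\otimes\phi_i^\pm(z)\otimes\phi_i^\pm(z)$, and takes the identification of the iterated completions for granted. One correction to your extra completion step: the uniform bounds $d+C$ and $d-C$ are false as stated. The summands $\phi_{i,s}^-\otimes x_{i,r-s}^+$ with $s$ at the top of the support of $\phi_i^-(z)$ violate the first bound for $r\ll0$, and $x_{i,r}^+\otimes1$ violates the second for $r\gg0$. On generators, use instead first-factor degrees at most $\max(d,0)+C$, second-factor degrees at least $\min(d,0)-C$, and finitely many terms in each bidegree.
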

\begin{proof}
It is enough to check the identity on the generating currents.  Both sides send $x_i^+(z)$ to
\[
x_i^+(z)\otimes1\otimes1
+\phi_i^-(z)\otimes x_i^+(z)\otimes1
+\phi_i^-(z)\otimes\phi_i^-(z)\otimes x_i^+(z),
\]
send $x_i^-(z)$ to
\[
1\otimes1\otimes x_i^-(z)
+1\otimes x_i^-(z)\otimes\phi_i^+(z)
+x_i^-(z)\otimes\phi_i^+(z)\otimes\phi_i^+(z),
\]
and send $\phi_i^\pm(z)$ to $\phi_i^\pm(z)\otimes\phi_i^\pm(z)\otimes\phi_i^\pm(z)$.
\end{proof}

The coproduct above is available for arbitrary two-sided shifts, but it takes values in a completion.  The non-completed coproducts of Section~\ref{section: shifted RTT and coproducts} are a different construction: they are obtained by transporting the RTT matrix coproduct.  We next construct the shift homomorphisms that will be used only to extend the one-sided non-completed coproduct from antidominant shifts to arbitrary shifts.

\subsection{Injective shift homomorphisms}\label{subsec:shift-homomorphisms}
For one-sided shifted algebras we use homomorphisms analogous to those of \cite[Lemma~10.18]{FT2019}.

\begin{lemma}\label{lem:shift homomorphism iota mu nu1 nu2}
For type A with the standard parity sequence, let $m,n\geq1$ with $m\ne n$.  For any $\mu\in\Lambda$ and $\nu_1,\nu_2\in\Lambda^-$, there is a unique superalgebra homomorphism
\[
\iota_{\mu,\nu_1,\nu_2}:
\mathcal U_q^{0,\mu}(\widehat{\mathfrak{sl}}_{m|n})
\longrightarrow
\mathcal U_q^{0,\mu+\nu_1+\nu_2}(\widehat{\mathfrak{sl}}_{m|n})
\]
defined by
\begin{align*}
x_i^+(z)&\longmapsto(1-z^{-1})^{-\alpha_i(\nu_1)}x_i^+(z),\quad x_i^-(z) \longmapsto(1-z^{-1})^{-\alpha_i(\nu_2)}x_i^-(z),\\
\phi_i^\pm(z)&\longmapsto(1-z^{-1})^{-\alpha_i(\nu_1+\nu_2)}\phi_i^\pm(z).
\end{align*}
\end{lemma}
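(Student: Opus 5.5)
The plan is to check that the assigned currents satisfy every defining relation of Definition~\ref{Def: shifted quantum affine superalgebra} in the target $\mathcal U_q^{0,\mu+\nu_1+\nu_2}$. Uniqueness is then automatic, because the modes $x_{i,k}^{\pm}$, $\phi^{\pm}_{i,r}$ and the inverses of the leading Cartan coefficients generate the source. Write $f_i(z):=(1-z^{-1})^{-\alpha_i(\nu_1)}$ and $g_i(z):=(1-z^{-1})^{-\alpha_i(\nu_2)}$. Since $\nu_1,\nu_2\in\Lambda^-$, both exponents are nonnegative integers, so $f_i,g_i$ are polynomials in $z^{-1}$ with constant term $1$. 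The images of the modes are therefore finite combinations $x_{i,k}^{+}\mapsto\sum_{a\ge0}\binom{-\alpha_i(\nu_1)}{a}(-1)^a x^{+}_{i,k+a}$, and similarly for $x^-$. No completion is needed, and the map on modes is well defined.

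\emph{Cartan supports.} In the target, $\phi_i^+(z)$ has leading coefficient $\phi^+_{i,0}$ at $z^0$, and $\phi_i^-(z)$ starts at $z^{-l_i}$ with $l_i=\alpha_i(\mu+\nu_1+\nu_2)$. Multiply $\phi_i^+(z)$ by the polynomial $(1-z^{-1})^{-\alpha_i(\nu_1+\nu_2)}$ in $z^{-1}$. This keeps it a power series in $z^{-1}$ with the same leading coefficient $\phi^+_{i,0}$, so the invertible generator $(\phi^+_{i,0})^{-1}$ maps to $(\phi^+_{i,0})^{-1}$. For $\phi_i^-$, rewrite $(1-z^{-1})^{N}=(-1)^Nz^{-N}(1-z)^{N}$ with $N=-\alpha_i(\nu_1+\nu_2)\ge0$. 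Then the image of $\phi_i^-(z)$, which in the target starts at $z^{-l_i}$, becomes $z^{-N}$ times a series in $z$. Its lowest power is $z^{-l_i-N}=z^{-\alpha_i(\mu)}$, which is exactly the source support, with leading coefficient $(-1)^N\phi^-_{i,l_i}$. So $(\phi^-_{i,\alpha_i(\mu)})^{-1}\mapsto(-1)^N(\phi^-_{i,l_i})^{-1}$. This bookkeeping is where I expect the only real care to be needed; one must also check that no coefficient outside the source support receives a nonzero image, and this follows from the same expansion.

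\emph{Relations.} All relations except the mixed one are homogeneous in each current. The Cartan--Cartan relations are immediate. In \eqref{SQASrelation2}, \eqref{SQASrelation3} and \eqref{SQASrelation4}, scalar factors $f_i(z)f_j(w)$ (or $g_ig_j$, or $(1-z^{-1})^{\cdots}f_j(w)$) multiply both sides equally, since scalar series are even and central. The Serre relations \eqref{SQASrelation6}--\eqref{SQASrelation7} pick up the factor $f_i(z_1)f_i(z_2)f_{i\pm1}(w)$, or $f_m(z_1)f_m(z_2)f_{m+1}(y)f_{m-1}(w)$, which is symmetric in $z_1,z_2$ and so commutes with $\mathrm{Sym}$. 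For the mixed relation \eqref{SQASrelation5}, the left side becomes $f_i(z)g_j(w)[x_i^+(z),x_j^-(w)]$. Using $\delta(z/w)h(w)=\delta(z/w)h(z)$, valid here because $g_j$ is a polynomial in $w^{-1}$, this equals
\[
\frac{\delta_{ij}}{q_i-q_i^{-1}}\delta(z/w)(1-z^{-1})^{-\alpha_i(\nu_1+\nu_2)}\bigl(\phi_i^+(z)-\phi_i^-(z)\bigr),
\]
which is the image of the right-hand side. Here the image of $\phi_i^-(z)$ is read as the same formal series expanded as above, and the identification is consistent coefficientwise since all factors are Laurent polynomials. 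Parities are preserved because the scalar factors are even.
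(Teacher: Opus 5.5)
Your proposal is correct and follows the paper's own proof. Both substitute the rescaled currents directly into \eqref{SQASrelation1}--\eqref{SQASrelation7}, and both compute the leading negative Cartan coefficient as $\iota_{\mu,\nu_1,\nu_2}(\phi^-_{i,\alpha_i(\mu)})=(-1)^{\alpha_i(\nu_1+\nu_2)}\phi^-_{i,\alpha_i(\mu+\nu_1+\nu_2)}$; you give more detail than the paper on the Cartan supports and on the $\delta(z/w)$ step in the mixed relation.

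There is one harmless index slip. Since $x_i^+(z)=\sum_r x^+_{i,r}z^{-r}$, the image of $x^+_{i,k}$ is $\sum_{a\ge0}(-1)^a\binom{-\alpha_i(\nu_1)}{a}x^+_{i,k-a}$, not $x^+_{i,k+a}$; this matches the formula used in the proof of Theorem~\ref{Th: commutative diagram}.
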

\begin{proof}
Because $\nu_1,\nu_2$ are antidominant, all exponents $-\alpha_i(\nu_a)$ are nonnegative integers.  Direct substitution into \eqref{SQASrelation1}--\eqref{SQASrelation7} preserves the defining relations.  The positive Cartan current keeps its leading coefficient, while the negative leading coefficient satisfies
\[
\iota_{\mu,\nu_1,\nu_2}(\phi_{i,\alpha_i(\mu)}^-)
=(-1)^{\alpha_i(\nu_1+\nu_2)}
\phi_{i,\alpha_i(\mu+\nu_1+\nu_2)}^-.
\]
Thus the target has precisely the required one-sided shift.
\end{proof}

We now record the composition law and prove injectivity.
\begin{theorem}\label{th:iota mu nu1 nu2 is injective}
For type A with the standard parity sequence, assume $m,n>1$ and $m\ne n$.
\begin{itemize}
\item[(1)] For $\mu\in\Lambda$ and $\nu_1,\nu_2,\nu_3,\nu_4\in\Lambda^-$,
$
\iota_{\mu+\nu_1+\nu_2,\nu_3,\nu_4}\circ\iota_{\mu,\nu_1,\nu_2}
=
\iota_{\mu,\nu_1+\nu_3,\nu_2+\nu_4}$.
\item[(2)] For every $\mu\in\Lambda$ and $\nu_1,\nu_2\in\Lambda^-$, the homomorphism $\iota_{\mu,\nu_1,\nu_2}$ is injective.
\end{itemize}
\end{theorem}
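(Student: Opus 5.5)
For (1), I compare the two sides on the generating currents. Both are superalgebra homomorphisms $\mathcal U_q^{0,\mu}\to\mathcal U_q^{0,\mu+\nu_1+\nu_2+\nu_3+\nu_4}$. Since $\iota$ multiplies each current by a power of $(1-z^{-1})$, which is a scalar series, composing two such maps multiplies the exponents additively. Thus $x_i^+(z)\mapsto(1-z^{-1})^{-\alpha_i(\nu_1)-\alpha_i(\nu_3)}x_i^+(z)$, and similarly for $x_i^-(z)$ and $\phi_i^\pm(z)$. These are exactly the images under $\iota_{\mu,\nu_1+\nu_3,\nu_2+\nu_4}$. The inverse leading generators are then determined, since the image of a leading coefficient is a nonzero scalar multiple of the leading coefficient (Lemma~\ref{lem:shift homomorphism iota mu nu1 nu2}). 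By uniqueness the two homomorphisms agree.

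For (2), I use the triangular decomposition of Theorem~\ref{thm:shifted-triangular}. The map $\iota=\iota_{\mu,\nu_1,\nu_2}$ preserves each of the three factors, so $\iota(\mathcal U^{<}\mathcal U^0\mathcal U^{>})\subseteq \mathcal U'^{<}\mathcal U'^0\mathcal U'^{>}$, where primes denote the factors of the target. Because multiplication $\mathcal U'^{<}\otimes\mathcal U'^0\otimes\mathcal U'^{>}\to\mathcal U'$ is an isomorphism, $\iota$ equals $\iota^{<}\otimes\iota^{0}\otimes\iota^{>}$ under these identifications. Hence it suffices to prove that each restriction is injective.

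For the Cartan part, $\mathcal U^0$ is the commutative algebra generated by $(\phi^+_{i,0})^{\pm1}$, $(\phi^-_{i,\alpha_i(\mu)})^{\pm1}$ and $h_{i,\pm r}$. Taking logarithms of the current formula, $\iota$ fixes the leading coefficients up to sign. It sends $h_{i,r}\mapsto h_{i,r}+c_{i,r}$ with scalar $c_{i,r}$, coming from $\log(1-z^{-1})$ expanded in $z^{-1}$; this shift appears only for the positive-power modes of the $+$ current. Careful: the $\phi^-$ current is a series in $z$, so its expansion of $(1-z^{-1})^{k}$ is the one giving the new leading term $(-1)^kz^{-k}$. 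In $\log$ form this produces $h_{i,-r}\mapsto h_{i,-r}+c'_{i,r}$. In either case $\iota^0$ is a triangular change of polynomial generators by constants, followed by relabeling the invertible leading term, and is therefore injective (in fact an isomorphism onto its image, which is a polynomial-Laurent algebra).

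For the halves, Theorem~\ref{thm:shifted-triangular} identifies $\mathcal U^{>}$ and $\mathcal U'^{>}$ with the same shift-independent positive half $U^{>}$ of the quantum loop superalgebra. The map $\iota^{>}$ becomes the endomorphism $x_i^+(z)\mapsto(1-z^{-1})^{k_i}x_i^+(z)$ with $k_i=-\alpha_i(\nu_1)\ge0$. I will use the shuffle realization of \cite{Tsy2021}, which embeds $U^{>}$ injectively into a shuffle superalgebra of symmetric (with super signs) rational functions; that this embedding is injective is the content of \cite{Tsy2021}. In the shuffle algebra, $x_{i,r}^+$ corresponds to $x_{i,1}^r$, and the map is then given by multiplication of a degree-$\underline k$ element $F(\{x_{i,a}\})$ by $\prod_{i,a}(1-x_{i,a}^{-1})^{k_i}$. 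This factor is multiplicative for the shuffle product because it is a symmetric product over all variables, so it commutes with the symmetrization. Multiplication by this nonzero function is injective on rational functions, so the induced shuffle endomorphism is injective. Intertwining with the embedding then gives injectivity of $\iota^{>}$, and the negative half is handled the same way with $\nu_2$.

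The main obstacle is checking that the current-level map on $U^{>}$ is exactly transported to multiplication by $\prod(1-x_{i,a}^{-1})^{k_i}$ under the shuffle embedding of \cite{Tsy2021}, including the super-sign conventions at the isotropic node. I would check this on generators, where it is immediate from the mode expansion, and then use that both are algebra homomorphisms.
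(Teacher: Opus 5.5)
Your proposal is correct and follows essentially the same route as the paper. For (1) you compose the scalar current factors, exactly as the paper does. For (2) you use the triangular decomposition to reduce to two things: on the Cartan factor, a translation of the logarithmic modes that rescales the leading coefficients by signs; on the shift-independent halves, multiplication by the color-symmetric factor $\prod_{i,a}(1-x_{i,a}^{-1})^{k_i}$ in Tsymbaliuk's shuffle realization. Your observation that this multiplier only needs to be an injective algebra endomorphism of the ambient shuffle algebra is a minor simplification of the paper's argument, since you then need not check that it preserves the wheel conditions.
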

\begin{proof}
Part~(1) follows immediately from the defining formulas.  We prove part~(2).

By Theorem~\ref{thm:shifted-triangular}, multiplication gives a triangular decomposition into the negative, Cartan, and positive current factors, and the positive and negative factors are independent of the shift.  Hence it is enough to prove injectivity on these three factors.  On the Cartan factor, after the leading monomial and leading coefficient are separated from each current, multiplication by $(1-z^{-1})^{-\alpha_i(\nu_1+\nu_2)}$ translates the logarithmic modes by scalars and acts triangularly with invertible diagonal.  Thus the Cartan restriction is injective.

For the positive factor, identify the shifted positive current algebra with the unshifted algebra $\mathcal U_q(\widehat{\mathfrak{sl}}_{m|n})^>$.  By \cite[Theorem~5.18]{Tsy2021}, there is a shuffle-algebra isomorphism
\[
\Psi:\mathcal U_q(\widehat{\mathfrak{sl}}_{m|n})^>\xrightarrow{\ \sim\ }S^{(m|n)},
\qquad
\Psi(x_{i,r}^+)=y_{i,1}^r.
\]
We use only the following features of the shuffle realization from \cite[Section~5.4]{Tsy2021}: $S^{(m|n)}$ is $\mathbb N^I$-graded, its homogeneous component $S_{\underline k}^{(m|n)}$ consists of rational functions satisfying the prescribed supersymmetry, pole, and wheel conditions, and its product is obtained by supersymmetrizing the product of two functions against the standard shuffle kernel.

Fix $\nu\in\Lambda^-$, put $d_i=-\alpha_i(\nu)\ge0$, and on degree $\underline k=(k_i)_{i\in I}$ set
\[
P_{\nu,\underline k}:=\prod_{i\in I}\prod_{r=1}^{k_i}(1-y_{i,r}^{-1})^{d_i}.
\]
The factor $P_{\nu,\underline k}$ is symmetric in the variables of every color.  Multiplication by it introduces no poles and preserves both types of wheel conditions, hence preserves $S_{\underline k}^{(m|n)}$.  Moreover, if $F$ and $G$ have degrees $\underline k$ and $\underline l$, respectively, then symmetry of the multiplier gives
\[
(P_{\nu,\underline k}F)\star(P_{\nu,\underline l}G)
=P_{\nu,\underline k+\underline l}(F\star G).
\]
Thus multiplication by the $P_{\nu,\underline k}$ defines an algebra endomorphism of $S^{(m|n)}$.  It is injective on every homogeneous component because $P_{\nu,\underline k}$ is a nonzero Laurent polynomial.

Under $\Psi$, this endomorphism is exactly the positive shift map.  Indeed,
\[
P_{\nu,\mathbf e_i}\Psi(x_{i,r}^+)
=(1-y_{i,1}^{-1})^{d_i}y_{i,1}^r
=\sum_{a=0}^{d_i}(-1)^a\binom{d_i}{a}y_{i,1}^{r-a},
\]
which is the image under $\Psi$ of the coefficient of $z^{-r}$ in $(1-z^{-1})^{d_i}x_i^+(z)$.  Hence the positive restriction is injective.  The same shuffle-multiplier argument for the negative half, equivalently the standard positive--negative current anti-isomorphism, proves injectivity of the negative restriction.  The triangular decomposition now implies injectivity of $\iota_{\mu,\nu_1,\nu_2}$.
\end{proof}

\section{Shifted RTT realizations and non-completed coproducts}
\label{section: shifted RTT and coproducts}

Assume throughout this section that $m,n>1$ and $m\ne n$.  We pass from the completed coproduct of Section~\ref{section: Coproduct} to algebraic, non-completed coproducts.  For antidominant two-sided shifts, the shifted RTT realization and Ding--Frenkel isomorphism transport the RTT matrix coproduct to the current algebras; the one-sided specialization is made explicit by the finite presentation of Section~\ref{section: Levendorskii type presentation of shifted quantum affine superalgebra}.  The shift homomorphisms of Section~\ref{section: Coproduct} then extend the one-sided coproduct to arbitrary shifts.

\subsection{Antidominantly shifted RTT algebras}\label{subsec:shifted-super-rtt}
We introduce an antidominantly shifted RTT superalgebra associated with the Perk--Schultz $R$-matrix, whose Gauss-current ratios match the Cartan valuations in Definition~\ref{Def: shifted quantum affine superalgebra}. We retain the full general-linear diagonal data, since a quantum Berezinian reduction would identify central diagonal parameters that remain independent in the shifted current algebra; see Proposition~\ref{prop:central-leading-reduction}.

Let $\mu_{+},\mu_{-}\in\Lambda^{-}$ and set $l_i^{\pm}:=(\alpha_i\mid\mu_{\pm})\leq0$ for $i\in I$.
Choose integer lifts
\[
\mathbf d^{\pm}=(d_a^{\pm})_{a\in J}\in\mathbb Z^{J},\quad d_{i+1}^{\pm}-d_i^{\pm}=l_i^{\pm}\quad(i\in I).
\]
Thus, each sequence $d_1^{\pm},\ldots,d_M^{\pm}$ is weakly decreasing.  Such a lift is unique up to adding a common integer to all its entries.

\begin{definition}[Antidominantly shifted RTT superalgebra]\label{def:shifted-super-rtt}
Let $\mathscr U^{\mathrm{rtt}}_{\mathbf d^{+},\mathbf d^{-}}$ be the superalgebra generated by homogeneous symbols
$e_{ab}^{(r)}$,$f_{ba}^{(r)}$ $(1\leq a<b\leq M,\ r\in\mathbb Z)$,
together with even symbols
$g_{a,r}^{+}$ $(r\geq-d_a^{+})$,
$g_{a,-r}^{-}$ $(r\geq-d_a^{-})$,
and the inverses of $g_{a,-d_a^{+}}^{+}$ and $g_{a,d_a^{-}}^{-}$.  We set
\[
|e_{ab}^{(r)}|=|f_{ba}^{(r)}|=|a|+|b|,
\quad |g_{a,r}^{\pm}|=0,
\]
and form the Gauss series
\begin{gather*}
e_{ab}^{+}(z)=\sum_{r\geq0}e_{ab}^{(r)}z^{-r},\quad
e_{ab}^{-}(z)=\sum_{r<0}e_{ab}^{(r)}z^{-r},
\\
f_{ba}^{+}(z)=\sum_{r>0}f_{ba}^{(r)}z^{-r},\quad
f_{ba}^{-}(z)=\sum_{r\leq0}f_{ba}^{(r)}z^{-r},\\
g_a^{+}(z)=\sum_{r\geq-d_a^{+}}g_{a,r}^{+}z^{-r},\quad
g_a^{-}(z)=\sum_{r\geq-d_a^{-}}g_{a,-r}^{-}z^{r}.
\end{gather*}
Set
\begin{gather*}
F^{\epsilon}(z)=1+\sum_{a<b}f_{ba}^{\epsilon}(z)\otimes\mathsf E_{ba},\quad
G^{\epsilon}(z)=\sum_{a\in J}g_a^{\epsilon}(z)\otimes\mathsf E_{aa},\quad
E^{\epsilon}(z)=1+\sum_{a<b}e_{ab}^{\epsilon}(z)\otimes\mathsf E_{ab},\\
\mathsf T^{\epsilon}(z)=F^{\epsilon}(z)G^{\epsilon}(z)E^{\epsilon}(z),
\quad \epsilon\in\{+,-\}.
\end{gather*}
The defining relations are the four graded RTT relations associated with the Perk--Schultz matrix
\begin{equation}\label{eq:shifted-super-rtt-relations}
R_{23}(z,w)\mathsf T_{12}^{\epsilon}(z)\mathsf T_{13}^{\epsilon'}(w)
=
\mathsf T_{13}^{\epsilon'}(w)\mathsf T_{12}^{\epsilon}(z)R_{23}(z,w),
\quad \epsilon,\epsilon'\in\{+,-\},
\end{equation}
together with the invertibility of the displayed leading diagonal coefficients.  Here $R(z,w)$ is the Perk--Schultz matrix \eqref{eq:Perk-Schultz-R}, and all tensor products and matrix products are graded.
\end{definition}

At zero shift, Definition~\ref{def:shifted-super-rtt} is the extended super RTT algebra in which the two diagonal leading systems are not identified with one another.  This is the RTT object naturally corresponding to the central extension $\mathcal U_q^{0,0}$, rather than to its central reduction in Proposition~\ref{prop:central-leading-reduction}.

The common normalization of the lifts does not affect the RTT algebra.
\begin{lemma}\label{lem:shifted-super-rtt-lift}
For any $c^{+},c^{-}\in\mathbb Z$, replacing $\mathbf d^{+}$ by $\mathbf d^{+}+c^{+}(1,\ldots,1)$ and $\mathbf d^{-}$ by $\mathbf d^{-}+c^{-}(1,\ldots,1)$ does not change the isomorphism class of $\mathscr U^{\mathrm{rtt}}_{\mathbf d^{+},\mathbf d^{-}}$. More precisely, $\mathsf T^{+}(z)\mapsto z^{c^{+}}\mathsf T^{+}(z)$ and $\mathsf T^{-}(z)\mapsto z^{-c^{-}}\mathsf T^{-}(z)$ define an isomorphism between the corresponding shifted RTT superalgebras.
\end{lemma}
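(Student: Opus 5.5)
The plan is to check directly that the assignment $\mathsf T^{+}(z)\mapsto z^{c^{+}}\mathsf T^{+}(z)$, $\mathsf T^{-}(z)\mapsto z^{-c^{-}}\mathsf T^{-}(z)$ preserves the defining relations, and then to exhibit an explicit inverse. We first write the map on Gauss generators. Since the factors $F^{\epsilon}$ and $E^{\epsilon}$ are unipotent, and the Gauss decomposition of a matrix with invertible leading diagonal coefficients is unique, rescaling $\mathsf T^{\epsilon}$ by a central scalar series only rescales the diagonal factor. So we define the map $\Theta$ from $\mathscr U^{\mathrm{rtt}}_{\mathbf d^{+}+c^{+},\mathbf d^{-}+c^{-}}$ to $\mathscr U^{\mathrm{rtt}}_{\mathbf d^{+},\mathbf d^{-}}$ by
\[
e_{ab}^{(r)}\mapsto e_{ab}^{(r)},\qquad f_{ba}^{(r)}\mapsto f_{ba}^{(r)},\qquad g_{a,r}^{+}\mapsto g_{a,r+c^{+}}^{+},\qquad g_{a,-r}^{-}\mapsto g_{a,-r-c^{-}}^{-}.
\]
Equivalently, $g_a^{+}(z)\mapsto z^{c^{+}}g_a^{+}(z)$ and $g_a^{-}(z)\mapsto z^{-c^{-}}g_a^{-}(z)$. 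The inverses of the leading coefficients are sent to the inverses of the corresponding leading coefficients.

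Next we check the supports. In the source, $g_a^{+}(z)$ has support $r\ge -(d_a^{+}+c^{+})$. Its image $z^{c^{+}}g_a^{+}(z)$ has lowest power $z^{c^{+}+d_a^{+}}$, which is exactly this support. The same computation works for $g_a^{-}$ with $z^{-c^{-}}$. Hence leading coefficients go to leading coefficients, invertibility is preserved, and the assignment is a bijection on generating symbols.

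Then we check the relations. For each of the four relations \eqref{eq:shifted-super-rtt-relations}, the image of $\mathsf T^{\epsilon}_{12}(z)\mathsf T^{\epsilon'}_{13}(w)$ is $z^{\pm c}w^{\pm c'}\mathsf T^{\epsilon}_{12}(z)\mathsf T^{\epsilon'}_{13}(w)$. The scalars $z^{\pm c},w^{\pm c'}$ are even and central and commute with $R_{23}(z,w)$, so they factor out of both sides and each relation is mapped to the same relation times a unit. The one point needing care is that the relations are imposed coefficientwise on formal series. For the mixed pairs $(\epsilon,\epsilon')=(+,-)$ or $(-,+)$, multiplying by $z^{c}w^{-c'}$ only reindexes coefficients. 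Because both sides are expanded in the same direction, this sends the coefficient family of one relation bijectively onto that of the other.

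Finally, the shift by $-c^{\pm}$ defines the inverse homomorphism in the same way, so $\Theta$ is an isomorphism. We expect no real obstacle here. The only thing to watch is the Gauss-decomposition step: we must confirm that the rescaled leading coefficient is still the invertible generator and that no sign from the grading appears. This holds because $z^{c}$ is even, so $\epsilon_{ijk}$-type signs are unaffected.
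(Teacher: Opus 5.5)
Your proposal is correct and follows the paper's argument. The scalar monomials factor out of both sides of every RTT relation. Only the diagonal valuations shift, while the off-diagonal Gauss series are unchanged, and the shift by $-c^{\pm}$ gives the inverse. You also spell out the mode-level reindexing and the correct direction $\mathscr U^{\mathrm{rtt}}_{\mathbf d^{+}+c^{+},\mathbf d^{-}+c^{-}}\to\mathscr U^{\mathrm{rtt}}_{\mathbf d^{+},\mathbf d^{-}}$, which the paper leaves implicit. One wording slip: for $g_a^{+}(z)$, the power $z^{d_a^{+}+c^{+}}$ is the highest power of the $z^{-1}$-expansion, not the lowest.
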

\begin{proof}
The scalar monomials cancel on the two sides of every RTT relation \eqref{eq:shifted-super-rtt-relations}.  They shift all diagonal valuations by the prescribed common constants and leave the off-diagonal Gauss series unchanged.  The inverse assignment uses $-c^{+},-c^{-}$.
\end{proof}

Define the simple Gauss currents in $\mathscr U^{\mathrm{rtt}}_{\mathbf d^{+},\mathbf d^{-}}$ by
\begin{gather}\label{eq:shifted-super-simple-gauss-currents}
\mathsf x_i^{+}(z)
:=\frac{e_{i,i+1}^{+}(\tau_i z)-e_{i,i+1}^{-}(\tau_i z)}{q_i-q_i^{-1}},\quad
\mathsf x_i^{-}(z)
:=\frac{f_{i+1,i}^{+}(\tau_i z)-f_{i+1,i}^{-}(\tau_i z)}{q_i-q_i^{-1}},\\
\mathsf\phi_i^{\pm}(z)
:=g_{i+1}^{\pm}(\tau_i z)\bigl(g_i^{\pm}(\tau_i z)\bigr)^{-1}.\label{eq:shifted-super-simple-gauss-current2}
\end{gather}
The inverses in \eqref{eq:shifted-super-simple-gauss-current2} exist as formal Laurent series because the leading diagonal coefficients are invertible.
By construction,
\begin{align}\label{eq:shifted-super-gauss-support}
\mathsf\phi_i^{+}(z)\in z^{l_i^{+}}\mathscr U^{\mathrm{rtt}}_{\mathbf d^{+},\mathbf d^{-}}[[z^{-1}]],\quad
\mathsf\phi_i^{-}(z)\in z^{-l_i^{-}}\mathscr U^{\mathrm{rtt}}_{\mathbf d^{+},\mathbf d^{-}}[[z]],
\end{align}
and the first nonzero coefficient of each series is invertible.  Let
$\mathscr U^{\mathrm{rtt},\mathrm{sl}}_{\mathbf d^{+},\mathbf d^{-}}$
be the subalgebra generated by the coefficients of the currents in
\eqref{eq:shifted-super-simple-gauss-currents} and the inverses of their leading Cartan coefficients.

\begin{proposition}\label{prop:shifted-super-df-epi}
For $m,n>1$, $m\ne n$, and $\mu_{+},\mu_{-}\in\Lambda^{-}$, the assignment
\[
x_i^{\pm}(z)\longmapsto\mathsf x_i^{\pm}(z),\quad\phi_i^{\pm}(z)\longmapsto\mathsf\phi_i^{\pm}(z)
\]
defines a surjective superalgebra homomorphism
\[
\Gamma_{\mathbf d^{+},\mathbf d^{-}}:
\mathcal U_q^{\mu_{+},\mu_{-}}(\widehat{\mathfrak{sl}}_{m|n})
\twoheadrightarrow
\mathscr U^{\mathrm{rtt},\mathrm{sl}}_{\mathbf d^{+},\mathbf d^{-}}.
\]
It depends, up to the canonical isomorphism of Lemma~\ref{lem:shifted-super-rtt-lift}, only on the shifts $\mu_{+},\mu_{-}$.
\end{proposition}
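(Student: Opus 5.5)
The plan is to reduce the shifted statement to the zero-shift Ding--Frenkel homomorphism of Theorem~\ref{th: RTT and Drinfeld realization} and Corollary~\ref{coro: iso from Drinfeld realization to RTT realization}. The point is that the shift enters the RTT relations only through the supports of the diagonal Gauss series. The current relations are extracted from the graded RTT relations \eqref{eq:shifted-super-rtt-relations} by a Gauss-decomposition computation. That computation is local in the spectral parameters and never uses the lower endpoint of the support of $g_a^{\pm}(z)$, only that the leading coefficient is invertible so that $g_a^{\pm}(z)^{-1}$ exists.

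\textbf{Step 1.} I will rerun the Ding--Frenkel derivation in $\mathscr U^{\mathrm{rtt}}_{\mathbf d^+,\mathbf d^-}$. I first restrict to the $2\times2$ and $3\times3$ principal corner blocks indexed by $\{i,i+1\}$ and $\{i,i+1,i+2\}$. Exactly as in the unshifted case, the RTT relations restrict to these blocks, with the Perk--Schultz matrix restricted accordingly; the graded signs $\epsilon_{ijk}$ and the three regimes $i<m$, $i=m$, $i>m$ enter as in \cite{Zyz1997}. From these blocks I will derive the following for the simple currents \eqref{eq:shifted-super-simple-gauss-currents}--\eqref{eq:shifted-super-simple-gauss-current2}:
\begin{itemize}
\item the $g$--$g$ commutativity;
\item the $g$--$e$ and $g$--$f$ exchange relations, giving \eqref{SQASrelation4} after the rescaling $z\mapsto\tau_i z$;
\item the quadratic relations \eqref{SQASrelation2}--\eqref{SQASrelation3};
\item the mixed relation \eqref{SQASrelation5}, whose right-hand side is $\delta(z/w)\bigl(g_{i+1}^+g_i^{+,-1}-g_{i+1}^-g_i^{-,-1}\bigr)$.
\end{itemize}
The Serre relations \eqref{SQASrelation6}--\eqref{SQASrelation7} come from the $3\times3$ and $4\times4$ blocks, again by the unshifted argument. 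Each identity is an equality of formal series whose proof multiplies RTT relations by inverses of diagonal Gauss series. So it holds verbatim once those inverses exist, which the invertibility of $g^{\pm}_{a,\mp d_a^{\pm}}$ guarantees.

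The main obstacle is the mixed relation. In the unshifted proof the splitting $x^+=e^+-e^-$ and the delta function arise from the $(+,-)$ RTT relation, and one must check that no boundary terms appear when $g^{\pm}$ start at $z^{\pm d_a^{\pm}}$ rather than $z^0$. I will verify directly on the $2\times2$ block that the relevant identity holds coefficientwise. It has the form $(z-w)[e^{\epsilon}(z),f^{\epsilon'}(w)]=$ (difference of $g$-ratios evaluated at $z$ and at $w$). Dividing by $z-w$ then produces the delta function, with the convention that $e^+$ starts at $r=0$ and $f^+$ at $r>0$.

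\textbf{Step 2.} I will check supports and leading terms. By \eqref{eq:shifted-super-gauss-support}, $\mathsf\phi_i^{\pm}(z)$ has leading power $z^{\pm l_i^{\pm}}$, because $d_{i+1}^\pm-d_i^\pm=l_i^\pm$. Its leading coefficient is $\tau_i^{\pm l_i^{\pm}}g_{i+1,\cdot}^{\pm}(g_{i,\cdot}^{\pm})^{-1}$, which is invertible. Hence the generators $(\phi^{\pm}_{i,\mp l_i^{\pm}})^{-1}$ have images, and the assignment is a homomorphism. Surjectivity onto $\mathscr U^{\mathrm{rtt},\mathrm{sl}}_{\mathbf d^+,\mathbf d^-}$ is immediate, since that subalgebra is by definition generated by the images.

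\textbf{Step 3.} For independence of the lift, the isomorphism of Lemma~\ref{lem:shifted-super-rtt-lift} multiplies every $g_a^{+}$ by $z^{c^+}$ and every $g_a^-$ by $z^{-c^-}$, and fixes $E^{\pm},F^{\pm}$ (by uniqueness of the Gauss decomposition). The ratios $\mathsf\phi_i^\pm$ and the currents $\mathsf x_i^\pm$ are therefore unchanged. So the lemma's isomorphism intertwines the two maps $\Gamma$.
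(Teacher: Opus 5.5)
Your proposal is correct and takes essentially the paper's route. The unshifted Gauss-component derivation of the current relations is a formal consequence of the graded RTT relations that never uses where the diagonal series start, only that their leading coefficients are invertible; this covers the $\delta$-function combination of the four half-current commutators and the odd Serre relation. The support check, surjectivity, and lift-independence through the unchanged ratios $g_{i+1}^{\pm}(g_i^{\pm})^{-1}$ then go exactly as in the paper.

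One small correction: for $i>1$ the rank-two (three, four) reduction has to pass through the Schur complement of the top-left $(i-1)\times(i-1)$ block of $\mathsf T^{\pm}(z)$, not through the literal principal submatrix indexed by $\{i,i+1\}$, whose Gauss coordinates are not $g_i,g_{i+1},e_{i,i+1},f_{i+1,i}$. That reduction still works in the shifted setting, because the top-left block's Gauss factors have invertible leading diagonal coefficients and so the block is invertible.
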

\begin{proof}
The support statement \eqref{eq:shifted-super-gauss-support} follows from
$d_{i+1}^{\pm}-d_i^{\pm}=l_i^{\pm}$.  Hence the two Cartan currents have exactly the supports required in Definition~\ref{Def: shifted quantum affine superalgebra}, with independent invertible leading coefficients.

It remains to check the current relations.  The Gauss-component calculation which gives Theorem~\ref{th: RTT and Drinfeld realization} is a formal consequence of the graded RTT relation and the Gauss factorization.  In particular, its Cartan--Cartan, Cartan--root, mixed, same-sign, ordinary Serre, and odd Serre identities are identities of formal Laurent series; the calculation does not use the zero-shift locations of the first nonzero diagonal coefficients.  In Definition~\ref{def:shifted-super-rtt} the off-diagonal half-current expansions are exactly the same as in the unshifted RTT algebra, while the diagonal series are merely allowed to begin in different degrees and still have invertible leading coefficients.  Therefore the same Gauss-component identities hold coefficientwise.  Applying the same spectral rescaling $z\mapsto\tau_i z$ and normalization by $q_i-q_i^{-1}$ as in Corollary~\ref{coro: iso from Drinfeld realization to RTT realization} gives precisely relations \eqref{SQASrelation1}--\eqref{SQASrelation7}.  This includes the quartic odd Serre relation at $i=m$: its RTT derivation is unchanged because the parity sequence and the off-diagonal Gauss expansions have not been altered.

Thus the displayed assignment defines a homomorphism.  Surjectivity follows from the definition of $\mathscr U^{\mathrm{rtt},\mathrm{sl}}_{\mathbf d^{+},\mathbf d^{-}}$.  Finally, a common change of lift multiplies $g_{i+1}^{\pm}$ and $g_i^{\pm}$ by the same scalar monomial, so their ratio is unchanged.
\end{proof}

The RTT coproduct itself can be constructed without assuming that
$\Gamma_{\mathbf d^{+},\mathbf d^{-}}$ is injective.

\begin{theorem}\label{thm:shifted-super-rtt-coproduct}
For $a=1,2$, let
$\mathbf d_a^{\pm}=(d_{a,b}^{\pm})_{b\in J}$ be weakly decreasing integer sequences.  There is a unique superalgebra homomorphism
\[
\Delta^{\mathrm{rtt}}_{\mathbf d_1,\mathbf d_2}:
\mathscr U^{\mathrm{rtt}}_{\mathbf d_1^{+}+\mathbf d_2^{+},\,\mathbf d_1^{-}+\mathbf d_2^{-}}
\longrightarrow
\mathscr U^{\mathrm{rtt}}_{\mathbf d_1^{+},\mathbf d_1^{-}}
\otimes
\mathscr U^{\mathrm{rtt}}_{\mathbf d_2^{+},\mathbf d_2^{-}}
\]
characterized by graded matrix multiplication
\begin{equation}\label{eq:shifted-super-rtt-coproduct-matrix}
\Delta^{\mathrm{rtt}}_{\mathbf d_1,\mathbf d_2}(\mathsf T^{\epsilon}(z))
=
\mathsf T_{(1)}^{\epsilon}(z)\mathsf T_{(2)}^{\epsilon}(z),
\quad \epsilon\in\{+,-\}.
\end{equation}
Equivalently, if $t_{ij}^{\epsilon}(z)$ denotes the $(i,j)$ entry of $\mathsf T^{\epsilon}(z)$, then
\begin{equation}\label{eq:shifted-super-rtt-coproduct-entry}
\Delta^{\mathrm{rtt}}_{\mathbf d_1,\mathbf d_2}(t_{ij}^{\epsilon}(z))
=
\sum_{k\in J}\epsilon_{ijk}\,
 t_{ik}^{\epsilon}(z)\otimes t_{kj}^{\epsilon}(z).
\end{equation}
\end{theorem}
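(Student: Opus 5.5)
The plan is the standard "matrix product of solutions of the RTT relation is again a solution" argument, with two adjustments for the shifted setting: controlling supports and leading coefficients, and checking that the images still admit a Gauss decomposition with the prescribed shape.

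\textbf{Step 1: the matrix product satisfies the graded RTT relations.} Put $\mathsf T^{\epsilon}_\Delta(z):=\mathsf T^{\epsilon}_{(1)}(z)\mathsf T^{\epsilon}_{(2)}(z)$, a matrix with entries in $A_1\otimes A_2$, where $A_a=\mathscr U^{\mathrm{rtt}}_{\mathbf d_a^{+},\mathbf d_a^{-}}$. Because the entries of $\mathsf T_{(1)}$ and $\mathsf T_{(2)}$ supercommute in the graded tensor product, we have $\mathsf T_{\Delta,12}\mathsf T'_{\Delta,13}=\mathsf T_{(1),12}\mathsf T'_{(1),13}\mathsf T_{(2),12}\mathsf T'_{(2),13}$. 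Applying the RTT relation \eqref{eq:shifted-super-rtt-relations} in the second factor and then in the first moves $R_{23}(z,w)$ through, for every pair $\epsilon,\epsilon'$. The signs are exactly those absorbed by $\epsilon_{ijk}$ in \eqref{eq:shifted-super-rtt-coproduct-entry}. This is the same computation as for $\Delta^{\mathrm{rtt}}$ in Section~\ref{subsec:rtt-gauss}, and it does not depend on where the diagonal series begin.

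\textbf{Step 2: the images have the right Gauss shape.} The generators of the source are not the $t_{ij}$ but the Gauss coefficients, so a homomorphism requires $\mathsf T^{\epsilon}_\Delta(z)=F G E$ with:
\begin{itemize}
\item $F$ and $E$ unitriangular, with the same half-current expansions as in Definition~\ref{def:shifted-super-rtt};
\item diagonal series $g_{\Delta,a}^{\pm}$ beginning at degree $\mp(d_{1,a}^{\pm}+d_{2,a}^{\pm})$, with invertible leading coefficients.
\end{itemize}
The plan is to work in the completed algebra $(A_1\otimes A_2)((z^{-1}))$ for $\epsilon=+$, and in $((z))$ for $\epsilon=-$. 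Write the product as $F_1G_1E_1F_2G_2E_2$ and Gauss-factorize the middle product $E_1F_2$. Weak decrease of $\mathbf d_a^{\pm}$ is what makes this work: it makes $G_1^{-1}$-conjugation of lower-triangular strictly-negative-degree series stay within the allowed expansions. Then the leading diagonal term of the $(a,a)$ entry is $g_{1,a,\mathrm{lead}}\otimes g_{2,a,\mathrm{lead}}$, which is invertible. For $\epsilon=+$ the $f$-series must have only $r>0$ modes, and for $\epsilon=-$ the $e$-series only $r<0$ modes. I would verify this mode-support condition by the triangular recursion: quasideterminant formulas express $g_{\Delta,a}$, $e_{\Delta,ab}$, $f_{\Delta,ba}$ as finite expressions in entries of $\mathsf T_\Delta$ and inverses of leading-invertible diagonal series. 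I expect this support check to be the main obstacle. The honest route is to show that the degree conditions (the $e^{+}$ modes lie in $r\ge0$, the $f^{+}$ modes in $r>0$, after normalizing by $z^{d_a}$) are equivalent to $z^{-\mathbf d}$-twisted $\mathsf T^{+}$ being in $\mathrm{Mat}[[z^{-1}]]$ with upper-triangular invertible constant term. That property is visibly closed under products when the $\mathbf d$ are weakly decreasing, since then the twist $z^{-D}\mathsf T z^{?}$ is compatible.

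\textbf{Step 3: definition, uniqueness, and well-definedness.} Define $\Delta^{\mathrm{rtt}}_{\mathbf d_1,\mathbf d_2}$ on generators by sending the Gauss coefficients to those of $\mathsf T_\Delta^{\epsilon}$, and send the inverse leading generators to tensor products of inverses. The relations are preserved by Steps 1--2. Uniqueness holds because the Gauss coefficients are determined by $\mathsf T^{\epsilon}$ and the invertible leading terms. The entrywise formula \eqref{eq:shifted-super-rtt-coproduct-entry} is then just graded matrix multiplication. Finally, each coefficient of an entry of $\mathsf T_{(1)}\mathsf T_{(2)}$ at a fixed power of $z$ is a finite sum, because both series are one-sided in the same direction. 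Hence the image lies in the uncompleted tensor product.
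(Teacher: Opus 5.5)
Your Step 1, the uniqueness argument, and the opening move of Step 2 are exactly the paper's proof. That opening move is Gauss-factorizing the middle product $E_{(1)}^{\epsilon}F_{(2)}^{\epsilon}$ and using weak decrease to control a diagonal conjugation. The gap is that you stop before the one computation that finishes the support check. You call it the main obstacle and instead propose an ``honest route'' that does not work, because no diagonal twist characterizes the shifted Gauss shape.

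To see this, write $\mathsf T^{+}=F\,z^{D}\widehat G\,E$ with $D=\operatorname{diag}(\mathbf d^{+})$ and $\widehat G$ having invertible constant term. For $D_1+D_2=D$,
\[
z^{-D_1}\mathsf T^{+}z^{-D_2}=(z^{-D_1}Fz^{D_1})\,\widehat G\,(z^{D_2}Ez^{-D_2}).
\]
For $a<b$, the $(b,a)$ and $(a,b)$ entries of the outer factors are $f_{ba}$ and $e_{ab}$ multiplied by $z^{(D_1)_a-(D_1)_b}$ and $z^{(D_2)_a-(D_2)_b}$. So, by uniqueness of Gauss decomposition, the twist lies in $\mathrm{Mat}[[z^{-1}]]$ with upper-triangular invertible constant term for all admissible $F,E$ only if $D_1$ and $D_2$ are both weakly \emph{increasing}. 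That is incompatible with $D$ weakly decreasing unless the shift is a common constant.

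The quasideterminant route is no easier, because individual entries of $\mathsf T^{+}$ carry no useful degree bounds. In $t_{aa}$, a term $f_{ac}g_ce_{ca}$ with $c<a$ can have degree $d_c^{+}-1$, which exceeds $d_a^{+}$ as soon as $d_c^{+}\ge d_a^{+}+2$. For the same reason, the invertible leading coefficient you cite belongs to the diagonal Gauss factor of the product, not to its $(a,a)$ entry.

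What closes the gap is the factorization you had begun. With $E_{(1)}^{\epsilon}F_{(2)}^{\epsilon}=\overline F^{\epsilon}\,\overline G^{\epsilon}\,\overline E^{\epsilon}$, where $\overline G^{\epsilon}$ has constant term $1$,
\[
\mathsf T_{(1)}^{\epsilon}\mathsf T_{(2)}^{\epsilon}
=F_{(1)}^{\epsilon}\bigl(G_{(1)}^{\epsilon}\overline F^{\epsilon}(G_{(1)}^{\epsilon})^{-1}\bigr)\bigl(G_{(1)}^{\epsilon}\overline G^{\epsilon}G_{(2)}^{\epsilon}\bigr)\bigl((G_{(2)}^{\epsilon})^{-1}\overline E^{\epsilon}G_{(2)}^{\epsilon}\bigr)E_{(2)}^{\epsilon}.
\]
Two things are needed here:
\begin{itemize}
\item The lower factor is conjugated as $G_{(1)}(\cdot)G_{(1)}^{-1}$. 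This shifts the degree of the $(b,a)$ entry, $a<b$, by $d_{1,b}^{+}-d_{1,a}^{+}\le0$. Conjugating the other way, as ``$G_1^{-1}$-conjugation'' suggests, would create forbidden nonnegative powers of $z$.
\item You also need the upper factor, conjugated by $G_{(2)}$, whose $(a,b)$ entry shifts by $d_{2,b}^{+}-d_{2,a}^{+}\le0$.
\end{itemize}
In the $z$-adic topology, the analogous inequalities for $\mathbf d_1^{-},\mathbf d_2^{-}$ keep the lower factor in $[[z]]$ and the upper factor in $z[[z]]$. That is the entire support verification in the paper.

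Finally, each coefficient of these Gauss factors is a finite expression. This, rather than finiteness of the entries $t_{ij}$ (which are not generators), is what places the images of the generators in the uncompleted tensor product.
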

\begin{proof}
The RTT relations are preserved by \eqref{eq:shifted-super-rtt-coproduct-matrix} by the standard graded RTT argument: the two tensor factors supercommute and the Perk--Schultz matrix satisfies the graded Yang--Baxter equation.  Once the product matrix is shown to have the required Gauss form, uniqueness of formal Gauss decomposition determines the images of all Gauss generators and also proves uniqueness of the homomorphism.  It therefore remains to verify the prescribed Gauss supports.

Write
\[
\mathsf T_{(a)}^{\epsilon}=F_{(a)}^{\epsilon}G_{(a)}^{\epsilon}E_{(a)}^{\epsilon}
\quad(a=1,2).
\]
In the $z^{-1}$-adic topology for $\epsilon=+$, and in the $z$-adic topology for $\epsilon=-$, the product
$E_{(1)}^{\epsilon}F_{(2)}^{\epsilon}$ admits a unique Gauss decomposition
\[
E_{(1)}^{\epsilon}(z)F_{(2)}^{\epsilon}(z)
=\overline F^{\epsilon}(z)\overline G^{\epsilon}(z)\overline E^{\epsilon}(z).
\]
The diagonal factor has leading term $1$; for $\epsilon=+$ one of the two off-diagonal factors may have a constant upper-triangular term and for $\epsilon=-$ one may have a constant lower-triangular term, exactly as in the unshifted Gauss multiplication.  Consequently,
\begin{align*}
\mathsf T_{(1)}^{\epsilon}\mathsf T_{(2)}^{\epsilon}
=F_{(1)}^{\epsilon}
\bigl(G_{(1)}^{\epsilon}\overline F^{\epsilon}(G_{(1)}^{\epsilon})^{-1}\bigr)
\bigl(G_{(1)}^{\epsilon}\overline G^{\epsilon}G_{(2)}^{\epsilon}\bigr)
\bigl((G_{(2)}^{\epsilon})^{-1}\overline E^{\epsilon}G_{(2)}^{\epsilon}\bigr)
E_{(2)}^{\epsilon}
\end{align*}
is again a Gauss decomposition.

Only the mode bounds require attention.  Conjugating a lower Gauss coefficient in position $(b,a)$, $a<b$, by $G_{(1)}^{+}$ multiplies its leading degree by the ratio $g_b^{+}/g_a^{+}$, whose leading $z$-degree is $d_{1,b}^{+}-d_{1,a}^{+}\leq0$.  Hence no forbidden positive $z$-degree is created in the $+$ expansion.  The corresponding upper-triangular conjugation by $G_{(2)}^{+}$ is controlled by $d_{2,b}^{+}-d_{2,a}^{+}\leq0$.  The same argument in the $z$-adic topology, using the leading degrees $-d_{a,b}^{-}$, gives the required bounds for the $-$ expansion.  The super signs only change coefficients and do not affect these valuations.  Finally,
$G_{(1)}^{\epsilon}\overline G^{\epsilon}G_{(2)}^{\epsilon}$ has diagonal leading degrees equal to the sums
$d_{1,a}^{\pm}+d_{2,a}^{\pm}$, with invertible leading coefficients.  Thus the product satisfies Definition~\ref{def:shifted-super-rtt} for the summed shifts, proving that \eqref{eq:shifted-super-rtt-coproduct-matrix} is well defined.
\end{proof}

\begin{corollary}\label{cor:shifted-super-rtt-coassociativity}
For three pairs of weakly decreasing lifts $\mathbf d_1^{\pm},\mathbf d_2^{\pm},\mathbf d_3^{\pm}$,
\begin{align*}
(\Delta^{\mathrm{rtt}}_{\mathbf d_1,\mathbf d_2}\otimes\id)
\circ\Delta^{\mathrm{rtt}}_{\mathbf d_1+\mathbf d_2,\mathbf d_3}
=
(\id\otimes\Delta^{\mathrm{rtt}}_{\mathbf d_2,\mathbf d_3})
\circ\Delta^{\mathrm{rtt}}_{\mathbf d_1,\mathbf d_2+\mathbf d_3}.
\end{align*}
\end{corollary}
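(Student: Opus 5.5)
Both sides of the claimed identity are superalgebra homomorphisms
\[
\mathscr U^{\mathrm{rtt}}_{\mathbf d_1^{+}+\mathbf d_2^{+}+\mathbf d_3^{+},\,\mathbf d_1^{-}+\mathbf d_2^{-}+\mathbf d_3^{-}}
\longrightarrow
\mathscr U^{\mathrm{rtt}}_{\mathbf d_1^{+},\mathbf d_1^{-}}\otimes
\mathscr U^{\mathrm{rtt}}_{\mathbf d_2^{+},\mathbf d_2^{-}}\otimes
\mathscr U^{\mathrm{rtt}}_{\mathbf d_3^{+},\mathbf d_3^{-}},
\]
well defined by Theorem~\ref{thm:shifted-super-rtt-coproduct}. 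Sums of weakly decreasing lifts are again weakly decreasing, so every intermediate coproduct exists. The plan is to reduce the identity to a matrix identity for $\mathsf T^{\epsilon}(z)$ and then invoke the uniqueness part of Theorem~\ref{thm:shifted-super-rtt-coproduct}.

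\textbf{Step 1: compute both sides on $\mathsf T^{\epsilon}(z)$.} By \eqref{eq:shifted-super-rtt-coproduct-matrix}, the left side sends $\mathsf T^{\epsilon}(z)$ to $(\mathsf T^{\epsilon}_{(1)}(z)\mathsf T^{\epsilon}_{(2)}(z))\mathsf T^{\epsilon}_{(3)}(z)$. The right side sends it to $\mathsf T^{\epsilon}_{(1)}(z)(\mathsf T^{\epsilon}_{(2)}(z)\mathsf T^{\epsilon}_{(3)}(z))$. These agree by associativity of graded matrix multiplication. At the level of entries \eqref{eq:shifted-super-rtt-coproduct-entry}, both sides equal
\[
\sum_{k,l\in J}\epsilon\, t_{ik}^{\epsilon}(z)\otimes t_{kl}^{\epsilon}(z)\otimes t_{lj}^{\epsilon}(z).
\]
The main point to check is that the two sign products agree: on one side $\epsilon_{ijl}\epsilon_{ilk}$, on the other $\epsilon_{ijk}\epsilon_{kjl}$. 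This holds because both equal the Koszul sign obtained by moving the odd parts past one another in the graded triple tensor product. I would verify it directly by writing
\[
\epsilon_{ijk}=(-1)^{(|i|+|k|)(|k|+|j|)}
\]
and expanding both exponents modulo $2$.

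\textbf{Step 2: pass from $\mathsf T^{\epsilon}(z)$ to all generators.} The generators of the source algebra are the Gauss coefficients $e_{ab}^{(r)}$, $f_{ba}^{(r)}$, $g^{\pm}_{a,r}$ and the inverses of the leading diagonal coefficients. These are determined by $\mathsf T^{\pm}(z)$ through the unique formal Gauss decomposition used in the proof of Theorem~\ref{thm:shifted-super-rtt-coproduct}. A homomorphism is therefore determined by its values on the entries of $\mathsf T^{\pm}(z)$, together with the inverses, which are forced because inverses are unique in the target. The uniqueness statement of Theorem~\ref{thm:shifted-super-rtt-coproduct} is what legitimizes this step. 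Applied to the composite maps, it says that two homomorphisms agreeing on $\mathsf T^{\pm}(z)$ agree on the Gauss factors $F,G,E$, because those factors are recovered from $\mathsf T^{\pm}(z)$ by the same formal Gauss decomposition in the triple tensor product.

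\textbf{Expected obstacle.} The main obstacle is Step 2: making sure the Gauss decomposition in the triple tensor product is unique, so that it really determines the generators. I would handle this by using the same valuation argument as in Theorem~\ref{thm:shifted-super-rtt-coproduct}, in the $z^{-1}$-adic topology for $\epsilon=+$ and the $z$-adic topology for $\epsilon=-$. The key input there is that the diagonal leading coefficients are invertible.
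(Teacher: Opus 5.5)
Your proposal is correct and is essentially the paper's proof: both composites send $\mathsf T^{\epsilon}(z)$ to the graded triple product $\mathsf T^{\epsilon}_{(1)}(z)\mathsf T^{\epsilon}_{(2)}(z)\mathsf T^{\epsilon}_{(3)}(z)$, and your sign identity $\epsilon_{ijl}\epsilon_{ilk}=\epsilon_{ijk}\epsilon_{kjl}$ is right. Uniqueness of the formal Gauss decomposition then forces agreement on all Gauss generators. The step you flag as the main obstacle is actually light: both composites are already known to exist, so you only need uniqueness of $FGE$ factorizations, which follows from comparing lower and upper triangular parts using the invertibility of the diagonal factor, with no valuation argument required.
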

\begin{proof}
Both sides send $\mathsf T^{\epsilon}(z)$ to the same graded triple matrix product
$\mathsf T_{(1)}^{\epsilon}(z)\mathsf T_{(2)}^{\epsilon}(z)\mathsf T_{(3)}^{\epsilon}(z)$.  Uniqueness of the formal Gauss decomposition then implies equality on all Gauss generators.
\end{proof}

\subsubsection{Three-factor coefficient separation}\label{subsubsec:three-factor-rtt-separation}
The epimorphism of Proposition~\ref{prop:shifted-super-df-epi} sends the shifted PBWD basis of Corollary~\ref{cor:shifted-pbw} to a spanning set of the shifted RTT superalgebra $\mathscr U^{\mathrm{rtt}}_{\mathbf d^{+},\mathbf d^{-}}$.  
To prove linear independence, we attach two zero-shift RTT factors and extract an extremal coefficient, using only the ordinary Ding--Frenkel isomorphism, the RTT matrix coproduct, and the shifted PBWD basis.

Write
\[
 U=\mathcal U_q^{\mu_+,\mu_-}(\widehat{\mathfrak{sl}}_{m|n}),
 \quad
 \mathscr R=\mathscr U^{\mathrm{rtt},\mathrm{sl}}_{\mathbf d^+,\mathbf d^-},
 \quad
 \Gamma=\Gamma_{\mathbf d^+,\mathbf d^-}.
\]
Let
$K_i^+=\phi_{i,-l_i^+}^+$,
$K_i^-=\phi_{i,l_i^-}^-$,
and write a Cartan PBWD monomial in the form
\[
 H_{\mathbf c}=K_{\mathbf u}H_{\mathbf v}^{\circ},\quad
 K_{\mathbf u}=\prod_i(K_i^+)^{u_i^+}(K_i^-)^{u_i^-},
 \quad
 H_{\mathbf v}^{\circ}
 =\prod_{i,r>0}h_{i,r}^{v_{i,r}^+}h_{i,-r}^{v_{i,r}^-}.
\]
For the root labels of Corollary~\ref{cor:shifted-pbw}, put
\[
 \mathrm{wt}(\mathbf a^\pm)
 =\sum_{\beta,r}\mathbf a^\pm(\beta,r)\,\beta
 =\sum_i\omega_i^\pm\alpha_i,
 \quad
 |\mathbf v|=\sum_{i,r>0}(v_{i,r}^++v_{i,r}^-).
\]
Thus a full PBWD basis element will be written
\begin{equation}\label{eq:three-factor-full-pbwd}
 B_\lambda=F_{\mathbf a^-}K_{\mathbf u}H_{\mathbf v}^{\circ}E_{\mathbf a^+}.
\end{equation}

We first record the zero-shift diagonal rescalings used in the injectivity argument.
\begin{lemma}
\label{lem:zero-shift-rtt-diagonal-rescaling}
Let
\[
 \mathbb L_{\mathbf p}=\mathbb C(q)[p_1^{\pm1},\ldots,p_{M-1}^{\pm1}],
 \quad
 \mathbb L_{\mathbf r}=\mathbb C(q)[r_1^{\pm1},\ldots,r_{M-1}^{\pm1}].
\]
There are superalgebra homomorphisms from the full zero-shift extended RTT superalgebra of Definition~\ref{def:shifted-super-rtt} to Laurent extensions of the ordinary quantum affine superalgebra $\mathcal U_q(\widehat{\mathfrak{sl}}_{m|n})$,
\begin{align}
 \rho_{\mathbf p}^{\mathrm L}:\mathscr U^{\mathrm{rtt}}_{\mathbf0,\mathbf0}
 &\longrightarrow
 \mathcal U_q(\widehat{\mathfrak{sl}}_{m|n})\otimes\mathbb L_{\mathbf p},
 \label{eq:rho-left}\\
 \rho_{\mathbf r}^{\mathrm R}:\mathscr U^{\mathrm{rtt}}_{\mathbf0,\mathbf0}
 &\longrightarrow
 \mathcal U_q(\widehat{\mathfrak{sl}}_{m|n})\otimes\mathbb L_{\mathbf r},
 \label{eq:rho-right}
\end{align}
which on the simple Gauss currents are given by
\begin{align}
 \rho_{\mathbf p}^{\mathrm L}(\mathsf x_i^+(z))&=x_i^+(z),&
 \rho_{\mathbf p}^{\mathrm L}(\mathsf x_i^-(z))&=p_i x_i^-(z),&
 \rho_{\mathbf p}^{\mathrm L}(\mathsf\phi_i^\pm(z))&=p_i\phi_i^\pm(z),
 \label{eq:rho-left-currents}\\
 \rho_{\mathbf r}^{\mathrm R}(\mathsf x_i^+(z))&=r_i x_i^+(z),&
 \rho_{\mathbf r}^{\mathrm R}(\mathsf x_i^-(z))&=x_i^-(z),&
 \rho_{\mathbf r}^{\mathrm R}(\mathsf\phi_i^\pm(z))&=r_i\phi_i^\pm(z).
 \label{eq:rho-right-currents}
\end{align}
In particular, if \(\Gamma_{\mathbf0,\mathbf0}\) denotes the zero-shift
map of Proposition~\ref{prop:shifted-super-df-epi}, then both
\(\rho_{\mathbf p}^{\mathrm L}\Gamma_{\mathbf0,\mathbf0}\) and
\(\rho_{\mathbf r}^{\mathrm R}\Gamma_{\mathbf0,\mathbf0}\) are injective.
\end{lemma}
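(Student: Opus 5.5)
The plan is to construct $\rho^{\mathrm L}_{\mathbf p}$ and $\rho^{\mathrm R}_{\mathbf r}$ by composing two maps. The first is an automorphism of the zero-shift extended RTT superalgebra given by diagonal rescaling. The second is a quotient map onto the special-linear RTT algebra, which Corollary~\ref{coro: iso from Drinfeld realization to RTT realization} identifies with $\mathcal U_q(\widehat{\mathfrak{sl}}_{m|n})$.

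\emph{Step 1: rescale by constant diagonal matrices.} For an invertible diagonal matrix $D=\sum_a c_a\mathsf E_{aa}$ with even scalar entries in a commutative Laurent ring, the Perk--Schultz matrix satisfies $[R(z,w),D\otimes D]=0$. Hence the substitutions $\mathsf T^\epsilon(z)\mapsto D\,\mathsf T^\epsilon(z)$ and $\mathsf T^\epsilon(z)\mapsto \mathsf T^\epsilon(z)D$ preserve all four RTT relations \eqref{eq:shifted-super-rtt-relations}. These substitutions act on the Gauss factors as follows.
\begin{itemize}
\item Left multiplication conjugates $F$ and rescales $G$: $f_{ba}\mapsto c_bc_a^{-1}f_{ba}$ and $g_a\mapsto c_ag_a$, with $E$ unchanged.
\item Right multiplication rescales $G$ and conjugates $E$: $g_a\mapsto g_ac_a$ and $e_{ab}\mapsto c_a^{-1}c_b e_{ab}$.
\end{itemize}
In both cases $\mathsf\phi_i\mapsto c_{i+1}c_i^{-1}\mathsf\phi_i$. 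Choose the $c_a$ with $c_{i+1}c_i^{-1}=p_i$ (for instance $c_1=1$ and $c_{a+1}=p_1\cdots p_a$). Left multiplication then gives $\mathsf x_i^-\mapsto p_i\mathsf x_i^-$, $\mathsf\phi_i^\pm\mapsto p_i\mathsf\phi_i^\pm$, $\mathsf x_i^+\mapsto\mathsf x_i^+$. Right multiplication with $r$ in place of $p$ gives the $\mathbf r$-formulas. The invertibility of the leading diagonal coefficients is preserved, so we obtain endomorphisms of $\mathscr U^{\mathrm{rtt}}_{\mathbf0,\mathbf0}\otimes\mathbb L$.

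\emph{Step 2: pass to $\mathcal U_q(\widehat{\mathfrak{sl}}_{m|n})$.} Since $\mathscr U^{\mathrm{rtt}}_{\mathbf0,\mathbf0}$ is generated by the simple Gauss currents together with the diagonal Cartan data, I will compose Step 1 with a map $\pi:\mathscr U^{\mathrm{rtt}}_{\mathbf0,\mathbf0}\to\mathcal U_q(\widehat{\mathfrak{sl}}_{m|n})\otimes\mathbb L$ that sends simple currents to their Drinfeld counterparts. The natural candidate is the quotient by $g^+_{a,0}g^-_{a,0}=1$ together with a normalization of the quantum Berezinians, followed by $\gamma^{-1}$ from Corollary~\ref{coro: iso from Drinfeld realization to RTT realization}. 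The catch is that the rescaled series $c_a g_a$ no longer satisfy $g^+_{a,0}g^-_{a,0}=1$. To handle this, I will rescale only one side. Apply the rescaling by $D$ to $\mathsf T^+$ alone and leave $\mathsf T^-$ unchanged. Then use the central element $c_i$ of Proposition~\ref{prop:central-leading-reduction}: the composite sends $c_i\mapsto p_i$ on the current side. Equivalently, define $\rho^{\mathrm L}_{\mathbf p}$ directly on $\mathcal U^{0,0}_q$ through $\Gamma_{\mathbf0,\mathbf0}$ as
\[
\phi_i^\pm\mapsto p_i\phi_i^\pm,\qquad x_i^-\mapsto p_ix_i^-,\qquad x_i^+\mapsto x_i^+ .
\]
Checking this against Definition~\ref{Def: shifted quantum affine superalgebra} is immediate: every relation is homogeneous of the same $\mathbf p$-degree in each term. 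The mixed relation has degree $p_i$ on both sides, and the $c_i-1$ relation is irrelevant because $\mathcal U^{0,0}_q$ does not impose it. What must still be shown is that this map factors through $\mathscr U^{\mathrm{rtt}}_{\mathbf0,\mathbf0}$, that is, that it kills $\ker\Gamma_{\mathbf0,\mathbf0}$. This is where the RTT-level construction of Step 1 is needed. I expect the main obstacle to be producing a genuine map on all of $\mathscr U^{\mathrm{rtt}}_{\mathbf0,\mathbf0}$, including the non-simple diagonal data, that lands in $\mathcal U_q(\widehat{\mathfrak{sl}}_{m|n})$. I would handle it by first rescaling $\mathsf T^\pm$ by an overall central scalar series so that the quantum Berezinians become $1$ and $g^+_{a,0}g^-_{a,0}$ becomes constant. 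After that, I would invoke the special-linear isomorphism $\gamma$ tensored with $\mathbb L$.

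\emph{Step 3: injectivity.} The composite $\rho^{\mathrm L}_{\mathbf p}\Gamma_{\mathbf0,\mathbf0}$ on $\mathcal U^{0,0}_q$ maps each PBWD basis element $F_{\mathbf a^-}K_{\mathbf u}H^\circ_{\mathbf v}E_{\mathbf a^+}$ of Corollary~\ref{cor:shifted-pbw} to a nonzero scalar times
\[
F_{\mathbf a^-}\,k^{\mathbf u^+-\mathbf u^-}H^\circ_{\mathbf v}E_{\mathbf a^+}\otimes\mathbf p^{\mathbf u^++\mathbf u^-+\omega^-} .
\]
This uses $K_i^+\mapsto p_ik_i$ and $K_i^-\mapsto p_ik_i^{-1}$. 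Distinct pairs $(\mathbf u^+,\mathbf u^-)$ with the same $\mathbf a^\pm,\mathbf v$ are separated by the pair consisting of the $k$-exponent and the $\mathbf p$-exponent, since $\omega^-$ is fixed by $\mathbf a^-$. Since the PBWD basis of $\mathcal U_q(\widehat{\mathfrak{sl}}_{m|n})$ tensored with the monomials of $\mathbb L_{\mathbf p}$ is linearly independent, the images are linearly independent, which proves injectivity. The same argument applies to $\mathbf r$, with $\omega^+$ in place of $\omega^-$.
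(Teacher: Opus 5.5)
Your Steps~1 and~3 coincide with the paper's argument. Both use the diagonal matrix $D$ with $c_{i+1}c_i^{-1}=p_i$ and the identity $[R(z,w),D\otimes D]=0$. Both use the Gauss bookkeeping $DFGE=(DFD^{-1})(DG)E$, respectively $FGED=F(GD)(D^{-1}ED)$. And both recover $\mathbf u_\pm$ from $\boldsymbol\delta$ and $\boldsymbol\sigma$ in the PBWD expansion.

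\textbf{The gap is in Step~2.} You never actually produce a homomorphism on the \emph{full} algebra $\mathscr U^{\mathrm{rtt}}_{\mathbf0,\mathbf0}$, and the obstruction you name is not real. The relations $g^+_{a,0}g^-_{a,0}=1$ and the Berezinian relations hold only in the target, not in the source. So compose your rescaling $\theta_D\colon\mathsf T^\pm\mapsto D\mathsf T^\pm$ with the natural homomorphism $\pi\colon\mathscr U^{\mathrm{rtt}}_{\mathbf0,\mathbf0}\otimes\mathbb L_{\mathbf p}\to\mathcal U_q^{\mathrm{rtt}}(\widehat{\mathfrak{sl}}_{m|n})\otimes\mathbb L_{\mathbf p}$. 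The composite $\pi\circ\theta_D$ is automatically a homomorphism; nothing has to hold for the elements $c_ag_a^\pm$ inside the source. This is exactly the paper's proof, which phrases it via the universal property. The matrices $D_{\mathbf p}T^\pm_{\mathrm{sl}}(z)$ satisfy the four graded RTT relations and have the required Gauss supports and invertible leading diagonal coefficients. Definition~\ref{def:shifted-super-rtt} therefore yields the map, which is then composed with $\gamma^{-1}$. Your first sentence of Step~2 was already the proof.

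\textbf{The substitutes you offer instead do not work.}

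\emph{Rescaling only $\mathsf T^+$.} This does not preserve the mixed relations $R_{23}\mathsf T^{\pm}_{12}\mathsf T^{\mp}_{13}=\mathsf T^{\mp}_{13}\mathsf T^{\pm}_{12}R_{23}$. Preserving them would require $R$ to commute with $D\otimes1$, which fails on the terms $\mathsf E_{ij}\otimes\mathsf E_{ji}$ unless $c_i=c_j$. It would also scale $f^+_{i+1,i}$ and $\phi_i^+$ but not $f^-_{i+1,i}$ and $\phi_i^-$. So it contradicts \eqref{eq:rho-left-currents} and is not ``equivalent'' to the current map you write next.

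\emph{Factoring through $\Gamma_{\mathbf0,\mathbf0}$.} Defining the current-level map $\psi$ on $\mathcal U_q^{0,0}$ and factoring it through $\Gamma_{\mathbf0,\mathbf0}$ is circular. Your own Step~3 shows $\psi$ is injective, so the factorization needs $\ker\Gamma_{\mathbf0,\mathbf0}=0$. That is precisely what this lemma is later used to prove. Moreover, it would only give a map on the image $\mathscr U^{\mathrm{rtt},\mathrm{sl}}_{\mathbf0,\mathbf0}$. But $\Xi_{\mathbf p,\mathbf r}$ applies $\rho^{\mathrm L}_{\mathbf p}$ and $\rho^{\mathrm R}_{\mathbf r}$ to the outer factors of the three-factor RTT product, whose coefficients are not known to lie in the simple-current subalgebra.

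\emph{Normalizing the Berezinian.} Renormalizing by a central series is left unexecuted and is unnecessary. It would also force you to adjoin roots, since a scalar series $f(z)$ enters the leading coefficient of $qB(f\mathsf T)$ as $f_0^{\,m-n}$.
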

\begin{proof}
Choose an even diagonal matrix
\[
 D_{\mathbf p}=\operatorname{diag}(d_1,\ldots,d_M),\quad
 d_1=1,\quad d_{i+1}=p_i d_i.
\]
Since \((D_{\mathbf p}\otimes D_{\mathbf p})R(z,w)
=R(z,w)(D_{\mathbf p}\otimes D_{\mathbf p})\), substituting
\(D_{\mathbf p}T^\pm(z)\) for the two zero-shift RTT matrices preserves all
four RTT relations.  Its Gauss decomposition is
\[
 D_{\mathbf p}F^\pm G^\pm E^\pm
 =(D_{\mathbf p}F^\pm D_{\mathbf p}^{-1})
  (D_{\mathbf p}G^\pm)E^\pm.
\]
Hence the lower simple Gauss coordinate is multiplied by \(p_i\), the
upper one is unchanged, and the consecutive diagonal ratio is multiplied
by \(p_i\).  Now let $T_{\mathrm{sl}}^{\pm}(z)$ denote the images of the zero-shift RTT
matrices in the quantum Berezinian quotient
$\mathcal U_q^{\mathrm{rtt}}(\widehat{\mathfrak{sl}}_{m|n})$.
The matrices $D_{\mathbf p}T_{\mathrm{sl}}^{\pm}(z)$ still satisfy all
four general-linear RTT relations.  Since the \emph{source} of
\eqref{eq:rho-left} is the extended RTT superalgebra of Definition~\ref{def:shifted-super-rtt}, it imposes no
quantum Berezinian relation that these images have to satisfy.  The universal
property of the RTT presentation therefore gives a homomorphism to the
ordinary special-linear RTT quotient tensored with $\mathbb L_{\mathbf p}$.
Composing with the inverse of the isomorphism in
Corollary~\ref{coro: iso from Drinfeld realization to RTT realization}
gives \eqref{eq:rho-left} and the simple-current formulas
\eqref{eq:rho-left-currents}.  For the right rescaling, choose $D_{\mathbf r}=\operatorname{diag}(e_1,\ldots,e_M)$ with $e_1=1$ and $e_{i+1}=r_i e_i$. Then \(T^\pm(z)D_{\mathbf r}\) gives \eqref{eq:rho-right}--\eqref{eq:rho-right-currents} in the same way.

For injectivity, put
\[
 \boldsymbol\delta=\mathbf u_+-\mathbf u_-,
 \quad
 \boldsymbol\sigma=\mathbf u_++\mathbf u_-.
\]
The PBWD basis and \eqref{eq:rho-left-currents} give
\begin{equation}\label{eq:rho-left-pbwd}
 (\rho_{\mathbf p}^{\mathrm L}\Gamma_{\mathbf0,\mathbf0})(B_\lambda)
 =\mathbf p^{\boldsymbol\omega^-+\boldsymbol\sigma}
 F_{\mathbf a^-}
 \Bigl(\prod_i k_i^{\delta_i}\Bigr)
 H_{\mathbf v}^{\circ}E_{\mathbf a^+},
\end{equation}
where \(k_i=\phi_{i,0}^+\) in $\mathcal U_q(\widehat{\mathfrak{sl}}_{m|n})$.
The PBWD monomial in $\mathcal U_q(\widehat{\mathfrak{sl}}_{m|n})$ determines
\(\mathbf a^-,\mathbf a^+,\mathbf v\), and
\(\boldsymbol\delta\); after \(\boldsymbol\omega^-\) is known, the
Laurent exponent determines \(\boldsymbol\sigma\).  Thus it determines
\(\mathbf u_+\) and \(\mathbf u_-\) separately.  Distinct shifted PBWD
basis elements therefore have distinct images in
\eqref{eq:rho-left-pbwd}.  This proves the first injectivity assertion;
the second is analogous.
\end{proof}

Let \(\Delta^{\mathrm{rtt},(3)}_{\mathbf d^+,\mathbf d^-}\) denote the
three-factor RTT homomorphism obtained from
Corollary~\ref{cor:shifted-super-rtt-coassociativity}, characterized by
\[
 \mathsf T^\epsilon(z)\longmapsto
 \mathsf T_{(\mathrm L)}^\epsilon(z)
 \mathsf T_{(\mathrm M)}^\epsilon(z)
 \mathsf T_{(\mathrm R)}^\epsilon(z),
\]
with zero shifts on the left and right and
\((\mathbf d^+,\mathbf d^-)\) in the middle.  Since the two maps in
Lemma~\ref{lem:zero-shift-rtt-diagonal-rescaling} are defined on the
\emph{full} zero-shift RTT algebra, they may be applied directly to the two
outer factors of this three-factor RTT homomorphism.  No current-level
coproduct compatibility, and in particular no injectivity of $\Gamma$, is
used in this step.  Restricting the resulting map to $\mathscr R$ gives
\begin{equation}\label{eq:three-factor-Xi}
 \Xi_{\mathbf p,\mathbf r}:\mathscr R\longrightarrow
 \bigl(\mathcal U_q(\widehat{\mathfrak{sl}}_{m|n})\otimes\mathbb L_{\mathbf p}\bigr)
 \otimes\mathscr U^{\mathrm{rtt}}_{\mathbf d^+,\mathbf d^-}
 \otimes
 \bigl(\mathcal U_q(\widehat{\mathfrak{sl}}_{m|n})\otimes\mathbb L_{\mathbf r}\bigr).
\end{equation}
We keep the full shifted RTT algebra in the middle; no injectivity statement about it is used below.  All coefficient extractions below are algebraic: for a fixed matrix coefficient the RTT matrix product contains only finitely many decompositions of its loop degree, and a fixed PBWD monomial is a finite product of such coefficients.  Thus its image has finite support in each outer PBWD degree used below.

On each outer copy of $\mathcal U_q(\widehat{\mathfrak{sl}}_{m|n})$, expand in the PBWD basis.  We use the standard filtration obtained by bounding separately the total heights of the negative and positive PBWD root factors. Cartan--root reordering preserves these two bounds, while a mixed positive--negative commutator is strictly lower in at least one of them; this is the usual triangularity underlying the PBWD normal ordering.  Thus the associated graded records the two root halves separately. For a basis monomial \(FHE\), let \(d_-(FHE)\) and \(d_+(FHE)\) be the heights of the negative and positive root weights, respectively, and let \(d_0(FHE)\) be the total degree in the nonleading Cartan modes \(h_{i,\pm r}\).  Order pairs of outer PBWD monomials first lexicographically by
\begin{equation}\label{eq:outer-degree-order}
\bigl(d_-^{\mathrm L},\ d_+^{\mathrm R},\ -d_+^{\mathrm L},\ -d_-^{\mathrm R},\ d_0^{\mathrm L}\bigr).
\end{equation}
The proof below shows that, for the image of each full PBWD monomial, the maximal degree in \eqref{eq:outer-degree-order} contains a single outer PBWD--Laurent monomial.  Thus any fixed refinement to a total order gives the same leading term; no choice of refinement is used to create the separation.

\begin{lemma}
\label{lem:three-factor-rtt-leading-coefficient}
For \(B_\lambda\) as in \eqref{eq:three-factor-full-pbwd}, put
$\boldsymbol\delta=\mathbf u_+-\mathbf u_-$,
$\boldsymbol\sigma=\mathbf u_++\mathbf u_-$.
With respect to \eqref{eq:outer-degree-order}, the image
\(\Xi_{\mathbf p,\mathbf r}\Gamma(B_\lambda)\) has the unique leading
outer PBWD term
\begin{equation}\label{eq:three-factor-rtt-leading-term}
 \mathbf p^{\boldsymbol\omega^-+\boldsymbol\sigma}
 \mathbf r^{\boldsymbol\omega^++\boldsymbol\sigma}
 \Bigl(F_{\mathbf a^-}k^{\boldsymbol\delta}H_{\mathbf v}^{\circ}\Bigr)
 \otimes \Gamma(K_{\mathbf u})\otimes
 \Bigl(k^{\boldsymbol\delta}E_{\mathbf a^+}\Bigr).
\end{equation}
Here \(k^{\boldsymbol\delta}=\prod_i k_i^{\delta_i}\).  The outer labels in
\eqref{eq:three-factor-rtt-leading-term} are different for different full
PBWD labels \(\lambda\).
\end{lemma}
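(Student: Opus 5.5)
Since $\Xi_{\mathbf p,\mathbf r}\Gamma$ is a superalgebra homomorphism, I will compute it factor by factor on $B_\lambda=F_{\mathbf a^-}K_{\mathbf u}H^\circ_{\mathbf v}E_{\mathbf a^+}$ and then multiply. First I record the images of the simple generators under the three-factor map $\mathsf T^\epsilon\mapsto\mathsf T^\epsilon_{(\mathrm L)}\mathsf T^\epsilon_{(\mathrm M)}\mathsf T^\epsilon_{(\mathrm R)}$. Multiplying out the Gauss factorizations as in the proof of Theorem~\ref{thm:shifted-super-rtt-coproduct} gives the image of $\mathsf x_i^+(z)$: it has one term $\mathsf x_i^+\otimes1\otimes1$ with only left-positive content, a term $\phi$-type$\otimes\mathsf x_i^+\otimes\phi$-type, a term $\phi\otimes\phi\otimes\mathsf x_i^+$ with right-positive content, and further terms containing products of Gauss coordinates. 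Each further term has strictly more root content on some outer factor. Dually for $\mathsf x_i^-(z)$. The leading Cartan coefficients are grouplike at leading order, $K_i^\pm\mapsto K_i^\pm\otimes K_i^\pm\otimes K_i^\pm$ up to the rescalings. After applying $\rho^{\mathrm L}_{\mathbf p}$ and $\rho^{\mathrm R}_{\mathbf r}$, the outer leading Cartan coefficients contribute $p_ik_i^{\pm1}$ and $r_ik_i^{\pm1}$. This is the source of $\mathbf p^{\boldsymbol\sigma}\mathbf r^{\boldsymbol\sigma}k^{\boldsymbol\delta}$, since $\phi^-_{i,0}=k_i^{-1}$ in the central quotient. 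The factor $\mathbf p^{\boldsymbol\omega^-}$ comes from the lower coordinates on the left, and $\mathbf r^{\boldsymbol\omega^+}$ from the upper coordinates on the right.

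Next I isolate the extremal term of the order \eqref{eq:outer-degree-order}. The first entry $d_-^{\mathrm L}$ is bounded by $\mathrm{wt}(\mathbf a^-)$, because the total negative weight is conserved across the three factors. It is attained only when every negative root current of $F_{\mathbf a^-}$ deposits its entire root on the left factor. Given this, each such factor contributes the $\mathsf x^-\otimes\phi^+\otimes\phi^+$ component, so the middle and right receive only Cartan currents. Symmetrically, $d_+^{\mathrm R}$ is maximal exactly when all of $E_{\mathbf a^+}$ goes to the right, with Cartan currents $\phi^-$ on the left and middle. The entries $-d_+^{\mathrm L}$ and $-d_-^{\mathrm R}$ then rule out extra root pairs created on the outer factors by off-diagonal Gauss mixing. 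The Cartan currents that accompany the roots and pass through the middle must then be commuted past root factors into PBWD normal order. By the triangularity of the PBWD filtration quoted before the lemma, this reordering changes only lower terms. The leading parts are the leading coefficients, and these combine into $\Gamma(K_{\mathbf u})$ in the middle and $k^{\boldsymbol\delta}$ on the outsides; the accompanying loop-degree tails are controlled by the final entry $d_0^{\mathrm L}$. For $H^\circ_{\mathbf v}$, the grouplike Cartan currents make each $h_{i,r}$ primitive modulo lower terms. Maximizing $d_0^{\mathrm L}$ therefore forces every $h_{i,\pm r}$ onto the left, giving $H^\circ_{\mathbf v}$ there with coefficient one.

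Finally, for distinctness of labels: the left PBWD monomial determines $\mathbf a^-$, $\mathbf v$ and $\boldsymbol\delta$, and the right one determines $\mathbf a^+$. From these, $\boldsymbol\omega^\pm$ are known, and the $\mathbf p$-exponent then gives $\boldsymbol\sigma$. So $\mathbf u_\pm$ are recovered, as in the injectivity part of Lemma~\ref{lem:zero-shift-rtt-diagonal-rescaling}.

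The main obstacle is the uniqueness of the leading term. Higher Gauss coordinates $e_{ab}$, $f_{ba}$ with $b>a+1$ appear in the matrix product and could contribute terms of the same outer degrees. I would first show by weight bookkeeping that any such term, once rewritten in the outer PBWD basis, either lowers $d_-^{\mathrm L}$ or $d_+^{\mathrm R}$, or raises $d_+^{\mathrm L}$ or $d_-^{\mathrm R}$. The tool is the explicit quantum recursions of Proposition~\ref{prop: formulas for eji0,eji-1,fij0,fij1}, which express composite coordinates through simple ones.
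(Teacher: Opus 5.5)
There is a genuine gap. Your extremal-term analysis rests on a description of the images of the simple currents that is not correct for the three-factor RTT matrix product.

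\textbf{The images of the simple currents.} You use the pattern of the formal Drinfeld coproduct: a negative current placed on the left is accompanied by $\phi^+\otimes\phi^+$, and a positive current placed on the right by $\phi^-\otimes\phi^-$. For $\Xi_{\mathbf p,\mathbf r}$ the correct picture comes from writing $T_a^\epsilon=F_a^\epsilon G_a^\epsilon E_a^\epsilon$ and $Q^\epsilon=G_{\mathrm L}^\epsilon E_{\mathrm L}^\epsilon T_{\mathrm M}^\epsilon F_{\mathrm R}^\epsilon G_{\mathrm R}^\epsilon$. Then the three-factor product is exactly $F_{\mathrm L}^\epsilon Q^\epsilon E_{\mathrm R}^\epsilon$. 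Gauss elimination of $Q^\epsilon$ inverts only leading diagonal coefficients, so its Gauss factors have coefficients in $\mathcal C=B_{\mathrm L}^{\geq0}\otimes\mathscr U^{\mathrm{rtt}}_{\mathbf d^+,\mathbf d^-}\otimes B_{\mathrm R}^{\leq0}$. Uniqueness of Gauss decomposition then gives $F_{\mathrm{tot}}^\epsilon=F_{\mathrm L}^\epsilon\widehat F^\epsilon$, $G^\epsilon_{\mathrm{tot}}=\widehat G^\epsilon$, and $E^\epsilon_{\mathrm{tot}}=\widehat E^\epsilon E_{\mathrm R}^\epsilon$. Consequently:
\begin{itemize}
\item a simple negative current maps to the left simple current tensored with $1\otimes1$, plus an element of $\mathcal C$;
\item a simple positive current maps to $1\otimes1$ tensored with the right simple current, plus an element of $\mathcal C$;
\item every Cartan image lies in $\mathcal C$.
\end{itemize}
So the extremal term carries no Cartan companions at all. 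With your companions, their leading coefficients would multiply into the middle and outer factors. The middle coefficient would then not be $\Gamma(K_{\mathbf u})$, and the outer $k$-powers and $\mathbf p,\mathbf r$ exponents would change. Moreover, the nonleading modes of the left companions $\phi^-$ would raise $d_0^{\mathrm L}$, so the last entry of \eqref{eq:outer-degree-order} would not isolate $H_{\mathbf v}^\circ$. Your sentence that everything ``combines into $\Gamma(K_{\mathbf u})$ in the middle and $k^{\boldsymbol\delta}$ on the outsides'' therefore contradicts your own description rather than following from it.

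\textbf{The height bounds.} The same missing fact is what the bounds need. Conservation of total weight does not bound $d_-^{\mathrm L}$: it allows a left factor to carry extra negative content compensated by positive content in another factor. Your claim that remainders have ``strictly more root content on some outer factor'' does not exclude this. What excludes it is that all remainders lie in $\mathcal C$, so they contain no left negative and no right positive modes. With that, expand each PBWD root vector into simple words and mark the chosen summands. This gives $d_-^{\mathrm L}\le\sum_i\omega_i^-$ and $d_+^{\mathrm R}\le\sum_i\omega_i^+$, with equality only for fully marked words. These extremal terms are already in negative--Cartan--positive order in each outer factor, so no reordering is needed. Weight bookkeeping with the recursions of Proposition~\ref{prop: formulas for eji0,eji-1,fij0,fij1} will not produce this support statement.

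\textbf{The Cartan part.} The Cartan currents are not grouplike for the RTT coproduct; compare the $E\otimes F$ corrections in $\Delta(h_{i,\pm1})$ in Proposition~\ref{prop:explicit-finite-generator-coproduct}. You need the quotient $\pi$ of $\mathcal C$ by the left positive and right negative modes, under which
$\pi(Q^\epsilon)=(G_{\mathrm L}^\epsilon F_{\mathrm M}^\epsilon(G_{\mathrm L}^\epsilon)^{-1})(G_{\mathrm L}^\epsilon G_{\mathrm M}^\epsilon G_{\mathrm R}^\epsilon)((G_{\mathrm R}^\epsilon)^{-1}E_{\mathrm M}^\epsilon G_{\mathrm R}^\epsilon)$.
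This shows that the leading Cartan coefficients multiply and the $h_{i,r}$ add. Maximizing $d_0^{\mathrm L}$ then leaves $H_{\mathbf v}^\circ$ on the left with coefficient one.

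Your final label-recovery step is fine once the leading term is correctly established.
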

\begin{proof}
We first isolate the two outer Gauss factors exactly.  This avoids
taking a lowest-degree component in the full outer algebra, where a
mixed root commutator can produce a Cartan term.
Write $T_a^\epsilon=F_a^\epsilon G_a^\epsilon E_a^\epsilon$ and set
\[
 Q^\epsilon=G_{\mathrm L}^\epsilon E_{\mathrm L}^\epsilon
 T_{\mathrm M}^\epsilon F_{\mathrm R}^\epsilon G_{\mathrm R}^\epsilon.
\]
Then the three-factor matrix product is exactly
$F_{\mathrm L}^\epsilon Q^\epsilon E_{\mathrm R}^\epsilon$.
Let $B_{\mathrm L}^{\geq0}$ be the current Borel generated by the Cartan and positive modes in the left outer copy of $\mathcal U_q(\widehat{\mathfrak{sl}}_{m|n})$, and let $B_{\mathrm R}^{\leq0}$ be the current Borel generated by the Cartan and negative modes in the right outer copy.  Include
the Laurent rescaling parameters in these algebras, and put
\[
 \mathcal C=B_{\mathrm L}^{\geq0}\otimes
 \mathscr U^{\mathrm{rtt}}_{\mathbf d^+,\mathbf d^-}
 \otimes B_{\mathrm R}^{\leq0}.
\]
All coefficients of $Q^\epsilon$ lie in $\mathcal C$.  Recursive Gauss
elimination stays in $\mathcal C$: the leading diagonal coefficients
are products of the invertible leading Cartan coefficients of the
three factors, and only their inverses are required for coefficientwise
inversion.  The prescribed antidominant supports ensure that these products occur in the allowed leading degrees, as in the RTT matrix-product construction.  Consequently
the Gauss decomposition
$Q^\epsilon=\widehat F^\epsilon\widehat G^\epsilon
\widehat E^\epsilon$ has coefficients in $\mathcal C$.
Uniqueness of Gauss decomposition now gives
\begin{equation}\label{eq:three-factor-exact-outer-gauss}
 F_{\rm tot}^\epsilon=F_{\mathrm L}^\epsilon\widehat F^\epsilon,
 \quad G_{\rm tot}^\epsilon=\widehat G^\epsilon,
 \quad E_{\rm tot}^\epsilon=\widehat E^\epsilon E_{\mathrm R}^\epsilon.
\end{equation}
In particular each simple negative current is the left simple-current
plus an element with coefficients in $\mathcal C$, and each simple
positive current is the right simple-current plus such an element.
Every Cartan image also has coefficients in $\mathcal C$.  Thus none
of the remainder coefficients contains a left negative PBWD factor or
a right positive PBWD factor.

Here is the resulting bound, including the effect of contractions.
Express each homogeneous PBWD root vector as its defining finite
linear combination of words in simple root modes.  Each negative
word has exactly the height of its root weight many simple factors;
the same holds on the positive side.  In the expansion of $B_\lambda$,
mark a factor whenever its left negative summand, respectively right
positive summand, from \eqref{eq:three-factor-exact-outer-gauss} is chosen.
Before normal ordering, the number of left negative simple factors
is at most $\sum_i\omega_i^-$, with equality only if every negative
factor was marked.  The corresponding right positive count is at most
$\sum_i\omega_i^+$, with equality only if every positive factor was
marked.  All other coefficients belong to $\mathcal C$.

PBWD normal ordering cannot increase either count.  Cartan--root
reordering preserves root height, as does same-sign root reordering.
On the left, using the commutator term in a mixed simple-root
interchange removes one negative factor; on the right it removes one
positive factor.  Applying these rules first to simple-root words
also proves the bounds for composite PBWD root vectors.  Thus every
normal-ordered term satisfies
\[
 d_-^{\mathrm L}\le\sum_i\omega_i^-,\quad
 d_+^{\mathrm R}\le\sum_i\omega_i^+.
\]
A term attaining both bounds must come from marking every root factor,
and cannot have used any contraction that lowers these counts.
Summing these marked words reconstructs exactly
$F_{\mathbf a^-}$ on the left and $E_{\mathbf a^+}$ on the right,
with the factors $\mathbf p^{\boldsymbol\omega^-}$ and
$\mathbf r^{\boldsymbol\omega^+}$.  In particular this step introduces
neither a different loop-mode label nor a second root monomial of the
same extremal heights.

It remains to determine which Cartan terms have
$d_+^{\mathrm L}=d_-^{\mathrm R}=0$.  This is a quotient calculation
\emph{inside the Borel algebra} $\mathcal C$.  The ideals generated
by the left positive modes and by the right negative modes are
two-sided in their respective Borels; their quotients are the Cartan
algebras.  Denote the resulting quotient map by $\pi$.
There is no mixed-sign commutator in these quotients.  Gauss elimination
commutes with $\pi$, since the inverted leading coefficients remain
invertible.  Explicitly,
\begin{align*}
\pi(Q^\epsilon)=G_{\mathrm L}^\epsilon F_{\mathrm M}^\epsilon G_{\mathrm M}^\epsilon E_{\mathrm M}^\epsilon G_{\mathrm R}^\epsilon=\bigl(G_{\mathrm L}^\epsilon F_{\mathrm M}^\epsilon(G_{\mathrm L}^\epsilon)^{-1}\bigr)\bigl(G_{\mathrm L}^\epsilon G_{\mathrm M}^\epsilon G_{\mathrm R}^\epsilon\bigr)\bigl((G_{\mathrm R}^\epsilon)^{-1}E_{\mathrm M}^\epsilon G_{\mathrm R}^\epsilon\bigr).
\end{align*}
Hence the diagonal factor after this quotient is exactly the product
of the three diagonal factors.

For the marked root contribution the order is already negative--Cartan--positive:
the left negative factor precedes all the $\mathcal C$ coefficients,
and the right positive factor follows them.  Therefore a nonzero
left positive or right negative factor in those coefficients cannot
be contracted away while retaining both extremal heights.  Taking
$\pi$ consequently extracts precisely the part with the maximal
third and fourth entries of \eqref{eq:outer-degree-order}.
In this part the leading Cartan coefficients multiply, and the
normalized Cartan logarithms satisfy
\[
 h_{i,r}\longmapsto h_{i,r}^{(\mathrm L)}
                    +h_{i,r}^{(\mathrm M)}+h_{i,r}^{(\mathrm R)}.
\]
The largest possible $d_0^{\mathrm L}$ is $|\mathbf v|$ and is attained
only by choosing the left summand of every nonleading Cartan factor.
Its coefficient is one.  The rescaled leading Cartan factors supply
$\mathbf p^{\boldsymbol\sigma}\mathbf r^{\boldsymbol\sigma}$,
$k^{\boldsymbol\delta}$ in both outer factors, and
$\Gamma(K_{\mathbf u})$ in the middle.  This proves
\eqref{eq:three-factor-rtt-leading-term}.  The indicated factors are
already in PBWD order, so no further super sign occurs.

Finally the outer PBWD monomials determine
$\mathbf a^-,\mathbf a^+,\mathbf v,\boldsymbol\delta$.
Their root weights determine $\boldsymbol\omega^\pm$, and either
Laurent exponent then determines $\boldsymbol\sigma$.  Thus
$\mathbf u_\pm=(\boldsymbol\sigma\pm\boldsymbol\delta)/2$.
The displayed extremal outer labels are therefore distinct for
distinct full PBWD labels.
\end{proof}

\begin{theorem}
\label{thm:shifted-super-df-isomorphism}
Assume \(m,n>1\) and \(m\ne n\).  For every pair of antidominant shifts
\(\mu_+,\mu_-\), the homomorphism of
Proposition~\ref{prop:shifted-super-df-epi}
\[
 \Gamma_{\mathbf d^+,\mathbf d^-}:
 \mathcal U_q^{\mu_+,\mu_-}(\widehat{\mathfrak{sl}}_{m|n})
 \longrightarrow
 \mathscr U^{\mathrm{rtt},\mathrm{sl}}_{\mathbf d^+,\mathbf d^-}
\]
is an isomorphism.
\end{theorem}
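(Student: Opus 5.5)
Surjectivity is already contained in Proposition~\ref{prop:shifted-super-df-epi}, so the plan reduces to injectivity of $\Gamma=\Gamma_{\mathbf d^+,\mathbf d^-}$, and for this I would use the three-factor coefficient separation just set up. By Corollary~\ref{cor:shifted-pbw} the ordered products $B_\lambda=F_{\mathbf a^-}K_{\mathbf u}H_{\mathbf v}^{\circ}E_{\mathbf a^+}$ form a basis of $U=\mathcal U_q^{\mu_+,\mu_-}(\widehat{\mathfrak{sl}}_{m|n})$. Hence it suffices to show that the elements $\Gamma(B_\lambda)$ are linearly independent in $\mathscr R$, or, more strongly, that their images under the auxiliary homomorphism $\Xi_{\mathbf p,\mathbf r}\circ\Gamma$ of \eqref{eq:three-factor-Xi} are linearly independent.

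Suppose $\sum_\lambda c_\lambda B_\lambda\in\ker\Gamma$ is a nontrivial finite combination. Apply $\Xi_{\mathbf p,\mathbf r}$ and expand the two outer tensor factors in the PBWD basis of the ordinary algebra $\mathcal U_q(\widehat{\mathfrak{sl}}_{m|n})$, keeping track of the Laurent monomials in $\mathbf p$ and $\mathbf r$. The middle factor is left as an arbitrary element of $\mathscr U^{\mathrm{rtt}}_{\mathbf d^+,\mathbf d^-}$; nothing is assumed about its structure.

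Next, among the $\lambda$ with $c_\lambda\neq0$, choose those whose leading outer label, as given by Lemma~\ref{lem:three-factor-rtt-leading-coefficient} and ordered by \eqref{eq:outer-degree-order}, is maximal. By that lemma the leading labels are pairwise distinct. I would then argue that for every other $\lambda'$ in the combination, all outer terms of $\Xi\Gamma(B_{\lambda'})$ are either strictly smaller in the order or at least differ in label from the chosen maximum. Consequently the coefficient of the maximal outer PBWD--Laurent monomial in $\Xi\Gamma(\sum c_\lambda B_\lambda)$ equals $c_\lambda$ times the middle element $\Gamma(K_{\mathbf u})$. This element is nonzero because $\Gamma(K_{\mathbf u})$ is a product of the invertible leading diagonal ratios $g_{i+1}g_i^{-1}$ and their inverses, so it is a unit of the middle algebra. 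This contradicts the assumption that the combination lies in the kernel, and so $\Gamma$ is injective and hence an isomorphism.

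I expect the main obstacle to be this comparison step. A lower label $\lambda'$ could in principle produce non-leading outer terms that coincide with the leading monomial of $\lambda$. To rule this out, I would first pick $\lambda$ maximal for the pair $(d_-^{\mathrm L},d_+^{\mathrm R})$, which is determined by $\omega^\pm$; the bounds in the proof of the lemma show that no other $\lambda'$ reaches larger values of these two entries. Then, within the fixed $\omega^\pm$, I would maximize $d_0^{\mathrm L}=|\mathbf v|$, and finally separate the remaining cases using the exact PBWD labels together with the Laurent exponents $\mathbf p^{\boldsymbol\omega^-+\boldsymbol\sigma}$. If this ordering still leaves ties, the fallback is to use the separate exponent gradings in $\mathbf p$ and $\mathbf r$, which record $\boldsymbol\sigma$ independently, to split the combination into homogeneous pieces before extracting the leading coefficient.
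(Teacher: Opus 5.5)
Your proposal is correct and follows the paper's proof essentially step for step. Surjectivity comes from Proposition~\ref{prop:shifted-super-df-epi}; for injectivity, apply $\Xi_{\mathbf p,\mathbf r}\Gamma$ to a nonzero kernel element, take the largest leading outer label from Lemma~\ref{lem:three-factor-rtt-leading-coefficient}, and observe that its coefficient $c_\lambda\Gamma(K_{\mathbf u})$ is a nonzero scalar times a unit, which gives the contradiction. The comparison step you worried about is already handled by that lemma, since every non-leading term of each image lies in strictly lower degree, so your fallback via separate $\mathbf p,\mathbf r$ gradings is unnecessary.
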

\begin{proof}
Surjectivity is Proposition~\ref{prop:shifted-super-df-epi}.  Suppose that
\(0\ne u=\sum_{\lambda\in L}c_\lambda B_\lambda\) lies in the kernel, where
\(L\) is finite.  Apply \(\Xi_{\mathbf p,\mathbf r}\Gamma\).  By
Lemma~\ref{lem:three-factor-rtt-leading-coefficient}, the images of the
\(B_\lambda\) have pairwise distinct leading outer labels.  Choose the
largest one.  Its coefficient in the image of \(u\) is $c_\lambda\,\Gamma(K_{\mathbf u})$.
The element \(\Gamma(K_{\mathbf u})\) is invertible because every leading
Cartan coefficient is invertible in $\mathscr U^{\mathrm{rtt}}_{\mathbf d^+,\mathbf d^-}$.  Hence the
displayed coefficient is nonzero, a contradiction.  Thus the kernel is
zero.
\end{proof}

\begin{remark}
The proof above uses the RTT coproduct to attach two zero-shift factors and
then separates PBWD labels by ordinary PBWD coefficient extraction in those
factors.  Thus it does not invoke the difference-operator separation
statement appearing in \cite[Conjecture~3.75]{FPT2022}.  The ingredients
used here are the shifted PBWD basis on the Drinfeld side, the ordinary
zero-shift Ding--Frenkel isomorphism, and the two diagonal rescalings which
retain the two independent leading Cartan systems.  Apart from the usual
super PBWD signs and the exterior convention for odd root vectors, the
coefficient comparison itself does not use a special property of the
isotropic vertex.
\end{remark}

\subsection{Full general-linear currents and two-sided antidominant coproducts}
\label{subsec:full-gl-shifted-bridge}

The RTT matrix coproduct is defined on the full shifted RTT superalgebra, while the Ding--Frenkel map above identifies the shifted special-linear current superalgebra with the simple-current subalgebra.  We pass through the full general-linear current superalgebra to compare these levels without a separate all-mode tensor-closure calculation.

\begin{definition}[Shifted full general-linear current superalgebra]
\label{def:shifted-full-gl-current}
For weakly decreasing integer sequences
$\mathbf d^\pm=(d_a^\pm)_{a\in J}$, let
$\mathcal U^{\mathrm{gl}}_{q,\mathbf d^+,\mathbf d^-}$ be the
$\mathbb C(q)$-superalgebra generated by the coefficients of the root currents
$X_i^\pm(z)$, $i\in I$, and the even diagonal currents
\begin{align*}
K_a^+(z)=\sum_{r\ge -d_a^+}K_{a,r}^+z^{-r},\quad K_a^-(z)=\sum_{r\ge -d_a^-}K_{a,-r}^-z^r,\quad a\in J,
\end{align*}
with $K_{a,-d_a^+}^+$ and $K_{a,d_a^-}^-$ invertible.  The defining
relations are the displayed current relations of
Theorem~\ref{th: RTT and Drinfeld realization}, interpreted coefficientwise
with these shifted supports.  The zero-shift normalization
$K_{a,0}^+K_{a,0}^-=1$ of the zero-shift full general-linear RTT superalgebra is not imposed here; in
particular the two leading diagonal coefficients are independent and
invertible.
\end{definition}

Put $\kappa_a^+=K_{a,-d_a^+}^+,\quad \kappa_a^-=K_{a,d_a^-}^-$,
and normalize
\[
 \widehat K_a^+(z)=(\kappa_a^+)^{-1}z^{-d_a^+}K_a^+(z),
 \quad
 \widehat K_a^-(z)=(\kappa_a^-)^{-1}z^{d_a^-}K_a^-(z).
\]
Both normalized series have constant term $1$.  Since the diagonal
coefficients commute, define logarithmic modes by
\[
 \log\widehat K_a^+(z)=\sum_{r>0}H_{a,r}^{\mathrm{gl}}z^{-r},
 \quad
 \log\widehat K_a^-(z)=\sum_{r>0}H_{a,-r}^{\mathrm{gl}}z^{r}.
\]
The passage between the nonleading coefficients of $\widehat K_a^\pm$ and
the $H_{a,\pm r}^{\mathrm{gl}}$ is triangular and invertible.  We use ordered
monomials in these logarithmic modes, together with arbitrary Laurent powers
of the $\kappa_a^\pm$, for the diagonal part.

\begin{lemma}
\label{lem:shifted-full-gl-ordered-spanning}
Every element of $\mathcal U^{\mathrm{gl}}_{q,\mathbf d^+,\mathbf d^-}$
is a finite linear combination of ordered monomials
\begin{equation}\label{eq:full-gl-ordered-monomial}
 F_{\mathbf a^-}\,
 \prod_{a\in J}(\kappa_a^+)^{u_a^+}(\kappa_a^-)^{u_a^-}
 H_{\mathbf v}^{\mathrm{gl}}\,
 E_{\mathbf a^+},
\end{equation}
where $F_{\mathbf a^-}$ and $E_{\mathbf a^+}$ are ordered PBWD root
monomials in the negative and positive current halves of the zero-shift full current superalgebra and
$H_{\mathbf v}^{\mathrm{gl}}$ is an ordered monomial in the logarithmic
modes $H_{a,\pm r}^{\mathrm{gl}}$.
\end{lemma}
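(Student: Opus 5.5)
We prove Lemma~\ref{lem:shifted-full-gl-ordered-spanning} by the same normal-ordering argument used for the auxiliary algebra in the proof of Theorem~\ref{thm:shifted-triangular}. The shift changes only where the diagonal currents begin, so the ordering argument is insensitive to it. Let $\mathcal U^{\mathrm{gl},<}$, $\mathcal U^{\mathrm{gl},0}$ and $\mathcal U^{\mathrm{gl},>}$ be the subalgebras generated by the modes of $X_i^-(z)$, by the diagonal coefficients $K_{a,r}^\pm$ together with $(\kappa_a^\pm)^{-1}$, and by the modes of $X_i^+(z)$, respectively. The goal is to show that
\[
 \mathcal U^{\mathrm{gl}}_{q,\mathbf d^+,\mathbf d^-}
 =\mathcal U^{\mathrm{gl},<}\,\mathcal U^{\mathrm{gl},0}\,\mathcal U^{\mathrm{gl},>}.
\]
Since the right-hand side contains $1$ and all generators, it suffices to show it is stable under left multiplication by every generator.

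The steps run as follows. First, left multiplication by a negative mode $X_{i,r}^-$ clearly preserves the product. Second, take a diagonal generator $K_{b,s}^\pm$ or $(\kappa_b^\pm)^{-1}$. The Cartan--root relations of Theorem~\ref{th: RTT and Drinfeld realization} have the form $K_b^\epsilon(z)X_i^\pm(w)=\psi(z/w)^{\mp1}X_i^\pm(w)K_b^\epsilon(z)$ with a rational factor $\psi$. Expanding them coefficientwise in the appropriate direction, for either sign $\epsilon$, lets us move a diagonal coefficient past a negative mode. The result is a finite combination of negative modes times diagonal coefficients, so it stays in the product. For the leading coefficients this is even cleaner: $\kappa_b^\pm X_{i,r}^\mp(\kappa_b^\pm)^{-1}$ is a scalar multiple of $X_{i,r}^\mp$.

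Finiteness is the point to check here. Unshifted, the coefficientwise identity for $K^+_{b}(z)X_i^-(w)$ already involves only finitely many terms at each order after clearing the denominator $z-w$. The shift multiplies the diagonal series by a power of $z$, so each coefficient identity changes only by a bounded reindexing. I would write out one such identity explicitly, say $(z-w)K_b^+(z)X_i^-(w)=(q^{\pm}z-q^{\mp}w)X_i^-(w)K_b^+(z)$. The aim is to show that it expresses $K_{b,s}^+X_{i,r}^-$ recursively in $s$, starting from the leading coefficient $\kappa_b^+$, as a finite combination of terms $X^-K^+$.

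Third, and this is the main obstacle, take a positive mode $X_{i,r}^+$ and move it leftward past a word of negative modes. Each interchange uses the mixed relation, which creates the terms
\[
 \delta_{ij}(q_i-q_i^{-1})\bigl(K_{i+1}^+K_i^{+,-1}-K_{i+1}^-K_i^{-,-1}\bigr).
\]
Their coefficients are diagonal elements: they are well defined because each inverse $K_i^\pm(z)^{-1}$ is a formal series whose leading coefficient is $(\kappa_i^\pm)^{-1}$. Those Cartan terms are then pushed leftward by the second step. An induction on the length of the negative word closes the argument.

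Finally, I would express the product $\mathcal U^{\mathrm{gl},<}\mathcal U^{\mathrm{gl},0}\mathcal U^{\mathrm{gl},>}$ in terms of the displayed monomials. The outer factors $\mathcal U^{\mathrm{gl},<}$ and $\mathcal U^{\mathrm{gl},>}$ are quotients of the zero-shift full current halves, since their relations are the same-sign relations, which are unchanged by the shift. Their ordered PBWD monomials span them, by the zero-shift theory behind Corollary~\ref{cor:shifted-pbw}: the full $\mathfrak{gl}$ current halves coincide with the $\mathfrak{sl}$ halves. The diagonal part is commutative, and under the triangular change of variables to the logarithmic modes it is spanned by Laurent monomials in the $\kappa_a^\pm$ times ordered monomials in the $H^{\mathrm{gl}}_{a,\pm r}$.
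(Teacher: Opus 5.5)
Your proposal is correct and follows essentially the paper's normal-ordering route. The Cartan--root relations move diagonal coefficients across root modes, and the mixed relation replaces out-of-order positive--negative pairs by coefficients of the diagonal ratios; the root halves are then spanned by the zero-shift PBWD monomials because the same-sign relations are unchanged, and the commutative diagonal part is rewritten in the $\kappa_a^\pm$ and the logarithmic modes. You are more explicit than the paper about finiteness (the coefficientwise recursion terminating at the leading diagonal coefficient); note only that your third step also needs the mirror-image expansion $X_{i,r}^+\,\mathcal U^{\mathrm{gl},0}\subseteq\mathcal U^{\mathrm{gl},0}\,\mathcal U^{\mathrm{gl},>}$, which follows from the same Cartan--root relation and the same terminating recursion.
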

\begin{proof}
The root-current relations are unchanged by the diagonal shifts, so the two root halves have the same PBWD spanning sets as in the zero-shift superalgebra. The diagonal coefficients commute.  The Cartan--root relations move every diagonal coefficient between the negative and positive root factors, while the mixed root commutator replaces an out-of-order positive--negative pair by a linear combination involving only the diagonal ratios $K_{i+1}^\epsilon(z)(K_i^\epsilon(z))^{-1}$.  Iterating these relations gives \eqref{eq:full-gl-ordered-monomial}.  Only spanning is used here; linear independence will follow from the RTT coefficient argument below.
\end{proof}

\medskip\noindent\textbf{All-mode recovery of the higher Gauss coefficients.}
For arbitrary weakly decreasing shifts $\mathbf d^\pm$ and every
$1\leq j<k<i\leq M$, $r\in\mathbb Z$, the shifted RTT relations imply
\begin{align}
 e_{ji}^{(r)}
 &=\frac{1}{q_k-q_k^{-1}}
   [e_{ki}^{(0)},e_{jk}^{(r)}]_{q_k^{-1}},
 \label{eq:shifted-all-mode-e-recovery}\\
 f_{ij}^{(r)}
 &=-\frac{1}{q_k-q_k^{-1}}
   [f_{kj}^{(r)},f_{ik}^{(0)}]_{q_k}.
 \label{eq:shifted-all-mode-f-recovery}
\end{align}
These are the all-mode versions of
Proposition~\ref{prop: formulas for eji0,eji-1,fij0,fij1}.  To see directly
that the diagonal valuations play no role, for
\eqref{eq:shifted-all-mode-e-recovery} use the
$\mathsf E_{jk}\otimes\mathsf E_{ki}$ component of the RTT relation.  For
$r\geq0$ use the $++$ relation, while for $r<0$ use the $-+$ relation, and in
the second spectral variable extract the coefficient selecting
$e_{ki}^{(0)}$.  The calculation is made after Gauss factorization and before
expanding any diagonal series into modes.  Every diagonal factor which must
be removed is an invertible formal Laurent series, since it is a monomial in
the spectral parameter times a series with invertible constant term.  The
terms with an intermediate index $j'<j$ cancel by the same induction on the
left endpoint as in the proof of
Proposition~\ref{prop: formulas for eji0,eji-1,fij0,fij1}.  The remaining
half-current identity is
\[
 (q_k-q_k^{-1})e_{ji}^{\epsilon}(z)
   =[e_{ki}^{(0)},e_{jk}^{\epsilon}(z)]_{q_k^{-1}},
\]
with $\epsilon=+$ for the nonnegative modes and $\epsilon=-$ for the negative
modes.

Likewise, the $\mathsf E_{ik}\otimes\mathsf E_{kj}$ component, using the
$--$ relation for $r\leq0$ and the $+-$ relation for $r>0$, gives
\[
 (q_k-q_k^{-1})f_{ij}^{\epsilon}(z)
   =-[f_{kj}^{\epsilon}(z),f_{ik}^{(0)}]_{q_k}.
\]
The graded signs are exactly those already present in the supercommutators of
Proposition~\ref{prop: formulas for eji0,eji-1,fij0,fij1}; changing the
leading powers of the even diagonal series introduces no new sign.  Taking
coefficients proves \eqref{eq:shifted-all-mode-e-recovery}--
\eqref{eq:shifted-all-mode-f-recovery}.  Iterating the first identity with
$k=i-1$ and the second with $k=j+1$ expresses every higher Gauss coefficient
as an iterated graded $q$-commutator of simple Gauss coefficients.  These are
the super counterparts of the shifted type A formulas in
\cite[Lemma~3.46 and Corollary~3.48]{FPT2022}.

\begin{theorem}
\label{thm:shifted-full-gl-df-isomorphism}
Assume $m,n>1$ and $m\ne n$.  The assignments
\begin{align*}
 K_a^\pm(z)\longmapsto g_a^\pm(z),\quad
 X_i^+(z)\longmapsto e_{i,i+1}^+(z)-e_{i,i+1}^-(z),\quad
 X_i^-(z)\longmapsto f_{i+1,i}^+(z)-f_{i+1,i}^-(z)
\end{align*}
define an isomorphism
\begin{equation}\label{eq:shifted-full-gl-df-isomorphism}
 \widetilde\Gamma_{\mathbf d^+,\mathbf d^-}:
 \mathcal U^{\mathrm{gl}}_{q,\mathbf d^+,\mathbf d^-}
 \xrightarrow{\ \sim\ }
 \mathscr U^{\mathrm{rtt}}_{\mathbf d^+,\mathbf d^-}.
\end{equation}
In particular, the ordered monomials of
Lemma~\ref{lem:shifted-full-gl-ordered-spanning} form a basis.
\end{theorem}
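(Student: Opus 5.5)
The proof has three parts: well-definedness, surjectivity, and injectivity, with injectivity modeled on the argument for Theorem~\ref{thm:shifted-super-df-isomorphism}.

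\emph{Well-definedness.} We must check that the images of $K_a^\pm(z)$ and $X_i^\pm(z)$ satisfy every current relation of Theorem~\ref{th: RTT and Drinfeld realization}, coefficientwise, with the shifted supports. We argue as in Proposition~\ref{prop:shifted-super-df-epi}. The Gauss-component derivation of those relations from the graded RTT relations \eqref{eq:shifted-super-rtt-relations} is an identity of formal Laurent series. It only divides by diagonal Gauss factors, and these are invertible here: each is a monomial $z^{\mp d_a^\pm}$ times a series with invertible constant term. The off-diagonal half-current expansions are unchanged from the zero-shift case. Hence every relation transfers verbatim, including the odd quadratic relation at $i=m$ and the quartic Serre relation. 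The leading diagonal coefficients $g_{a,-d_a^+}^+$ and $g_{a,d_a^-}^-$ are invertible by definition, so $\widetilde\Gamma_{\mathbf d^+,\mathbf d^-}$ is a homomorphism.

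\emph{Surjectivity.} The target $\mathscr U^{\mathrm{rtt}}_{\mathbf d^+,\mathbf d^-}$ is generated by all Gauss coefficients $e_{ab}^{(r)}$, $f_{ba}^{(r)}$, $g_{a,r}^\pm$ and the inverses of the leading diagonal coefficients. The diagonal coefficients and their inverses lie in the image directly. The simple coefficients $e_{i,i+1}^{(r)}$ and $f_{i+1,i}^{(r)}$ are the modes of $X_i^\pm$ split by the sign of $r$, so they also lie in the image. The all-mode recovery formulas \eqref{eq:shifted-all-mode-e-recovery}--\eqref{eq:shifted-all-mode-f-recovery} then express each higher Gauss coefficient as an iterated graded $q$-commutator of simple ones. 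This gives surjectivity.

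\emph{Injectivity.} By Lemma~\ref{lem:shifted-full-gl-ordered-spanning}, the ordered monomials \eqref{eq:full-gl-ordered-monomial} span the source. It therefore suffices to show that their images are linearly independent; this also proves the basis statement. We attach two zero-shift outer factors through the three-factor RTT homomorphism of Corollary~\ref{cor:shifted-super-rtt-coassociativity}, and compose the outer factors with the diagonal rescalings of Lemma~\ref{lem:zero-shift-rtt-diagonal-rescaling}. For the full general-linear diagonal part we use rescalings with independent parameters on all $M$ diagonal entries, $D=\operatorname{diag}(p_1,\dots,p_M)$ on the left and $\operatorname{diag}(r_1,\dots,r_M)$ on the right. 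We then run the extremal-coefficient argument of Lemma~\ref{lem:three-factor-rtt-leading-coefficient} with the order \eqref{eq:outer-degree-order}. As in that lemma, the exact outer Gauss factorization $F_{\mathrm L}QE_{\mathrm R}$ isolates the left negative and right positive root monomials. Passing to the Cartan quotient $\pi$ turns the diagonal factor into the product $G_{\mathrm L}G_{\mathrm M}G_{\mathrm R}$, so $H^{\mathrm{gl}}_{a,r}$ maps to the sum of its three copies and $\kappa_a^\pm$ map to products of leading coefficients. The leading term is then $F_{\mathbf a^-}\cdot(\text{left Cartan monomial})\otimes\prod(\kappa_a^\pm)^{u_a^\pm}\otimes(\text{right Cartan})E_{\mathbf a^+}$, carrying the Laurent factor $\mathbf p^{\omega^-+\sigma}\mathbf r^{\omega^++\sigma}$, where $\boldsymbol\sigma$ now has components $u_a^++u_a^-$ indexed by $a\in J$. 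The middle factor is invertible, so a nonzero kernel element would have a nonzero extremal coefficient, which is a contradiction.

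\emph{Main obstacle.} The hard part is showing that distinct labels $(\mathbf a^\pm,\mathbf u^\pm,\mathbf v)$ give distinct leading outer labels when the outer copies are the special-linear algebra $\mathcal U_q(\widehat{\mathfrak{sl}}_{m|n})$. These outer copies only see diagonal ratios, so the differences $u_a^+-u_a^-$ along the common direction $\sum_a\epsilon_a$, and the logarithmic modes $\sum_a H^{\mathrm{gl}}_{a,r}$, are invisible there. The first thing to try is to replace the outer factors by the zero-shift \emph{general-linear} RTT algebra $\mathcal U_q^{\mathrm{rtt}}(\widehat{\mathfrak{gl}}_{m|n})$. Via Theorem~\ref{th: RTT and Drinfeld realization} it has a PBWD-type basis including all $K_a$ modes, and rescaling each diagonal entry independently makes the Laurent exponents record $\boldsymbol\sigma\in\mathbb Z^J$ while the outer Cartan monomials record $\boldsymbol\delta$ and $\mathbf v$ in full. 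If that basis is not available, the fallback is to detect the central direction through the quantum Berezinian, which is group-like, tracking the central $\kappa$-products in the middle factor.
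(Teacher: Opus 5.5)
Your proposal is correct and, once you adopt the general-linear outer factors from your last paragraph, it is essentially the paper's proof. The homomorphism comes from the coefficientwise Gauss-component argument and surjectivity from the all-mode recovery formulas \eqref{eq:shifted-all-mode-e-recovery}--\eqref{eq:shifted-all-mode-f-recovery}. Injectivity comes from the three-factor extremal-coefficient argument, with the extended outer factors sent to $D_{\mathbf p}T_{\mathrm{gl}}^\epsilon(z)$ and $T_{\mathrm{gl}}^\epsilon(z)D_{\mathbf r}$ in the normalized zero-shift general-linear RTT algebra and expanded in its PBW normal form; the paper points to \cite[Remark~5.12]{Tsy2021} for that basis, so your Berezinian fallback is unnecessary.

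The only adjustment is notational. With $M$ independent rescalings, the root words contribute the $J$-indexed weights $\boldsymbol\eta^\pm$ (since $f_{ba}\mapsto p_bp_a^{-1}f_{ba}$) rather than $\boldsymbol\omega^\pm$, and these are still determined by the recovered root words.
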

\begin{proof}
The Gauss-component proof of Theorem~\ref{th: RTT and Drinfeld realization}
is coefficientwise and uses only the off-diagonal half-current supports and
the invertibility of the leading diagonal coefficients.  Replacing the
zero-degree diagonal series by the shifted series of
Definition~\ref{def:shifted-full-gl-current} therefore gives the same current
relations, so the displayed assignment is a homomorphism.  Surjectivity no
longer requires an appeal to the zero-shift generation theorem: by \eqref{eq:shifted-all-mode-e-recovery}--\eqref{eq:shifted-all-mode-f-recovery}, every higher upper or
lower Gauss coefficient is an iterated graded $q$-commutator of simple Gauss
coefficients, while every diagonal coefficient and the inverse of every
leading diagonal coefficient is the image of the corresponding coefficient
of $K_a^\pm(z)$.  Hence the image contains all generators of
$\mathscr U^{\mathrm{rtt}}_{\mathbf d^+,\mathbf d^-}$.

It remains to prove injectivity.  Attach zero-shift full RTT factors on the
left and right and use the three-factor matrix product exactly as in
Subsubsection~\ref{subsubsec:three-factor-rtt-separation}.  Here, independently of the simple-ratio parameters used there, let $\mathbf p=(p_a)_{a\in J}$ and $\mathbf r=(r_a)_{a\in J}$ be $M$-tuples of algebraically independent Laurent parameters.  On the left outer factor map the extended zero-shift RTT matrices to $D_{\mathbf p}T_{\mathrm{gl}}^\epsilon(z)$ and on the right to $T_{\mathrm{gl}}^\epsilon(z)D_{\mathbf r}$ in the ordinary normalized zero-shift general-linear RTT algebra, where
\[
 D_{\mathbf p}=\operatorname{diag}(p_1,\ldots,p_M),\qquad
 D_{\mathbf r}=\operatorname{diag}(r_1,\ldots,r_M).
\]
These substitutions preserve the RTT relations.  They define
homomorphisms from the extended outer factors because the latter impose no
relation between their positive and negative leading diagonal systems.  We
expand the two zero-shift targets in the standard general-linear PBW normal
form associated with the current presentation of
Theorem~\ref{th: RTT and Drinfeld realization}; equivalently, one may use
the full general-linear PBWD bases discussed in
\cite[Remark~5.12]{Tsy2021}.  The proof of
Lemma~\ref{lem:three-factor-rtt-leading-coefficient} then applies with the
diagonal indices $a\in J$ in place of the simple-ratio indices $i\in I$.
We record the label recovery, which is the only point that changes.

For an ordered monomial \eqref{eq:full-gl-ordered-monomial}, set
\[
 \delta_a=u_a^+-u_a^-,\qquad
 \sigma_a=u_a^++u_a^-.
\]
Let $\boldsymbol\eta^-$ and $\boldsymbol\eta^+$ be the Laurent-weight
vectors contributed by the already specified negative and positive root
PBWD words under the two diagonal rescalings.  Write $k_a=K_{a,0}^+$ in a zero-shift outer factor; there $K_{a,0}^-=k_a^{-1}$.  The exact outer-factor
argument of Lemma~\ref{lem:three-factor-rtt-leading-coefficient} gives the
unique extremal term
\begin{equation}\label{eq:full-gl-three-factor-leading-term}
 \mathbf p^{\boldsymbol\eta^-+\boldsymbol\sigma}
 \mathbf r^{\boldsymbol\eta^++\boldsymbol\sigma}
 \left(F_{\mathbf a^-}k^{\boldsymbol\delta}
 H_{\mathbf v}^{\mathrm{gl}}\right)
 \otimes
 \widetilde\Gamma\!\left(
   \prod_a(\kappa_a^+)^{u_a^+}(\kappa_a^-)^{u_a^-}\right)
 \otimes
 \left(k^{\boldsymbol\delta}E_{\mathbf a^+}\right),
\end{equation}
where $k^{\boldsymbol\delta}=\prod_a k_a^{\delta_a}$.  Thus the ordinary
outer PBW monomials recover the two root words, the logarithmic diagonal word
$H_{\mathbf v}^{\mathrm{gl}}$, and all powers $\delta_a$.  At the extremal root
heights the Borel quotient used in
Lemma~\ref{lem:three-factor-rtt-leading-coefficient} makes the diagonal factor
the product of the three diagonal factors; hence the logarithmic modes add.
We spell out why the last Cartan-degree step cannot collide with a different
root label.  Taking logarithms of the $K$--$X$ exchange relations in
Theorem~\ref{th: RTT and Drinfeld realization} shows, coefficientwise, that
for $r\ne0$
\[
 [H_{a,r}^{\mathrm{gl}},X_{i,s}^{\pm}]
   \in \mathbb C(q)\,X_{i,s+r}^{\pm}
\]
(with zero coefficient when $a$ is not adjacent to $i$).  Thus, when an outer
term is put in PBW order, commuting a logarithmic Cartan mode past a root mode
either keeps that Cartan mode in the ordered term or uses the commutator and
removes one logarithmic Cartan factor.  In particular normal ordering cannot
increase the left logarithmic Cartan degree.  The commutator term may change a
root loop index, but it has strictly smaller left Cartan degree and therefore
cannot compete after the four preceding root-degree entries of
\eqref{eq:outer-degree-order} have been fixed.

Consequently, among terms having the extremal outer root labels, the largest
possible left Cartan degree is $|\mathbf v|$.  It is attained only by choosing
the left summand from every occurrence in
$H_{\mathbf v}^{\mathrm{gl}}$ and by never using a Cartan--root commutator.
This produces exactly the ordered monomial $H_{\mathbf v}^{\mathrm{gl}}$
with coefficient one.  Since the negative and positive outer PBW words have
already fixed all root vectors and their individual loop-mode labels, no
change of a root label can compensate for a different logarithmic Cartan
word at this last stage.  The root words then determine
$\boldsymbol\eta^\pm$, and either Laurent exponent recovers every
$\sigma_a$.  Therefore
\[
 u_a^+=\frac{\sigma_a+\delta_a}{2},\qquad
 u_a^-=\frac{\sigma_a-\delta_a}{2}.
\]
Thus distinct ordered monomials have distinct extremal outer
PBW--Laurent labels.  The exact outer-factor argument in
Lemma~\ref{lem:three-factor-rtt-leading-coefficient} also shows that this
extremal coefficient occurs uniquely and with coefficient equal to an
invertible monomial in the middle leading diagonal coefficients.

Consequently no nontrivial linear combination of the spanning monomials in
Lemma~\ref{lem:shifted-full-gl-ordered-spanning} can vanish: applying the
three-factor map and taking the largest extremal outer label would leave a
nonzero coefficient.  Hence those monomials form a basis.  The same argument
applied to a nonzero element in the kernel of
$\widetilde\Gamma_{\mathbf d^+,\mathbf d^-}$ gives a contradiction, proving
injectivity.
\end{proof}

The special-linear current superalgebra sits naturally inside the full current superalgebra.  If
$d_{i+1}^\pm-d_i^\pm=l_i^\pm=(\alpha_i\mid\mu_\pm)$, define
\begin{equation}\label{eq:shifted-sl-into-full-gl}
 \jmath_{\mathbf d}:\mathcal U_q^{\mu_+,\mu_-}
 \longrightarrow \mathcal U^{\mathrm{gl}}_{q,\mathbf d^+,\mathbf d^-}
\end{equation}
by
\begin{align*}
 x_i^+(z) \longmapsto \frac{X_i^+(\tau_i z)}{q_i-q_i^{-1}}, \quad  
 x_i^-(z) \longmapsto \frac{X_i^-(\tau_i z)}{q_i-q_i^{-1}}, \quad 
 \phi_i^\pm(z) \longmapsto
 K_{i+1}^\pm(\tau_i z)(K_i^\pm(\tau_i z))^{-1}.
\end{align*}
The defining relations give a homomorphism, and
$\widetilde\Gamma_{\mathbf d^+,\mathbf d^-}\circ\jmath_{\mathbf d}
=\Gamma_{\mathbf d^+,\mathbf d^-}$.  Theorems~\ref{thm:shifted-super-df-isomorphism}
and~\ref{thm:shifted-full-gl-df-isomorphism} therefore show that
$\jmath_{\mathbf d}$ is injective.

\begin{proposition}
\label{prop:shifted-rtt-scalar-fixed-algebra}
Let $\mathscr G$ be the group of pairs of invertible scalar series
\[
 a^+(z)\in\mathbb C(q)^\times(1+z^{-1}\mathbb C(q)[[z^{-1}]]),\qquad
 a^-(z)\in\mathbb C(q)^\times(1+z\mathbb C(q)[[z]]).
\]
It acts on the full shifted RTT superalgebra by
$\mathsf T^\pm(z)\longmapsto a^\pm(z)\mathsf T^\pm(z)$.
Then
\begin{equation}\label{eq:shifted-rtt-fixed-algebra}
 \bigl(\mathscr U^{\mathrm{rtt}}_{\mathbf d^+,\mathbf d^-}\bigr)^{\mathscr G}
 =\mathscr U^{\mathrm{rtt},\mathrm{sl}}_{\mathbf d^+,\mathbf d^-}.
\end{equation}
\end{proposition}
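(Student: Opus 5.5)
The plan is to pass through the shifted full general-linear Ding--Frenkel isomorphism of Theorem~\ref{thm:shifted-full-gl-df-isomorphism} and its ordered-monomial basis.

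\textbf{Step 1: the action and the easy inclusion.}
Since $a^\pm(z)$ are scalar series, the substitution $\mathsf T^\pm(z)\mapsto a^\pm(z)\mathsf T^\pm(z)$ preserves all four graded RTT relations \eqref{eq:shifted-super-rtt-relations}: the scalar factors $a^\epsilon(z)a^{\epsilon'}(w)$ appear identically on both sides. It also preserves the shifted supports, because $a^\pm$ has an invertible constant term. Hence each element of $\mathscr G$ acts by an algebra automorphism.

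By uniqueness of the Gauss decomposition,
\[
a^\pm\mathsf T^\pm=F^\pm\,(a^\pm G^\pm)\,E^\pm .
\]
So $\mathscr G$ fixes every $e_{ab}^{(r)}$ and $f_{ba}^{(r)}$, and multiplies each $g_a^\pm(z)$ by the same series $a^\pm(z)$. The simple Gauss currents \eqref{eq:shifted-super-simple-gauss-currents} and the ratios \eqref{eq:shifted-super-simple-gauss-current2} are therefore fixed. This gives $\mathscr U^{\mathrm{rtt},\mathrm{sl}}_{\mathbf d^+,\mathbf d^-}\subseteq(\mathscr U^{\mathrm{rtt}}_{\mathbf d^+,\mathbf d^-})^{\mathscr G}$.

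\textbf{Step 2: separating the scalar directions.}
For the reverse inclusion, transport the action through $\widetilde\Gamma_{\mathbf d^+,\mathbf d^-}$. On $\mathcal U^{\mathrm{gl}}_{q,\mathbf d^+,\mathbf d^-}$ it fixes $X_i^\pm$ and sends $K_a^\pm\mapsto a^\pm K_a^\pm$. Writing $a^\pm=c^\pm\exp(\sum_{r>0}\gamma_{\pm r}z^{\mp r})$, it acts on the diagonal coordinates by
\[
\kappa_a^\pm\mapsto c^\pm\kappa_a^\pm,\qquad H^{\mathrm{gl}}_{a,\pm r}\mapsto H^{\mathrm{gl}}_{a,\pm r}+\gamma_{\pm r}.
\]
Now change diagonal coordinates. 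Use the simple-ratio coordinates, namely the ratios $\kappa_{i+1}^\pm(\kappa_i^\pm)^{-1}$ and the differences $H_{i+1,r}-H_{i,r}$ (after the $\tau_i$ rescaling, these are the modes of $\phi_i^\pm$), together with one complementary direction for each sign and each $r$. I would choose the complementary direction to be a weighted combination
\[
C_r=\sum_a w_aH^{\mathrm{gl}}_{a,r},\qquad \kappa_C^\pm=\prod_a(\kappa_a^\pm)^{w_a},
\]
with weights such that $C_r$ commutes with every $X_i^\pm$. Such weights exist because $m\ne n$: this is the logarithm of the quantum Berezinian (cf.\ $|\det A|=|m-n|$), possibly after clearing denominators. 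The group acts on $C_r$ and $\kappa_C$ by translation by $(\sum w_a)\gamma_r$ and scaling by $(c^\pm)^{\sum w_a}$, where $\sum w_a\neq0$. It fixes all the ratio coordinates.

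Reordering the diagonal part of each basis monomial of Lemma~\ref{lem:shifted-full-gl-ordered-spanning} then gives a basis of the form
\[
\bigl(F\,D_{\mathrm{sl}}\,E\bigr)\cdot\kappa_C^{\mathbf u}\,C^{\mathbf v},
\]
where the elements $\kappa_C$ and $C_r$ commute with everything. By the triangular decomposition (Theorem~\ref{thm:shifted-triangular}) and the injectivity of $\jmath_{\mathbf d}$, the span of the $F D_{\mathrm{sl}} E$ is exactly $\jmath_{\mathbf d}(\mathcal U_q^{\mu_+,\mu_-})$, i.e.\ the sl-subalgebra.

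\textbf{Step 3: the invariance argument.}
Write a fixed element as $\sum_{\mathbf u,\mathbf v}y_{\mathbf u,\mathbf v}\kappa_C^{\mathbf u}C^{\mathbf v}$ with $y_{\mathbf u,\mathbf v}$ in the sl-part.
\begin{enumerate}
\item Invariance under the scalars $c^\pm$, which range over the infinite field $\mathbb C(q)^\times$, separates the characters and forces $\mathbf u=0$.
\item Invariance under the translations $\gamma_r$ acts unipotently on the polynomial variables $C_r$. Comparing the top-degree terms in some $C_r$ under $C_r\mapsto C_r+t$ forces $\mathbf v=0$.
\end{enumerate}
Hence the element lies in $\mathscr U^{\mathrm{rtt},\mathrm{sl}}_{\mathbf d^+,\mathbf d^-}$.

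\textbf{Main obstacle.}
The hardest step is Step 2. One must produce central complementary coordinates, namely the quantum-Berezinian-type combinations in the shifted setting, and check that the sl basis times monomials in these coordinates is again a basis. If exact centrality of $C_r$ is awkward, I would instead use only that $C_r$ commutes with the root currents modulo terms of lower filtration degree, and run the top-degree comparison in Step 3 on the associated graded algebra.
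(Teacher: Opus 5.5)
Your overall route is the paper's: transport the action through $\widetilde\Gamma_{\mathbf d^+,\mathbf d^-}$, use the ordered-monomial basis of Theorem~\ref{thm:shifted-full-gl-df-isomorphism}, and show that invariants cannot involve the scalar directions. However, the coordinate change in Step~2 is wrong for the leading coefficients.

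The complement $\kappa_C^\pm=\prod_a(\kappa_a^\pm)^{w_a}$ must have \emph{integer} exponents. Its commutation with every $X_i^\pm$ forces $s_iw_i=s_{i+1}w_{i+1}$, so $w$ is an integer multiple of $(1,\dots,1,-1,\dots,-1)$. The ratio exponents span $\{x\in\mathbb Z^{J}:\sum_ax_a=0\}$, and together with this $w$ they generate a sublattice of $\mathbb Z^{J}$ of index at least $|m-n|$. Hence for $|m-n|\ge2$ (e.g.\ $m=4$, $n=2$) the monomials $\kappa_C^{\mathbf u}D_{\mathrm{sl}}$ do not span the diagonal Laurent algebra; for instance $\kappa_1^+$ is not in their span. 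So the products $(FD_{\mathrm{sl}}E)\,\kappa_C^{\mathbf u}C^{\mathbf v}$ are not a basis of $\mathcal U^{\mathrm{gl}}_{q,\mathbf d^+,\mathbf d^-}$. Consequently the expansion $\sum y_{\mathbf u,\mathbf v}\kappa_C^{\mathbf u}C^{\mathbf v}$ of a fixed element at the start of Step~3 is unjustified. Only when $|m-n|=1$ does your argument go through as written. The logarithmic part is harmless: over $\mathbb C(q)$, any linear complement with $\sum_aw_a\neq0$ gives an invertible change of polynomial generators.

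The repair is to drop the centrality requirement, which is never needed. Each $g\in\mathscr G$ fixes all root modes, hence every $F_{\mathbf a^-}$ and $E_{\mathbf a^+}$. It also maps the commutative diagonal algebra $\mathcal D$ (Laurent in the $\kappa_a^\pm$, polynomial in the $H^{\mathrm{gl}}_{a,\pm r}$) to itself. By the basis theorem, multiplication identifies $\mathcal U^{\mathrm{gl}}_{q,\mathbf d^+,\mathbf d^-}$ with $\mathrm{span}\{F\}\otimes\mathcal D\otimes\mathrm{span}\{E\}$, intertwining $g$ with $\mathrm{id}\otimes g|_{\mathcal D}\otimes\mathrm{id}$. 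Therefore the invariants are $\mathrm{span}\{F\}\otimes\mathcal D^{\mathscr G}\otimes\mathrm{span}\{E\}$.

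On $\mathcal D$, use the unimodular coordinates $\kappa_1^\pm$, $\kappa_{a+1}^\pm(\kappa_a^\pm)^{-1}$, $H^{\mathrm{gl}}_{1,\pm r}$, $H^{\mathrm{gl}}_{a+1,\pm r}-H^{\mathrm{gl}}_{a,\pm r}$. Equivalently, grade the Laurent part by $\sum_au_a^\pm$, on which $c^\pm$ acts by the character $(c^\pm)^{\sum_au_a^\pm}$. Your character and top-degree arguments from Step~3 then show that $\mathcal D^{\mathscr G}$ is the ratio subalgebra. Its product with the two root halves spans $\jmath_{\mathbf d}(\mathcal U_q^{\mu_+,\mu_-})$.

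This is exactly the paper's proof. The quantum-Berezinian-type central coordinates you flagged as the main obstacle are unnecessary, since $\kappa_1^\pm$ and $H^{\mathrm{gl}}_{1,\pm r}$ need not commute with the root currents.
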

\begin{proof}
Under the isomorphism
\eqref{eq:shifted-full-gl-df-isomorphism}, the action is
\[
K_a^\pm(z)\longmapsto a^\pm(z)K_a^\pm(z),\quad X_i^\pm(z)\longmapsto X_i^\pm(z).
\]
Hence the image of $\jmath_{\mathbf d}$ is fixed.  Conversely, use the basis
proved in Theorem~\ref{thm:shifted-full-gl-df-isomorphism}.  For either sign, the diagonal basis may equivalently be written in the
coordinates
\[
 \kappa_1^\pm,\quad
 \kappa_{a+1}^\pm(\kappa_a^\pm)^{-1}\ (a\in I),\quad
 H_{1,\pm r}^{\mathrm{gl}},\quad
 H_{a+1,\pm r}^{\mathrm{gl}}-H_{a,\pm r}^{\mathrm{gl}}
 \ (a\in I,r>0).
\]
This is an invertible triangular change of variables at every finite jet
order.  The constant term of the scalar series multiplies $\kappa_1^\pm$,
while the coefficients of $\log(a^\pm(z)/a_0^\pm)$ translate independently
the variables $H_{1,\pm r}^{\mathrm{gl}}$; all ratio variables are fixed.
Since every element is a finite linear combination of basis monomials,
invariance under all such scalings and translations forces it to be
independent of $\kappa_1^\pm$ and of every $H_{1,\pm r}^{\mathrm{gl}}$.  The fixed Cartan
algebra is precisely the algebra generated by the coefficients of the
consecutive ratios and their invertible leading coefficients.  The spectral
rescalings $z\mapsto\tau_i z$ do not change the coefficient algebra generated
by these ratios, and the same is true for the scalar normalizations of the
root currents in \eqref{eq:shifted-sl-into-full-gl}.  Since all root currents
are fixed, the full fixed algebra is
$\jmath_{\mathbf d}(\mathcal U_q^{\mu_+,\mu_-})$.
Transporting this equality through the two Ding--Frenkel isomorphisms gives
\eqref{eq:shifted-rtt-fixed-algebra}.
\end{proof}

\begin{proposition}
\label{prop:two-sided-antidominant-rtt-closure}
For $a=1,2$, let $\mathbf d_a^\pm$ be weakly decreasing.  Then
\begin{align}\label{eq:two-sided-sl-tensor-closure}
 \Delta^{\mathrm{rtt}}_{\mathbf d_1,\mathbf d_2}
 \bigl(\mathscr U^{\mathrm{rtt},\mathrm{sl}}_{
 \mathbf d_1^++\mathbf d_2^+,\,\mathbf d_1^-+\mathbf d_2^-}\bigr)
 \subseteq
 \mathscr U^{\mathrm{rtt},\mathrm{sl}}_{\mathbf d_1^+,\mathbf d_1^-}
 \otimes
 \mathscr U^{\mathrm{rtt},\mathrm{sl}}_{\mathbf d_2^+,\mathbf d_2^-}.
\end{align}
\end{proposition}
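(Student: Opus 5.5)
The plan is to use the fixed-algebra characterization of Proposition~\ref{prop:shifted-rtt-scalar-fixed-algebra} on both sides, and show that $\Delta^{\mathrm{rtt}}_{\mathbf d_1,\mathbf d_2}$ intertwines the scalar actions suitably.

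\textbf{Step 1: equivariance of the coproduct.} Let $\mathscr G$ act on the source by $\mathsf T^\pm(z)\mapsto a^\pm(z)\mathsf T^\pm(z)$. Let $\mathscr G\times\mathscr G$ act on the target factorwise, by $\mathsf T_{(1)}^\pm\mapsto a_1^\pm\mathsf T_{(1)}^\pm$ and $\mathsf T_{(2)}^\pm\mapsto a_2^\pm\mathsf T_{(2)}^\pm$. By \eqref{eq:shifted-super-rtt-coproduct-matrix}, the product $\mathsf T_{(1)}^\epsilon\mathsf T_{(2)}^\epsilon$ is then multiplied by the even scalar series $a_1^\epsilon a_2^\epsilon$. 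Both sides of the resulting identity are superalgebra homomorphisms determined by their values on the matrix coefficients $t_{ij}^{\epsilon,(r)}$, which generate (the Gauss coefficients are recovered from them by uniqueness of Gauss decomposition). We therefore get
\[
(g_1\otimes g_2)\circ\Delta^{\mathrm{rtt}}_{\mathbf d_1,\mathbf d_2}
=\Delta^{\mathrm{rtt}}_{\mathbf d_1,\mathbf d_2}\circ(g_1g_2).
\]
Two checks are needed here:
\begin{itemize}
\item Scalar series in $\mathscr G$ preserve the shifted supports, since the constant term is invertible and there is no degree shift. So each $g_a$ is an automorphism of $\mathscr U^{\mathrm{rtt}}_{\mathbf d_a^+,\mathbf d_a^-}$.
\item The scalars commute with everything and carry no sign.
\end{itemize}
It follows that for $x$ in the $\mathscr G$-fixed subalgebra of the source, $\Delta(x)$ is fixed by every $g_1\otimes g_2$, i.e.\ by $\mathscr G\times\mathscr G$ acting on the two factors separately.

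\textbf{Step 2: fixed vectors of the tensor product.} We must show
\[
\bigl(A_1\otimes A_2\bigr)^{\mathscr G\times\mathscr G}=A_1^{\mathscr G}\otimes A_2^{\mathscr G},
\qquad A_a=\mathscr U^{\mathrm{rtt}}_{\mathbf d_a^+,\mathbf d_a^-}.
\]
The tool is the basis of Theorem~\ref{thm:shifted-full-gl-df-isomorphism}, rewritten in the coordinates from the proof of Proposition~\ref{prop:shifted-rtt-scalar-fixed-algebra}. In these coordinates each $A_a$ is a free module over the polynomial–Laurent algebra
\[
P_a=\mathbb C(q)\bigl[(\kappa_1^\pm)^{\pm1},\,H_{1,\pm r}^{\mathrm{gl}}\bigr],
\]
with a basis of monomials in the fixed variables (the ratio coordinates and root PBWD words). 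The word \emph{free} needs justification: the variables $\kappa_1^\pm$ and $H_{1,\pm r}$ commute with the root currents up to $\mathscr G$-invariant coefficients. The cleanest way is to note that $K_1$ is central modulo ratios, or simply to use that the basis is a tensor product of $P_a$-monomials and fixed monomials. Thus $A_a\cong A_a^{\mathscr G}\otimes P_a$ as vector spaces, compatibly with the action, where $\mathscr G$ acts only on $P_a$: by scaling on $\kappa_1^\pm$ and by independent translations on $H_{1,\pm r}$.

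Consequently $A_1\otimes A_2\cong A_1^{\mathscr G}\otimes A_2^{\mathscr G}\otimes P_1\otimes P_2$, and $\mathscr G\times\mathscr G$ acts on $P_1\otimes P_2$ factorwise. On $P_1\otimes P_2$ the fixed vectors are the scalars. The argument: torus weights under the independent scalings of the constant terms kill all nonzero Laurent exponents. For the translations, use that $\mathbb C(q)$ is infinite and take finite jets, so a polynomial invariant under all translations is constant. A finite linear combination in the fixed basis tensored with $P_1\otimes P_2$ is invariant iff each coefficient in $P_1\otimes P_2$ is invariant, which gives the claim.

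\textbf{Step 3: conclusion.} By Proposition~\ref{prop:shifted-rtt-scalar-fixed-algebra} applied to each factor, $A_a^{\mathscr G}=\mathscr U^{\mathrm{rtt},\mathrm{sl}}_{\mathbf d_a^+,\mathbf d_a^-}$, and applied to the summed shifts it identifies the source subalgebra. Combining with Steps 1–2 yields \eqref{eq:two-sided-sl-tensor-closure}.

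\textbf{Main obstacle.} I expect the hard part to be Step 2's claim that the basis splits as (fixed part)$\otimes P_a$ with the action only on $P_a$. The ordered monomials have $\kappa$ and $H$ sandwiched between $F$ and $E$. Since the change to ratio coordinates is triangular and the group acts diagonally (scaling and translation) on these commuting Cartan coordinates, I would first try writing each basis monomial as $F\cdot(\text{ratio monomial})\cdot(\kappa_1\text{-}H_1\text{ monomial})\cdot E$. Invariance can then be tested coefficientwise after expanding $H_{1,\pm r}$-powers binomially under translation, with the top-degree part in $H_1$ giving the induction.
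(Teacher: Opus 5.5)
Your argument is correct, and your Step~1 (equivariance of $\Delta^{\mathrm{rtt}}_{\mathbf d_1,\mathbf d_2}$ under scalar rescalings, checked on $\mathsf T^\epsilon(z)$ and propagated by uniqueness of Gauss decomposition) is exactly the paper's first step. The two routes diverge at Step~2.

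The paper does not look inside $A_a$ at all. It uses only a linear-algebra fact: if a group acts linearly on the first tensor factor alone, then $(A\otimes B)^G=A^G\otimes B$. To see this, write an element as $\sum_j a_j\otimes b_j$ with the $b_j$ linearly independent; invariance forces every $a_j$ to be invariant. The paper applies this once for the rescalings of the first factor and once for those of the second, and then invokes Proposition~\ref{prop:shifted-rtt-scalar-fixed-algebra} for each factor.

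You instead build an equivariant splitting $A_a\cong V_a\otimes P_a$ from the basis of Theorem~\ref{thm:shifted-full-gl-df-isomorphism} and compute the invariants of $P_1\otimes P_2$. This works. The clean justification is simply that each $g$ is an algebra automorphism fixing the root monomials and the ratio monomials, so $g(FRPE)=FR\,g(P)\,E$ on the basis $\{FRPE\}$. However, it essentially re-proves Proposition~\ref{prop:shifted-rtt-scalar-fixed-algebra} inside the tensor product.

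What your route buys is an explicit description of the special-linear subalgebra as an equivariant tensor complement of $P_a$. What it costs is a second pass through the basis theorem. In particular, the ``main obstacle'' you flag is unnecessary, and so are the free-module claim and the commutation heuristics about $\kappa_1^\pm$ and $H^{\mathrm{gl}}_{1,\pm r}$. Once the one-factor invariants are known, the two-factor statement follows from linearity of the action alone.
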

\begin{proof}
Let $\theta_a$ be a scalar rescaling of the $a$-th target RTT matrix.  Matrix
multiplication gives, coefficientwise,
\[
 (\theta_1\otimes\mathrm{id})\Delta^{\mathrm{rtt}}
 =\Delta^{\mathrm{rtt}}\theta_1^{\mathrm{src}},\qquad
 (\mathrm{id}\otimes\theta_2)\Delta^{\mathrm{rtt}}
 =\Delta^{\mathrm{rtt}}\theta_2^{\mathrm{src}},
\]
where on the source the same scalar series multiplies the full source matrix.
Every element of the source simple-current subalgebra is fixed by either
source rescaling.  Its coproduct is therefore fixed independently by the
scalar actions on the first and second target factors.

Over the ground field, if a group acts only on the first factor of
$A\otimes B$, then
$(A\otimes B)^G=A^G\otimes B$: write an element using linearly independent
coefficients in the second factor.  Applying this twice and using
Proposition~\ref{prop:shifted-rtt-scalar-fixed-algebra} gives
\[
 \bigl(\mathscr U^{\mathrm{rtt}}_{\mathbf d_1^+,\mathbf d_1^-}
 \otimes
 \mathscr U^{\mathrm{rtt}}_{\mathbf d_2^+,\mathbf d_2^-}\bigr)^{
 \mathscr G_1\times\mathscr G_2}
 =
 \mathscr U^{\mathrm{rtt},\mathrm{sl}}_{\mathbf d_1^+,\mathbf d_1^-}
 \otimes
 \mathscr U^{\mathrm{rtt},\mathrm{sl}}_{\mathbf d_2^+,\mathbf d_2^-},
\]
which proves \eqref{eq:two-sided-sl-tensor-closure}.
\end{proof}

\begin{theorem}
\label{thm:two-sided-antidominant-current-coproduct}
Assume $m,n>1$ and $m\ne n$.  Let
$(\mu_{a,+},\mu_{a,-})$, $a=1,2$, be two pairs of antidominant shifts and
choose weakly decreasing lifts $\mathbf d_a^\pm$.  There is a unique
superalgebra homomorphism
\begin{align}\label{eq:two-sided-antidominant-current-coproduct}
\Delta_{(\mu_{1,+},\mu_{1,-}),(\mu_{2,+},\mu_{2,-})}:\mathcal U_q^{\mu_{1,+}+\mu_{2,+},\,\mu_{1,-}+\mu_{2,-}}\longrightarrow\mathcal U_q^{\mu_{1,+},\mu_{1,-}}\otimes\mathcal U_q^{\mu_{2,+},\mu_{2,-}}
\end{align}
whose image under the three Ding--Frenkel isomorphisms is the RTT matrix
coproduct.  These homomorphisms are coassociative for three pairs of
antidominant shifts.  When $\mu_{a,+}=0$, they give the one-sided
antidominant coproducts used below.
\end{theorem}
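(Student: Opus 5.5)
The homomorphism is built by transport and then checked. Write $\mathbf d^\pm=\mathbf d_1^\pm+\mathbf d_2^\pm$. This is a weakly decreasing lift of $\mu_{1,\pm}+\mu_{2,\pm}$, because the consecutive differences add: $d_{i+1}^\pm-d_i^\pm=(\alpha_i\mid\mu_{1,\pm})+(\alpha_i\mid\mu_{2,\pm})$. Define
\[
\Delta_{(\mu_{1,+},\mu_{1,-}),(\mu_{2,+},\mu_{2,-})}
:=(\Gamma_{\mathbf d_1^+,\mathbf d_1^-}^{-1}\otimes\Gamma_{\mathbf d_2^+,\mathbf d_2^-}^{-1})\circ\Delta^{\mathrm{rtt}}_{\mathbf d_1,\mathbf d_2}\circ\Gamma_{\mathbf d^+,\mathbf d^-}.
\]
The ingredients are as follows. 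Theorem~\ref{thm:shifted-super-df-isomorphism} makes each $\Gamma$ an isomorphism onto the simple-current subalgebra $\mathscr U^{\mathrm{rtt},\mathrm{sl}}$. Theorem~\ref{thm:shifted-super-rtt-coproduct} supplies $\Delta^{\mathrm{rtt}}_{\mathbf d_1,\mathbf d_2}$ on the full shifted RTT superalgebra. Proposition~\ref{prop:two-sided-antidominant-rtt-closure} guarantees that the restriction of $\Delta^{\mathrm{rtt}}$ to $\mathscr U^{\mathrm{rtt},\mathrm{sl}}_{\mathbf d^+,\mathbf d^-}$ lands in $\mathscr U^{\mathrm{rtt},\mathrm{sl}}_{\mathbf d_1^+,\mathbf d_1^-}\otimes\mathscr U^{\mathrm{rtt},\mathrm{sl}}_{\mathbf d_2^+,\mathbf d_2^-}$. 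Together these make the composite a well-defined superalgebra homomorphism, and its image under the three Ding--Frenkel isomorphisms is the RTT coproduct by construction. Uniqueness follows because the three $\Gamma$'s are bijections, so the condition "transports to $\Delta^{\mathrm{rtt}}$" determines the map.

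Next I check that the result does not depend on the chosen lifts. Changing $\mathbf d_a^\pm$ by common constants $c_a^\pm$ changes $\mathbf d^\pm$ by $c_1^\pm+c_2^\pm$. By Lemma~\ref{lem:shifted-super-rtt-lift}, the RTT matrices are then multiplied by scalar monomials $z^{c}$, and these factor through the matrix product: $z^{c_1+c_2}\mathsf T=z^{c_1}\mathsf T_{(1)}\cdot z^{c_2}\mathsf T_{(2)}$. So the lift isomorphisms intertwine the two versions of $\Delta^{\mathrm{rtt}}$. Proposition~\ref{prop:shifted-super-df-epi} shows these isomorphisms fix the simple currents, since the ratios $g_{i+1}/g_i$ are unchanged, and hence they commute with the $\Gamma$'s. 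The transported map is therefore independent of the lifts.

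For coassociativity, take three pairs with lifts $\mathbf d_1,\mathbf d_2,\mathbf d_3$. Conjugating both iterated composites by the four relevant $\Gamma$'s reduces the identity to Corollary~\ref{cor:shifted-super-rtt-coassociativity}, restricted to the simple-current subalgebras. The intermediate maps land in the sl-subalgebras by Proposition~\ref{prop:two-sided-antidominant-rtt-closure} applied twice, once for each bracketing, with the summed lifts $\mathbf d_1+\mathbf d_2$ and $\mathbf d_2+\mathbf d_3$ as the source lifts.

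For the one-sided case $\mu_{a,+}=0$, choose $\mathbf d_a^+=0$. Then $\mathsf T^+$ has zero-shift diagonal support, and the construction specializes to a map $\mathcal U_q^{0,\mu_1+\mu_2}\to\mathcal U_q^{0,\mu_1}\otimes\mathcal U_q^{0,\mu_2}$.

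I expect the main obstacle to be the closure statement, that the restricted coproduct lands in the tensor product of the simple-current subalgebras. This is handled by the fixed-algebra description in Proposition~\ref{prop:shifted-rtt-scalar-fixed-algebra}. One delicate point should be stated explicitly: that description identifies $\mathscr U^{\mathrm{rtt},\mathrm{sl}}$ with $\Gamma(\mathcal U_q^{\mu_+,\mu_-})$, including the inverses of the leading Cartan coefficients, so the transported map is defined on all generators of the source algebra.
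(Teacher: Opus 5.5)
Your proposal is correct and follows essentially the same route as the paper. The paper also restricts the RTT matrix coproduct to the simple-current subalgebras via Proposition~\ref{prop:two-sided-antidominant-rtt-closure} and transports it through the shifted Ding--Frenkel isomorphisms of Theorem~\ref{thm:shifted-super-df-isomorphism}, which also gives uniqueness; it checks lift-independence with Lemma~\ref{lem:shifted-super-rtt-lift} and obtains coassociativity by restricting Corollary~\ref{cor:shifted-super-rtt-coassociativity}. Your extra remarks (the summed lift is weakly decreasing, the lift isomorphisms commute with the $\Gamma$'s, and closure is needed for both bracketings) make explicit points the paper's short proof leaves implicit.
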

\begin{proof}
Proposition~\ref{prop:two-sided-antidominant-rtt-closure} permits us to restrict
the RTT matrix coproduct to the simple-current subalgebras.  Transporting
this restriction through Theorem~\ref{thm:shifted-super-df-isomorphism}
gives \eqref{eq:two-sided-antidominant-current-coproduct}; uniqueness follows
from the same isomorphisms.  The construction is independent of the chosen
lifts.  Indeed, for each target factor one may change the positive and
negative lifts independently by common integers,
$\mathbf d_a^+\mapsto\mathbf d_a^++c_a^+(1,\ldots,1)$ and
$\mathbf d_a^-\mapsto\mathbf d_a^-+c_a^-(1,\ldots,1)$.
By Lemma~\ref{lem:shifted-super-rtt-lift}, this multiplies the corresponding
full RTT half-matrices by the scalar monomials $z^{c_a^+}$ and $z^{c_a^-}$,
respectively.  Matrix multiplication changes the source lifts by
$c_1^++c_2^+$ and $c_1^-+c_2^-$ in the two signs, while all simple Gauss
currents and their consecutive diagonal ratios are unchanged.  Coassociativity is the
restriction of Corollary~\ref{cor:shifted-super-rtt-coassociativity}.  The
one-sided specialization $\mu_{a,+}=0$ is therefore coassociative for every
antidominant triple, including boundary points of the antidominant chamber.
\end{proof}

We compare this construction with the ordinary type A case.
\begin{remark}
Theorem~\ref{thm:two-sided-antidominant-current-coproduct} is the super counterpart, for the standard parity sequence, of the antidominant special-linear coproduct in \cite[Proposition~3.113]{FPT2022}; its RTT coassociativity corresponds to \cite[Corollary~3.108]{FPT2022}.  The proof here uses the full general-linear current presentation to establish tensor closure and the three-factor RTT coefficient argument for the shifted Ding--Frenkel isomorphisms; it does not use the difference-operator separation conjecture appearing in \cite[Conjecture~3.75]{FPT2022}.
\end{remark}

The distinction between antidominant coassociativity and arbitrary-shift
extensions is discussed in Remark~\ref{rem:noncompleted-coassoc-comparison}
below.

\subsection{One-sided coproducts and extension to arbitrary shifts}

For antidominant $\mu_1,\mu_2$, the one-sided coproduct is the specialization
$\mu_{1,+}=\mu_{2,+}=0$ of
Theorem~\ref{thm:two-sided-antidominant-current-coproduct}.  In the strictly
antidominant case the Levendorskii-type finite presentation makes this
coproduct completely explicit on a finite set of generators.

\begin{proposition}
\label{prop:explicit-strict-antidominant-one-sided-coproduct}
For type A with the standard parity sequence, assume $m,n>1$ and $m\ne n$, and let
$\mu_{1},\mu_{2}\in\Lambda^{-}$ be strictly antidominant.  Set
\[
l_{a,i}:=\alpha_i(\mu_a)\quad(a=1,2),\qquad l_i:=l_{1,i}+l_{2,i}.
\]
Let
\[
\Delta_{\mu_{1},\mu_{2}}:
\mathcal{U}_{q}^{0,\mu_{1}+\mu_{2}}(\widehat{\mathfrak{sl}}_{m|n})
\longrightarrow
\mathcal{U}_{q}^{0,\mu_{1}}(\widehat{\mathfrak{sl}}_{m|n})
\otimes
\mathcal{U}_{q}^{0,\mu_{2}}(\widehat{\mathfrak{sl}}_{m|n})
\]
be the one-sided specialization of Theorem~\ref{thm:two-sided-antidominant-current-coproduct}.  On the finite generating set of Theorem~\ref{thm:finite iso},
\begin{gather*}
\Delta_{\mu_{1},\mu_{2}}(x_{i,r}^{+})=1\otimes x_{i,r}^{+},\quad
\Delta_{\mu_{1},\mu_{2}}(x_{i,s}^{-})=x_{i,s}^{-}\otimes1
\quad\text{for $l_{2,i}\leq r<0,\ l_{1,i}<s\leq0$},\\
\Delta_{\mu_{1},\mu_{2}}(x_{i,0}^{+})
=1\otimes x_{i,0}^{+}+x_{i,0}^{+}\otimes\phi_{i,0}^{+},\quad
\Delta_{\mu_{1},\mu_{2}}(x_{i,l_{1,i}}^{-})
=x_{i,l_{1,i}}^{-}\otimes1+\phi_{i,l_{1,i}}^{-}\otimes x_{i,0}^{-},\\
\Delta_{\mu_{1},\mu_{2}}(\phi_{i,0}^{+})
=\phi_{i,0}^{+}\otimes\phi_{i,0}^{+},\quad
\Delta_{\mu_{1},\mu_{2}}(\phi_{i,l_i}^{-})
=\phi_{i,l_{1,i}}^{-}\otimes\phi_{i,l_{2,i}}^{-}.
\end{gather*}
The images of the remaining boundary generators
$x_{i,l_{2,i}-1}^{+}$, $x_{i,1}^{-}$, $h_{i,\pm1}$
are given explicitly in Proposition~\ref{prop:explicit-antidominant-coproduct-formulas} of Appendix~\ref{app:antidominant-coproduct-formulas}.  Together, these formulas determine the coproduct on the finite presentation.
\end{proposition}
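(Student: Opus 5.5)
The coproduct $\Delta_{\mu_1,\mu_2}$ is by definition the transport of the RTT matrix coproduct of Theorem~\ref{thm:shifted-super-rtt-coproduct} through the Ding--Frenkel isomorphisms of Theorem~\ref{thm:shifted-super-df-isomorphism}. Here we take $\mathbf d_a^+=0$ and weakly decreasing lifts $\mathbf d_a^-$ with $d_{a,i+1}^--d_{a,i}^-=l_{a,i}$. To compute the images of the finite generators of Theorem~\ref{thm:finite iso}, we will compute, via the Gauss decomposition of the product $\mathsf T^\epsilon_{(1)}(z)\mathsf T^\epsilon_{(2)}(z)$, the relevant low-order Gauss coefficients of the product matrix. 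Since these are finitely many coefficients, only finitely many terms in the matrix product contribute, and no completion is needed.

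For the Cartan generators we work in the $+$ expansion first. Since $\mathbf d^+=0$ on both factors, all $+$ series start in degree $0$. Moreover, $E^+_{(1)}F^+_{(2)}$ has constant term $1+e_{(1)}^{(0)}$ with no lower constant part, because $f^{(r)}$ for $r>0$ has positive $z^{-1}$-degree. Hence the leading diagonal of the product is $g^+_{a,0}\otimes g^+_{a,0}$, and taking ratios at $z\mapsto\tau_iz$ gives $\Delta(\phi^+_{i,0})=\phi^+_{i,0}\otimes\phi^+_{i,0}$.

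For the $-$ expansion we use the conjugation in the proof of Theorem~\ref{thm:shifted-super-rtt-coproduct}. The lowest $z$-degree of $g_a^-$ in each factor is $-d^-_{a,a}$, and $g^-_{a,d^-_{1,a}}\otimes g^-_{a,d^-_{2,a}}$ multiplies. This gives $\Delta(\phi^-_{i,l_i})=\phi^-_{i,l_{1,i}}\otimes\phi^-_{i,l_{2,i}}$, provided the middle Gauss factor $\overline G^-$ contributes $1$ at leading order. The corrections $e^{(r)}_{(1)}$ and $f^{(s)}_{(2)}$ in $E^-_{(1)}F^-_{(2)}$ have strictly positive $z$-degree except $f^{(0)}$, and the $(a,a)$ entry of such a product has positive degree.

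For the root generators, the simple upper Gauss coefficient of the product is
\[
e^{\mathrm{tot}}_{i,i+1}=e^{(2)}_{i,i+1}+(G_{(2)})^{-1}_{ii}\,\overline e_{i,i+1}\,(G_{(2)})_{i+1,i+1}.
\]
Here $\overline e_{i,i+1}$ comes from the factor $E_{(1)}F_{(2)}$, and to the orders in question it equals $e^{(1)}_{i,i+1}\otimes1$. Conjugating by $G_{(2)}$ multiplies by $\phi_i^\epsilon$ of the second factor. In the $-$ expansion, strict antidominance gives $l_{2,i}<0$, so $\phi^-_i$ of the second factor starts in $z$-degree $-l_{2,i}>0$. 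Hence for $l_{2,i}\le r<0$ that contribution is pushed beyond mode $r$ and only $1\otimes x^+_{i,r}$ survives. At $r=0$ we use the $+$ expansion, where $\phi^+_{i,0}$ sits in degree $0$, and this produces the extra term $x^+_{i,0}\otimes\phi^+_{i,0}$. The lower-triangular side is symmetric, with the conjugation by $G_{(1)}$ on the left. For $l_{1,i}<s\le0$ only $x^-_{i,s}\otimes1$ survives, while at $s=l_{1,i}$ the boundary term $\phi^-_{i,l_{1,i}}\otimes x^-_{i,0}$ appears.

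The main obstacle is the bookkeeping of signs and of the scalings $\tau_i$ and $(q_i-q_i^{-1})^{-1}$. We also have to verify carefully that no mode beyond the displayed ones leaks in through $\overline G$ or through higher Gauss coefficients $e_{ab}$ with $b>a+1$; for simple coefficients the latter do not enter at these orders. For this we follow the zero-shift template of Appendix~\ref{app:rtt-coproduct-proof}, tracking valuations exactly as in the support argument of Theorem~\ref{thm:shifted-super-rtt-coproduct}. The remaining generators $x^+_{i,l_{2,i}-1}$, $x^-_{i,1}$, and $h_{i,\pm1}$ are deferred to Proposition~\ref{prop:explicit-antidominant-coproduct-formulas}. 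Since Theorem~\ref{thm:finite iso} shows these generators generate, the images determine $\Delta_{\mu_1,\mu_2}$.
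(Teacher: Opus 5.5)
Your proposal is correct and follows the paper's route. The paper likewise obtains these formulas by transporting the shifted RTT matrix coproduct through the shifted Ding--Frenkel isomorphisms with one-sided lifts $\mathbf d_a^+=0$, reads off the displayed modes from the valuations of the Gauss factors (Appendix~\ref{app:antidominant-coproduct-formulas}), and invokes Theorem~\ref{thm:finite iso} to conclude that the finite generators determine the coproduct. Your explicit use of the factorization $F_{(1)}(G_{(1)}\overline F G_{(1)}^{-1})(G_{(1)}\overline G G_{(2)})(G_{(2)}^{-1}\overline E G_{(2)})E_{(2)}$, showing that the conjugated terms begin in $z$-degree $1-l_{2,i}$ (upper) and $-l_{1,i}$ (lower) in the $-$ expansion, just spells out the bookkeeping the paper leaves implicit.
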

\begin{proof}
The coproduct already exists by Theorem~\ref{thm:two-sided-antidominant-current-coproduct}.  The displayed formulas and the remaining boundary formulas are obtained by transporting the shifted RTT matrix coproduct through the shifted Ding--Frenkel isomorphisms; see Appendix~\ref{app:antidominant-coproduct-formulas}.  The finite presentation of Theorem~\ref{thm:finite iso} then shows that these values determine the coproduct.
\end{proof}

\begin{theorem}\label{Th: commutative diagram}
For type A with the standard parity sequence, assume $m,n>1$ and $m\ne n$.  For any $\mu_{1},\mu_{2}\in\Lambda$ and $\mu=\mu_{1}+\mu_{2}$, there is a unique $\mathbb{C}(q)$-superalgebra homomorphism
\begin{align*}
\Delta_{\mu_{1},\mu_{2}}:\mathcal{U}^{0,\mu}_{q}(\widehat{\mathfrak{sl}}_{m|n})\rightarrow\mathcal{U}^{0,\mu_{1}}_{q}(\widehat{\mathfrak{sl}}_{m|n})\otimes \mathcal{U}^{0,\mu_{2}}_{q}(\widehat{\mathfrak{sl}}_{m|n}),
\end{align*}
such that for any $\nu_{1},\nu_{2}\in\Lambda^{-}$,
\begin{equation}\label{eq:shift-coproduct-compatibility}
(\iota_{\mu_{1},0,\nu_{1}}\otimes\iota_{\mu_{2},\nu_{2},0})\circ\Delta_{\mu_{1},\mu_{2}}
=
\Delta_{\mu_{1}+\nu_{1},\mu_{2}+\nu_{2}}\circ\iota_{\mu,\nu_{2},\nu_{1}}.
\end{equation}
\end{theorem}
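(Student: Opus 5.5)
We follow the strategy of \cite[Theorem~10.20]{FT2019}: define $\Delta_{\mu_1,\mu_2}$ for arbitrary shifts by pushing the source into an antidominant shift, applying the antidominant coproduct of Theorem~\ref{thm:two-sided-antidominant-current-coproduct}, and showing that the result lands in the image of the injective shift homomorphisms.

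\emph{Uniqueness.} By Theorem~\ref{th:iota mu nu1 nu2 is injective}(2), $\iota_{\mu_1,0,\nu_1}\otimes\iota_{\mu_2,\nu_2,0}$ is injective. Indeed, a tensor product of injective linear maps over the field $\mathbb C(q)$ is injective. Hence \eqref{eq:shift-coproduct-compatibility} determines $\Delta_{\mu_1,\mu_2}$ once one pair $(\nu_1,\nu_2)$ is fixed. For existence, choose $\nu_1,\nu_2\in\Lambda^-$ with $\mu_1+\nu_1,\mu_2+\nu_2\in\Lambda^-$. Then the right-hand side $\Delta_{\mu_1+\nu_1,\mu_2+\nu_2}\circ\iota_{\mu,\nu_2,\nu_1}$ is defined by the antidominant case. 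The main claim is that its image lies in the subalgebra $\iota_{\mu_1,0,\nu_1}(\mathcal U_q^{0,\mu_1})\otimes\iota_{\mu_2,\nu_2,0}(\mathcal U_q^{0,\mu_2})$. We then define $\Delta_{\mu_1,\mu_2}$ as the composite with the inverse of the (injective) map onto that image. This is automatically a superalgebra homomorphism.

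\emph{Image containment.} It suffices to check containment on generators $x_{i,r}^\pm$ and on the Cartan generators, including the inverses of the leading coefficients. We work at the level of currents using the transported RTT coproduct. Under $\iota_{\mu,\nu_2,\nu_1}$, the current $x_i^+(z)$ goes to $(1-z^{-1})^{-\alpha_i(\nu_2)}x_i^+(z)$. Its coproduct, read off from the Gauss decomposition of $\mathsf T_{(1)}\mathsf T_{(2)}$, has the shape $1\otimes x_i^+(z)$ plus terms whose first factor involves positive Gauss data from factor $1$ and whose second factor involves Cartan data. Only the $1\otimes x_i^+$ part needs to carry the factor $(1-z^{-1})^{-\alpha_i(\nu_2)}$, which lies in $\iota_{\mu_2,\nu_2,0}$. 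The remaining terms must be shown to lie in the image of $\iota_{\mu_1,0,\nu_1}\otimes\iota_{\mu_2,\nu_2,0}$.

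This is the main obstacle. We would follow FT2019 and FPT2022 by reducing to the generating set. First, for strictly antidominant $\mu_a$ with $\nu_a$ sufficiently antidominant, we use the explicit boundary formulas of Proposition~\ref{prop:explicit-strict-antidominant-one-sided-coproduct} and Appendix~\ref{app:antidominant-coproduct-formulas}. Then, applying the composition law of Theorem~\ref{th:iota mu nu1 nu2 is injective}(1), we check the identity \eqref{eq:shift-coproduct-compatibility} among antidominant shifts:
\[
(\iota_{\mu_1,0,\nu_1}\otimes\iota_{\mu_2,\nu_2,0})\circ\Delta_{\mu_1,\mu_2}=\Delta_{\mu_1+\nu_1,\mu_2+\nu_2}\circ\iota_{\mu,\nu_2,\nu_1},\qquad \mu_a,\ \mu_a+\nu_a\in\Lambda^-.
\]
Check it first on the finite Levendorskii generators of Theorem~\ref{thm:finite iso}, using the explicit images together with the scalar identity relating $(1-z^{-1})^{k}$ to the leading-coefficient sign in Lemma~\ref{lem:shift homomorphism iota mu nu1 nu2}. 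The isotropic node needs care: there the recursion goes through $h_{m-1,\pm1}$, so compatibility of $\iota$ with $h_{m-1,\pm1}$ must be verified. This compatibility holds because $\iota$ shifts the logarithmic modes by scalars. Once compatibility holds for antidominant data, the containment for general $\mu_a$ follows by writing $\mu_a=\mu_a'-\nu_a'$ and factoring the shift maps through the composition law.

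\emph{Independence of the choice.} Finally, we show that the resulting $\Delta_{\mu_1,\mu_2}$ does not depend on $(\nu_1,\nu_2)$ and satisfies \eqref{eq:shift-coproduct-compatibility} for all antidominant $\nu_a$. Given two admissible choices, compare both with their sum. Use the composition law, the antidominant compatibility just proved, and injectivity again to cancel the outer shift maps. The compatibility for arbitrary $\nu_a\in\Lambda^-$ then follows from the same cancellation argument.
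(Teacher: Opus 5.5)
Your uniqueness argument, the definition of $\Delta_{\mu_1,\mu_2}$ by inverting $\iota_{\mu_1,0,\nu_1}\otimes\iota_{\mu_2,\nu_2,0}$ on its image, and your final paragraph (independence of $(\nu_1,\nu_2)$ and compatibility for arbitrary $\nu_a$, by comparing with sums and cancelling injective maps) all agree with the paper. The gap is the image containment. You correctly flag it as the main obstacle, but the route you propose cannot resolve it.

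Compatibility among antidominant shifts only involves shift maps whose \emph{source} $\mathcal U_q^{0,\lambda_a}$ is antidominant. It therefore says nothing about membership in $\iota_{\mu_1,0,\nu_1}(\mathcal U_q^{0,\mu_1})\otimes\iota_{\mu_2,\nu_2,0}(\mathcal U_q^{0,\mu_2})$ when $\mu_1,\mu_2$ are not antidominant, and that is the only new case. Writing $\mu_a=\mu_a'-\nu_a'$ and using the composition law together with injectivity only shows that the containment for $(\nu_1,\nu_2)$ is equivalent to the containment for $(\nu_1+\nu_1',\nu_2+\nu_2')$. It never establishes either one, so the argument is circular at exactly this step.

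The missing idea is a description of the images of the one-sided shift maps, valid for every $\eta\in\Lambda$ and $\nu\in\Lambda^-$.
\begin{itemize}
\item The map $\iota_{\eta,0,\nu}$ fixes every positive root current. It multiplies each Cartan current by the polynomial $(1-z^{-1})^{-\alpha_i(\nu)}$, which acts triangularly with nonzero diagonal on the coefficient strings, starting from the invertible leading coefficients. Hence the subalgebra of $\mathcal U_q^{0,\eta+\nu}$ generated by the positive root modes and all Cartan coefficients lies in $\iota_{\eta,0,\nu}(\mathcal U_q^{0,\eta})$.
\item Symmetrically, the subalgebra generated by the negative root modes and the Cartan coefficients lies in $\iota_{\eta,\nu,0}(\mathcal U_q^{0,\eta})$.
\end{itemize}

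Now use the triangular shape you already noticed, stated correctly. Let $U_1^{>}\subseteq U_1^{\geq}$ be the subalgebras of $\mathcal U_q^{0,\mu_1+\nu_1}$ generated by the positive modes, respectively by the positive modes and the Cartan part. Let $U_2^{<}\subseteq U_2^{\leq}$ be the analogous negative subalgebras of $\mathcal U_q^{0,\mu_2+\nu_2}$. Then, for all $r$,
\begin{itemize}
\item $\Delta_{\mu_1+\nu_1,\mu_2+\nu_2}(x_{i,r}^+)\in 1\otimes x_{i,r}^++U_1^{>}\otimes U_2^{\leq}$;
\item $\Delta_{\mu_1+\nu_1,\mu_2+\nu_2}(x_{i,r}^-)\in x_{i,r}^-\otimes1+U_1^{\geq}\otimes U_2^{<}$.
\end{itemize}
Note that the second factor of the correction terms contains negative root vectors such as $F_{l,i+1}^{(0)}\phi^-_{i,l_{2,i}}$, not just Cartan data. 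The paper obtains these statements from the boundary formulas together with the $h_{i,\pm1}$ (respectively $h_{m-1,\pm1}$) recursion.

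With this, every generator is handled:
\begin{itemize}
\item The leading term of $\Delta(\iota_{\mu,\nu_2,\nu_1}(x_{i,r}^{+}))$ is exactly $(\iota_{\mu_1,0,\nu_1}\otimes\iota_{\mu_2,\nu_2,0})(1\otimes x_{i,r}^+)$, and similarly for $x_{i,r}^-\otimes1$.
\item The correction terms lie in the image by the two inclusions above.
\item The leading Cartan coefficients map directly to tensor products of leading coefficients.
\item The higher Cartan coefficients, written as $(q_i-q_i^{-1})[x_{i,r}^+,x_{i,0}^-]$ and its negative analogue, land in $U_1^{\geq}\otimes U_2^{\leq}$, which is contained in the image.
\end{itemize}
Existence then needs no antidominant compatibility at all. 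That compatibility, checked on the finite generators, is used only afterwards, for independence of the auxiliary shifts.
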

\begin{proof}
First, we construct $\Delta_{\mu_{1},\mu_{2}}$. Fix $\nu_{1},\nu_{2}\in\Lambda^{-}$ such that $\alpha_i(\mu_1+\nu_1),\alpha_i(\mu_2+\nu_2)<0$ for each $i\in I$.  The map $\Delta_{\mu_1+\nu_1,\mu_2+\nu_2}$ is the one-sided specialization of Theorem~\ref{thm:two-sided-antidominant-current-coproduct}; its finite-generator formulas are given in Proposition~\ref{prop:explicit-strict-antidominant-one-sided-coproduct}.  Since $\iota_{\mu,\nu_2,\nu_1}$ and $\iota_{\mu_1,0,\nu_1}\otimes\iota_{\mu_2,\nu_2,0}$ are injective, it is enough to prove that
\begin{align}\label{eq:give the above homo in the diagram}
\Delta_{\mu_{1}+\nu_{1},\mu_{2}+\nu_{2}}\iota_{\mu,\nu_{2},\nu_{1}}(\mathcal{U}^{0,\mu}_{q}(\widehat{\mathfrak{sl}}_{m|n}))\subseteq(\iota_{\mu_{1},0,\nu_{1}}\otimes\iota_{\mu_{2},\nu_{2},0})(\mathcal{U}^{0,\mu_{1}}_{q}(\widehat{\mathfrak{sl}}_{m|n})\otimes\mathcal{U}^{0,\mu_{2}}_{q}(\widehat{\mathfrak{sl}}_{m|n})),
\end{align}
then $\Delta_{\mu_{1}+\nu_{1},\mu_{2}+\nu_{2}}$ uniquely determines a homomorphism $\Delta_{\mu_{1},\mu_{2}}$ satisfying \eqref{eq:shift-coproduct-compatibility} for this pair $\nu_1,\nu_2$.

For $\mu\in\Lambda$, write $l_i:=\alpha_i(\mu)$.  Let $\mathcal{U}_{q}^{0,\mu}(\widehat{\mathfrak{sl}}_{m|n})^{>}$ and $\mathcal{U}_{q}^{0,\mu}(\widehat{\mathfrak{sl}}_{m|n})^{\geq}$ (resp. $\mathcal{U}_{q}^{0,\mu}(\widehat{\mathfrak{sl}}_{m|n})^{<}$ and $\mathcal{U}_{q}^{0,\mu}(\widehat{\mathfrak{sl}}_{m|n})^{\leq}$) be the $\mathbb{C}(q)$-subalgebras of $\mathcal{U}_{q}^{0,\mu}(\widehat{\mathfrak{sl}}_{m|n})$ generated by $\{x_{i,r}^{+}\mid i\in I, r\in\mathbb{Z}\}$ and
\[
\{x_{i,r}^{+},(\phi_{i,0}^{+})^{\pm1},(\phi_{i,l_i}^{-})^{\pm1},\phi_{i,s}^{+},\phi_{i,t}^{-}\mid i\in I,\ r\in\mathbb Z,\ s>0,\ t<l_i\}
\]
(resp. $\{x_{i,r}^{-}\mid i\in I, r\in\mathbb{Z}\}$ and the analogous set with $x_{i,r}^{+}$ replaced by $x_{i,r}^{-}$).

We first note the two inclusions
\begin{align}
\mathcal U_q^{0,\eta+\nu}(\widehat{\mathfrak{sl}}_{m|n})^{\geq}
\subseteq
\iota_{\eta,0,\nu}\bigl(\mathcal U_q^{0,\eta}(\widehat{\mathfrak{sl}}_{m|n})\bigr), \quad 
\mathcal U_q^{0,\eta+\nu}(\widehat{\mathfrak{sl}}_{m|n})^{\leq}
\subseteq
\iota_{\eta,\nu,0}\bigl(\mathcal U_q^{0,\eta}(\widehat{\mathfrak{sl}}_{m|n})\bigr)
\end{align}
for every $\eta\in\Lambda$ and $\nu\in\Lambda^{-}$.  Indeed, $\iota_{\eta,0,\nu}$ fixes every positive root current and multiplies each Cartan current by the finite polynomial $(1-z^{-1})^{-\alpha_i(\nu)}$.  On the ordered coefficient strings determined by the shifted supports this is triangular with nonzero diagonal; starting from the invertible leading coefficients, one recursively recovers every Cartan coefficient in the target.  The second inclusion is identical with the negative root currents fixed.

Use the finite-generator formulas of Proposition~\ref{prop:explicit-strict-antidominant-one-sided-coproduct} for $x_{i,-1}^{+}$, $x_{i,1}^{-}$, and $h_{i,\pm1}$.  Together with the relations
\begin{gather*}
[h_{i,\pm1},x_{i,r}^{+}]=[s_{i}A_{i,i}]_{q_{i}}x_{i,r\pm1}^{+},\quad[h_{i,\pm1},x_{i,r}^{-}]=-[s_{i}A_{i,i}]_{q_{i}}x_{i,r\pm1}^{-}\quad\text{for $i\neq m$},\\
[h_{i-1,\pm1},x_{i,r}^{+}]=[s_{i-1}A_{i-1,i}]_{q_{i-1}}x_{i,r\pm1}^{+},\quad[h_{i-1,\pm1},x_{i,r}^{-}]=-[s_{i-1}A_{i-1,i}]_{q_{i-1}}x_{i,r\pm1}^{-}\quad\text{for $i=m$},
\end{gather*}
together with the two inclusions just proved, namely
\begin{align}\label{eq:geq otimes leq subset iota mu1 otimes mu2}
\mathcal{U}_{q}^{0,\mu_{1}+\nu_{1}}(\widehat{\mathfrak{sl}}_{m|n})^{\geq}\otimes\mathcal{U}_{q}^{0,\mu_{2}+\nu_{2}}(\widehat{\mathfrak{sl}}_{m|n})^{\leq}\subseteq(\iota_{\mu_{1},0,\nu_{1}}\otimes\iota_{\mu_{2},\nu_{2},0})(\mathcal{U}_{q}^{0,\mu_{1}}(\widehat{\mathfrak{sl}}_{m|n})\otimes\mathcal{U}_{q}^{0,\mu_{2}}(\widehat{\mathfrak{sl}}_{m|n})),
\end{align}
we obtain
\begin{align}\label{eq:Delta xir subset}
\begin{split}
&\Delta_{\mu_{1}+\nu_{1},\mu_{2}+\nu_{2}}(x_{i,r}^{+})\in 1\otimes x_{i,r}^{+}+\mathcal{U}_{q}^{0,\mu_{1}+\nu_{1}}(\widehat{\mathfrak{sl}}_{m|n})^{>}\otimes\mathcal{U}_{q}^{0,\mu_{2}+\nu_{2}}(\widehat{\mathfrak{sl}}_{m|n})^{\leq},\\&
\Delta_{\mu_{1}+\nu_{1},\mu_{2}+\nu_{2}}(x_{i,r}^{-})\in x_{i,r}^{-}\otimes1+\mathcal{U}_{q}^{0,\mu_{1}+\nu_{1}}(\widehat{\mathfrak{sl}}_{m|n})^{\geq}\otimes\mathcal{U}_{q}^{0,\mu_{2}+\nu_{2}}(\widehat{\mathfrak{sl}}_{m|n})^{<},
\end{split}
\end{align}
for all $r\in\mathbb{Z}$. 
By \eqref{eq:geq otimes leq subset iota mu1 otimes mu2}--\eqref{eq:Delta xir subset} and $\iota_{\mu,\nu_{2},\nu_{1}}(x_{i,r}^{+})=\sum_{s=0}^{-\alpha_{i}(\nu_{2})}(-1)^{s}\binom{-\alpha_{i}(\nu_{2})}{s}x_{i,r-s}^{+}$, we have
\begin{align*}
\Delta_{\mu_{1}+\nu_{1},\mu_{2}+\nu_{2}}(\iota_{\mu,\nu_{2},\nu_{1}}(x_{i,r}^{+}))&\in1\otimes\sum_{s=0}^{-\alpha_{i}(\nu_{2})}(-1)^{s}\binom{-\alpha_{i}(\nu_{2})}{s}x_{i,r-s}^{+}\\&+\mathcal{U}_{q}^{0,\mu_{1}+\nu_{1}}(\widehat{\mathfrak{sl}}_{m|n})^{>}\otimes\mathcal{U}_{q}^{0,\mu_{2}+\nu_{2}}(\widehat{\mathfrak{sl}}_{m|n})^{\leq},
\end{align*}
where the first term on the right-hand side is equal to $(\iota_{\mu_{1},0,\nu_{1}}\otimes\iota_{\mu_{2},\nu_{2},0})(1\otimes x_{i,r}^{+})$, and the second summand on the right-hand side is contained in $(\iota_{\mu_{1},0,\nu_{1}}\otimes\iota_{\mu_{2},\nu_{2},0})(\mathcal{U}_{q}^{0,\mu_{1}}(\widehat{\mathfrak{sl}}_{m|n})\otimes\mathcal{U}_{q}^{0,\mu_{2}}(\widehat{\mathfrak{sl}}_{m|n}))$. Similarly, we have
\begin{align*}
\Delta_{\mu_{1}+\nu_{1},\mu_{2}+\nu_{2}}(\iota_{\mu,\nu_{2},\nu_{1}}(x_{i,r}^{-}))&\in \sum_{s=0}^{-\alpha_{i}(\nu_{1})}(-1)^{s}\binom{-\alpha_{i}(\nu_{1})}{s}x_{i,r-s}^{-}\otimes1\\&+\mathcal{U}_{q}^{0,\mu_{1}+\nu_{1}}(\widehat{\mathfrak{sl}}_{m|n})^{\geq}\otimes\mathcal{U}_{q}^{0,\mu_{2}+\nu_{2}}(\widehat{\mathfrak{sl}}_{m|n})^{<},
\end{align*}
where the first term on the right-hand side is equal to $(\iota_{\mu_{1},0,\nu_{1}}\otimes\iota_{\mu_{2},\nu_{2},0})(x_{i,r}^{-}\otimes1)$, and the second summand on the right-hand side is contained in $(\iota_{\mu_{1},0,\nu_{1}}\otimes\iota_{\mu_{2},\nu_{2},0})(\mathcal{U}_{q}^{0,\mu_{1}}(\widehat{\mathfrak{sl}}_{m|n})\otimes\mathcal{U}_{q}^{0,\mu_{2}}(\widehat{\mathfrak{sl}}_{m|n}))$. Moreover, we have
\begin{gather*}
\Delta_{\mu_{1}+\nu_{1},\mu_{2}+\nu_{2}}(\iota_{\mu,\nu_{2},\nu_{1}}((\phi_{i,0}^{+})^{\pm1}))=(\iota_{\mu_{1},0,\nu_{1}}\otimes\iota_{\mu_{2},\nu_{2},0})((\phi_{i,0}^{+})^{\pm1}\otimes(\phi_{i,0}^{+})^{\pm1}),\\
\Delta_{\mu_{1}+\nu_{1},\mu_{2}+\nu_{2}}(\iota_{\mu,\nu_{2},\nu_{1}}((\phi_{i,l_{i}}^{-})^{\pm1}))=(\iota_{\mu_{1},0,\nu_{1}}\otimes\iota_{\mu_{2},\nu_{2},0})((\phi_{i,l_{1,i}}^{-})^{\pm1}\otimes(\phi_{i,l_{2,i}}^{-})^{\pm1}).
\end{gather*}
Combining the relations $\phi_{i,r}^{+}=(q_{i}-q_{i}^{-1})[x_{i,r}^{+},x_{i,0}^{-}]$ and $\phi_{i,L_{i}-r}^{-}=-(q_{i}-q_{i}^{-1})[x_{i,L_{i}-r}^{+},x_{i,0}^{-}]$ in $\mathcal{U}_{q}^{0,\mu+\nu_{1}+\nu_{2}}(\widehat{\mathfrak{sl}}_{m|n})$ for $r\in\mathbb{Z}_{>0}$, $L_{i}=\alpha_{i}(\mu+\nu_{1}+\nu_{2})$, and $l_{i}-L_{i}=-\alpha_{i}(\nu_{1}+\nu_{2})$ with \eqref{eq:geq otimes leq subset iota mu1 otimes mu2}--\eqref{eq:Delta xir subset}, we obtain
\begin{align*}
&\Delta_{\mu_{1}+\nu_{1},\mu_{2}+\nu_{2}}(\iota_{\mu,\nu_{2},\nu_{1}}(\phi_{i,r}^{+})),\Delta_{\mu_{1}+\nu_{1},\mu_{2}+\nu_{2}}(\iota_{\mu,\nu_{2},\nu_{1}}(\phi_{i,l_{i}-r}^{-}))\\&\in\mathcal{U}_{q}^{0,\mu_{1}+\nu_{1}}(\widehat{\mathfrak{sl}}_{m|n})^{\geq}\otimes\mathcal{U}_{q}^{0,\mu_{2}+\nu_{2}}(\widehat{\mathfrak{sl}}_{m|n})^{\leq}\subseteq(\iota_{\mu_{1},0,\nu_{1}}\otimes\iota_{\mu_{2},\nu_{2},0})(\mathcal{U}_{q}^{0,\mu_{1}}(\widehat{\mathfrak{sl}}_{m|n})\otimes\mathcal{U}_{q}^{0,\mu_{2}}(\widehat{\mathfrak{sl}}_{m|n})),
\end{align*}
where $\iota_{\mu,\nu_{2},\nu_{1}}(\phi_{i,l_{i}-r}^{-})=\sum_{t=0}^{N_{i}}(-1)^{t}\binom{N_{i}}{t}\phi_{i,l_{i}-r-t}^{-}$ and we set $N_{i}:=l_{i}-L_{i}$. Therefore, \eqref{eq:give the above homo in the diagram} holds, and hence we obtain the homomorphism $\Delta_{\mu_{1},\mu_{2}}$ for the fixed $\nu_{1},\nu_{2}$.

Let $\nu_{1}',\nu_{2}'\in\Lambda^{-}$ be such that $\alpha_i(\mu_1+\nu_1'),\alpha_i(\mu_2+\nu_2')<0$ for each $i\in I$.  The composition law of Theorem~\ref{th:iota mu nu1 nu2 is injective} gives
\begin{align*}
\iota_{\mu+\nu_1+\nu_2,\nu_2',\nu_1'}\circ\iota_{\mu,\nu_2,\nu_1}
&=\iota_{\mu,\nu_2+\nu_2',\nu_1+\nu_1'},\\
(\iota_{\mu_1+\nu_1,0,\nu_1'}\otimes\iota_{\mu_2+\nu_2,\nu_2',0})\circ(\iota_{\mu_1,0,\nu_1}\otimes\iota_{\mu_2,\nu_2,0})
&=\iota_{\mu_1,0,\nu_1+\nu_1'}\otimes\iota_{\mu_2,\nu_2+\nu_2',0}.
\end{align*}
The finite-generator formulas of Proposition~\ref{prop:explicit-strict-antidominant-one-sided-coproduct} also give
\begin{align*}(\iota_{\mu_1+\nu_1,0,\nu_1'}\otimes\iota_{\mu_2+\nu_2,\nu_2',0})\circ\Delta_{\mu_1+\nu_1,\mu_2+\nu_2}=\Delta_{\mu_1+\nu_1+\nu_1',\mu_2+\nu_2+\nu_2'}\circ\iota_{\mu+\nu_1+\nu_2,\nu_2',\nu_1'}.
\end{align*}
Composing these identities with \eqref{eq:shift-coproduct-compatibility} shows that the maps $\Delta_{\mu_1,\mu_2}$ obtained from $(\nu_1,\nu_2)$ and from $(\nu_1+\nu_1',\nu_2+\nu_2')$ coincide.  The same argument starting from $(\nu_1',\nu_2')$ proves independence of the choice of the auxiliary antidominant shifts.

Now let $\nu_1,\nu_2\in\Lambda^{-}$ be arbitrary. Choose $\nu_1',\nu_2'\in\Lambda^{-}$ such that $\alpha_i(\mu_1+\nu_1+\nu_1'),\alpha_i(\mu_2+\nu_2+\nu_2')<0$ for every $i\in I$.  By the construction above, compatibility holds for the combined shifts $\nu_1+\nu_1',\nu_2+\nu_2'$ and, applied to the pair $\mu_1+\nu_1,\mu_2+\nu_2$, also for $\nu_1',\nu_2'$.  Using the composition law for the shift homomorphisms, these two identities give
\begin{align*}
(\iota_{\mu_1+\nu_1,0,\nu_1'}\otimes\iota_{\mu_2+\nu_2,\nu_2',0})
(\iota_{\mu_1,0,\nu_1}\otimes\iota_{\mu_2,\nu_2,0})\Delta_{\mu_1,\mu_2}=(\iota_{\mu_1+\nu_1,0,\nu_1'}\otimes\iota_{\mu_2+\nu_2,\nu_2',0})
\Delta_{\mu_1+\nu_1,\mu_2+\nu_2}\iota_{\mu,\nu_2,\nu_1}.
\end{align*}
The common tensor-product factor on the two sides is injective by Theorem~\ref{th:iota mu nu1 nu2 is injective}; cancelling it gives \eqref{eq:shift-coproduct-compatibility} for the original $\nu_1,\nu_2$.  This proves the theorem.
\end{proof}

The arbitrary-shift family retains the following restricted ternary compatibility.
\begin{theorem}
\label{thm:boundary-rtt-current-coassociativity}
For every antidominant triple $\lambda_1,\lambda_2,\lambda_3\in\Lambda^{-}$,
\[
(\Delta_{\lambda_1,\lambda_2}\otimes\id)\circ
\Delta_{\lambda_1+\lambda_2,\lambda_3}
=
(\id\otimes\Delta_{\lambda_2,\lambda_3})\circ
\Delta_{\lambda_1,\lambda_2+\lambda_3}.
\]
More generally, if $\mu_2\in\Lambda^{-}$ and $\mu_1,\mu_3\in\Lambda$ are
arbitrary, then
\[
(\Delta_{\mu_1,\mu_2}\otimes\id)\circ
\Delta_{\mu_1+\mu_2,\mu_3}
=
(\id\otimes\Delta_{\mu_2,\mu_3})\circ
\Delta_{\mu_1,\mu_2+\mu_3}.
\]
Thus the middle shift may lie on the boundary of the antidominant chamber,
where some $\alpha_i(\mu_2)=0$.
\end{theorem}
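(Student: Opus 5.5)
The first assertion will be deduced from the second, or checked directly, using Theorem~\ref{thm:two-sided-antidominant-current-coproduct}. For $\lambda_1,\lambda_2,\lambda_3\in\Lambda^-$, every map involved is the one-sided specialization of the two-sided antidominant coproduct. Coassociativity of that family is the restriction of Corollary~\ref{cor:shifted-super-rtt-coassociativity} through the Ding--Frenkel isomorphisms of Theorem~\ref{thm:shifted-super-df-isomorphism}. One point must still be checked: the $\Delta_{\lambda_a,\lambda_b}$ produced by Theorem~\ref{Th: commutative diagram} agrees with the RTT-transported map on antidominant shifts. I would prove this by taking $\nu_1=\nu_2=0$ in \eqref{eq:shift-coproduct-compatibility} and invoking uniqueness. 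More concretely, I would show that the transported RTT coproducts themselves satisfy \eqref{eq:shift-coproduct-compatibility} for antidominant shifts. On the RTT side, the shift homomorphisms correspond to multiplying $\mathsf T^-$-type data by diagonal matrices of scalar polynomials in $z^{-1}$. Matrix multiplication then commutes with these operations, so the uniqueness statement of Theorem~\ref{Th: commutative diagram} identifies the two maps.

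For the general statement, let $\mu_2\in\Lambda^-$ and $\mu_1,\mu_3\in\Lambda$ be arbitrary. Choose $\nu_1,\nu_3\in\Lambda^-$ so that $\mu_1+\nu_1$ and $\mu_3+\nu_3$ are antidominant. The idea is to conjugate both sides of the ternary identity by the injective map
\[
\iota_{\mu_1,0,\nu_1}\otimes\id\otimes\iota_{\mu_3,\nu_3,0},
\]
which is injective by Theorem~\ref{th:iota mu nu1 nu2 is injective}(2), being a tensor product of injective maps over a field. On the left-hand side, I would apply \eqref{eq:shift-coproduct-compatibility} twice. First,
\[
(\iota_{\mu_1,0,\nu_1}\otimes\id)\Delta_{\mu_1,\mu_2}=\Delta_{\mu_1+\nu_1,\mu_2}\iota_{\mu_1+\mu_2,0,\nu_1},
\]
which is the case $\nu_2=0$; here $\iota_{\cdot,0,0}=\id$ by the defining formulas. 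Next, pushing through $\Delta_{\mu_1+\mu_2,\mu_3}$ uses compatibility with shifts $(\nu_1,\nu_3)$, together with the composition law Theorem~\ref{th:iota mu nu1 nu2 is injective}(1) to merge the $\iota$'s. The left side thus becomes
\[
(\Delta_{\mu_1+\nu_1,\mu_2}\otimes\id)\Delta_{\mu_1+\nu_1+\mu_2,\mu_3+\nu_3}\circ\iota_{\mu,\nu_3,\nu_1},
\qquad \mu=\mu_1+\mu_2+\mu_3 .
\]
The right side similarly becomes
\[
(\id\otimes\Delta_{\mu_2,\mu_3+\nu_3})\Delta_{\mu_1+\nu_1,\mu_2+\mu_3+\nu_3}\circ\iota_{\mu,\nu_3,\nu_1}.
\]
Bookkeeping of which $\nu$ goes to which slot must be checked carefully. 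The convention $\iota_{\mu,\nu_2,\nu_1}$ on the source puts the right factor's shift on $x^+$ and the left factor's shift on $x^-$, and the middle factor receives no shift in both cases.

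After this reduction, all three shifts $\mu_1+\nu_1$, $\mu_2$, $\mu_3+\nu_3$ are antidominant, so the first part (the RTT coassociativity) gives equality of the reduced expressions. Injectivity then cancels the outer conjugation. The main obstacle I expect is the middle factor. The compatibility \eqref{eq:shift-coproduct-compatibility} only lets one shift the left factor by $\iota_{\cdot,0,\nu}$ and the right factor by $\iota_{\cdot,\nu,0}$. In the inner step on the right-hand side, $(\id\otimes\Delta_{\mu_2,\mu_3})$, the factor $\mu_2$ sits on the left and must therefore be left unshifted. This is exactly why $\mu_2$ must already be antidominant, and why the boundary case $\alpha_i(\mu_2)=0$ is allowed: it is covered by Theorem~\ref{thm:two-sided-antidominant-current-coproduct}, which does not require strictness. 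The strict-antidominance step in Theorem~\ref{Th: commutative diagram} is not needed here, since that theorem's $\Delta$ is unique and compatibility holds for all $\nu_i\in\Lambda^-$. I would verify the matching of $\Delta$ at boundary antidominant shifts with the RTT-transported map as indicated in the first paragraph.
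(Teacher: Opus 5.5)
Your argument is the paper's. The antidominant identity is read off from the coassociative two-sided family of Theorem~\ref{thm:two-sided-antidominant-current-coproduct}. The general identity follows by composing with the injective map $\iota_{\mu_1,0,\nu_1}\otimes\id\otimes\iota_{\mu_3,\nu_3,0}$, applying \eqref{eq:shift-coproduct-compatibility} twice, reducing to the antidominant triple $(\mu_1+\nu_1,\mu_2,\mu_3+\nu_3)$, and cancelling. Your bookkeeping is right, and the composition law is not actually needed.

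You are also right that this uses a further fact: the maps of Theorem~\ref{Th: commutative diagram} must agree with the RTT-transported one-sided coproducts at antidominant pairs that may lie on the boundary, such as $(\mu_1+\nu_1,\mu_2)$ and $(\mu_2,\mu_3+\nu_3)$. The paper simply asserts this, in the first line of its proof and in Remark~\ref{rem:noncompleted-coassoc-comparison}. Its own compatibility check in the proof of Theorem~\ref{Th: commutative diagram} is made only between strictly antidominant shifts.

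The justification you sketch for this identification does not work.
\begin{itemize}
\item Setting $\nu_1=\nu_2=0$ in \eqref{eq:shift-coproduct-compatibility} is vacuous. What you need is the compatibility for some $\nu_1,\nu_2$ making $\lambda_a+\nu_a$ strictly antidominant, followed by injectivity.
\item The shift maps are not induced by $\mathsf T^{\pm}(z)\mapsto D(z)\mathsf T^{\pm}(z)$, or $\mathsf T^{\pm}(z)D(z)$, with $D(z)=\operatorname{diag}(p_a(z))$ polynomial in $z^{-1}$. Such a substitution respects \eqref{eq:shifted-super-rtt-relations} only when the ratios $p_a(z)/p_b(z)$ are constant. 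The $\mathsf E_{ab}\otimes\mathsf E_{ba}$ terms of $R(z,w)$ pick up the factor $p_a(z)p_b(w)/(p_b(z)p_a(w))$, and $\mathsf T^{+}_{12}(z)\mathsf T^{+}_{13}(w)$ is invertible. This is why Lemma~\ref{lem:zero-shift-rtt-diagonal-rescaling} uses constant matrices. Here the consecutive ratios would have to be $(1-\tau_i z^{-1})^{d_i}$ with $d_i=-\alpha_i(\nu)\ge0$.
\item That factor has $z$-adic valuation $-d_i$. So even on the simple currents, the shift map mixes the half-currents $f^{+}_{i+1,i}$ and $f^{-}_{i+1,i}$: the image of a positive mode involves finitely many nonpositive modes. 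It does not act on $\mathsf T^{+}$ and $\mathsf T^{-}$ separately, so ``matrix multiplication commutes with these operations'' does not apply.
\end{itemize}

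To close this point, check the compatibility of the transported RTT coproducts with $\iota$ directly at boundary antidominant pairs. One way is on the generating set of Theorem~\ref{thm:finite iso}, which is available for every antidominant decomposition, mirroring the generator check the paper invokes for strictly antidominant shifts. Otherwise, take the identification as given, as the paper does.
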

\begin{proof}
For an antidominant triple, every coproduct in the first identity is the
one-sided specialization of the coassociative two-sided family in
Theorem~\ref{thm:two-sided-antidominant-current-coproduct}; hence the identity
follows immediately.

For arbitrary outer shifts, choose $\nu_1,\nu_3\in\Lambda^{-}$ such that
$\mu_1+\nu_1,\mu_3+\nu_3\in\Lambda^{-}$.  Apply twice the compatibility relation~\eqref{eq:shift-coproduct-compatibility}
of Theorem~\ref{Th: commutative diagram}.  After the injective map
$\iota_{\mu_1,0,\nu_1}\otimes\id\otimes
\iota_{\mu_3,\nu_3,0}$,
the two ternary composites become the corresponding composites for the
antidominant triple
$\mu_1+\nu_1,\mu_2,\mu_3+\nu_3$, which are equal by the first part.
Injectivity of the three-factor embedding, by
Theorem~\ref{th:iota mu nu1 nu2 is injective}, gives the result.
\end{proof}

\subsection{Tensor products across shifts}
The arbitrary-shift homomorphism gives a binary representation-theoretic operation.

\begin{corollary}\label{cor:tensor-products-across-shifts}
For type A with the standard parity sequence, assume $m,n>1$ and $m\ne n$.  Let $\mu_{1},\mu_{2}\in\Lambda$, and let $V_{1}$ and $V_{2}$ be left modules over $\mathcal{U}^{0,\mu_{1}}_{q}(\widehat{\mathfrak{sl}}_{m|n})$ and $\mathcal{U}^{0,\mu_{2}}_{q}(\widehat{\mathfrak{sl}}_{m|n})$, respectively.  Then $V_{1}\otimes V_{2}$ is naturally a left $\mathcal{U}^{0,\mu_{1}+\mu_{2}}_{q}(\widehat{\mathfrak{sl}}_{m|n})$-module through $\Delta_{\mu_{1},\mu_{2}}$.  If $\mu_{1}=\mu_{2}=0$ and both modules factor through the central reduction of Proposition~\ref{prop:central-leading-reduction}, this construction descends to the usual tensor-product module for $\mathcal U_q(\widehat{\mathfrak{sl}}_{m|n})$.
\end{corollary}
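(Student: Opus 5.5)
The module structure comes from pulling back along the homomorphism $\Delta_{\mu_1,\mu_2}$ of Theorem~\ref{Th: commutative diagram}. Since $\Delta_{\mu_1,\mu_2}$ is a superalgebra homomorphism into $\mathcal U^{0,\mu_1}_q\otimes\mathcal U^{0,\mu_2}_q$, the only point is that $V_1\otimes V_2$ is a module over the graded tensor product. That holds with the sign rule
\[
(a\otimes b)(v_1\otimes v_2)=(-1)^{|b||v_1|}av_1\otimes bv_2 ,
\]
which is compatible with the multiplication convention of Section~\ref{section: quantum affine superalgebra}; this is a routine check on homogeneous elements. Naturality in $V_1,V_2$ is immediate, since module maps tensor to module maps for the pulled-back action.

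For the zero-shift statement, take $\mu_1=\mu_2=0$, so that no auxiliary shift is needed and $\Delta_{0,0}$ is the antidominant map of Theorem~\ref{thm:two-sided-antidominant-current-coproduct} with all lifts $\mathbf d_a^\pm=0$. The steps are as follows.

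\begin{enumerate}
\item Show that $\Delta_{0,0}(c_i)=c_i\otimes c_i$, where $c_i=\phi^+_{i,0}\phi^-_{i,0}$. This follows from $\Delta(\phi^+_{i,0})=\phi^+_{i,0}\otimes\phi^+_{i,0}$ and $\Delta(\phi^-_{i,0})=\phi^-_{i,0}\otimes\phi^-_{i,0}$, which are read off from the RTT matrix product of leading diagonal terms; these are the zero-shift instances of Proposition~\ref{prop:explicit-strict-antidominant-one-sided-coproduct}.
\item Conclude that $\Delta_{0,0}(J_0)$ lies in $J_0\otimes\mathcal U^{0,0}_q+\mathcal U^{0,0}_q\otimes J_0$. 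This makes the ideal a coideal, so $\Delta_{0,0}$ descends to a homomorphism $\mathcal U_q(\widehat{\mathfrak{sl}}_{m|n})\to\mathcal U_q(\widehat{\mathfrak{sl}}_{m|n})^{\otimes2}$ via Proposition~\ref{prop:central-leading-reduction}.
\item Identify this descended map with the ordinary coproduct $\Delta$. The zero-shift extended RTT algebra maps onto $\mathcal U^{\mathrm{rtt}}_q(\widehat{\mathfrak{sl}}_{m|n})$ compatibly with the RTT matrix coproduct, since both coproducts are $\mathsf T\mapsto\mathsf T_{(1)}\mathsf T_{(2)}$. Corollary~\ref{coro: iso from Drinfeld realization to RTT realization} then identifies $\Delta^{\mathrm{rtt}}$ with $\Delta$.
\end{enumerate}

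Once the maps agree, modules factoring through the reduction give exactly the usual tensor-product action, because the pulled-back action is computed through the same homomorphism.

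The main obstacle is the compatibility square in step~3. One must check that the quotient map from $\mathscr U^{\mathrm{rtt},\mathrm{sl}}_{\mathbf0,\mathbf0}$, after imposing $c_i=1$, lands in the special-linear RTT quotient and commutes with the Ding--Frenkel maps $\Gamma_{\mathbf0,\mathbf0}$ and $\gamma$. I would verify this on the simple currents $\mathsf x_i^\pm(z)$ and $\mathsf\phi_i^\pm(z)$, whose formulas coincide in the two settings. I would also check that the quantum Berezinian relations are not needed there, since only the simple-current subalgebra is involved and it is fixed under scalar rescalings (Proposition~\ref{prop:shifted-rtt-scalar-fixed-algebra}).
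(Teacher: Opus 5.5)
Your proposal is correct and follows the paper's argument. You pull back the graded tensor-product action along $\Delta_{\mu_1,\mu_2}$, and at zero shift you use that each $c_i$ is group-like, so that $J_0$ is a coideal and the action descends. Your step~3 only makes explicit the identification with the transported RTT coproduct, which the paper leaves implicit. That step is a routine check on generators, since the quotient imposing $g^+_{a,0}g^-_{a,0}=1$ and the quantum Berezinian relations is defined on the whole extended RTT algebra.

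One small correction: at zero shift, read off the group-likeness of $\phi^\pm_{i,0}$ directly from the RTT matrix product in Theorem~\ref{thm:shifted-super-rtt-coproduct}. You should not cite it as a zero-shift instance of Proposition~\ref{prop:explicit-strict-antidominant-one-sided-coproduct}, because that proposition assumes strictly antidominant shifts.
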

\begin{proof}
Compose $\Delta_{\mu_{1},\mu_{2}}$ with the tensor product of the two module actions.  At zero shift the central elements $c_i$ are group-like, so the ideal generated by $c_i-1$ is a coideal and the action descends to the central reduction $\mathcal U_q(\widehat{\mathfrak{sl}}_{m|n})$.
\end{proof}

We close by comparing the two coassociativity statements with the ordinary type A case.
\begin{remark}\label{rem:noncompleted-coassoc-comparison}
Proposition~\ref{prop:formal-shifted-coassociativity} proves coassociativity for the \emph{formal} shifted Drinfeld coproduct in the completed tensor product.  This has an exact non-super precedent in \cite[Lemma~10.2]{FT2019}, where the formal coproduct is defined for arbitrary pairs of two-sided shifts.

The situation for non-completed coproducts is more subtle, but the distinction is \emph{not} between an infinite current presentation and a finite presentation.  For dominant coweights $\mu_a^\pm$, $a=1,2$, the RTT coproducts of \cite[Proposition~3.106]{FPT2022} give, after passage to the special-linear shifted algebras, coproducts with the four antidominant shifts
\[
(-\mu_1^+,-\mu_1^-; -\mu_2^+,-\mu_2^-),
\]
and Corollary~3.108 of \cite{FPT2022} shows that these form a coassociative family.  On the other hand, \cite[Remark~3.115]{FPT2022}, using the arbitrary-shift extension mechanism of \cite[Theorem~10.20]{FT2019}, considers
\[
\Delta_{\nu_1^+,\nu_1^-,\nu_2^+,\nu_2^-}:
U^{\mathrm{sc}}_{\nu_1^++\nu_2^+,\,\nu_1^-+\nu_2^-}
\longrightarrow
U^{\mathrm{sc}}_{\nu_1^+,\nu_1^-}\otimes
U^{\mathrm{sc}}_{\nu_2^+,\nu_2^-}
\]
for arbitrary coweights $\nu_a^\pm$, and explicitly notes that this general family is not coassociative.  Thus the loss of coassociativity occurs in the arbitrary-shift extension, not merely because the coproduct is non-completed or is specified on a finite generating set.  In rank one, \cite[Lemma~10.12]{FT2019} also records a restricted ternary compatibility when the middle one-sided shift is antidominant.

For pairs of antidominant positive and negative shifts, Theorem~\ref{thm:two-sided-antidominant-current-coproduct} gives the corresponding coassociative super family.  The maps $\Delta_{\mu_1,\mu_2}$ of Theorem~\ref{Th: commutative diagram} are its one-sided specialization on the antidominant chamber and extend to arbitrary one-sided shifts by the shift embeddings.  Theorem~\ref{thm:boundary-rtt-current-coassociativity} proves the resulting restricted ternary compatibility whenever the middle one-sided shift is antidominant, including boundary cases where some $\alpha_i(\mu_2)=0$.  We make no unrestricted monoidal or arbitrary four-parameter coassociativity claim after extending both positive and negative shifts beyond the antidominant chamber.
\end{remark}

\section*{Acknowledgements}
The authors are grateful to 
Michael Finkelberg, Hiroyuki Yamane, Honglian Zhang, and Huafeng Zhang for helpful discussions and valuable comments. This work was supported by the National Natural Science Foundation of China (Nos.~12171213 and 12271224). JL was supported by the Beatriz Galindo Senior grant BG24/00114 from the Spanish Ministry of Science, Innovation and Universities.

\section*{Declarations}

\noindent\textbf{Declaration on the Use of Generative AI.} Generative AI (ChatGPT, provided by OpenAI) was used to assist with language editing and improve readability. It did not generate the underlying mathematical ideas, nor was it used to supply missing arguments or complete proofs. All mathematical content, including the validity of every proof, remains the sole responsibility of the authors.

\medskip\noindent\textbf{Conflict of Interest.} The authors declare that they have no conflicts of interest relevant to this work.

\medskip\noindent\textbf{Data Availability Statement.} No datasets were generated or analyzed during the current study.

\appendix

\section{Odd-Serre stability in the shifted current algebra}\label{app:odd-serre-stability}

This appendix verifies directly the quartic odd-Serre stability needed in Lemma~\ref{lem:shifted-relation-stability} for $m,n>1$.  We prove $[O^{+},x_{m,r}^{-}]=0$ coefficientwise without using the unshifted normalization $\phi_{m,0}^{-}=(\phi_{m,0}^{+})^{-1}$; compare \cite[Lemma~3.9 and Appendix~1]{Zh2014}.

To shorten the quartic-current calculation, put $E(z)=x_m^{+}(z)$, $F(u)=x_m^{-}(u)$, $L(w)=x_{m-1}^{+}(w)$, and $R(y)=x_{m+1}^{+}(y)$. For $z_1,z_2,y,w$ independent formal variables, set
\begin{align*}
\mathscr S(z_1,z_2;y,w)
:=\operatorname{Sym}_{z_1,z_2}
 [E(z_1),[R(y),[E(z_2),L(w)]_q]_{q^{-1}}].
\end{align*}
The coefficients of $\mathscr S$ span $O^{+}$.  Since $E$ and $F$ are odd,
whereas $L$, $R$, and the Cartan currents are even, the identities
\eqref{eqs:three terms}, together with
\eqref{SQASrelation5}, give
\begin{align}
&[\mathscr S(z_1,z_2;y,w),F(u)] \notag\\
&\quad=\frac{1}{q_m-q_m^{-1}}
\sum_{\epsilon\in\{+,-\}}\epsilon
\Bigl(
\delta(z_2/u)\,\mathscr A^{\epsilon}(z_1,z_2;y,w)
+\delta(z_1/u)\,\mathscr A^{\epsilon}(z_2,z_1;y,w)
\Bigr),                                      \label{eq:odd-stability-delta}
\end{align}
where, as usual, the symbol $\epsilon$ in the sum means $+1$ for $+$ and
$-1$ for $-$, and
\begin{align}
\mathscr A^{\epsilon}(z,t;y,w)
=[E(z),[R(y),[\phi_m^{\epsilon}(t),L(w)]_q]_{q^{-1}}] -[\phi_m^{\epsilon}(t),[R(y),[E(z),L(w)]_q]_{q^{-1}}].
                                                        \label{eq:Aeps-definition}
\end{align}
Indeed, $F(u)$ supercommutes with $L(w)$ and $R(y)$, while
\[
[E(z),F(u)]
=\frac{1}{q_m-q_m^{-1}}\delta(z/u)
   (\phi_m^{+}(z)-\phi_m^{-}(z)).
\]
Thus it remains to prove $\mathscr A^{\epsilon}=0$ for
$\epsilon=\pm$.

We do this using only the quadratic current relations.  For a formal variable
$s$, define
\[
g_{-}(s,w)=\frac{q^{-1}s-w}{s-q^{-1}w},\qquad
g_{+}(s,y)=\frac{qs-y}{s-qy}.
\]
Because
$A_{m,m-1}=-1$, $A_{m,m+1}=1$, and $A_{m,m}=0$, relation
\eqref{SQASrelation4} and the same-sign quadratic relation
\eqref{SQASrelation2} imply, in the standard formal localization,
\begin{align}
\phi_m^{\epsilon}(t)L(w)&=aL(w)\phi_m^{\epsilon}(t),&
\phi_m^{\epsilon}(t)R(y)&=bR(y)\phi_m^{\epsilon}(t),\notag\\
E(z)L(w)&=cL(w)E(z),& E(z)R(y)&=dR(y)E(z),              \label{eq:abcd-exchange}
\end{align}
where
\[
a=g_{-}(t,w),\qquad b=g_{+}(t,y),\qquad
c=g_{-}(z,w),\qquad d=g_{+}(z,y),
\]
and $E(z)$ commutes with $\phi_m^{\epsilon}(t)$.  Notice that the same
exchange factors occur for $\epsilon=+$ and $\epsilon=-$; the shift changes
only the Laurent support of the Cartan current, not its exchange kernel.

Expand \eqref{eq:Aeps-definition} and use
\eqref{eq:abcd-exchange} to put every term in the order
$L(w)R(y)E(z)\phi_m^{\epsilon}(t)$.  One obtains
\begin{align}
\mathscr A^{\epsilon}(z,t;y,w)
=\frac{\mathscr C(a,b,c,d)}{q}\,
L(w)R(y)E(z)\phi_m^{\epsilon}(t),                       \label{eq:Aeps-coeff}
\end{align}
with
\begin{align}
\mathscr C(a,b,c,d)
=ab(q^2+1-qc-qd)+cd(qa+qb-q^2-1)+q(c+d-a-b).                                     \label{eq:C-abcd}
\end{align}
The vanishing of this coefficient is a two-line rational identity.  Namely,
if $A=g_{-}(s,w)$ and $B=g_{+}(s,y)$, then
\begin{align}
\frac{q(A+B)-(q^2+1)AB}{1-AB}
=\frac{y-q^2w}{y-w},                                   \label{eq:g-invariant}
\end{align}
which is independent of $s$.  Applying \eqref{eq:g-invariant} first to
$(A,B)=(a,b)$ and then to $(A,B)=(c,d)$, and cross-multiplying, gives
precisely $\mathscr C(a,b,c,d)=0$.  Hence
$\mathscr A^{\epsilon}(z,t;y,w)=0$ for both signs $\epsilon$.
Equation \eqref{eq:odd-stability-delta} therefore yields
\[
[\mathscr S(z_1,z_2;y,w),F(u)]=0.
\]
Taking coefficients of $z_1,z_2,y,w,u$ proves
\[
[O^{+},x_{m,r}^{-}]=0\qquad\text{for every }r\in\mathbb Z.
\]

The argument is insensitive to shifted endpoints.  More explicitly,
relations \eqref{eq:abcd-exchange} are just the defining current relations
before coefficient extraction.  If a coefficient of
$\phi_m^{\epsilon}(t)$ lies outside its prescribed shifted support, that
coefficient is zero; after clearing the four linear denominators in
\eqref{eq:abcd-exchange}, the identity above is polynomial in the formal
variables and therefore remains valid coefficientwise at the boundary.
The invertibility of the leading Cartan coefficient is never used.
The negative relation $[O^{-},x_{m,r}^{+}]=0$ follows from the same
calculation with all signs reversed (equivalently, by applying the standard
positive--negative involution).

\section{Detailed proof of the finite presentation}\label{app:finite-presentation-proof}
This appendix gives the mode-propagation proof of Theorem~\ref{thm:finite iso}.

\begin{proof}
First, relations \eqref{SQASrelation111}--\eqref{SQASrelation991} hold in $\mathcal{U}^{0,\mu}_{q}(\widehat{\mathfrak{sl}}_{m|n})$. Hence, there exists a superalgebra homomorphism $\varepsilon: \mathcal{E}^{\mu_{1},\mu_{2}}_{0}\rightarrow \mathcal{U}^{0,\mu}_{q}(\widehat{\mathfrak{sl}}_{m|n})$ given by
\begin{gather*}
x_{i,r}^{+}\mapsto x_{i,r}^{+},\quad x_{i,s}^{-}\mapsto x_{i,s}^{-},\quad
(\phi_{i,0}^{+})^{\pm 1}\mapsto (\phi_{i,0}^{+})^{\pm 1},\quad
(\phi_{i,l_{i}}^{-})^{\pm 1}\mapsto (\phi_{i,l_{i}}^{-})^{\pm 1},\quad
h_{i,\pm 1}\mapsto h_{i,\pm 1}
\end{gather*}
for $i\in I$, $r\in[l_{2,i}-1,0]$, and $s\in[l_{1,i},1]$. It follows from this and relations \eqref{SQASArelation1}--\eqref{SQASArelation6} given above that the map $\varepsilon: x_{i,r}^{\pm}\mapsto x_{i,r}^{\pm}, \phi_{i,m_{i}^{\pm}}^{\pm}\mapsto \phi_{i,m_{i}^{\pm}}^{\pm}$ for $i\in I$, $r\in\mathbb{Z}$, $m_{i}^{+}\in\mathbb{Z}_{\geq0}$, and $m_{i}^{-}\in\mathbb{Z}_{\geq -l_{i}}$, is a surjective superalgebra homomorphism. It remains to show that relations \eqref{SQASrelation1}--\eqref{SQASrelation7} hold in $\mathcal{E}^{\mu_{1},\mu_{2}}_{0}$.

By \eqref{SQASrelation111}, \eqref{SQASrelation441}--\eqref{SQASrelation442}, and the inductive definitions of $x_{i,r}^{\pm}$ and $\phi_{i,r}^{\pm}$, we have
\begin{gather}
\begin{split}
\phi_{i,0}^{+}x_{j,r}^{+}=q^{A_{i,j}}x_{j,r}^{+}\phi_{i,0}^{+},\quad \phi_{i,l_{i}}^{-}x_{j,r}^{+}=q^{-A_{i,j}}x_{j,r}^{+}\phi_{i,l_{i}}^{-},\quad [h_{i,\pm1},x_{j,r}^{+}]=[s_{i}A_{i,j}]_{q_{i}}x_{j,r\pm 1}^{+},\label{v1}
\end{split}
\end{gather}
\begin{gather}
\begin{split}
\phi_{i,0}^{+}x_{j,r}^{-}=q^{-A_{i,j}}x_{j,r}^{-}\phi_{i,0}^{+},\quad \phi_{i,l_{i}}^{-}x_{j,r}^{-}=q^{A_{i,j}}x_{j,r}^{-}\phi_{i,l_{i}}^{-},\quad [h_{i,\pm1},x_{j,r}^{-}]=-[s_{i}A_{i,j}]_{q_{i}}x_{j,r\pm 1}^{-},\label{v2}
\end{split}
\end{gather}
\begin{gather}
[\phi_{i,0}^{+},\phi_{j,\pm m_{j}^{\pm}}^{\pm}]=0,\quad [\phi_{i,l_{i}}^{-},\phi_{j,\pm m_{j}^{\pm}}^{\pm}]=0,\label{v3}
\end{gather}
for all $i,j\in I$, $r\in\mathbb{Z}$, $m_{j}^{+}\in\mathbb{Z}_{\geq0}$, and $m_{j}^{-}\in\mathbb{Z}_{\geq -l_{j}}$. Combining these with identities \eqref{eqs:three terms}, relations \eqref{SQASrelation111}, \eqref{SQASrelation441}--\eqref{SQASrelation551}, and \eqref{SQASArelation5}--\eqref{SQASArelation6}, we obtain
\begin{gather}
\begin{split}
[x_{i,1}^{+},x_{i,0}^{-}]=[x_{i,0}^{+},x_{i,1}^{-}]=\frac{1}{q_{i}-q_{i}^{-1}}\phi_{i,1}^{+},\\
[x_{i,l_{2,i}}^{+},x_{i,l_{1,i}-1}^{-}]=[x_{i,l_{2,i}-1}^{+},x_{i,l_{1,i}}^{-}]=-\frac{1}{q_{i}-q_{i}^{-1}}\phi_{i,l_{i}-1}^{-}.\label{v4}
\end{split}
\end{gather}
Since $\phi_{i,1}^{+}=(q_{i}-q_{i}^{-1})[x_{i,1}^{+},x_{i,0}^{-}]=(q_{i}-q_{i}^{-1})\phi_{i,0}^{+}h_{i,1}$, we have $[h_{i,1},\phi_{i,1}^{+}]=0$ and $[h_{m-1,1},\phi_{m,1}^{+}]=0$. Combining this with identities \eqref{eqs:three terms}, relations \eqref{SQASArelation5}, \eqref{v1}--\eqref{v2}, and \eqref{v4}, and the definition of $\phi_{i,2}^{+}$, we obtain
\begin{gather}
[x_{i,2}^{+},x_{i,0}^{-}]=[x_{i,1}^{+},x_{i,1}^{-}]=[x_{i,0}^{+},x_{i,2}^{-}]=\frac{1}{q_{i}-q_{i}^{-1}}\phi_{i,2}^{+},\label{v5}\\
[x_{i,l_{2,i}}^{+},x_{i,l_{1,i}-2}^{-}]=[x_{i,l_{2,i}-1}^{+},x_{i,l_{1,i}-1}^{-}]=[x_{i,l_{2,i}-2}^{+},x_{i,l_{1,i}}^{-}]=-\frac{1}{q_{i}-q_{i}^{-1}}\phi_{i,l_{i}-2}^{-}.\label{v6}
\end{gather}

\begin{lemma}\label{eq:k+l=0,1,2}
The following relations hold in $\mathcal{E}^{\mu_{1},\mu_{2}}_{0}$:
\begin{itemize}
\item[(a)] $[x_{i,k}^{+},x_{i,l}^{-}]=\frac{1}{q_{i}-q_{i}^{-1}}(\phi_{i,0}^{+}-\delta_{l_{i},0}\phi_{i,l_{i}}^{-})$ for $k+l=0$;
\item[(b)] $[x_{i,k}^{+},x_{i,l}^{-}]=\frac{1}{q_{i}-q_{i}^{-1}}\phi_{i,k+l}^{+}$ for $k+l=1,2$.
\end{itemize}
\end{lemma}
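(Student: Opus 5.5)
Both parts will be proved by propagating the defining mixed relation \eqref{SQASrelation551} and the already-established identities \eqref{v4}--\eqref{v6} along the index line $k+l=\text{const}$, using the $h$-actions \eqref{v1}--\eqref{v2}. Throughout, let $u$ be the Cartan direction used in the recursive definitions: $u=i$ for $i\ne m$, and $u=m-1$ for $i=m$. Write $c$ for the nonzero scalar $[s_uA_{u,i}]_{q_u}$, so that $[h_{u,\pm1},x_{i,r}^{\pm}]=\pm c\,x_{i,r\pm1}^{\pm}$ for all $r\in\mathbb Z$ by \eqref{v1}--\eqref{v2}.

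\emph{Part (a).} The super Jacobi identity gives
\[
[h_{u,1},[x_{i,k}^+,x_{i,l}^-]]=c\bigl([x_{i,k+1}^+,x_{i,l}^-]-[x_{i,k}^+,x_{i,l+1}^-]\bigr),
\]
and similarly for $h_{u,-1}$. The left-hand side vanishes whenever $[x_{i,k}^+,x_{i,l}^-]$ is a Cartan element commuting with $h_{u,\pm1}$. By \eqref{SQASrelation551} this holds for all generator pairs with $k+l=0$, where the value is $(\phi_{i,0}^+-\delta_{l_i,0}\phi_{i,l_i}^-)/(q_i-q_i^{-1})$; note $k+l=0$ is attainable since $r\in[l_{2,i}-1,0]$ and $s\in[l_{1,i},1]$. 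Hence $[x_{i,k+1}^+,x_{i,l-1}^-]=[x_{i,k}^+,x_{i,l}^-]$ along the line $k+l=0$. Starting from $(k,l)=(0,0)$ and using $h_{u,+1}$ and $h_{u,-1}$ alternately (equivalently, inducting on $|k|$), the value is transported to every $(k,-k)$ with $k\in\mathbb Z$. Some care is needed: moving $x^+$ up by $h_{u,1}$ also moves $x^-$ up. The correct move is to apply $h_{u,1}$ to the pair $(k,l-1)$, whose commutator is known to be Cartan only when $k+l-1$ lies in the known range. So I would first establish, as an auxiliary claim by the same induction, that $[x_{i,k}^+,x_{i,l}^-]$ is independent of $(k,l)$ on each line $k+l\in\{-1,0,1\}$. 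The line $-1$ uses the value $0$ when $l_i<-1$, or $\phi^-$ values otherwise.

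\emph{Part (b).} For $k+l=1$, I would induct in the same way from the base identity \eqref{v4}, noting that $\phi^+_{i,1}=(q_i-q_i^{-1})\phi^+_{i,0}h_{i,1}$ commutes with $h_{u,\pm1}$ by \eqref{v3} and \eqref{SQASrelation111}. Since the line-$0$ and line-$1$ values are constant, the Jacobi step transports the line-$1$ value to all $(k,1-k)$. For $k+l=2$, \eqref{v5} supplies three base points. To propagate further, I need that $\phi_{i,2}^+$ commutes with $h_{u,\pm1}$. This is the step I expect to be the main obstacle, because $\phi_{i,2}^+$ is defined only recursively through \eqref{SQASArelation5}, so its commutation with $h_{u,\pm1}$ is not automatic. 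My first attempt would be to express $[h_{u,1},\phi^+_{i,2}]$ as $c(q_i-q_i^{-1})\bigl([x^+_{i,2},x^-_{i,1}]-[x^+_{i,1},x^-_{i,2}]\bigr)$ and then show that this vanishes using the cubic relations \eqref{SQASrelation881}--\eqref{SQASrelation991}. Expanding $[h,[x^-_{i,1},[h,x^+_{i,0}]]]$ with the Jacobi identity yields exactly a combination of $[x^+_{i,2},x^-_{i,1}]$, $[x^+_{i,1},x^-_{i,2}]$, and already-known terms such as $[h_{u,1},\phi^+_{i,1}]=0$. For $h_{u,-1}$, I would compute the commutator via the line-$1$ constancy and $[h_{u,-1},\phi_{i,1}^+]$, reducing to the line-$0$ values. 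With this commutation in hand, the induction proceeds as before, and the isotropic case $i=m$ differs only through the choice $u=m-1$ and relation \eqref{SQASrelation991}.
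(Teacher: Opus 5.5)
Your argument is correct and is essentially the paper's: you propagate along the lines $k+l=\mathrm{const}$ by $\operatorname{ad}h_{u,\pm1}$ (with $u=i$, resp.\ $u=m-1$), and lines $0$ and $1$ bootstrap each other from the generator pairs and \eqref{v4}; line $-1$ is not actually needed. The obstacle you anticipate for $k+l=2$ is not real: once $[x_{i,t}^{+},x_{i,1-t}^{-}]$ is known to be constant in $t\in\mathbb Z$, applying $\operatorname{ad}h_{u,1}$ gives $[x_{i,t+1}^{+},x_{i,1-t}^{-}]=[x_{i,t}^{+},x_{i,2-t}^{-}]$ for all $t$, so line $2$ is constant with value $[x_{i,1}^{+},x_{i,1}^{-}]=\phi_{i,2}^{+}/(q_i-q_i^{-1})$ by definition \eqref{SQASArelation5}, and neither $[h_{u,\pm1},\phi_{i,2}^{+}]=0$ nor the cubic relations \eqref{SQASrelation881}--\eqref{SQASrelation991} enter (the paper uses those only afterwards, to obtain \eqref{v71}--\eqref{v72}); in fact, commutation of $\phi_{i,2}^{+}$ with $h_{u,\pm1}$ would only relate neighbours on lines $1$ and $3$, never on line $2$ itself.
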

\begin{proof}
We first consider the case $i=m$. For $k+l=0,1$, we proceed by induction on $l$. The case $l=0$ follows from \eqref{SQASrelation551} and \eqref{v4}. Assume that the statements corresponding to $k+l=0,1$ hold for $l=1-t<0$. Applying $h_{m-1,-1}$ to $[x_{m,t}^{+},x_{m,1-t}^{-}]=[x_{m,0}^{+},x_{m,1}^{-}]$ and using \eqref{v4}, we obtain
\begin{align}\label{eq:k+l=0}
[x_{m,t-1}^{+},x_{m,1-t}^{-}]-[x_{m,t}^{+},x_{m,-t}^{-}]=[x_{m,-1}^{+},x_{m,1}^{-}]-[x_{m,0}^{+},x_{m,0}^{-}]=0.
\end{align}
This proves the case $l=-t$ for $k+l=0$.
Applying $h_{m-1,1}$ to \eqref{eq:k+l=0}, we obtain
\begin{align*}
[x_{m,t+1}^{+},x_{m,-t}^{-}]-[x_{m,t}^{+},x_{m,1-t}^{-}]=[x_{m,t}^{+},x_{m,1-t}^{-}]-[x_{m,t-1}^{+},x_{m,2-t}^{-}]=0.
\end{align*}
This proves the case $l=-t$ for $k+l=1$. The case $l>0$ can be proved similarly.

Now we prove the case $k+l=2$. Applying $h_{m-1,1}$ to $[x_{m,t}^{+},x_{m,1-t}^{-}]=[x_{m,0}^{+},x_{m,1}^{-}]$ ($t\in\mathbb{Z}$) and using \eqref{v5}, we obtain
\begin{align*}
[x_{m,t+1}^{+},x_{m,1-t}^{-}]-[x_{m,t}^{+},x_{m,2-t}^{-}]=[x_{m,1}^{+},x_{m,1}^{-}]-[x_{m,0}^{+},x_{m,2}^{-}]=0.
\end{align*}
Therefore, the case $k+l=2$ follows.

For $i\in I\setminus\{m\}$, the same argument, with $h_{m-1,\pm1}$ replaced by $h_{i,\pm1}$, yields (a) and (b).
\end{proof}

Relation \eqref{SQASrelation881} is equivalent to 
\begin{gather}
[h_{i,1},\phi_{i,2}^{+}]=0,\quad [h_{i,-1},\phi_{i,l_{i}-2}^{-}]=0,\label{v71}
\end{gather}
and relation \eqref{SQASrelation991} is equivalent to
\begin{gather}
[h_{m-1,1},\phi_{m,2}^{+}]=0,\quad[h_{m-1,-1},\phi_{m,l_{m}-2}^{-}]=0.\label{v72}
\end{gather}
Then, for any $i,j\in I$, we have
\begin{gather}
\begin{split}
[h_{j,-1},\phi_{i,2}^{+}]=0,\quad [h_{j,1},\phi_{i,l_{i}-2}^{-}]=0.
\end{split}\label{v8}
\end{gather}
Now define elements $h_{i,\pm2}\in\mathcal{E}^{\mu_{1},\mu_{2}}_{0}$ as follows:
\begin{gather}
\begin{split}
h_{i,2}:=\frac{1}{q_{i}-q_{i}^{-1}}(\phi_{i,0}^{+})^{-1}\phi_{i,2}^{+}-\frac{q_{i}-q_{i}^{-1}}{2}h_{i,1}^{2},\\
h_{i,-2}:=-\frac{1}{q_{i}-q_{i}^{-1}}(\phi_{i,l_{i}}^{-})^{-1}\phi_{i,l_{i}-2}^{-}+\frac{q_{i}-q_{i}^{-1}}{2}h_{i,-1}^{2}.\label{a1}
\end{split}
\end{gather}
By \eqref{SQASrelation111} and \eqref{v71}--\eqref{v8}, for every $i,j\in I$, we have
\begin{gather}
[h_{i,\pm1},h_{i,\pm2}]=0,\quad[h_{j,\mp1},h_{i,\pm2}]=0.\label{v9}
\end{gather}

\begin{lemma}
For all $i\in I$ and $r\in\mathbb{Z}$, we have
\begin{gather}
[h_{i,\pm2},x_{i,r}^{+}]=\frac{[2s_{i}A_{i,i}]_{q_{i}}}{2}x_{i,r\pm2}^{+},\quad [h_{i,\pm2},x_{i,r}^{-}]=-\frac{[2s_{i}A_{i,i}]_{q_{i}}}{2}x_{i,r\pm2}^{-},\label{eq:A21}\\
[h_{m-1,\pm2},x_{m,r}^{+}]=\frac{[2s_{m-1}A_{m-1,m}]_{q_{m-1}}}{2}x_{m,r\pm2}^{+},\quad [h_{m-1,\pm2},x_{m,r}^{-}]=-\frac{[2s_{m-1}A_{m-1,m}]_{q_{m-1}}}{2}x_{m,r\pm2}^{-}.\label{eq:A22}
\end{gather}
\end{lemma}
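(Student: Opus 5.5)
We prove \eqref{eq:A21} by the standard Levendorskii argument: first verify the identity at one base value of $r$, then propagate it to all $r$ using the $h_{\pm1}$-actions \eqref{v1}--\eqref{v2} together with the commutation relations \eqref{v9}. We treat the $+$ sign and $h_{i,2}$; the cases $h_{i,-2}$ and $x^-$ are symmetric. The latter use $\phi^-_{i,l_i-2}$, \eqref{v6}, and the second relations in \eqref{v71}--\eqref{v72}.

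\emph{Propagation step.} Suppose $[h_{i,2},x_{i,r_0}^{+}]=c\,x_{i,r_0+2}^{+}$ holds for a single $r_0$, where $c=[2s_iA_{i,i}]_{q_i}/2$. By \eqref{v1}, $x_{i,r_0\pm1}^+$ is a nonzero multiple of $[h_{i,\pm1},x_{i,r_0}^+]$; here $A_{i,i}\neq0$ for $i\ne m$. By \eqref{v9}, $h_{i,2}$ commutes with both $h_{i,1}$ and $h_{i,-1}$. Applying $\operatorname{ad}h_{i,\pm1}$ to the base identity and using the Jacobi identity therefore yields the identity for $r_0\pm1$. Induction in both directions gives all $r\in\mathbb Z$. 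The same scheme gives \eqref{eq:A22}, with the direction $h_{m-1,\pm1}$ used for node $m$, and with $[h_{m-1,\pm1},h_{m-1,\pm2}]=0$ and $[h_{m-1,\mp1},h_{m-1,\pm2}]=0$ again coming from \eqref{v9}.

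\emph{Base case.} This is the main obstacle. We need $[h_{i,2},x_{i,0}^+]=c\,x_{i,2}^+$, and for this we use the definition \eqref{a1}. Using \eqref{v1}, we have $[h_{i,1}^2,x_{i,0}^+]=[s_iA_{i,i}]\,(h_{i,1}x_{i,1}^++x_{i,1}^+h_{i,1})$. We then compute $[(\phi_{i,0}^+)^{-1}\phi_{i,2}^+,x_{i,0}^+]$. For this, we express $\phi_{i,2}^+=(q_i-q_i^{-1})[x_{i,1}^+,x_{i,1}^-]$ by \eqref{SQASArelation5} and Lemma~\ref{eq:k+l=0,1,2}(b), and $\phi^+_{i,1}=(q_i-q_i^{-1})\phi^+_{i,0}h_{i,1}$. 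Taking the $z^{-2}$-coefficient of the desired current identity $(z-q^{A}w)\phi_i^+(z)x_i^+(w)=(q^{A}z-w)x_i^+(w)\phi_i^+(z)$ isolates the needed relation
\[
\phi^+_{i,2}x^+_{i,1}-q^{A_{i,i}}\phi^+_{i,1}x^+_{i,2}\,\text{-type identity},
\]
which we prove directly from the super Jacobi identity, using the quadratic relation \eqref{SQASrelation221} between $x^+_{i,1}$ and $x^+_{i,0}$ (and $x^+_{i,2}$) and the mixed relations \eqref{v4}--\eqref{v5}.

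Concretely, we first derive the two degree-one exchange relations
\[
\phi^+_{i,1}x^+_{i,r}-q^{A}x^+_{i,r}\phi^+_{i,1}=(q^{A}-q^{-A})\phi^+_{i,0}x^+_{i,r+1}
\]
from $\phi^+_{i,1}=(q_i-q_i^{-1})\phi^+_{i,0}h_{i,1}$. Next, we write $\phi^+_{i,2}=(q_i-q_i^{-1})[x^+_{i,2},x^-_{i,0}]$ and compute $[\phi^+_{i,2},x^+_{i,0}]$ by moving $x^+_{i,0}$ through, using \eqref{SQASrelation221} for the pair $(x^+_{i,2},x^+_{i,0})$ versus $(x^+_{i,1},x^+_{i,1})$, together with \eqref{SQASrelation551}/\eqref{v4}. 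Substituting into \eqref{a1}, the $h_{i,1}^2$ correction cancels the quadratic terms, and what is left is exactly $c\,x^+_{i,2}$. This cancellation is the usual one behind $\exp$-logarithm compatibility: it amounts to checking the degree-two coefficient of $\log\phi_i^+(z)$.

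For node $m$ with $h_{m-1,2}$, we perform the same computation with $i$ replaced by $m-1$ acting on $x^+_{m,0}$. Here we need $[\phi^+_{m-1,2},x^+_{m,0}]$, which we obtain by expressing $\phi^+_{m-1,2}$ through $x^\pm_{m-1}$ modes and using the mixed relation $[x^+_{m-1,k},x^-_{m}]=0$ together with \eqref{SQASrelation221} for $A_{m-1,m}=-1$. We expect this computation, and the careful tracking of $q$-powers there, to be the hardest point. If it proves too unwieldy, the fallback is to verify the base identity in the free auxiliary algebra by a generating-function computation restricted to degree $\le2$.
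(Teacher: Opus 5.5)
The propagation step is correct and matches the paper. The base case, however, has a genuine gap.

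\textbf{The gap.} To compute $[\phi^+_{i,2},x^+_{i,0}]$ you invoke \eqref{SQASrelation221} for the pair $(x^+_{i,2},x^+_{i,0})$ versus $(x^+_{i,1},x^+_{i,1})$, and a quadratic relation between $x^+_{i,1}$ and $x^+_{i,0}$. Neither is available in $\mathcal E^{\mu_1,\mu_2}_0$ at this stage. Relation \eqref{SQASrelation221} is imposed only on the generator modes $x^+_{i,r}$, $r\in[l_{2,i}-1,0]$, whereas $x^+_{i,1},x^+_{i,2}$ are defined by the recursion \eqref{SQASArelation1}.

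Put $X^+(i;r,s):=[x^+_{i,r+1},x^+_{i,s}]_{q^{A_{i,i}}}+[x^+_{i,s+1},x^+_{i,r}]_{q^{A_{i,i}}}$. The defining instances are those with $r,s\le-1$. The operators $\operatorname{ad}h_{i,\pm1}$ act on this family by $p_{\pm1}=u^{\pm1}+v^{\pm1}$. In terms of $e_1=u+v$ and $e_2=uv$ one has $p_1=e_1$ and $p_{-1}=e_1e_2^{-1}$, so these operators never raise the $e_2$-degree. The defining instances all have $e_2$-degree at most $-1$ (normalizing $X^+(i;0,0)\leftrightarrow 1$). By contrast, $X^+(i;0,0)$ and $X^+(i;1,0)$ have $e_2$-degree $0$, so they are never reached.

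This is exactly why the paper introduces $h_{i,\pm2}$. The all-mode quadratic relation is proved only after this lemma, with $\operatorname{ad}h_{i,\pm2}$ supplying $p_{\pm2}$. Your base case is therefore circular. Your fallback cannot replace the missing relation, since the identity genuinely depends on the relations.

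\textbf{The repair: use base point $r=-1$.}
\begin{enumerate}
\item Since $\mu_2$ is antidominant, $x^+_{i,-1}$ is a generator, so $X^+(i;-1,-1)=0$ is a defining relation.
\item One application of $\operatorname{ad}h_{i,1}$ gives $X^+(i;0,-1)=0$, i.e.\ $x^+_{i,1}x^+_{i,-1}-q^{A_{i,i}}(x^+_{i,0})^2=q^{A_{i,i}}x^+_{i,-1}x^+_{i,1}-(x^+_{i,0})^2$.
\item Commute this with the generator $x^-_{i,1}$; this uses only \eqref{SQASrelation551} and \eqref{v5}.
\item Your degree-one exchange relation \eqref{v10} removes the degree-one terms and leaves \eqref{v11}.
\item Substituting $\phi^+_{i,1}=(q_i-q_i^{-1})\phi^+_{i,0}h_{i,1}$ and \eqref{a1} yields $[h_{i,2},x^+_{i,-1}]=c\,x^+_{i,1}$. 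Your propagation then finishes.
\end{enumerate}

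\textbf{The case $i=m$ in \eqref{eq:A21}.} You must treat this separately, since your propagation is written only for $A_{i,i}\neq0$. From $[x^+_{m,0},x^+_{m,-1}]=0$ and $[x^+_{m,0},x^+_{m,0}]=0$ one gets $[x^+_{m,1},x^+_{m,-1}]=0$. Commuting with $x^-_{m,1}$ gives $[h_{m,2},x^+_{m,-1}]=0$. Then propagate with $h_{m-1,\pm1}$, which commute with $h_{m,2}$ by \eqref{v72} and \eqref{v9}.

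\textbf{The case \eqref{eq:A22}.} Here your plan is repairable for a different reason. Relations between the distinct nodes $m-1$ and $m$ can be shifted in each index separately, because $\operatorname{ad}h_{m-1,1}$ and $\operatorname{ad}h_{m,1}$ give a nondegenerate $2\times2$ system. The paper starts from the defining instance $[x^+_{m-1,0},x^+_{m,-1}]_{q^{A_{m-1,m}}}+[x^+_{m,0},x^+_{m-1,-1}]_{q^{A_{m-1,m}}}=0$, separates the two shifted relations this way, and commutes their sum with $x^-_{m-1,1}$. As written, though, you simply invoke \eqref{SQASrelation221} at non-generator modes there too, and that step needs this justification.
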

\begin{proof}
We first prove \eqref{eq:A21}. For $i\neq m$, by \eqref{SQASrelation221}, we have $x_{i,0}^{+}x_{i,-1}^{+}-q^{A_{i,i}}x_{i,-1}^{+}x_{i,0}^{+}=0$. Commuting this relation with $h_{i,1}$ and using \eqref{SQASrelation441}, we have $x_{i,1}^{+}x_{i,-1}^{+}-q^{A_{i,i}}(x_{i,0}^{+})^{2}=q^{A_{i,i}}x_{i,-1}^{+}x_{i,1}^{+}-(x_{i,0}^{+})^{2}$. Commuting this relation further with $x_{i,1}^{-}$ and using \eqref{SQASrelation551}, \eqref{v5}, we obtain
\begin{gather*}
\scalemath{0.96}{
\phi_{i,2}^{+}x_{i,-1}^{+}-q^{A_{i,i}}\phi_{i,1}^{+}x_{i,0}^{+}+x_{i,1}^{+}\phi_{i,0}^{+}-q^{A_{i,i}}x_{i,0}^{+}\phi_{i,1}^{+}
=q^{A_{i,i}}x_{i,-1}^{+}\phi_{i,2}^{+}-x_{i,0}^{+}\phi_{i,1}^{+}+q^{A_{i,i}}\phi_{i,0}^{+}x_{i,1}^{+}-\phi_{i,1}^{+}x_{i,0}^{+}.}
\end{gather*}

Since
\begin{gather}
x_{i,1}^{+}\phi_{i,0}^{+}-q^{A_{i,i}}x_{i,0}^{+}\phi_{i,1}^{+}=q^{A_{i,i}}\phi_{i,0}^{+}x_{i,1}^{+}-\phi_{i,1}^{+}x_{i,0}^{+},\label{v10}
\end{gather}
we have,
\begin{gather}
\phi_{i,2}^{+}x_{i,-1}^{+}-q^{A_{i,i}}\phi_{i,1}^{+}x_{i,0}^{+}=q^{A_{i,i}}x_{i,-1}^{+}\phi_{i,2}^{+}-x_{i,0}^{+}\phi_{i,1}^{+}.\label{v11}
\end{gather}
Combining \eqref{v11} with  $\phi_{i,1}^{+}=(q_{i}-q_{i}^{-1})\phi_{i,0}^{+}h_{i,1}$, $\phi_{i,2}^{+}=(q_{i}-q_{i}^{-1})\phi_{i,0}^{+}(h_{i,2}+\frac{q_{i}-q_{i}^{-1}}{2}h_{i,1}^{2})$, and relations \eqref{SQASrelation111}, \eqref{v1}, we obtain $[h_{i,2},x_{i,-1}^{+}]=\frac{[2s_{i}A_{i,i}]_{q_{i}}}{2}x_{i,1}^{+}$. Commuting this relation with $h_{i,\pm1}$ and using relations \eqref{v1}, \eqref{v9}, we obtain $[h_{i,2},x_{i,r}^{+}]=\frac{[2s_{i}A_{i,i}]_{q_{i}}}{2}x_{i,r+2}^{+}$ for all $r\in\mathbb{Z}$.

For $i=m$, by \eqref{SQASrelation331}, we have $[x_{m,0}^{+},x_{m,-1}^{+}]=0$. Commuting this relation with $h_{m-1,1}$ and using \eqref{SQASrelation441}, we have $[x_{m,1}^{+},x_{m,-1}^{+}]=0$. Commuting this relation further with $x_{m,1}^{-}$ and using \eqref{SQASrelation551}, \eqref{v5}, we obtain $\phi_{m,2}^{+}x_{m,-1}^{+}-x_{m,1}^{+}\phi_{m,0}^{+}=x_{m,-1}^{+}\phi_{m,2}^{+}-\phi_{m,0}^{+}x_{m,1}^{+}$. Since 
\begin{align}
x_{m,1}^{+}\phi_{m,0}^{+}-\phi_{m,0}^{+}x_{m,1}^{+}=0,\label{v10''}
\end{align}
we have $\phi_{m,2}^{+}x_{m,-1}^{+}=x_{m,-1}^{+}\phi_{m,2}^{+}$. Combining this relation with $\phi_{m,2}^{+}=(q_{m}-q_{m}^{-1})\phi_{m,0}^{+}(h_{m,2}+\frac{q_{m}-q_{m}^{-1}}{2}h_{m,1}^{2})$, we obtain $[h_{m,2},{x_{m,-1}^{+}}]=0$. Commuting this relation with $h_{m-1,\pm1}$ and using relations \eqref{v1} and \eqref{v9}, we obtain $[h_{m,2},x_{m,r}^{+}]=0$ for all $r\in\mathbb{Z}$.

This completes the proof of \eqref{eq:A21}.

By a similar argument, we obtain $[h_{i,-2},x_{i,r}^{+}]=\frac{[2s_{i}A_{i,i}]_{q_{i}}}{2}x_{i,r-2}^{+}$ for all $r\in\mathbb{Z}$.

The proof of $[h_{i,\pm2},x_{i,r}^{-}]=-\frac{[2s_{i}A_{i,i}]_{q_{i}}}{2}x_{i,r\pm 2}^{-}$ is analogous.

Now we prove \eqref{eq:A22}. By \eqref{SQASrelation221}, we have $[x_{m-1,0}^{+},x_{m,-1}^{+}]_{q^{A_{m-1,m}}}+[x_{m,0}^{+},x_{m-1,-1}^{+}]_{q^{A_{m-1,m}}}=0$. Commuting this with $h_{m-1,1}$ and $h_{m,1}$, using \eqref{SQASrelation41}, and using the nondegeneracy of the matrix
\[
\begin{bmatrix}
[s_{m-1}A_{m-1,m-1}]_{q_{m-1}} & [s_{m-1}A_{m-1,m}]_{q_{m-1}}\\
[s_{m}A_{m,m-1}]_{q_{m}} & [s_{m}A_{m,m}]_{q_{m}}
\end{bmatrix},
\]
we have
\begin{gather*}
[x_{m-1,1}^{+},x_{m,-1}^{+}]_{q^{A_{m-1,m}}}+[x_{m,0}^{+},x_{m-1,0}^{+}]_{q^{A_{m-1,m}}}=0,\\
[x_{m-1,0}^{+},x_{m,0}^{+}]_{q^{A_{m-1,m}}}+[x_{m,1}^{+},x_{m-1,-1}^{+}]_{q^{A_{m-1,m}}}=0.
\end{gather*}
Commuting the sum of the above two equations with $x_{m-1,1}^{-}$ and using relations \eqref{SQASrelation441}, \eqref{SQASrelation551}, and \eqref{v4}--\eqref{v5}, we obtain
\begin{gather*}
\begin{split}
&\phi_{m-1,2}^{+}x_{m,-1}^{+}-q^{A_{m-1,m}}\phi_{m-1,1}^{+}x_{m,0}^{+}+x_{m,1}^{+}\phi_{m-1,0}^{+}-q^{A_{m-1,m}}x_{m,0}^{+}\phi_{m-1,1}^{+}\\&=
q^{A_{m-1,m}}x_{m,-1}^{+}\phi_{m-1,2}^{+}-x_{m,0}^{+}\phi_{m-1,1}^{+}+q^{A_{m-1,m}}\phi_{m-1,0}^{+}x_{m,1}^{+}-\phi_{m-1,1}^{+}x_{m,0}^{+}.
\end{split}
\end{gather*}
Since
\begin{gather}
x_{m,1}^{+}\phi_{m-1,0}^{+}-q^{A_{m-1,m}}x_{m,0}^{+}\phi_{m-1,1}^{+}=q^{A_{m-1,m}}\phi_{m-1,0}^{+}x_{m,1}^{+}-\phi_{m-1,1}^{+}x_{m,0}^{+},\label{v10'}
\end{gather}
We have,
\begin{gather}
\phi_{m-1,2}^{+}x_{m,-1}^{+}-q^{A_{m-1,m}}\phi_{m-1,1}^{+}x_{m,0}^{+}=q^{A_{m-1,m}}x_{m,-1}^{+}\phi_{m-1,2}^{+}-x_{m,0}^{+}\phi_{m-1,1}^{+},\label{v11'}
\end{gather}
Combining \eqref{v11'} with  
\begin{gather*}
\phi_{m-1,1}^{+}=(q_{m-1}-q_{m-1}^{-1})\phi_{m-1,0}^{+}h_{m-1,1},\\
\phi_{m-1,2}^{+}=(q_{m-1}-q_{m-1}^{-1})\phi_{m-1,0}^{+}(h_{m-1,2}+\frac{q_{m-1}-q_{m-1}^{-1}}{2}h_{m-1,1}^{2}),
\end{gather*}
and relations \eqref{SQASrelation11}, \eqref{v1}, we obtain $[h_{m-1,2},x_{m,-1}^{+}]=\frac{[2s_{m-1}A_{m-1,m}]_{q_{m-1}}}{2}x_{m,1}^{+}$. Commuting this relation with $h_{m-1,\pm1}$ and using relations \eqref{v1} and \eqref{v9}, we obtain 
\begin{gather*}
[h_{m-1,2},x_{m,r}^{+}]=\frac{[2s_{m-1}A_{m-1,m}]_{q_{m-1}}}{2}x_{m,r+2}^{+}\quad\text{for all $r\in\mathbb{Z}$}.
\end{gather*}
By a similar argument, we obtain $[h_{m-1,-2},x_{m,r}^{+}]=\frac{[2s_{m-1}A_{m-1,m}]_{q_{m-1}}}{2}x_{m,r-2}^{+}$ for all $r\in\mathbb{Z}$.

The proof of $[h_{m-1,\pm2},x_{m,r}^{-}]=-\frac{[2s_{m-1}A_{m-1,m}]_{q_{m-1}}}{2}x_{m,r\pm 2}^{-}$ is analogous.

\end{proof}

Now we continue the proof of Theorem~\ref{thm:finite iso}.

\textbf{The relation \eqref{SQASrelation2} for $i=j$:} Here $A_{i,i}\neq0$, hence $i\neq m$. Let
\[
X^{+}(i;r,s):=[x_{i,r+1}^{+},x_{i,s}^{+}]_{q^{A_{i,i}}}+[x_{i,s+1}^{+},x_{i,r}^{+}]_{q^{A_{i,i}}}.
\]
We need to prove $X^{+}(i;r,s)=0$ for all $r,s\in\mathbb Z$. By \eqref{SQASrelation221}, $X^{+}(i;r,s)=X^{+}(i;s,r)$ and $X^{+}(i;-1,-1)=0$. For $a\in\{\pm1,\pm2\}$, set
\[
L_{i,a}:=\frac{a}{[as_iA_{i,i}]_{q_i}}\operatorname{ad}(h_{i,a}).
\]
By \eqref{v1} and \eqref{eq:A21},
\[
L_{i,a}X^{+}(i;r,s)=X^{+}(i;r+a,s)+X^{+}(i;r,s+a).
\]
On the coefficient family $X^{+}(i;r,s)$, the operators $L_{i,a}$ act as the commuting shift operators $p_a(u,v)=u^a+v^a$, where $u,v$ record shifts of the first and second indices.  Since
\[
\mathbb C[u^{\pm1},v^{\pm1}]^{\mathfrak S_2}
=\mathbb C[p_1,p_2,p_{-1},p_{-2}],
\]
where $uv=(p_1^2-p_2)/2$ and $(uv)^{-1}=(p_{-1}^2-p_{-2})/2$, every symmetric Laurent shift of $X^{+}(i;-1,-1)$ is obtained by a polynomial in $L_{i,\pm1},L_{i,\pm2}$.  As $X^{+}(i;r,s)$ is symmetric in $r,s$, this gives $X^{+}(i;r,s)=0$ for all $r,s\in\mathbb Z$.

Similarly, we have $[x_{i,r+1}^{-},x_{i,s}^{-}]_{q^{-A_{i,i}}}+[x_{i,s+1}^{-},x_{i,r}^{-}]_{q^{-A_{i,i}}}=0$ for all $r,s\in\mathbb{Z}$.

\textbf{The relation \eqref{SQASrelation2} for $i\neq j$:} Let
\[
X^{+}(i,j;r,s):=[x_{i,r+1}^{+},x_{j,s}^{+}]_{q^{A_{i,j}}}+[x_{j,s+1}^{+},x_{i,r}^{+}]_{q^{A_{i,j}}}.
\]
We only need to consider $A_{i,j}\neq0$. By \eqref{SQASrelation221}, $X^{+}(i,j;-1,-1)=0$.

If $i,j\in I\setminus\{m\}$, then for either choice of sign, \eqref{v1} gives
\begin{align*}
[h_{i,\pm1},X^{+}(i,j;r,s)]&=[s_iA_{i,i}]_{q_i}X^{+}(i,j;r\pm1,s)+[s_iA_{i,j}]_{q_i}X^{+}(i,j;r,s\pm1),\\
[h_{j,\pm1},X^{+}(i,j;r,s)]&=[s_jA_{j,i}]_{q_j}X^{+}(i,j;r\pm1,s)+[s_jA_{j,j}]_{q_j}X^{+}(i,j;r,s\pm1).
\end{align*}
The coefficient matrix
\[
\begin{bmatrix}
[s_iA_{i,i}]_{q_i}&[s_iA_{i,j}]_{q_i}\\
[s_jA_{j,i}]_{q_j}&[s_jA_{j,j}]_{q_j}
\end{bmatrix}
\]
is nondegenerate; since $i,j\neq m$ and $A_{i,j}\neq0$, its determinant is $[3]_q\neq0$. Hence, whenever $X^{+}(i,j;r,s)=0$, the two adjoint relations with $h_{i,\pm1}$ and $h_{j,\pm1}$ imply separately
\[
X^{+}(i,j;r\pm1,s)=X^{+}(i,j;r,s\pm1)=0.
\]
Starting from $(-1,-1)$ and using both signs proves the relation for every $(r,s)\in\mathbb Z^2$.

If one of $i,j$ is $m$, we may assume $i=m$; then $j=m\pm1$.  Using the two even neighboring Cartan directions $m-1$ and $m+1$, we obtain for $u\in\{m-1,m+1\}$
\[
[h_{u,\pm1},X^{+}(m,j;r,s)]
=[s_uA_{u,m}]_{q_u}X^{+}(m,j;r\pm1,s)
+[s_uA_{u,j}]_{q_u}X^{+}(m,j;r,s\pm1).
\]
For $j=m-1$ or $j=m+1$, respectively, the resulting $2\times2$ coefficient matrix has determinant $\pm[2]_q\neq0$.  Thus the two coordinates can again be shifted independently, and the same argument from $X^{+}(m,j;-1,-1)=0$ proves $X^{+}(m,j;r,s)=0$ for all $r,s\in\mathbb Z$.

Similarly, we have $[x_{i,r+1}^{-},x_{j,s}^{-}]_{q^{-A_{i,j}}}+[x_{j,s+1}^{-},x_{i,r}^{-}]_{q^{-A_{i,j}}}=0$ for all $r,s\in\mathbb{Z}$.

\textbf{The relation \eqref{SQASrelation3} for $i=j=m$:} Set $W^{+}(r,s):=[x_{m,r}^{+},x_{m,s}^{+}]$.  Since the odd supercommutator is symmetric, $W^{+}(r,s)=W^{+}(s,r)$, and \eqref{SQASrelation331} gives $W^{+}(0,0)=0$. For $a\in\{\pm1,\pm2\}$, set
\[
M_a:=\frac{a}{[as_{m-1}A_{m-1,m}]_{q_{m-1}}}\operatorname{ad}(h_{m-1,a}).
\]
By \eqref{v1} and \eqref{eq:A22},
\[
M_a W^{+}(r,s)=W^{+}(r+a,s)+W^{+}(r,s+a).
\]
As above, on the coefficient family $W^{+}(r,s)$ the operators $M_{\pm1}$ and $M_{\pm2}$ realize the commuting shift operators
$p_{\pm1}$ and $p_{\pm2}$, respectively.  These elements generate
$\mathbb C[u^{\pm1},v^{\pm1}]^{\mathfrak S_2}$.
Therefore every symmetric Laurent shift of $W^{+}(0,0)$ vanishes, and hence
$[x_{m,r}^{+},x_{m,s}^{+}]=0$ for all $r,s\in\mathbb Z$.

Similarly, we have $[x_{m,r}^{-},x_{m,s}^{-}]=0$ for all $r,s\in\mathbb{Z}$.

\textbf{The relation \eqref{SQASrelation3} for $i\neq j$:} We need to prove $[x_{i,r}^{+},x_{j,s}^{+}]=0$ for $A_{i,j}=0$. By \eqref{SQASrelation331}, we have $[x_{i,0}^{+},x_{j,0}^{+}]=0$ for $A_{i,j}=0$.

If $i,j\in I\setminus\{m\}$, then by \eqref{v1}, we have
\begin{align*}
[h_{i,\pm1},[x_{i,r}^{+},x_{j,s}^{+}]]=[s_{i}A_{i,i}]_{q_{i}}[x_{i,r\pm1}^{+},x_{j,s}^{+}],\\
[h_{j,\pm1},[x_{i,r}^{+},x_{j,s}^{+}]]=[s_{j}A_{j,j}]_{q_{j}}[x_{i,r}^{+},x_{j,s\pm1}^{+}].
\end{align*}
By induction on $r$ and $s$, we obtain $[x_{i,r}^{+},x_{j,s}^{+}]=0$ for all $r,s\in\mathbb{Z}$.

If one of $i$ and $j$ is equal to $m$, we may assume without loss of generality that $i=m$. By \eqref{v1}, we have
\begin{align*}
[h_{i-1,\pm1},[x_{i,r}^{+},x_{j,s}^{+}]]&=[s_{i-1}A_{i-1,i}]_{q_{i-1}}[x_{i,r\pm1}^{+},x_{j,s}^{+}]+[s_{i-1}A_{i-1,j}]_{q_{i-1}}[x_{i,r}^{+},x_{j,s\pm1}^{+}],\\
[h_{j,\pm1},[x_{i,r}^{+},x_{j,s}^{+}]]&=[s_{j}A_{j,j}]_{q_{j}}[x_{i,r}^{+},x_{j,s\pm1}^{+}].
\end{align*}
Combining these equations with the nondegeneracy of the matrix
\[
\begin{bmatrix}
[s_{i-1}A_{i-1,i}]_{q_{i-1}} & [s_{i-1}A_{i-1,j}]_{q_{i-1}}\\
0 & [s_{j}A_{j,j}]_{q_{j}}
\end{bmatrix},
\]
we obtain $[x_{m,r}^{+},x_{j,s}^{+}]=0$ for all $r,s\in\mathbb{Z}$ by induction on $r$ and $s$.

Similarly, we have $[x_{i,r}^{-},x_{j,s}^{-}]=0$ for all $r,s\in\mathbb{Z}$.

\textbf{The relation \eqref{SQASrelation4} for $i=j$:} We only prove the case $\epsilon=+$. The case $\epsilon=-$ is analogous. Let $Y^{+}(i;r,s):=[\phi_{i,r+1}^{+},x_{i,s}^{+}]_{q^{A_{i,i}}}+[x_{i,s+1}^{+},\phi_{i,r}^{+}]_{q^{A_{i,i}}}$. We need to prove $Y^{+}(i;r,s)=0$ for all $r\in\mathbb{N}$ and $s\in\mathbb{Z}$. The case $Y^{+}(i;0,0)=[\phi_{i,1}^{+},x_{i,0}^{+}]_{q^{A_{i,i}}}+[x_{i,1}^{+},\phi_{i,0}^{+}]_{q^{A_{i,i}}}=0$ follows from \eqref{v10} and \eqref{v10''}.

Since
\begin{gather*}
[h_{i,\pm1},Y^{+}(i;0,s)]=[s_{i}A_{i,i}]_{q_{i}}Y^{+}(i;0,s\pm1)\quad\text{for $i\in I\setminus\{m\}$},\\
[h_{m-1,\pm1},Y^{+}(m;0,s)]=[s_{m-1}A_{m-1,m}]_{q_{m-1}}Y^{+}(m;0,s\pm1),
\end{gather*}
we obtain $Y^{+}(i;0,s)=0$ for all $i\in I$ and $s\in\mathbb{Z}$. Using \eqref{v1}, the equalities $X^{+}(i;r,s)=0$ and $[x_{m,r}^{+},x_{m,s}^{+}]=0$ for all $r,s\in\mathbb{Z}$, and the definition of $\phi_{i,r}^{+}$, we have
\begin{gather*}
Y^{+}(i;r,s-1)+Y^{+}(i;s,r-1)=(q_{i}-q_{i}^{-1})[X^{+}(i;r-1,s-1),x_{i,1}^{-}]=0\quad \text{for $i\in I\setminus\{m\}$},\\
Y^{+}(m;r,s-1)-Y^{+}(m;s,r-1)=-(q_{m}-q_{m}^{-1})([[x_{m,r}^{+},x_{m,s-1}^{+}]-[x_{m,s}^{+},x_{m,r-1}^{+}],x_{m,1}^{-}])=0,
\end{gather*}
for $r,s\in\mathbb{Z}_{\geq0}$. In particular,
\begin{gather*}
Y^{+}(i;r,-1)+Y^{+}(i;0,r-1)=0\quad\text{for $i\in I\setminus\{m\}$, $r\in\mathbb{N}$},\\
Y^{+}(m;r,-1)-Y^{+}(m;0,r-1)=0\quad\text{for $r\in\mathbb{N}$},
\end{gather*}
Hence,
\begin{gather}
Y^{+}(i;r,-1)=0\quad \text{for all $r\in\mathbb{N}$}.\label{v12}
\end{gather}
Since
\begin{gather*}
[h_{i,\pm1},Y^{+}(i;1,s)]=[s_{i}A_{i,i}]_{q_{i}}Y^{+}(i;1,s\pm1)\quad \text{for $i\in I\setminus\{m\}$},\\
[h_{m-1,\pm1},Y^{+}(m;1,s)]=[s_{m-1}A_{m-1,m}]_{q_{m-1}}Y^{+}(m;1,s\pm1),
\end{gather*}
we obtain $Y^{+}(i;1,s)=0$ for all $i\in I$ and $s\in\mathbb{Z}$.

We prove the following statements by induction on $N\in\mathbb{Z}_{\geq1}$:

\begin{itemize}
\item[$(i_{N})$] $[(\phi_{i,0}^{+})^{-1}\phi_{i,r}^{+},x_{i,k}^{\pm}]=q^{\pm A_{i,i}}(\phi_{i,0}^{+})^{-1}\phi_{i,r-1}^{+}x_{i,k+1}^{\pm}-q^{\mp A_{i,i}}x_{i,k+1}^{\pm}(\phi_{i,0}^{+})^{-1}\phi_{i,r-1}^{+}$ for $i\in I\setminus\{m\}$ and $r\in[0,N+1]$;
\item[$(ii_{N})$]
\begin{align*}
[(\phi_{m,0}^{+})^{-1}\phi_{m,r}^{+},x_{m-1,k}^{\pm}]&=q^{\pm A_{m,m-1}}(\phi_{m,0}^{+})^{-1}\phi_{m,r-1}^{+}x_{m-1,k+1}^{\pm}\\&-q^{\mp A_{m,m-1}}x_{m-1,k+1}^{\pm}(\phi_{m,0}^{+})^{-1}\phi_{m,r-1}^{+}\quad \text{for $r\in[0,N+1]$};
\end{align*}
\item[$(iii_{N})$] $[h_{i,1},\phi_{i,r}^{+}]=0$ and $[h_{m-1,1},\phi_{m,r}^{+}]=0$ for $r\in[0,N+1]$;
\item[$(iv_{N})$] $[h_{i,-1},\phi_{i,r}^{+}]=0$ and $[h_{m-1,-1},\phi_{m,r}^{+}]=0$ for $r\in[0,N+1]$;
\item[$(v_{N})$] $[x_{i,r}^{+},x_{i,s}^{-}]=\frac{1}{q_{i}-q_{i}^{-1}}\phi_{i,r+s}^{+}$ for any $r,s\in\mathbb{Z}$ with $r+s\in[1,N+2]$;
\item[$(vi_{N})$] $Y^{+}(i;r,s)=0$ for any $r\in[0,N]$ with $s\in\mathbb{Z}$;
\item[$(vii_{N})$] $[\phi_{i,r}^{+},\phi_{i,s}^{+}]=0$ and $[\phi_{m-1,r}^{+},\phi_{m,s}^{+}]=0$ for any $r,s\in\mathbb{Z}_{\geq0}$ with $r+s\in[0,N+2]$.
\end{itemize}

First, $(i_{1})$--$(iv_{1})$ and $(vi_{1})$--$(vii_{1})$ were already proved at the beginning, while $(v_{1})$ follows from relations \eqref{v1}--\eqref{v2}, \eqref{v4}--\eqref{v5}, and \eqref{v71}--\eqref{v72}. Assume that $(i_{N})$--$(vii_{N})$ hold for some $N\in\mathbb{Z}_{\geq1}$.

Consider the polynomial algebra $X:=\mathbb{C}(q)[x_{1},x_{2},\dots]$, which is $\mathbb{N}$-graded by $\deg(x_{r})=r$ for $r\in\mathbb{Z}_{\geq1}$. Define elements $h_{r}\in X$, $r\in\mathbb{Z}_{\geq1}$, by $\exp((q_{i}-q_{i}^{-1})(\sum_{r=1}^{\infty}h_{r}z^{-r}))=1+\sum_{r=1}^{\infty}x_{r}z^{-r}$. Then, for $r\in\mathbb{Z}_{\geq1}$, $h_{r} =\frac{1}{q_{i}-q_{i}^{-1}}x_{r}+p_{r}(x_{1},\dots,x_{r-1})$, where
\begin{align*}
p_{r}(x_{1},\dots,x_{r-1}) =-\frac{1}{q_{i}-q_{i}^{-1}}\sum_{s=2}^{r}\frac{(q_{i}-q_{i}^{-1})^{s}}{s!}\sum_{\substack{r_{1}+\cdots+r_{s}=r,\\ \text{ all }r_{j}\geq1}}h_{r_{1}}\dots h_{r_{s}},
\end{align*}
and $p_{r}(x_{1},\dots,x_{r-1})$ is homogeneous of degree $r$. Now, for $1\leq r\leq N+1$, define $h_{i,r}\in \mathcal{E}^{\mu_{1},\mu_{2}}_{0}$ by 
\begin{align}
\begin{split}
h_{i,r}:&=\frac{(\phi_{i,0}^{+})^{-1}\phi_{i,r}^{+}}{q_{i}-q_{i}^{-1}}+p_{r}((\phi_{i,0}^{+})^{-1}\phi_{i,1}^{+},\dots,(\phi_{i,0}^{+})^{-1}\phi_{i,r-1}^{+})\\&
=\frac{(\phi_{i,0}^{+})^{-1}\phi_{i,r}^{+}}{q_{i}-q_{i}^{-1}}-\frac{1}{q_{i}-q_{i}^{-1}}\sum_{s=2}^{r}\frac{(q_{i}-q_{i}^{-1})^{s}}{s!}\sum_{\substack{r_{1}+\cdots+r_{s}=r,\\ \text{ all }r_{j}\geq1}}h_{i,r_{1}}\dots h_{i,r_{s}}.\label{a3}
\end{split}
\end{align}
By the induction hypothesis $(vii_{N})$ and the degree condition on $p_{r}$, the elements $h_{i,r}$ are well-defined and independent of the choice of $N>r-1$. We will use the identities
\begin{align}
[h_{i,r},x_{i,s}^{+}]=\frac{[rs_{i}A_{i,i}]_{q_{i}}}{r}x_{i,s+r}^{+}\quad \text{for $i\in I$, $r\in[1,N+1]$, $s\in\mathbb{Z}$},\label{v13}\\
[h_{m-1,r},x_{m,s}^{+}]=\frac{[rs_{m-1}A_{m-1,m}]_{q_{m-1}}}{r}x_{m,s+r}^{+}\quad \text{for $r\in[1,N+1]$, $s\in\mathbb{Z}$}.\label{v13'}
\end{align}

$(i_{N+1})$: By $(v_{N})$, \eqref{v1}, \eqref{SQASrelation2}, and \eqref{eqs:three terms}, we have
\begin{align*}
[(\phi_{i,0}^{+})^{-1} & \phi_{i,N+2}^{+},x_{i,k}^{+}]=(q_{i}-q_{i}^{-1})[(\phi_{i,0}^{+})^{-1}[x_{i,k+1}^{+},x_{i,N+1-k}^{-}],x_{i,k}^{+}]\\&=(q_{i}-q_{i}^{-1})(\phi_{i,0}^{+})^{-1}[[x_{i,k+1}^{+},x_{i,N+1-k}^{-}],x_{i,k}^{+}]_{q^{A_{i,i}}}\\&=(q_{i}-q_{i}^{-1})(\phi_{i,0}^{+})^{-1}([x_{i,k+1}^{+},[x_{i,N+1-k}^{-},x_{i,k}^{+}]]_{q^{A_{i,i}}}+[[x_{i,k+1}^{+},x_{i,k}^{+}]_{q^{A_{i,i}}},x_{i,N+1-k}^{-}])\\&=-(\phi_{i,0}^{+})^{-1}[x_{i,k+1}^{+},\phi_{i,N+1}^{+}]_{q^{A_{i,i}}}\\&=q^{A_{i,i}}(\phi_{i,0}^{+})^{-1}\phi_{i,N+1}^{+}x_{i,k+1}^{+}-q^{-A_{i,i}}x_{i,k+1}^{+}(\phi_{i,0}^{+})^{-1}\phi_{i,N+1}^{+}.
\end{align*}

The case of $x_{i,k}^{-}$ is analogous.

$(ii_{N+1})$: By $(v_N)$, \eqref{v1}, \eqref{SQASrelation2}, and \eqref{eqs:three terms}, we obtain
\begin{align*}
[(\phi_{m,0}^{+})^{-1} & \phi_{m,N+2}^{+},x_{m-1,k}^{+}]
=(q_m-q_m^{-1})(\phi_{m,0}^{+})^{-1}
[[x_{m,k+1}^{+},x_{m,N+1-k}^{-}],x_{m-1,k}^{+}]_{q^{A_{m,m-1}}}\\
&=-(q_m-q_m^{-1})(\phi_{m,0}^{+})^{-1}
[x_{m-1,k+1}^{+},[x_{m,k}^{+},x_{m,N+1-k}^{-}]]_{q^{A_{m,m-1}}}\\
&=-(\phi_{m,0}^{+})^{-1}
[x_{m-1,k+1}^{+},\phi_{m,N+1}^{+}]_{q^{A_{m,m-1}}}\\
&=q^{A_{m,m-1}}(\phi_{m,0}^{+})^{-1}\phi_{m,N+1}^{+}x_{m-1,k+1}^{+}
-q^{-A_{m,m-1}}x_{m-1,k+1}^{+}(\phi_{m,0}^{+})^{-1}\phi_{m,N+1}^{+}.
\end{align*}

The case of $x_{m-1,k}^{-}$ is analogous.

$(iii_{N+1})_{a}$: For $i\in I\setminus\{m\}$, expanding the commutators and applying $(i_{N+1})$, $(v_N)$, and \eqref{eqs:three terms} gives
\begin{align*}
[\phi_{i,N+2}^{+},h_{i,1}]
&=\frac{[2s_iA_{i,i}]_{q_i}}{[s_iA_{i,i}]_{q_i}}
[(\phi_{i,0}^{+})^{-1}\phi_{i,N+1}^{+},[x_{i,0}^{+},x_{i,2}^{-}]]
-[(\phi_{i,0}^{+})^{-1}\phi_{i,N}^{+},[x_{i,0}^{+},x_{i,3}^{-}]]\\
&=\frac{[3s_iA_{i,i}]_{q_i}}{[s_iA_{i,i}]_{q_i}}
[(\phi_{i,0}^{+})^{-1}\phi_{i,N}^{+},[x_{i,0}^{+},x_{i,3}^{-}]]
-\frac{[2s_iA_{i,i}]_{q_i}}{[s_iA_{i,i}]_{q_i}}
[(\phi_{i,0}^{+})^{-1}\phi_{i,N-1}^{+},[x_{i,0}^{+},x_{i,4}^{-}]]\\
&=\cdots\\
&=\frac{[(N+2)s_iA_{i,i}]_{q_i}}{[s_iA_{i,i}]_{q_i}}
[(\phi_{i,0}^{+})^{-1}\phi_{i,1}^{+},[x_{i,0}^{+},x_{i,N+2}^{-}]]\\
&=(q_i-q_i^{-1})[(N+2)s_iA_{i,i}]_{q_i}
\bigl([x_{i,1}^{+},x_{i,N+2}^{-}]
-[x_{i,0}^{+},x_{i,N+3}^{-}]\bigr).
\end{align*}
On the other hand, $[\phi_{i,N+2}^{+},h_{i,1}]=-(q_{i}-q_{i}^{-1})[s_{i}A_{i,i}]_{q_{i}}([x_{i,1}^{+},x_{i,N+2}^{-}]-[x_{i,0}^{+},x_{i,N+3}^{-}])$.  Here $s_iA_{i,i}=2$ for $i\neq m$, and $[2N+4]_{q_i}+[2]_{q_i}=[N+3]_{q_i}(q_i^{N+1}+q_i^{-N-1})\neq0$ because $q$ is not a root of unity.  Comparing the two expressions therefore gives $[x_{i,1}^{+},x_{i,N+2}^{-}]=[x_{i,0}^{+},x_{i,N+3}^{-}]$, and consequently $[\phi_{i,N+2}^{+},h_{i,1}]=0$.

$(iv_{N+1})$: By \eqref{v1}--\eqref{v2}, $(v_{N})$, and \eqref{eqs:three terms}, for $r,s\in\mathbb{N}$ such that $r+s=N+2$, we obtain
\begin{align*}
[h_{i,-1},\phi_{i,N+2}^{+}]
&=(q_{i}-q_{i}^{-1})[h_{i,-1},[x_{i,r}^{+},x_{i,s}^{-}]]\\
&=(q_{i}-q_{i}^{-1})[s_{i}A_{i,i}]_{q_{i}}
 \bigl([x_{i,r-1}^{+},x_{i,s}^{-}]-[x_{i,r}^{+},x_{i,s-1}^{-}]\bigr)=0,
\end{align*}
and
\begin{align*}
[h_{m-1,-1},\phi_{m,N+2}^{+}]
=(q_{m}-q_{m}^{-1})[s_{m-1}A_{m-1,m}]_{q_{m-1}}\bigl([x_{m,r-1}^{+},x_{m,s}^{-}]-[x_{m,r}^{+},x_{m,s-1}^{-}]\bigr)=0.
\end{align*}

$(v_{N+1})_{a}$: By $(iii_{N+1})$ and $(v_{N})$, for $r,s\in\mathbb{N}$ with $r+s=N+2$, we obtain
\begin{align*}
0=[h_{i,1},\phi_{i,N+2}^{+}]=(q_{i}-q_{i}^{-1})[s_{i}A_{i,i}]_{q_{i}}([x_{i,r+1}^{+},x_{i,s}^{-}]-[x_{i,r}^{+},x_{i,s+1}^{-}])\quad\text{for $i\in I\setminus\{m\}$.}
\end{align*}
Hence $[x_{i,r+1}^{+},x_{i,s}^{-}]=[x_{i,r}^{+},x_{i,s+1}^{-}]$ for all $i\in I\setminus\{m\}$ and $r,s\in\mathbb{N}$ with $r+s=N+2$. Combining this with the definition of $\phi_{i,N+3}^{+}$, we obtain $[x_{i,r'}^{+},x_{i,N+3-r'}^{-}]=\frac{1}{q_i-q_i^{-1}}\phi_{i,N+3}^{+}$ for $i\in I\setminus\{m\}$ and $0\leq r'\leq N+3$.

$(iii_{N+1})_{b}$: By $(ii_{N+1})$, $(v_N)$, $(v_{N+1})_{a}$, and \eqref{eqs:three terms}, expanding the commutators gives
\begin{align*}
& [\phi_{m,N+2}^{+},h_{m-1,1}]
=\phi_{m,0}^{+}(\phi_{m-1,0}^{+})^{-1}
\Bigg(
\frac{[2s_mA_{m,m-1}]_{q_m}}{[s_mA_{m,m-1}]_{q_m}}
[(\phi_{m,0}^{+})^{-1}\phi_{m,N+1}^{+},
 [x_{m-1,0}^{+},x_{m-1,2}^{-}]]\\
&\hspace{5.8cm}
-[(\phi_{m,0}^{+})^{-1}\phi_{m,N}^{+},
 [x_{m-1,0}^{+},x_{m-1,3}^{-}]]
\Bigg)\\
&=\cdots\\
&=\phi_{m,0}^{+}(\phi_{m-1,0}^{+})^{-1}
\frac{[(N+2)s_mA_{m,m-1}]_{q_m}}{[s_mA_{m,m-1}]_{q_m}}
[(\phi_{m,0}^{+})^{-1}\phi_{m,1}^{+},
 [x_{m-1,0}^{+},x_{m-1,N+2}^{-}]]\\
&= \scalemath{0.96}{ (q_m-q_m^{-1})\phi_{m,0}^{+}(\phi_{m-1,0}^{+})^{-1}
[(N+2)s_mA_{m,m-1}]_{q_m} \bigl([x_{m-1,1}^{+},x_{m-1,N+2}^{-}]
-[x_{m-1,0}^{+},x_{m-1,N+3}^{-}]\bigr)=0. }
\end{align*}

$(v_{N+1})_{b}$: By $(iii_{N+1})$ and $(v_{N})$, for $r,s\in\mathbb{N}$ with $r+s=N+2$, we obtain
\begin{align*}
0=[h_{m-1,1},\phi_{m,N+2}^{+}]=(q-q^{-1})[A_{m-1,m}]_{q}([x_{m,r+1}^{+},x_{m,s}^{-}]-[x_{m,r}^{+},x_{m,s+1}^{-}]).
\end{align*}
Hence $[x_{m,r+1}^{+},x_{m,s}^{-}]=[x_{m,r}^{+},x_{m,s+1}^{-}]$ for $r,s\in\mathbb{N}$ with $r+s=N+2$. Combining this with the definition of $\phi_{m,N+3}^{+}$, we obtain $[x_{m,r'}^{+},x_{m,N+3-r'}^{-}]=\frac{1}{q_m-q_m^{-1}}\phi_{m,N+3}^{+}$ for all $0\leq r'\leq N+3$.

$(vi_{N+1})$: By $(iii_{N+1})$ and $(iv_{N+1})$, we have
\begin{align*}
& [h_{i,\pm1},Y^{+}(i;N+1,s)]=[s_iA_{i,i}]_{q_i}Y^{+}(i;N+1,s\pm1)\quad\text{ for $i\in I\setminus\{m\}$},\\
& [h_{m-1,\pm1},Y^{+}(m;N+1,s)]=[A_{m-1,m}]_{q}Y^{+}(m;N+1,s\pm1).
\end{align*}
By \eqref{v12}, we have $Y^{+}(i;N+1,-1)=0$ for all $i\in I$. Hence $Y^{+}(i;N+1,s)=0$ for all $i\in I$ and $s\in\mathbb{Z}$.

$(vii_{N+1})$: We need to prove $[h_{i,r},\phi_{i,s}^{+}]=0$ and $[h_{m-1,r},\phi_{m,s}^{+}]=0$ for $r,s\in\mathbb{Z}_{>0}$ with $r+s=N+3$. By $(v_{N})$, \eqref{v13}--\eqref{v13'}, and $(v_{N+1})$, we obtain
\begin{align*}
[h_{i,r},\phi_{i,s}^{+}]&=(q_{i}-q_{i}^{-1})[h_{i,r},[x_{i,s}^{+},x_{i,0}^{-}]]=(q_{i}-q_{i}^{-1})([[h_{i,r},x_{i,s}^{+}],x_{i,0}^{-}]+[x_{i,s}^{+},[h_{i,r},x_{i,0}^{-}]])\\&=(q_{i}-q_{i}^{-1})\frac{[rs_{i}A_{i,i}]_{q_{i}}}{r}([x_{i,r+s}^{+},x_{i,0}^{-}]-[x_{i,s}^{+},x_{i,r}^{-}])=0,
\end{align*}
and
\begin{align*}
[h_{m-1,r},\phi_{m,s}^{+}]=(q_{m}-q_{m}^{-1})\frac{[rs_{m-1}A_{m-1,m}]_{q_{m-1}}}{r}([x_{m,r+s}^{+},x_{m,0}^{-}]-[x_{m,s}^{+},x_{m,r}^{-}])=0.
\end{align*}

This completes the proof of $(i_{N})$--$(vii_{N})$. In particular, $(vi_{N})$ implies that $Y^{+}(i;r,s)=0$ for all $r\in \mathbb{N}$ and $s\in\mathbb{Z}$.

\textbf{The relation \eqref{SQASrelation5} for $i=j$:} We need to prove
\begin{gather}
[x_{i,r}^{+},x_{i,N-r}^{-}]=\frac{1}{q_{i}-q_{i}^{-1}}
\begin{cases}
\phi_{i,N}^{+}-\delta_{N,0}\delta_{l_{i},0}\phi_{i,l_{i}}^{-}\quad&\text{if $N\in\mathbb{Z}_{\geq0}$},\\
-\phi_{i,N}^{-}+\delta_{N,0}\delta_{l_{i},0}\phi_{i,0}^{+}\quad&\text{if $N\in\mathbb{Z}_{\leq l_{i}}$},\\
0\quad&\text{if $l_{i}+1\leq N\leq-1$}.
\end{cases}\label{v17}
\end{gather}

\textbf{Case $N\in\mathbb{Z}_{>0}$.} If $0\leq r\leq N$, then $[x_{i,r}^{+},x_{i,N-r}^{-}]=\frac{1}{q_{i}-q_{i}^{-1}}\phi_{i,N}^{+}$ by ($v_{N}$). If $r>N$, we proceed by induction on $r$. By \eqref{v1}--\eqref{v2} and ($iv_{N}$)--($v_{N}$), we obtain
\begin{align*}
[x_{i,r}^{+},x_{i,N-r}^{-}]&=-[s_{i}A_{i,i}]_{q_{i}}^{-1}[x_{i,r}^{+},[h_{i,-1},x_{i,N-r+1}^{-}]]\\&=-[s_{i}A_{i,i}]_{q_{i}}^{-1}([h_{i,-1},[x_{i,r}^{+},x_{i,N-r+1}^{-}]]+[[x_{i,r}^{+},h_{i,-1}],x_{i,N-r+1}^{-}])\\&=[x_{i,r-1}^{+},x_{i,N-r+1}^{-}]=\frac{1}{q_i-q_i^{-1}}\phi_{i,N}^{+},
\end{align*}
for $i\in I\setminus\{m\}$, and
\begin{align*}
[x_{m,r}^{+},x_{m,N-r}^{-}]&=-[s_{m-1}A_{m-1,m}]_{q_{m-1}}^{-1}[x_{m,r}^{+},[h_{m-1,-1},x_{m,N-r+1}^{-}]]\\&=-[s_{m-1}A_{m-1,m}]_{q_{m-1}}^{-1}([h_{m-1,-1},[x_{m,r}^{+},x_{m,N-r+1}^{-}]]+[[x_{m,r}^{+},h_{m-1,-1}],x_{m,N-r+1}^{-}])\\&=[x_{m,r-1}^{+},x_{m,N-r+1}^{-}]=\frac{1}{q_{m}-q_{m}^{-1}}\phi_{m,N}^{+}.
\end{align*}
Hence, $[x_{i,r}^{+},x_{i,N-r}^{-}]=\frac{1}{q_{i}-q_{i}^{-1}}\phi_{i,N}^{+}$ for $r>N$. The case $r\leq -1$ is similar.

\textbf{Case $N\in\mathbb{Z}_{\leq0}$.} We separate the finite gap from the negative tail; this avoids using the positive induction statements $(iv_N)$--$(v_N)$ outside their range.

First suppose $l_i<N<0$ and put $C_N(r):=[x_{i,r}^{+},x_{i,N-r}^{-}]$. We descend on $N$, starting from $N=0$. The case $N=0$ is Lemma~\ref{eq:k+l=0,1,2}, and its right-hand side commutes with $h_{i,-1}$; at the isotropic vertex it also commutes with $h_{m-1,-1}$. If the assertion has been proved at total mode $N+1$, then throughout the gap its value is either the $N+1=0$ Cartan boundary term or $0$. Consequently, for $i\neq m$,
\[
[s_iA_{i,i}]_{q_i}\bigl(C_N(r)-C_N(r+1)\bigr)=[h_{i,-1},C_{N+1}(r+1)]=0,
\]
while for $i=m$ the same computation with $h_{m-1,-1}$ gives
\[
[s_{m-1}A_{m-1,m}]_{q_{m-1}}\bigl(C_N(r)-C_N(r+1)\bigr)=0.
\]
Thus $C_N(r)$ is independent of $r$. The two generator intervals have sum interval $[l_i-1,1]$, so relation~\eqref{SQASrelation551} supplies a defining pair with total $N$ and gives $C_N(r)=0$. This proves the middle line of \eqref{v17}. The boundary $N=l_i$ is treated by the same propagation. If $l_i<-1$, it starts from the zero relation at total mode $l_i+1$; if $l_i=-1$, it starts from the $N=0$ value $\phi_{i,0}^{+}/(q_i-q_i^{-1})$, which commutes with the relevant $h_{-1}$. Hence $C_{l_i}(r)$ is independent of $r$, and the defining pair in \eqref{SQASrelation551} gives
\[
C_{l_i}(r)=-\frac{1}{q_i-q_i^{-1}}\phi_{i,l_i}^{-}.
\]
For $l_i=0$ this is already the $N=0$ case.

It remains to treat $N<l_i$. We use a formal reversal of the rank-one calculation above. Keep the notation $l_{a,i}=\alpha_i(\mu_a)$ and consider the same rank-one calculation in $\mathcal E_{0}^{\mu_2,\mu_1}$. Its finite positive and negative generator ranges are $[l_{1,i}-1,0]$ and $[l_{2,i},1]$, respectively. On words in the $i$-th root generators, the two leading $i$-th Cartan coefficients, and the Cartan modes used to propagate the $i$-th root modes, define the coefficient-field semilinear, parity-preserving anti-correspondence $\mathscr R_i$ by
\[
\mathscr R_i(f(q))=f(q^{-1}),\qquad \mathscr R_i(ab)=\mathscr R_i(b)\mathscr R_i(a),
\]
and
\[
\mathscr R_i(x_{i,r}^{+})=x_{i,l_{1,i}-r}^{-},\quad
\mathscr R_i(x_{i,s}^{-})=x_{i,l_{2,i}-s}^{+},\quad
\mathscr R_i(\phi_{i,0}^{+})=\phi_{i,l_i}^{-},\quad
\mathscr R_i(\phi_{i,l_i}^{-})=\phi_{i,0}^{+},
\]
together with $\mathscr R_i(h_{i,\pm1})=h_{i,\mp1}$ for every $i$.  At the isotropic vertex we also set $\mathscr R_m(h_{m-1,\pm1})=h_{m-1,\mp1}$, because $h_{m-1,\pm1}$ is the neighboring even Cartan mode used in the recursion of the $m$-th root modes.

We record the relations needed below. The source positive range $[l_{1,i}-1,0]$ is sent to the target negative range $[l_{1,i},1]$, and the source negative range $[l_{2,i},1]$ is sent to the target positive range $[l_{2,i}-1,0]$. Since $[k]_{q_i^{-1}}=[k]_{q_i}$, the Cartan--root relations are carried into their opposite-sign counterparts. The same-node quadratic relation (or $[x_{m,r}^{\pm},x_{m,s}^{\pm}]=0$ at the isotropic vertex) is taken to the corresponding relation with the root-current sign $+\leftrightarrow-$. In the mixed boundary relation, the total mode $M$ is changed to $l_i-M$; the five cases $M=1,l_i-1,0,l_i$ and $l_i<M<0$ are paired exactly, while $q_i-q_i^{-1}$ changes sign under $q\mapsto q^{-1}$. Using $\mathscr R_i([u,v])=[\mathscr R_i(v),\mathscr R_i(u)]$, the two relations in~\eqref{SQASrelation881} are carried to $-1$ times one another, whereas the two odd-node relations in~\eqref{SQASrelation991} are carried to one another exactly. Finally, the root-mode recursions \eqref{SQASArelation1}--\eqref{SQASArelation4} are carried into one another exactly. Thus every rank-one identity used in the positive-tail argument is transported by $\mathscr R_i$. No Serre relation is used in this rank-one reversal, and no image of a higher Cartan coefficient is assumed here.

Now fix $a>0$. The positive-tail result, already proved in $\mathcal E_{0}^{\mu_2,\mu_1}$, gives for every $r\in\mathbb Z$
\[
[x_{i,r}^{+},x_{i,a-r}^{-}]=\frac{1}{q_i-q_i^{-1}}\phi_{i,a}^{+}.
\]
Here $\mathscr R_i(\phi_{i,a}^{+})$ means the image of its defining word in \eqref{SQASArelation5}. Applying $\mathscr R_i$ gives in $\mathcal E_{0}^{\mu_1,\mu_2}$
\[
[x_{i,l_{2,i}-a+r}^{+},x_{i,l_{1,i}-r}^{-}]
=-\frac{1}{q_i-q_i^{-1}}\mathscr R_i(\phi_{i,a}^{+}).
\]
Set $N=l_i-a<l_i$. Taking $r=0$ and using the defining formula \eqref{SQASArelation6} gives directly $\mathscr R_i(\phi_{i,a}^{+})=\phi_{i,N}^{-}$. Since $r$ is arbitrary, the first index $l_{2,i}-a+r$ ranges over all integers, and therefore
\[
[x_{i,r}^{+},x_{i,N-r}^{-}]
=-\frac{1}{q_i-q_i^{-1}}\phi_{i,N}^{-}\qquad(N<l_i,\ r\in\mathbb Z).
\]
This proves the second line of \eqref{v17} without using any negative-tail Cartan relation in advance and completes the $N\leq0$ case.

\textbf{The relation \eqref{SQASrelation1} for $i=j$:} We need to prove $[\phi_{i,r}^{+},\phi_{i,m_{i}^{+}}^{+}]=[\phi_{i,r}^{+},\phi_{i,-m_{i}^{-}}^{-}]=0$ for any $r,m_{i}^{+}\in\mathbb{Z}_{\geq0}$ and $m_{i}^{-}\in\mathbb{Z}_{\geq-l_{i}}$. The cases $r=0$, $m_{i}^{+}=0$, and $m_{i}^{-}=-l_{i}$ are clear. It remains to show that $[h_{i,r},\phi_{i,m_{i}^{+}}^{+}]=0$ and $[h_{i,r},\phi_{i,-m_{i}^{-}}^{-}]=0$ for $r,m_{i}^{+}\in\mathbb{Z}_{> 0}$ and $m_{i}^{-}\in\mathbb{Z}_{>-l_{i}}$.

For $r,m_{i}^{+}\in\mathbb{Z}_{> 0}$, by \eqref{v13}, we obtain
\begin{align*}
[h_{i,r},\phi_{i,m_{i}^{+}}^{+}]&=(q_{i}-q_{i}^{-1})[h_{i,r},[x_{i,m_{i}^{+}-1}^{+},x_{i,1}^{-}]]\\&=(q_{i}-q_{i}^{-1})([[h_{i,r},x_{i,m_{i}^{+}-1}^{+}],x_{i,1}^{-}]+[x_{i,m_{i}^{+}-1}^{+},[h_{i,r},x_{i,1}^{-}]])\\&=(q_{i}-q_{i}^{-1})\frac{[rs_{i}A_{i,i}]_{q_{i}}}{r}([x_{i,m_{i}^{+}+r-1}^{+},x_{i,1}^{-}]-[x_{i,m_{i}^{+}-1}^{+},x_{i,r+1}^{-}])=0.
\end{align*}

For $r\in\mathbb{Z}_{> 0}$ and $m_{i}^{-}\in\mathbb{Z}_{>-l_{i}}$, using \eqref{v13} and relation \eqref{SQASrelation5} for $i=j$, which was proved before, we obtain
\begin{align*}
[h_{i,r},\phi_{i,-m_{i}^{-}}^{-}]&=-(q_{i}-q_{i}^{-1})[h_{i,r},[x_{i,-m_{i}^{-}}^{+},x_{i,0}^{-}]]\\&=-(q_{i}-q_{i}^{-1})([[h_{i,r},x_{i,-m_{i}^{-}}^{+}],x_{i,0}^{-}]+[x_{i,-m_{i}^{-}}^{+},[h_{i,r},x_{i,0}^{-}]])\\&=-(q_{i}-q_{i}^{-1})\frac{[rs_{i}A_{i,i}]_{q_{i}}}{r}([x_{i,-m_{i}^{-}+r}^{+},x_{i,0}^{-}]-[x_{i,-m_{i}^{-}}^{+},x_{i,r}^{-}])=0.
\end{align*}

\textbf{The relation \eqref{SQASrelation5} for $i\neq j$:} We need to prove $[x_{i,r}^{+},x_{j,s}^{-}]=0$ for any $r,s\in\mathbb{Z}$. Note that  $[x_{i,0}^{+},x_{j,0}^{-}]=0$ by \eqref{SQASrelation551}. If neither $i$ nor $j$ is equal to $m$, then by \eqref{v1}--\eqref{v2},
\begin{align*}
[h_{i,\pm1},[x_{i,r}^{+},x_{j,s}^{-}]]=[s_{i}A_{i,i}]_{q_{i}}[x_{i,r\pm 1}^{+},x_{j,s}^{-}]-[s_{i}A_{i,j}]_{q_{i}}[x_{i,r}^{+},x_{j,s\pm 1}^{-}],\\
[h_{j,\pm 1},[x_{i,r}^{+},x_{j,s}^{-}]]=[s_{j}A_{j,i}]_{q_{j}}[x_{i,r\pm 1}^{+},x_{j,s}^{-}]-[s_{j}A_{j,j}]_{q_{j}}[x_{i,r}^{+},x_{j,s\pm 1}^{-}].
\end{align*}
Combining these identities with the nondegeneracy of the matrix
\[
\begin{bmatrix}
[s_{i}A_{i,i}]_{q_{i}} & -[s_{i}A_{i,j}]_{q_{i}}\\
[s_{j}A_{j,i}]_{q_{j}} &-[s_{j}A_{j,j}]_{q_{j}}
\end{bmatrix},
\]
we obtain $[x_{i,r}^{+},x_{j,s}^{-}]=0$ for all $r,s\in\mathbb{Z}$ in this case.

If $i=m$, choose
\[
u:=\begin{cases}m+1,&j=m-1,\\ m-1,&j\neq m-1.\end{cases}
\]
Then $u$ is an even Cartan index distinct from $j$. By \eqref{v1}--\eqref{v2}, for either sign,
\begin{align*}
[h_{u,\pm1},[x_{m,r}^{+},x_{j,s}^{-}]]&=[s_uA_{u,m}]_{q_u}[x_{m,r\pm1}^{+},x_{j,s}^{-}]-[s_uA_{u,j}]_{q_u}[x_{m,r}^{+},x_{j,s\pm1}^{-}],\\
[h_{j,\pm1},[x_{m,r}^{+},x_{j,s}^{-}]]&=[s_jA_{j,m}]_{q_j}[x_{m,r\pm1}^{+},x_{j,s}^{-}]-[s_jA_{j,j}]_{q_j}[x_{m,r}^{+},x_{j,s\pm1}^{-}].
\end{align*}
The corresponding $2\times2$ coefficient matrix is nondegenerate; direct inspection gives determinant $[2]_q$ up to sign.  Starting from $[x_{m,0}^{+},x_{j,0}^{-}]=0$, the two signs therefore shift the two mode indices independently.

If $j=m$, choose $u=m+1$ when $i=m-1$ and $u=m-1$ otherwise.  Using the rows $u$ and $i$ gives
\[
\begin{bmatrix}
[s_uA_{u,i}]_{q_u}&-[s_uA_{u,m}]_{q_u}\\
[s_iA_{i,i}]_{q_i}&-[s_iA_{i,m}]_{q_i}
\end{bmatrix},
\]
whose determinant is $-[2]_q\neq0$.  The same two-sign propagation from $[x_{i,0}^{+},x_{m,0}^{-}]=0$ proves the result.  Hence $[x_{i,r}^{+},x_{j,s}^{-}]=0$ for all $i\neq j$ and all $r,s\in\mathbb Z$.

\textbf{The relation \eqref{SQASrelation4} for $i\neq j$:} We prove the case $\epsilon=+$; the case $\epsilon=-$ is analogous.
Let $Y^{+}(i,j;r,s)=[\phi_{i,r+1}^{+},x_{j,s}^{+}]_{q^{A_{i,j}}}+[x_{j,s+1}^{+},\phi_{i,r}^{+}]_{q^{A_{i,j}}}$. Using the relation \eqref{SQASrelation5}, which has been proved for both $i\neq j$ and $i=j$, together with $X^{+}(i,j;r,s)=0$, we obtain
\begin{align*}
[[x_{i,r+1}^{+},x_{j,s}^{+}]_{q^{A_{i,j}}},x_{i,0}^{-}]&=[x_{i,r+1}^{+},[x_{j,s}^{+},x_{i,0}^{-}]]_{q^{A_{i,j}}}+[[x_{i,r+1}^{+},x_{i,0}^{-}],x_{j,s}^{+}]_{q^{A_{i,j}}}\\&=\frac{1}{q_{i}-q_{i}^{-1}}[\phi_{i,r+1}^{+},x_{j,s}^{+}]_{q^{A_{i,j}}},\\
[[x_{j,s+1}^{+},x_{i,r}^{+}]_{q^{A_{i,j}}},x_{i,0}^{-}]&=[x_{j,s+1}^{+},[x_{i,r}^{+},x_{i,0}^{-}]]_{q^{A_{i,j}}}+[[x_{j,s+1}^{+},x_{i,0}^{-}],x_{i,r}^{+}]_{q^{A_{i,j}}}\\&=\frac{1}{q_{i}-q_{i}^{-1}}[x_{j,s+1}^{+},\phi_{i,r}^{+}]_{q^{A_{i,j}}},
\end{align*}
Thus,
\begin{align*}
Y^{+}(i,j;r,s)&=(q_{i}-q_{i}^{-1})([[x_{i,r+1}^{+},x_{j,s}^{+}]_{q^{A_{i,j}}},x_{i,0}^{-}]+[[x_{j,s+1}^{+},x_{i,r}^{+}]_{q^{A_{i,j}}},x_{i,0}^{-}])\\&=(q_{i}-q_{i}^{-1})[X^{+}(i,j;r,s),x_{i,0}^{-}]=0.
\end{align*}

\textbf{The relation \eqref{SQASrelation1} for $i\neq j$:} We prove the case $\epsilon=\epsilon'=+$; the other cases are analogous.

It suffices to prove that $[h_{i,r},\phi_{j,s}^{+}]=0$ for $r,s\in\mathbb{Z}_{> 0}$, where $h_{i,r}$ is defined in \eqref{a3}. Analogously to the derivation of \eqref{v13}--\eqref{v13'}, relations \eqref{SQASrelation4}--\eqref{SQASrelation5} imply
\begin{align*}
[h_{i,r},x_{j,s}^{+}]=\frac{[rs_{i}A_{i,j}]_{q_{i}}}{r}x_{j,s+r}^{+}\quad\text{and}\quad[h_{i,r},x_{j,s}^{-}]=-\frac{[rs_{i}A_{i,j}]_{q_{i}}}{r}x_{j,s+r}^{-}\quad \text{for $r\in\mathbb{Z}_{>0}$, $s\in\mathbb{Z}$}.
\end{align*}
Hence,
\begin{align*}
[h_{i,r},\phi_{j,s}^{+}]=(q_{j}-q_{j}^{-1})[h_{i,r},[x_{j,s}^{+},x_{j,0}^{-}]]& =(q_{j}-q_{j}^{-1})([[h_{i,r},x_{j,s}^{+}],x_{j,0}^{-}]+[x_{j,s}^{+},[h_{i,r},x_{j,0}^{-}]])\\&=(q_{j}-q_{j}^{-1})\frac{[rs_{i}A_{i,j}]_{q_{i}}}{r}([x_{j,s+r}^{+},x_{j,0}^{-}]-[x_{j,s}^{+},x_{j,r}^{-}])=0.
\end{align*}

\textbf{The relation \eqref{SQASrelation6}:} Fix $i\in I\setminus\{m\}$ and a neighboring vertex $j=i\pm1\in I$, and set
\[
Z^{+}_{i,j}(k_1,k_2;r):=\operatorname{Sym}_{k_1,k_2}
[x_{i,k_1}^{+},[x_{i,k_2}^{+},x_{j,r}^{+}]_{q}]_{q^{-1}}.
\]
This is symmetric in $k_1,k_2$, and \eqref{SQASrelation661} gives $Z^{+}_{i,j}(0,0;0)=0$.  By the already established Cartan--root relation \eqref{SQASrelation4}, for $a\in\{\pm1,\pm2\}$ and $u\in\{i,j\}$,
\[
[h_{u,a},Z^{+}_{i,j}(k_1,k_2;r)]
=c_{u,i}(a)P_aZ^{+}_{i,j}(k_1,k_2;r)
+c_{u,j}(a)T_aZ^{+}_{i,j}(k_1,k_2;r),
\]
where $c_{u,v}(a):=\frac{[as_uA_{u,v}]_{q_u}}{a}$
and
\begin{align*}
P_aZ(k_1,k_2;r)&:=Z(k_1+a,k_2;r)+Z(k_1,k_2+a;r),\\
T_aZ(k_1,k_2;r)&:=Z(k_1,k_2;r+a).
\end{align*}
For $a=\pm1,\pm2$, the matrix
\[
\begin{bmatrix}
c_{i,i}(a)&c_{i,j}(a)\\
c_{j,i}(a)&c_{j,j}(a)
\end{bmatrix}
\]
is nondegenerate.  If neither vertex is isotropic, its determinant is $[2]_q^2-1=[3]_q$ for $|a|=1$ and $([4]_q^2-[2]_q^2)/4$ for $|a|=2$; if $j=m$, the determinant is $\pm1$ for $|a|=1$ and $\pm[2]_q^2/4$ for $|a|=2$.  All these scalars are nonzero because $q$ is not a root of unity.  Hence the adjoint actions allow $P_a$ and $T_a$ to be applied separately to any vanishing coefficient. Repeating this from $Z^{+}_{i,j}(0,0;0)=0$, every polynomial in the commuting operators $P_{\pm1},P_{\pm2},T_{\pm1}$ annihilates the initial coefficient.  The operators $T_{\pm1}$ generate all Laurent shifts of $r$, while, as above,
\[
\mathbb C[u^{\pm1},v^{\pm1}]^{\mathfrak S_2}
=\mathbb C[p_1,p_2,p_{-1},p_{-2}]
\]
shows that $P_{\pm1},P_{\pm2}$ generate every symmetric Laurent shift of $(k_1,k_2)$.  Therefore
$Z^{+}_{i,j}(k_1,k_2;r)=0$, $k_1,k_2,r\in\mathbb Z$.
The negative-current relation is proved identically.

\textbf{The relation \eqref{SQASrelation7}:} Set
\[
Z^{+}_{m}(k_1,k_2;r_1,r_2):=
\operatorname{Sym}_{k_1,k_2}[x_{m,k_1}^{+},[x_{m+1,r_1}^{+},[x_{m,k_2}^{+},x_{m-1,r_2}^{+}]_{q}]_{q^{-1}}].
\]
It is symmetric in $k_1,k_2$, and \eqref{SQASrelation771} gives $Z^{+}_{m}(0,0;0,0)=0$.  For $a\in\{\pm1,\pm2\}$ define the three commuting shift operators
\begin{align*}
P_aZ(k_1,k_2;r_1,r_2)&:=Z(k_1+a,k_2;r_1,r_2)+Z(k_1,k_2+a;r_1,r_2),\\
T_a^{+}Z(k_1,k_2;r_1,r_2)&:=Z(k_1,k_2;r_1+a,r_2),\quad T_a^{-}Z(k_1,k_2;r_1,r_2):=Z(k_1,k_2;r_1,r_2+a).
\end{align*}
For every Cartan index $u$, relation \eqref{SQASrelation4} gives
\[
[h_{u,a},Z^{+}_{m}]
=c_{u,m}(a)P_aZ^{+}_{m}
+c_{u,m+1}(a)T_a^{+}Z^{+}_{m}
+c_{u,m-1}(a)T_a^{-}Z^{+}_{m}.
\]
Since $m,n>1$ and $m\neq n$, at least one of $m,n$ is at least $3$.  If $m\geq3$, take the three Cartan rows $u=m-2,m,m+1$; otherwise $m=2$ and hence $n\geq3$, and we take $u=m+2,m,m-1$.  In either case, for $a=\pm1,\pm2$ the resulting $3\times3$ coefficient matrix in the columns $(m,m+1,m-1)$ is nondegenerate: with the indicated ordering it is triangular up to a permutation, with determinant $-1$ for $|a|=1$ and $-[2]_q^3/8$ for $|a|=2$.  Consequently $P_a,T_a^{+},T_a^{-}$ may be separated from the three adjoint equations.

Repeating these separated shift operations from $Z^{+}_{m}(0,0;0,0)=0$, the operators $T_{\pm1}^{+}$ and $T_{\pm1}^{-}$ generate arbitrary Laurent shifts in $r_1,r_2$, while $P_{\pm1},P_{\pm2}$ generate all symmetric Laurent shifts in $k_1,k_2$. Hence
$Z^{+}_{m}(k_1,k_2;r_1,r_2)=0$, $k_1,k_2,r_1,r_2\in\mathbb Z$.
The negative-current odd Serre relation is analogous.

For completeness, the order reversal used in the negative-tail rank-one argument is also compatible with the Serre polynomials once the all-mode relations have been established. Indeed, for any parity-preserving semilinear anti-map $\mathscr R$ with $\mathscr R(q)=q^{-1}$,
\[
\mathscr R([u,v]_{q^a})=[\mathscr R(v),\mathscr R(u)]_{q^{-a}},\qquad
[v,u]_{q^{-a}}=-(-1)^{|u||v|}q^{-a}[u,v]_{q^a}.
\]
These identities carry the ordinary cubic Serre polynomial to the corresponding polynomial with the root-current sign $+\leftrightarrow-$. At the isotropic vertex, if $u$ is odd and $v,w$ are even, they give the exact identity
\[
\mathscr R\!\left([u,[v,[u,w]_q]_{q^{-1}}]\right)
=[\mathscr R(u),[\mathscr R(v),[\mathscr R(u),\mathscr R(w)]_q]_{q^{-1}}].
\]
Thus the $q\mapsto q^{-1}$ order reversal preserves the quartic odd Serre polynomial without an additional scalar or sign.
This completes the proof of Theorem~\ref{thm:finite iso}.
\end{proof}

\section{Zero-shift RTT coefficient calculations}\label{app:rtt-coproduct-proof}
This appendix derives the zero-shift RTT coefficient identities used as the model for the finite-generator formulas in Appendix~\ref{app:antidominant-coproduct-formulas}.  We continue the notation and Gauss decomposition of Subsection~\ref{subsec:rtt-gauss}.

Let us express the matrix coefficients of $\tilde{F}^{\pm}(z)$, $\tilde{G}^{\pm}(z)$, $\tilde{E}^{\pm}(z)$ as Taylor series in $z^{\mp1}$:
\begin{align*}
&\tilde{e}_{ji}^{+}(z)=\sum_{r\geq0}\tilde{e}_{ji}^{(r)}z^{-r},\quad \tilde{e}_{ji}^{-}(z)=\sum_{r<0}\tilde{e}_{ji}^{(r)}z^{-r},\\&
\tilde{f}_{ij}^{+}(z)=\sum_{r>0}\tilde{f}_{ij}^{(r)}z^{-r},\quad \tilde{f}_{ij}^{-}(z)=\sum_{r\leq0}\tilde{f}_{ij}^{(r)}z^{-r}, \quad
\tilde{g}_{i}^{\pm}(z)=\tilde{g}_{i}^{\pm}+\sum_{r>0}\tilde{g}_{i}^{(\pm r)}z^{\mp r}.
\end{align*}
From Corollary~\ref{coro: iso from Drinfeld realization to RTT realization}, we have
\begin{align}\label{equations for gamma-1}
\begin{split}
&\gamma^{-1}(\tilde{e}_{i,i+1}^{(0)})=
(q_{i}-q_{i}^{-1})x_{i,0}^{+},
\quad \gamma^{-1}(\tilde{e}_{i,i+1}^{(-1)})=-\tau_{i}^{-1}(q_{i}-q_{i}^{-1})x_{i,-1}^{+},\\&
\gamma^{-1}(\tilde{f}_{i+1,i}^{(0)})=-(q_{i}-q_{i}^{-1})x_{i,0}^{-},
\quad
\gamma^{-1}(\tilde{f}_{i+1,i}^{(1)})=\tau_{i}(q_{i}-q_{i}^{-1})x_{i,1}^{-}.
\end{split}
\end{align}

For convenience, write $[z^{r}]F(z)$ for the coefficient of $z^{r}$ in the Laurent series $F(z)$, and set $p_{ij}:=|i|+|j|\in\mathbb Z_2$. We first record the following lemma.
\begin{lemma}\label{Lemma: relations for g e f in RTT}
For any $1\leq j<i\leq m+n$ and $1\leq a,b \leq m+n$, the following identities hold:
\[
\begin{array}{l@{\qquad\qquad}l}
\text{(1)}\quad
\tilde{g}_{a}^{\epsilon}\tilde{g}_{b}^{\epsilon'}
=\tilde{g}_{b}^{\epsilon'}\tilde{g}_{a}^{\epsilon},
\quad \epsilon,\epsilon'\in\{+,-\},
&
\text{(4)}\quad
\tilde{g}_{a}^{\pm}\tilde{e}_{ji}^{(-1)}
=q_{a}^{\pm\delta_{ai}\mp\delta_{aj}}
\tilde{e}_{ji}^{(-1)}\tilde{g}_{a}^{\pm},
\\[1.2ex]
\text{(2)}\quad
\tilde{g}_{a}^{\pm}\tilde{e}_{ji}^{(0)}
=q_{a}^{\pm\delta_{ai}\mp\delta_{aj}}
\tilde{e}_{ji}^{(0)}\tilde{g}_{a}^{\pm},
&
\text{(5)}\quad
\tilde{g}_{a}^{\pm}\tilde{f}_{ij}^{(1)}
=q_{a}^{\mp\delta_{ai}\pm\delta_{aj}}
\tilde{f}_{ij}^{(1)}\tilde{g}_{a}^{\pm},
\\[1.2ex]
\text{(3)}\quad
\tilde{g}_{a}^{\pm}\tilde{f}_{ij}^{(0)}
=q_{a}^{\mp\delta_{ai}\pm\delta_{aj}}
\tilde{f}_{ij}^{(0)}\tilde{g}_{a}^{\pm}.
\end{array}
\]
\end{lemma}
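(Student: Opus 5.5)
We extract each identity as a coefficient of a single matrix entry of the unshifted RTT relations, using the Gauss decomposition of Subsection~\ref{subsec:rtt-gauss}.

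\textbf{Step 1: diagonal entries and commutation (1).} First recall the leading-coefficient structure.
\begin{itemize}
\item Since $t_{ij}^{(0)}=0$ for $j<i$, the matrix $T(z)$ at $z=\infty$ is upper triangular, so $\tilde f^{+}_{ij}$ has no constant term.
\item Since $s_{ji}^{(0)}=0$ for $j<i$, the matrix $S(0)$ is lower triangular, so $\tilde e^{-}_{ji}$ has no constant term.
\item The diagonal leading coefficients are $\tilde g_a^{+}=t_{aa}^{(0)}$ and $\tilde g_a^{-}=s_{aa}^{(0)}$, and these are mutually inverse.
\end{itemize}
For (1), take the $\mathsf E_{aa}\otimes\mathsf E_{bb}$ component of the $TT$, $SS$ and $TS$ relations at the leading order in $z,w$. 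The off-diagonal $R$-terms then contribute only products involving $t_{ab}^{(0)}t_{ba}^{(0)}$ with one factor zero. This gives $t_{aa}^{(0)}t_{bb}^{(0)}=t_{bb}^{(0)}t_{aa}^{(0)}$, and the same for the mixed and $S$ cases. Commutation with inverses follows.

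\textbf{Step 2: (2) and (3) from leading-order $TT$ relations.} Take the $\mathsf E_{aa}\otimes\mathsf E_{ji}$ and $\mathsf E_{ji}\otimes\mathsf E_{aa}$ components of $R_{23}T_{12}(z)T_{13}(w)=T_{13}(w)T_{12}(z)R_{23}$, and extract the coefficient of $z^{1}w^{0}$, i.e. the top power in $z$ with $w^0$. At this order $T(\infty)=\tilde G^{+}\tilde E^{+}(\infty)$ is upper triangular, and $t_{ji}^{(0)}=\tilde g_j^{+}\tilde e_{ji}^{(0)}$ plus terms $\epsilon\,\tilde f\tilde g\tilde e$ with $\tilde f^{(0)}=0$. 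The relation therefore reduces to a $q$-commutation between $t_{aa}^{(0)}$ and $t_{ji}^{(0)}$. The $R$-matrix diagonal weight $q_a$ appears exactly when $a\in\{i,j\}$, which produces the factor $q_a^{\pm(\delta_{ai}-\delta_{aj})}$. The factor $\tilde g_j^{+}$ is cancelled using (1). The super sign $\epsilon$ from the graded tensor product is trivial because $\tilde g$ is even.

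The $\tilde f^{(0)}$ statement (3) is identical, using the $SS$ relation at $z^0w^0$ (where $S(0)$ is lower triangular) with $s_{ij}^{(0)}=\tilde f_{ij}^{(0)}\tilde g_j^{-}$. The $\tilde g^{-}$ (respectively $\tilde g^{+}$) versions follow by inverting, since $\tilde g^{+}_a\tilde g^{-}_a=1$.

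\textbf{Step 3: (4) and (5), the main obstacle.} These involve subleading Gauss modes, so we use the mixed relations.
\begin{itemize}
\item For (4), use the $TS$ relation $R_{23}T_{12}(z)S_{13}(w)=S_{13}(w)T_{12}(z)R_{23}$ in the $\mathsf E_{aa}\otimes\mathsf E_{ji}$ component, extracting the coefficient of $z^{1}w^{1}$. This isolates $s^{(1)}_{ji}=\tilde g_j^{-}\tilde e_{ji}^{(-1)}+\dots$ against $t_{aa}^{(0)}$.
\item For (5), use the $z^{0}w^{-1}$ coefficient of the $ST$ relation, which isolates $t^{(1)}_{ij}$ against $s_{aa}^{(0)}$.
\end{itemize}
The difficulty is the extra terms. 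The correction terms $\tilde f^{-}_{jj'}\tilde g^{-}_{j'}\tilde e^{-}_{j'i}$ in $s_{ji}$ must drop at this order; at order $w^1$ they contain $\tilde e^{-}_{j'i}$ with no constant term and $\tilde f^{(0)}$ factors. The plan is to handle them by induction on $j$ (respectively $i-j$), using Step 2, so that they cancel between the two sides. The off-diagonal $R$-terms, which carry $w\,(q_j-q_j^{-1})$ or $z\,(q_i-q_i^{-1})$, produce terms with $t_{ja}^{(0)}$ or $s_{ai}$ entries. These vanish unless $a\in\{i,j\}$, and in that case they combine with the diagonal $R$-coefficient to yield exactly the same exponent as in (2) and (3). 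The careful bookkeeping of these off-diagonal $R$-contributions, together with the signs $\epsilon_{ijk}$, is where I expect the real work to be.
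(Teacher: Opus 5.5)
Your route is the paper's. You get (1) from the lowest coefficient of the diagonal--diagonal relations, and (2) from the $\mathsf E_{aa}\otimes\mathsf E_{ji}$ component of $TT$ at $z^1w^0$. You get (4) from the $\mathsf E_{aa}\otimes\mathsf E_{ji}$ component of $TS$ at $z^1w^1$, stripping the lower summands $\epsilon_{jij'}\tilde f_{jj'}^{(0)}\tilde g_{j'}^{-}\tilde e_{j'i}^{(-1)}$ of $[w]s_{ji}(w)$ by induction on $j$ using (1) and (3). The other sign follows from $\tilde g_a^{+}=(\tilde g_a^{-})^{-1}$.

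However, two of your extractions fail as written. Every entry of $R(z,w)$ is homogeneous of degree one in $(z,w)$. So the $z^0w^0$ coefficient of the $SS$ relation is the vacuous identity $0=0$; for (3) you need the $z^0w^1$ coefficient of the $\mathsf E_{aa}\otimes\mathsf E_{ij}$ component, as the paper uses. Likewise, in the $ST$ relation the $w$-linear part of $R$ shifts the $T$-degree by one. Hence $z^0w^{-1}$ pairs $s_{aa}^{(0)}$ with $t_{ij}^{(2)}$, not $t_{ij}^{(1)}$. The correct coefficient is $z^0w^0$, or equivalently the paper's $\mathsf E_{ij}\otimes\mathsf E_{aa}$ component of $TS$ at $z^0w^0$.

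Your account of the off-diagonal $R$-terms is also not what happens. At the correct coefficients they vanish individually for every $a$, including $a\in\{i,j\}$. Each one either contains the constant term of a strictly lower entry of $T$ or of a strictly upper entry of $S$, or it would need a positive-degree coefficient of a $T$-entry or a negative-degree coefficient of an $S$-entry. The factor $q_a^{\pm(\delta_{ai}-\delta_{aj})}$ therefore comes entirely from the diagonal entries of $R$, namely $zq_a-wq_a^{-1}$ versus $z-w$ on the two sides. Choosing the right degree-one monomial is exactly what makes this work: at $z^1w^0$ in $SS$, for instance, products $s_{ia}^{(0)}s_{aj}^{(0)}$ survive and must be combined.

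So the bookkeeping you flag as the main obstacle does not arise. The real content of (4) and (5) is the triangular induction; the lower summands do not ``drop'' at order $w^1$, they are removed by that induction. For (5) the lower summands are $\epsilon_{ijj'}\tilde f_{ij'}^{(1)}\tilde g_{j'}^{+}\tilde e_{j'j}^{(0)}$ with $j'<j$. The induction must therefore again run on $j$, using (1) and (2); an ascending induction on $i-j$ would not reach these terms.
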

\begin{proof}
Part (1) follows by taking the lowest coefficient in the diagonal--diagonal RTT relation.  For the remaining parts it is convenient to put
\[
\eta_a(j,i):=\delta_{ai}-\delta_{aj}.
\]
All four weight relations are obtained from the same coefficient extraction.  More precisely, the indicated matrix component and coefficient give
\[
\begin{array}{c|c|c|c}
\text{quantity} & \text{RTT relation} & \text{matrix component} & \text{coefficient}\\ \hline
\tilde g_a^+\tilde e_{ji}^{(0)} & TT & \mathsf E_{aa}\otimes \mathsf E_{ji} & z^1w^0\\
\tilde g_a^-\tilde f_{ij}^{(0)} & SS & \mathsf E_{aa}\otimes \mathsf E_{ij} & z^0w^1\\
\tilde g_a^+\,[z]s_{ji}(z) & TS & \mathsf E_{aa}\otimes \mathsf E_{ji} & z^1w^1\\
\tilde g_a^-\,[z^{-1}]t_{ij}(z) & TS & \mathsf E_{ij}\otimes \mathsf E_{aa} & z^0w^0.
\end{array}
\]
In each row the result is the same weight factor $q_a^{\eta_a(j,i)}$.  To illustrate the only nontrivial point, when $a=j<i$ the first extraction gives
\[
q_a\tilde g_a^+\tilde g_j^+\tilde e_{ji}^{(0)}
 =\tilde g_j^+\tilde e_{ji}^{(0)}\tilde g_a^+,
\]
and part (1) yields
$\tilde g_a^+\tilde e_{ji}^{(0)}=q_a^{-1}\tilde e_{ji}^{(0)}\tilde g_a^+$.
For $a=i$ the same extraction gives the factor $q_a$, while for
$a\notin\{j,i\}$ it gives factor $1$; these are exactly the five possible relative positions of $a,j,i$.

This proves (2) and, from the second row, (3).  For (4), write the Gauss coefficient as
\[
[z]s_{ji}(z)=\tilde g_j^-\tilde e_{ji}^{(-1)}
 +\sum_{j'<j}\epsilon_{jij'}\tilde f_{jj'}^{(0)}\tilde g_{j'}^-\tilde e_{j'i}^{(-1)}.
\]
The third row gives weight $q_a^{\eta_a(j,i)}$ for the left-hand side.  Using (1), (3), and induction on $j$, every summand with $j'<j$ already has this weight, so subtraction leaves the asserted relation for $\tilde e_{ji}^{(-1)}$.  The fourth row and the expansion
\[
[z^{-1}]t_{ij}(z)=\tilde f_{ij}^{(1)}\tilde g_j^+
 +\sum_{j'<j}\epsilon_{ijj'}\tilde f_{ij'}^{(1)}\tilde g_{j'}^+\tilde e_{j'j}^{(0)}
\]
give (5) by the same induction.  Thus no separate calculation for the five relative positions is needed beyond the endpoint coefficient displayed above.
\end{proof}

We next record recursive formulas for the first Gauss modes $e_{ji}^{(0)},e_{ji}^{(-1)},f_{ij}^{(0)},f_{ij}^{(1)}$ in terms of the simple Gauss generators.
\begin{proposition}\label{prop: formulas for eji0,eji-1,fij0,fij1}
For type A with the standard parity sequence, assume $m,n>1$ and $m\ne n$.
For any $1\leq j<k<i\leq m+n$, the following identities hold:
\[
\begin{array}{l@{\qquad\qquad}l}
\text{(1)}\quad
\tilde{e}_{ji}^{(0)}
=\frac{1}{q_k-q_k^{-1}}
[\tilde{e}_{ki}^{(0)},\tilde{e}_{jk}^{(0)}]_{q_k^{-1}},
&
\text{(3)}\quad
\tilde{e}_{ji}^{(-1)}
=\frac{1}{q_k-q_k^{-1}}
[\tilde{e}_{ki}^{(0)},\tilde{e}_{jk}^{(-1)}]_{q_k^{-1}},
\\[1.2ex]
\text{(2)}\quad
\tilde{f}_{ij}^{(0)}
=\frac{-1}{q_k-q_k^{-1}}
[\tilde{f}_{kj}^{(0)},\tilde{f}_{ik}^{(0)}]_{q_k},
&
\text{(4)}\quad
\tilde{f}_{ij}^{(1)}
=\frac{-1}{q_k-q_k^{-1}}
[\tilde{f}_{kj}^{(1)},\tilde{f}_{ik}^{(0)}]_{q_k}.
\end{array}
\]
\end{proposition}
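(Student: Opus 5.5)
The plan is to extract these four identities directly from the RTT relations, by induction on the length $i-j$ of the root. All four have the same shape, so I will treat (1) in detail and obtain (2)--(4) by changing the RTT relation, the matrix component and the extracted coefficient.

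For (1), I use the $TT$ relation $R_{23}(z,w)T_{12}(z)T_{13}(w)=T_{13}(w)T_{12}(z)R_{23}(z,w)$ and take its $\mathsf E_{jk}\otimes\mathsf E_{ki}$ component. Expanding the Perk--Schultz matrix, this component gives
\[
(z-w)\,t_{jk}(z)t_{ki}(w)+z(q_k-q_k^{-1})\,t_{kk}(z)t_{ji}(w)
=(z-w)\,t_{ki}(w)t_{jk}(z)+w(q_k-q_k^{-1})\,t_{ji}(z)t_{kk}(w),
\]
up to the graded sign $\epsilon_{jik}$ and the cross terms determined by the ordering $j<k<i$. I then take the coefficient $z^{1}w^{0}$, so that only the leading modes survive: $t_{ki}(w)$ contributes $t_{ki}^{(0)}$, $t_{kk}(z)$ contributes $\tilde g_k^{+}$, and $t_{jk}(z)$ contributes $t_{jk}^{(0)}$. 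Next I substitute the Gauss expansions of $t_{jk}^{(0)}$, $t_{ki}^{(0)}$ and $t_{ji}^{(0)}$ from Subsection~\ref{subsec:rtt-gauss}. Lemma~\ref{Lemma: relations for g e f in RTT} lets me move $\tilde g_j^{+}$ and $\tilde g_k^{+}$ to the left; the weight factors $q_k^{\pm1}$ coming from (2) there are what turn the plain commutator into $[\cdot,\cdot]_{q_k^{-1}}$. Since the diagonal coefficients are invertible, I can cancel $\tilde g_j^{+}$.

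The main obstacle is the correction terms with an intermediate index $j'<j$ in the Gauss expansions. These contain $\tilde f^{(0)}_{jj'}\tilde g^{+}_{j'}\tilde e^{(0)}_{j'k}$, and for $T$ the leading $\tilde f$-modes vanish ($\tilde f^{+}$ has only $r>0$). So at the $z^{1}w^{0}$ extraction the $T$-matrix upper-triangular leading parts are exactly $\tilde g\tilde e^{(0)}$, with no $j'<j$ contamination: $t^{(0)}$ is upper triangular with Gauss form $G^{(0)}E^{(0)}$. Therefore, for (1) and (3), I would work with the leading-mode matrices $T^{(0)}=G^{+}E^{(0)}$ and $S^{(0)}$ respectively. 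For (3) the relevant relation is $TS$ at coefficient $z^{0}w^{1}$, using $[z]s_{ji}(z)$. Its expansion does include $\tilde f^{(0)}_{jj'}$ terms. These I would remove by induction on $j$: the corresponding identities for smaller left endpoints, together with (1) and (3) of Lemma~\ref{Lemma: relations for g e f in RTT}, show that these terms appear identically on both sides and cancel. I would check this cancellation carefully, including the parity signs $\epsilon_{jij'}$.

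Parts (2) and (4) follow symmetrically. For (2) I take the $SS$ relation at the $\mathsf E_{ik}\otimes\mathsf E_{kj}$ component and coefficient $z^{0}w^{1}$, where $S^{(0)}=F^{(0)}G^{-}$ is lower triangular. For (4) I take the $TS$ relation at coefficient $z^{-1}w^{0}$, using $[z^{-1}]t_{ij}(z)$ and the same induction on the right endpoint. The graded signs at the isotropic index $k=m+1$, where $q_k=q^{-1}$, enter only through $\epsilon_{ijk}$ and $q_k$, so the formulas hold uniformly in $k$ as stated.
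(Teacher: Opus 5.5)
Your treatment of (1) and (2) is the paper's: the $TT$ relation on $\mathsf E_{jk}\otimes\mathsf E_{ki}$ at $z^1w^0$, and the $SS$ relation on $\mathsf E_{ik}\otimes\mathsf E_{kj}$ at $z^0w^1$. In both cases you correctly use that $T^{(0)}$ and $S^{(0)}$ are triangular, so no $j'<j$ terms occur. Induction on $j$ for (3)--(4) is also the paper's strategy. The gap is that the coefficients you extract for (3) and (4) are off by one, so they do not produce the needed identities.

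Every entry of the Perk--Schultz matrix has the form $az+bw$. The extracted coefficient must therefore have total degree one more than the sum of the mode degrees you want, and should be chosen so that only one of the two linear parts survives.
\begin{itemize}
\item For (3), take the relation $R_{23}(z,w)S_{12}(z)T_{13}(w)=T_{13}(w)S_{12}(z)R_{23}(z,w)$, which is the ordering compatible with your $[z]s(z)$. Its $z^0w^1$ coefficient sees only $s^{(0)}$ and $t^{(0)}$, since the $z$-linear part would need $[z^{-1}]s=0$. So $\tilde e^{(-1)}$ never appears. In the other ordering, $T(z)S(w)$, the coefficient $z^0w^1$ mixes $[z^{-1}]t\cdot[w]s$ with $t^{(0)}s^{(0)}$.
\item For (4), in $R_{23}(z,w)T_{12}(z)S_{13}(w)$ the coefficient of $z^{-1}w^0$ pairs $[z^{-2}]t$ with $s^{(0)}$. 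That is a relation for second modes (involving $\tilde f^{(2)}$), not for $\tilde f^{(1)}$.
\end{itemize}
The extractions that work are the paper's. In the $S(z)T(w)$ relation, the $\mathsf E_{jk}\otimes\mathsf E_{ki}$ component at $z^1w^1$ gives, up to graded signs,
\[
(q_k-q_k^{-1})\,\tilde g_k^{+}\,[z]s_{ji}(z)=t_{ki}^{(0)}\,[z]s_{jk}(z)-[z]s_{jk}(z)\,t_{ki}^{(0)}.
\]
For (4), the $\mathsf E_{ik}\otimes\mathsf E_{kj}$ component at $z^0w^0$ gives the analogous identity in $s^{(0)}_{ik}$, $[w^{-1}]t_{kj}(w)$ and $[w^{-1}]t_{ij}(w)$. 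Also, your sentence pairing $S^{(0)}$ with (3) should refer to (2): $S^{(0)}$ is lower triangular and contains no $\tilde e^{(-1)}$.

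Second, the cancellation of the lower terms $j'<j$ does not follow from the weight relations of Lemma~\ref{Lemma: relations for g e f in RTT} alone.
\begin{itemize}
\item To factor $\epsilon_{jkj'}\tilde f^{(0)}_{jj'}\tilde g^-_{j'}$ out of $t^{(0)}_{ki}[z]s_{jk}(z)-[z]s_{jk}(z)t^{(0)}_{ki}$, you must move $\tilde e^{(0)}_{ki}$ past $\tilde f^{(0)}_{jj'}$. This uses $[\tilde e^{(0)}_{ki},\tilde f^{(0)}_{jj'}]=0$ for $j'<j<k<i$, which holds because the supports are disjoint. In (4) you likewise need $[\tilde e^{(0)}_{j'j},\tilde f^{(0)}_{ik}]=0$.
\item To match the lower terms with the induction hypothesis you also need $\epsilon_{jkj'}=\epsilon_{jij'}$, respectively $\epsilon_{kjj'}=\epsilon_{ijj'}$. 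This holds because, with the standard parity sequence, if $j'$ and $j$ straddle the parity wall then $k$ and $i$ lie on the same side.
\end{itemize}
With the corrected coefficients and these two ingredients, your induction goes through exactly as in the paper.
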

\begin{proof}
For (1), take the $\mathsf E_{jk}\otimes \mathsf E_{ki}$ component of the $TT$ relation and then the coefficient of $z^1w^0$.  This gives
\[
\tilde g_j^+\tilde e_{jk}^{(0)}\tilde g_k^+\tilde e_{ki}^{(0)}
 +(q_k-q_k^{-1})\tilde g_k^+\tilde g_j^+\tilde e_{ji}^{(0)}
 =\tilde g_k^+\tilde e_{ki}^{(0)}\tilde g_j^+\tilde e_{jk}^{(0)}.
\]
Using Lemma~\ref{Lemma: relations for g e f in RTT} and cancelling the invertible diagonal factors yields (1).

The other three formulas come from the same extraction with the following choices:
\[
\begin{array}{c|c|c|c}
 & \text{RTT relation} & \text{matrix component} & \text{coefficient}\\ \hline
(2) & SS & \mathsf E_{ik}\otimes \mathsf E_{kj} & z^0w^1\\
(3) & ST & \mathsf E_{jk}\otimes \mathsf E_{ki} & z^1w^1\\
(4) & ST & \mathsf E_{ik}\otimes \mathsf E_{kj} & z^0w^0.
\end{array}
\]
For (2), substitution of the degree-zero Gauss coefficients followed by Lemma~\ref{Lemma: relations for g e f in RTT} gives the stated $q_k$-commutator immediately.  For (3), the coefficient of $s_{jk}(z)$ is
\[
[z]s_{jk}(z)=\tilde g_j^-\tilde e_{jk}^{(-1)}
 +\sum_{j'<j}\epsilon_{jkj'}\tilde f_{jj'}^{(0)}\tilde g_{j'}^-\tilde e_{j'k}^{(-1)}.
\]
After inserting this expression, the terms with $j'<j$ satisfy the desired identity by induction on $j$.  Here
$\epsilon_{jkj'}=\epsilon_{jij'}$: if $j$ and $j'$ have the same vector parity this is immediate from the definition of $\epsilon$, while if they straddle the unique parity wall then $j<k<i$ forces $k$ and $i$ to have the same vector parity.  Moreover
$[\tilde e_{ki}^{(0)},\tilde f_{jj'}^{(0)}]=0$ for $j'<j<k<i$, because the two interval roots have disjoint supports.  Thus the graded sign of every lower summand is exactly the inductive one.
Subtracting these lower terms leaves (3).  Formula (4) is identical, using
\[
[z^{-1}]t_{kj}(z)=\tilde f_{kj}^{(1)}\tilde g_j^+
 +\sum_{j'<j}\epsilon_{kjj'}\tilde f_{kj'}^{(1)}\tilde g_{j'}^+\tilde e_{j'j}^{(0)},
\]
together with
$\epsilon_{kjj'}=\epsilon_{ijj'}$ (by the same parity-wall argument) and
$[\tilde e_{j'j}^{(0)},\tilde f_{ik}^{(0)}]=0$ because the corresponding intervals are disjoint.  This proves (2)--(4).
\end{proof}

The following formulas follow from Proposition~\ref{prop: formulas for eji0,eji-1,fij0,fij1}.
\begin{corollary}\label{coro: equations for eji0,eji-1,fij0,fij1}
For type A with the standard parity sequence, assume $m,n>1$ and $m\ne n$.
For any $1\leq j<i\leq m+n$, we have
\begin{align}\label{eq:equivalent formulas for eji0,fij0,eji-1,fij1}
\begin{split}
\tilde{e}_{ji}^{(0)}&=\Big(\prod_{k=j+1}^{i-1}\frac{1}{q_{k}-q_{k}^{-1}}\Big)[\tilde{e}_{i-1,i}^{(0)},[\tilde{e}_{i-2,i-1}^{(0)},\cdots,[\tilde{e}_{j+1,j+2}^{(0)},\tilde{e}_{j,j+1}^{(0)}]_{q_{j+1}^{-1}}\cdots]_{q_{i-2}^{-1}}]_{q_{i-1}^{-1}}
\\&=\Big(\prod_{k=j+1}^{i-1}\frac{1}{q_{k}-q_{k}^{-1}}\Big)[[\cdots[\tilde{e}_{i-1,i}^{(0)},\tilde{e}_{i-2,i-1}^{(0)}]_{q_{i-1}^{-1}},\cdots,\tilde{e}_{j+1,j+2}^{(0)}]_{q_{j+2}^{-1}},\tilde{e}_{j,j+1}^{(0)}]_{q_{j+1}^{-1}},
\end{split}\\
\begin{split}
\tilde{f}_{ij}^{(0)}&=\Big(\prod_{k=j+1}^{i-1}\frac{1}{q_{k}^{-1}-q_{k}}\Big)[\tilde{f}_{j+1,j}^{(0)},[\tilde{f}_{j+2,j+1}^{(0)},\cdots,[\tilde{f}_{i-1,i-2}^{(0)},\tilde{f}_{i,i-1}^{(0)}]_{q_{i-1}}\cdots]_{q_{j+2}}]_{q_{j+1}}
\\&=\Big(\prod_{k=j+1}^{i-1}\frac{1}{q_{k}^{-1}-q_{k}}\Big)[[\cdots[\tilde{f}_{j+1,j}^{(0)},\tilde{f}_{j+2,j+1}^{(0)}]_{q_{j+1}},\cdots,\tilde{f}_{i-1,i-2}^{(0)}]_{q_{i-2}},\tilde{f}_{i,i-1}^{(0)}]_{q_{i-1}},
\end{split}\\
\begin{split}
\tilde{e}_{ji}^{(-1)}&=\Big(\prod_{k=j+1}^{i-1}\frac{1}{q_{k}-q_{k}^{-1}}\Big)[\tilde{e}_{i-1,i}^{(0)},[\tilde{e}_{i-2,i-1}^{(0)},\cdots,[\tilde{e}_{j+1,j+2}^{(0)},\tilde{e}_{j,j+1}^{(-1)}]_{q_{j+1}^{-1}}\cdots]_{q_{i-2}^{-1}}]_{q_{i-1}^{-1}}
\\&=\Big(\prod_{k=j+1}^{i-1}\frac{1}{q_{k}-q_{k}^{-1}}\Big)[[\cdots[\tilde{e}_{i-1,i}^{(0)},\tilde{e}_{i-2,i-1}^{(0)}]_{q_{i-1}^{-1}},\cdots,\tilde{e}_{j+1,j+2}^{(0)}]_{q_{j+2}^{-1}},\tilde{e}_{j,j+1}^{(-1)}]_{q_{j+1}^{-1}},
\end{split}\\
\begin{split}
\tilde{f}_{ij}^{(1)}&=\Big(\prod_{k=j+1}^{i-1}\frac{1}{q_{k}^{-1}-q_{k}}\Big)[\tilde{f}_{j+1,j}^{(1)},[\tilde{f}_{j+2,j+1}^{(0)},\cdots,[\tilde{f}_{i-1,i-2}^{(0)},\tilde{f}_{i,i-1}^{(0)}]_{q_{i-1}}\cdots]_{q_{j+2}}]_{q_{j+1}}
\\&=\Big(\prod_{k=j+1}^{i-1}\frac{1}{q_{k}^{-1}-q_{k}}\Big)[[\cdots[\tilde{f}_{j+1,j}^{(1)},\tilde{f}_{j+2,j+1}^{(0)}]_{q_{j+1}},\cdots,\tilde{f}_{i-1,i-2}^{(0)}]_{q_{i-2}},\tilde{f}_{i,i-1}^{(0)}]_{q_{i-1}}.
\end{split}
\end{align}
\end{corollary}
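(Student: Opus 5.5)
The plan is to iterate Proposition~\ref{prop: formulas for eji0,eji-1,fij0,fij1} by induction on the length $i-j$.

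\textbf{Base case.} For $i=j+1$ the product of prefactors is empty, so each displayed formula reduces to the corresponding simple Gauss coefficient itself. Nothing needs to be checked.

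\textbf{Right-nested forms.} For $i-j\ge2$ we obtain the first, right-nested expression as follows.
\begin{itemize}
\item For $\tilde e_{ji}^{(0)}$ and $\tilde e_{ji}^{(-1)}$, apply parts (1) and (3) of the proposition with $k=i-1$. This gives $\tilde e_{ji}^{(\cdot)}=\frac{1}{q_{i-1}-q_{i-1}^{-1}}[\tilde e_{i-1,i}^{(0)},\tilde e_{j,i-1}^{(\cdot)}]_{q_{i-1}^{-1}}$. Substituting the induction hypothesis for $\tilde e_{j,i-1}^{(\cdot)}$ yields the nested bracket. The prefactor $\prod_{k=j+1}^{i-2}$ comes from the hypothesis, and the new factor is the one with $k=i-1$.
\item For $\tilde f$, apply parts (2) and (4) with $k=j+1$. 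This gives $\tilde f_{ij}^{(\cdot)}=\frac{-1}{q_{j+1}-q_{j+1}^{-1}}[\tilde f_{j+1,j}^{(\cdot)},\tilde f_{i,j+1}^{(0)}]_{q_{j+1}}$, and $-1/(q-q^{-1})=1/(q^{-1}-q)$. Induct on $\tilde f_{i,j+1}^{(0)}$. The mode $(1)$ stays in the outermost left factor $\tilde f_{j+1,j}^{(1)}$, which matches the displayed formula.
\end{itemize}

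\textbf{Left-nested forms.} For the second expressions, apply the proposition with the other extreme choice of $k$.
\begin{itemize}
\item For $\tilde e$, take $k=j+1$ in (1) and (3): $\tilde e_{ji}=\frac{1}{q_{j+1}-q_{j+1}^{-1}}[\tilde e_{j+1,i}^{(0)},\tilde e_{j,j+1}^{(\cdot)}]_{q_{j+1}^{-1}}$. Expand $\tilde e_{j+1,i}^{(0)}$ by induction in its left-nested form; this produces exactly the left-nested bracket.
\item For $\tilde f$, take $k=i-1$ in (2) and (4): $\tilde f_{ij}=\frac{1}{q_{i-1}^{-1}-q_{i-1}}[\tilde f_{i-1,j}^{(\cdot)},\tilde f_{i,i-1}^{(0)}]_{q_{i-1}}$. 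Expand $\tilde f_{i-1,j}^{(\cdot)}$ by induction in its left-nested form.
\end{itemize}
The mode index is carried only by the single factor adjacent to $j$, exactly as displayed. All four cases are covered because the proposition allows any $k$ with $j<k<i$.

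\textbf{Main obstacle: consistency of the $q$-parameters.} In each bracket the parameter $q_k^{\mp1}$ must belong to the node $k$ that is being cancelled. In the right-nested $\tilde e$ formula, the outer bracket at step $k=i-1$ carries $q_{i-1}^{-1}$, as displayed. In the left-nested form, the innermost bracket $[\tilde e_{i-1,i},\tilde e_{i-2,i-1}]$ arises from cancelling $k=i-1$ and so carries $q_{i-1}^{-1}$, again as displayed.

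No extra super signs arise. The brackets are applied in the same form as in the proposition, whose sign bookkeeping is already built into its graded $q$-commutators. Both orders are therefore valid separately, and their equality follows because both equal $\tilde e_{ji}$.
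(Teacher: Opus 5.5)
Your proposal is correct and follows the paper's route: the paper gives no separate proof and simply states that these formulas follow from Proposition~\ref{prop: formulas for eji0,eji-1,fij0,fij1}. Your induction on $i-j$ is exactly that iteration, using the extreme choices $k=i-1$ or $k=j+1$ for the two nesting orders, and your prefactors, your parameters $q_k^{\mp1}$, and your placement of the modes $(-1)$ and $(1)$ all agree with the displayed formulas.
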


For the translation of Gauss coefficients into Drinfeld generators, we use the following root-vector notation.  For \(1\le j<i\le M\), set
\begin{align}\label{eq: Eji and Fij}
&E_{j,j+1}^{(0)}=\widetilde E_{j,j+1}^{(0)}=x_{j,0}^{+},
\qquad E_{j,j+1}^{(-1)}=x_{j,-1}^{+},\notag\\
&F_{j+1,j}^{(0)}=\widetilde F_{j+1,j}^{(0)}=x_{j,0}^{-},
\qquad F_{j+1,j}^{(1)}=x_{j,1}^{-},
\end{align}
and, for \(i\ge j+2\), define recursively
\begin{align*}
E_{ji}^{(0)}&=[x_{i-1,0}^{+},E_{j,i-1}^{(0)}]_{q_{i-1}^{-1}},
&
\widetilde E_{ji}^{(0)}&=[x_{i-1,0}^{+},\widetilde E_{j,i-1}^{(0)}]_{q_{i-1}},\\
E_{ji}^{(-1)}&=[x_{i-1,0}^{+},E_{j,i-1}^{(-1)}]_{q_{i-1}^{-1}},
&
F_{ij}^{(0)}&=[x_{j,0}^{-},F_{i,j+1}^{(0)}]_{q_{j+1}},\\
\widetilde F_{ij}^{(0)}&=[x_{j,0}^{-},\widetilde F_{i,j+1}^{(0)}]_{q_{j+1}^{-1}},
&
F_{ij}^{(1)}&=[x_{j,1}^{-},F_{i,j+1}^{(0)}]_{q_{j+1}}.
\end{align*}
These are the nested root vectors used in the formulas below.  Comparing these recursive definitions with Corollary~\ref{coro: equations for eji0,eji-1,fij0,fij1} and using the simple-current identifications in the Ding--Frenkel isomorphism gives the following translation.

\begin{corollary}\label{eq:from matrix elements in RTT to drinfeld realization}
For type A with the standard parity sequence, assume $m,n>1$ and $m\ne n$.  For $1\leq j<i\leq m+n$, we have
\begin{align*}
&\gamma^{-1}(\tilde{e}_{ji}^{(0)})=(q_{j}-q_{j}^{-1})E_{ji}^{(0)},\quad
\gamma^{-1}(\tilde{e}_{ji}^{(-1)})=-\tau_{j}^{-1}(q_{j}-q_{j}^{-1})E_{ji}^{(-1)},\\
&\gamma^{-1}(\tilde{f}_{ij}^{(0)})=-(q_{j}-q_{j}^{-1})F_{ij}^{(0)},\quad
\gamma^{-1}(\tilde{f}_{ij}^{(1)})=\tau_{j}(q_{j}-q_{j}^{-1})F_{ij}^{(1)}.
\end{align*}
\end{corollary}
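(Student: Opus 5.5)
The statement to prove is Corollary~\ref{eq:from matrix elements in RTT to drinfeld realization}: the images under $\gamma^{-1}$ of the four families $\tilde e_{ji}^{(0)}$, $\tilde e_{ji}^{(-1)}$, $\tilde f_{ij}^{(0)}$, $\tilde f_{ij}^{(1)}$ are the nested root vectors $E_{ji}^{(0)}$, $E_{ji}^{(-1)}$, $F_{ij}^{(0)}$, $F_{ij}^{(1)}$, up to the displayed scalars. I will argue by induction on the length $i-j$. The case $i=j+1$ is exactly \eqref{equations for gamma-1}, i.e.\ the mode expansion of the current formulas in Corollary~\ref{coro: iso from Drinfeld realization to RTT realization}. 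For the constants, note that $x_{i,\ge}^+(z)\mapsto \tilde e^+_{i,i+1}(\tau_i z)/(q_i-q_i^{-1})$ gives a coefficient $\tau_i^{-r}$ on $x_{i,r}^+$. The minus sign together with $\tau_i^{-1}$ at $r=-1$ comes from the sign convention in $x_{i,<}^+$. Similarly, $x^-_{i,\le}$ carries a minus sign and $x^-_{i,>}$ at $r=1$ carries $\tau_i$.

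For the inductive step I use Proposition~\ref{prop: formulas for eji0,eji-1,fij0,fij1} with the choice $k=i-1$ for the $e$'s and $k=j+1$ for the $f$'s. This gives
\[
\tilde e_{ji}^{(0)}=\tfrac{1}{q_{i-1}-q_{i-1}^{-1}}[\tilde e_{i-1,i}^{(0)},\tilde e_{j,i-1}^{(0)}]_{q_{i-1}^{-1}},
\]
and analogously for $\tilde e_{ji}^{(-1)}$, where only the inner factor carries mode $-1$. For the $f$'s it gives $\tilde f_{ij}^{(0)}=-\tfrac{1}{q_{j+1}-q_{j+1}^{-1}}[\tilde f_{j+1,j}^{(0)},\tilde f_{i,j+1}^{(0)}]_{q_{j+1}}$, and similarly $\tilde f^{(1)}_{ij}$ with $\tilde f^{(1)}_{j+1,j}$ in the first slot. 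I then apply $\gamma^{-1}$, which preserves parities and hence graded $q$-commutators. The base case supplies the simple factor and the induction hypothesis supplies the shorter factor, and the result matches the recursive definitions of $E^{(0)}_{ji}$, $E^{(-1)}_{ji}$, $F^{(0)}_{ij}$, $F^{(1)}_{ij}$ verbatim. The subscripts $q_{i-1}^{-1}$ and $q_{j+1}$ agree on both sides.

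The only real content is the scalar bookkeeping. For $e^{(0)}$, the prefactor is $(q_{i-1}-q_{i-1}^{-1})^{-1}$ times $(q_{i-1}-q_{i-1}^{-1})(q_j-q_j^{-1})$, which leaves $(q_j-q_j^{-1})$ as claimed. The $\tau_j^{-1}$ and the minus sign for $e^{(-1)}$ are simply inherited from the inner factor $\tilde e^{(-1)}_{j,i-1}$. For $f^{(0)}$ the count is
\[
-\tfrac{1}{q_{j+1}-q_{j+1}^{-1}}\cdot\bigl(-(q_j-q_j^{-1})\bigr)\cdot\bigl(-(q_{j+1}-q_{j+1}^{-1})\bigr)=-(q_j-q_j^{-1}),
\]
as required. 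For $f^{(1)}$ the count is $-\tfrac{1}{q_{j+1}-q_{j+1}^{-1}}\cdot\tau_j(q_j-q_j^{-1})\cdot\bigl(-(q_{j+1}-q_{j+1}^{-1})\bigr)=\tau_j(q_j-q_j^{-1})$, also as required.

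I expect the main obstacle to be sign consistency at the parity wall rather than the induction itself. Two things need checking there. First, the coefficient normalization $(q_k-q_k^{-1})$ must use $q_k$ for the intermediate index $k$, which can be odd, so that the cancellation above is exact and no stray $-1$ appears when $q_k=q^{-1}$. Second, the base-case sign conventions for $x^{\pm}_{i,<}$ and $x^-_{i,\le}$ must be matched carefully against \eqref{equations for gamma-1}. I would verify both by checking one explicit case, say $j=m-1$ and $i=m+1$, directly from Corollary~\ref{coro: equations for eji0,eji-1,fij0,fij1}.
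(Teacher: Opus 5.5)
Your proof is correct and is essentially the paper's argument. The paper compares the recursive definitions of $E$ and $F$ with the nested commutators of Corollary~\ref{coro: equations for eji0,eji-1,fij0,fij1}, which are just Proposition~\ref{prop: formulas for eji0,eji-1,fij0,fij1} iterated with $k=i-1$ (resp.\ $k=j+1$), and uses \eqref{equations for gamma-1} for the simple factors; your induction on $i-j$ does exactly this, your scalar bookkeeping is right, and the parity-wall concern is moot because the factor $q_k-q_k^{-1}$ cancels identically whichever value $q_k$ takes.
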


Let
\begin{align}\label{def: Aji+,Aij-}
\begin{split}
&A_{ji}^{+}:=\sum_{s\geq1}\sum_{j=j_{1}<\cdots<j_{s+1}=i}(-1)^{s-1}\tilde{e}_{j_{1}j_{2}}^{(0)}\cdots \tilde{e}_{j_{s}j_{s+1}}^{(0)},\\&
A_{ij}^{-}:=\sum_{s\geq1}\sum_{j=j_{1}<\cdots<j_{s+1}=i}(-1)^{s-1}\tilde{f}_{j_{s+1}j_{s}}^{(0)}\cdots \tilde{f}_{j_{2}j_{1}}^{(0)},
\end{split}
\end{align}
for $1\leq j<i\leq m+n$.
The alternating-path elements satisfy
\begin{align}
\begin{split}\label{equation for A+}
A_{ji}^{+}&=\Big(\prod_{k=j+1}^{i-1}\frac{1}{q_{k}-q_{k}^{-1}}\Big)[\tilde{e}_{i-1,i}^{(0)},[\tilde{e}_{i-2,i-1}^{(0)},\cdots,[\tilde{e}_{j+1,j+2}^{(0)},\tilde{e}_{j,j+1}^{(0)}]_{q_{j+1}}\cdots]_{q_{i-2}}]_{q_{i-1}}\\&
=\Big(\prod_{k=j+1}^{i-1}\frac{1}{q_{k}-q_{k}^{-1}}\Big)[[\cdots[\tilde{e}_{i-1,i}^{(0)},\tilde{e}_{i-2,i-1}^{(0)}]_{q_{i-1}},\cdots,\tilde{e}_{j+1,j+2}^{(0)}]_{q_{j+2}},\tilde{e}_{j,j+1}^{(0)}]_{q_{j+1}},
\end{split}\\
\begin{split}\label{equation for A-}
A_{ij}^{-}&=\Big(\prod_{k=j+1}^{i-1}\frac{1}{q_{k}^{-1}-q_{k}}\Big)[\tilde{f}_{j+1,j}^{(0)},[\tilde{f}_{j+2,j+1}^{(0)},\cdots,[\tilde{f}_{i-1,i-2}^{(0)},\tilde{f}_{i,i-1}^{(0)}]_{q_{i-1}^{-1}}\cdots]_{q_{j+2}^{-1}}]_{q_{j+1}^{-1}}\\&=\Big(\prod_{k=j+1}^{i-1}\frac{1}{q_{k}^{-1}-q_{k}}\Big)[[\cdots[\tilde{f}_{j+1,j}^{(0)},\tilde{f}_{j+2,j+1}^{(0)}]_{q_{j+1}^{-1}},\cdots,\tilde{f}_{i-1,i-2}^{(0)}]_{q_{i-2}^{-1}},\tilde{f}_{i,i-1}^{(0)}]_{q_{i-1}^{-1}}.
\end{split}
\end{align}
We prove the two formulas for $A_{ji}^{+}$; the formulas for $A_{ij}^{-}$ are obtained in the same way.  The alternating path definition gives the two recursions
\[
A_{ji}^{+}=\tilde e_{ji}^{(0)}-\sum_{j<k<i}\tilde e_{jk}^{(0)}A_{ki}^{+}
          =\tilde e_{ji}^{(0)}-\sum_{j<k<i}A_{jk}^{+}\tilde e_{ki}^{(0)}.
\]
Using Proposition~\ref{prop: formulas for eji0,eji-1,fij0,fij1} in the first recursion and induction on $i-j$, the terms with an intermediate vertex cancel in pairs; generators supported away from the last edge commute.  What remains is
\[
(q_{i-1}-q_{i-1}^{-1})A_{ji}^{+}
   =[\tilde e_{i-1,i}^{(0)},A_{j,i-1}^{+}]_{q_{i-1}}.
\]
Starting with $A_{j,j+1}^{+}=\tilde e_{j,j+1}^{(0)}$ and iterating this identity gives the first expression in \eqref{equation for A+}.  Applying the same argument to the second recursion gives
\[
(q_{j+1}-q_{j+1}^{-1})A_{ji}^{+}
   =[A_{j+1,i}^{+},\tilde e_{j,j+1}^{(0)}]_{q_{j+1}},
\]
which yields the second expression.  This proves \eqref{equation for A+}; the proof of \eqref{equation for A-} is identical with the negative root vectors and the corresponding relations in Proposition~\ref{prop: formulas for eji0,eji-1,fij0,fij1}.

Combining these identities with \eqref{equations for gamma-1}, we obtain the following translation.

\begin{corollary}\label{eq:from matrix elements in RTT to drinfeld realization2}
For type A with the standard parity sequence, assume $m,n>1$ and $m\ne n$.  For $1\leq j<i\leq m+n$, we have
\[
\gamma^{-1}(A_{ji}^{+})=(q_{j}-q_{j}^{-1})\widetilde{E}_{ji}^{(0)},\qquad
\gamma^{-1}(A_{ij}^{-})=-(q_{j}-q_{j}^{-1})\widetilde{F}_{ij}^{(0)}.
\]
\end{corollary}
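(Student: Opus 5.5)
The plan is to prove both identities by induction on $i-j$, applying the algebra isomorphism $\gamma^{-1}$ to the nested-commutator expressions \eqref{equation for A+} and \eqref{equation for A-}. Since $\gamma^{-1}$ is a superalgebra homomorphism and the $q$-commutator parameters are scalars in $\mathbb C(q)$, we have $\gamma^{-1}([u,v]_a)=[\gamma^{-1}(u),\gamma^{-1}(v)]_a$. This holds with the same parity sign, because $\gamma$ preserves parity.

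For the positive case, the base $i=j+1$ is immediate. Here $A_{j,j+1}^{+}=\tilde e_{j,j+1}^{(0)}$ by the definition \eqref{def: Aji+,Aij-}, and $\widetilde E_{j,j+1}^{(0)}=x_{j,0}^{+}$. So the claim is the first formula of \eqref{equations for gamma-1}. For the inductive step, peel off the outermost bracket in the first line of \eqref{equation for A+}, which is the identity $(q_{i-1}-q_{i-1}^{-1})A_{ji}^{+}=[\tilde e_{i-1,i}^{(0)},A_{j,i-1}^{+}]_{q_{i-1}}$ recorded just before this corollary. Applying $\gamma^{-1}$ and using $\gamma^{-1}(\tilde e_{i-1,i}^{(0)})=(q_{i-1}-q_{i-1}^{-1})x_{i-1,0}^{+}$ together with the induction hypothesis $\gamma^{-1}(A_{j,i-1}^{+})=(q_j-q_j^{-1})\widetilde E_{j,i-1}^{(0)}$ gives
\[
(q_{i-1}-q_{i-1}^{-1})\gamma^{-1}(A_{ji}^{+})=(q_{i-1}-q_{i-1}^{-1})(q_j-q_j^{-1})[x_{i-1,0}^{+},\widetilde E_{j,i-1}^{(0)}]_{q_{i-1}}.
\]
Cancelling the nonzero factor $q_{i-1}-q_{i-1}^{-1}$ and using the recursive definition of $\widetilde E_{ji}^{(0)}$ yields the claim. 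The normalization $q_j-q_j^{-1}$ depends only on the fixed left endpoint $j$, so it propagates unchanged through the induction.

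For the negative case, induct instead on the left endpoint, peeling the outermost bracket in the first line of \eqref{equation for A-}. This gives $(q_{j+1}^{-1}-q_{j+1})A_{ij}^{-}=[\tilde f_{j+1,j}^{(0)},A_{i,j+1}^{-}]_{q_{j+1}^{-1}}$. The base case $A_{j+1,j}^{-}=\tilde f_{j+1,j}^{(0)}$ is the third formula of \eqref{equations for gamma-1}. In the inductive step, $\gamma^{-1}$ of the right side equals $(q_j-q_j^{-1})(q_{j+1}-q_{j+1}^{-1})[x_{j,0}^{-},\widetilde F_{i,j+1}^{(0)}]_{q_{j+1}^{-1}}$, since the two minus signs cancel. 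Dividing by $q_{j+1}^{-1}-q_{j+1}$ produces exactly $-(q_j-q_j^{-1})\widetilde F_{ij}^{(0)}$, by the recursive definition of $\widetilde F_{ij}^{(0)}$.

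The only delicate point is the single-bracket recursion for $A_{ij}^{-}$ in the form just used. I would derive it exactly as in the text's argument for $A^{+}$: start from the alternating-path recursion $A_{ij}^{-}=\tilde f_{ij}^{(0)}-\sum_{j<k<i}A_{ik}^{-}\tilde f_{kj}^{(0)}$, insert Proposition~\ref{prop: formulas for eji0,eji-1,fij0,fij1}(2), and use cancellation in pairs. In doing so I would check carefully that the parameter is $q_{j+1}^{-1}$ (not $q_{j+1}$), and that no extra graded sign appears when an odd $\tilde f$ with an interval crossing the parity wall passes through the bracket.
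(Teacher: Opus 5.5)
Your proposal is correct and is essentially the paper's argument. The paper obtains the corollary in one line by applying $\gamma^{-1}$ to the nested $q$-commutator formulas \eqref{equation for A+} and \eqref{equation for A-} via \eqref{equations for gamma-1}: the factors $\prod_{k=j+1}^{i-1}(q_k-q_k^{-1})^{\mp1}$ cancel and leave $\pm(q_j-q_j^{-1})$, and your induction on $i-j$ simply unrolls that computation one bracket at a time.

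The single-bracket recursion for $A_{ij}^{-}$ that you flag as delicate needs no separate rederivation. It can be read off directly from the first line of \eqref{equation for A-}, including the parameter $q_{j+1}^{-1}$.
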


To derive the explicit formula for $\Delta^{\mathrm{rtt}}(\tilde{g}_{i}^{(\pm1)})$, we first establish the following lemmas.
\begin{lemma}\label{RTT coproduct for eji0, fij0}
For $1\leq j<i\leq m+n$, we have
\begin{align}
\begin{split}\label{RTT coproduct for eij0}
&\Delta^{\mathrm{rtt}}(\tilde{e}_{ji}^{(0)})=1\otimes \tilde{e}_{ji}^{(0)}+\tilde{e}_{ji}^{(0)}\otimes (\tilde{g}_{j}^{+})^{-1}\tilde{g}_{i}^{+}+\sum_{j<a<i}\tilde{e}_{ja}^{(0)}\otimes (\tilde{g}_{j}^{+})^{-1}\tilde{g}_{a}^{+}\tilde{e}_{ai}^{(0)},
\end{split}\\
\begin{split}\label{RTT coproduct for fij0}
\Delta^{\mathrm{rtt}}(\tilde{f}_{ij}^{(0)})=\tilde{f}_{ij}^{(0)}\otimes 1+\tilde{g}_{i}^{-}(\tilde{g}_{j}^{-})^{-1}\otimes \tilde{f}_{ij}^{(0)}+\sum_{j<a<i}\tilde{f}_{ia}^{(0)}\tilde{g}_{a}^{-}(\tilde{g}_{j}^{-})^{-1}\otimes \tilde{f}_{aj}^{(0)}.
\end{split}
\end{align}
\end{lemma}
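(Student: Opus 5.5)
The plan is to compute directly from the matrix coproduct $\Delta^{\mathrm{rtt}}(T(z))=T(z)\otimes T(z)$ on the single matrix coefficient $t_{ji}^{(0)}$, $j<i$, and then strip off the Gauss factors. By the Gauss formulas of Subsection~\ref{subsec:rtt-gauss}, the lowest coefficient of $t_{ji}(z)$ is
\[
t_{ji}^{(0)}=\tilde g_j^+\tilde e_{ji}^{(0)}+\sum_{j'<j}\epsilon_{jij'}\tilde f_{jj'}^{(0,+)}\tilde g_{j'}^+\tilde e_{j'i}^{(0)}.
\]
The $\tilde f^{+}$-series have no constant term, so the sum vanishes and $t_{ji}^{(0)}=\tilde g_j^+\tilde e_{ji}^{(0)}$. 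Likewise $t_{jj}^{(0)}=\tilde g_j^+$ and $t_{ab}^{(0)}=0$ for $a>b$. The coproduct formula in degree $0$ therefore gives
\[
\Delta^{\mathrm{rtt}}(\tilde g_j^+\tilde e_{ji}^{(0)})=\sum_{j\le a\le i}\epsilon_{jia}\,t_{ja}^{(0)}\otimes t_{ai}^{(0)},
\]
where only $j\le a\le i$ survives by triangularity. The same computation gives $\Delta^{\mathrm{rtt}}(\tilde g_j^+)=\tilde g_j^+\otimes\tilde g_j^+$, which is group-like and invertible.

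Next I expand the right-hand side term by term. The term $a=j$ is $\tilde g_j^+\otimes\tilde g_j^+\tilde e_{ji}^{(0)}$. The term $a=i$ is $\tilde g_j^+\tilde e_{ji}^{(0)}\otimes\tilde g_i^+$. A middle term $j<a<i$ is $\epsilon_{jia}\,\tilde g_j^+\tilde e_{ja}^{(0)}\otimes\tilde g_a^+\tilde e_{ai}^{(0)}$. I then multiply on the left by $\Delta^{\mathrm{rtt}}(\tilde g_j^+)^{-1}=(\tilde g_j^+)^{-1}\otimes(\tilde g_j^+)^{-1}$. Here the graded tensor product multiplication rule must be used: since $\tilde g$ is even, no sign appears when it moves past the first tensor leg. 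This produces the three displayed types of terms in \eqref{RTT coproduct for eij0}.

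The main point to check is the sign. The parities $|\tilde e_{ja}^{(0)}|=|j|+|a|$ and $|\tilde e_{ai}^{(0)}|=|a|+|i|$ appear when the factors are reassembled. The sign $\epsilon_{jia}=(-1)^{(|j|+|a|)(|a|+|i|)}$ in the coproduct formula is exactly the Koszul sign that arises from this graded tensor multiplication convention. I would verify that the two cancel, so that each middle term appears with coefficient $+1$. This is done by expanding $(x\otimes y)(z\otimes w)$ for the product $(\tilde g_j^+\tilde e_{ja}^{(0)}\otimes 1)(1\otimes\tilde g_a^+\tilde e_{ai}^{(0)})$, which involves no sign, and comparing it with how the entry formula \eqref{eq:shifted-super-rtt-coproduct-entry} was defined through $\mathsf E_{ja}\mathsf E_{ai}$. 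I expect this sign bookkeeping to be the only delicate step, and I would settle it by checking the case in which $j\le m<a$ explicitly.

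The formula \eqref{RTT coproduct for fij0} is obtained symmetrically from $s_{ij}^{(0)}$ with $i>j$. The $\tilde e^{-}$-series have no constant term, so $s_{ij}^{(0)}=\tilde f_{ij}^{(0)}\tilde g_j^-$. Applying the coproduct gives $\sum_{j\le a\le i}\epsilon_{ija}\,s_{ia}^{(0)}\otimes s_{aj}^{(0)}$, since upper-triangular $s^{(0)}$ entries vanish. This time I multiply on the right by $(\tilde g_j^-)^{-1}\otimes(\tilde g_j^-)^{-1}$. The term $a=i$ gives $\tilde g_i^-(\tilde g_j^-)^{-1}\otimes\tilde f_{ij}^{(0)}$ after commuting $\tilde g_j^-$ past nothing odd in the first leg. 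The term $a=j$ gives $\tilde f_{ij}^{(0)}\otimes1$. The middle terms give $\tilde f_{ia}^{(0)}\tilde g_a^-(\tilde g_j^-)^{-1}\otimes\tilde f_{aj}^{(0)}$, and here I would use Lemma~\ref{Lemma: relations for g e f in RTT}(1) to commute the diagonal factors. The sign check is identical to the one in the previous step.
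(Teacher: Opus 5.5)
Your proposal is correct and is essentially the paper's proof: take the $z^0$ coefficients of $t_{jj}(z)$ and $t_{ji}(z)$ (resp.\ $s_{jj}(z)$ and $s_{ij}(z)$), apply the matrix coproduct, and divide by the group-like element $\tilde g_j^{\pm}\otimes\tilde g_j^{\pm}$, multiplying on the left for the $\tilde e$ case and on the right for the $\tilde f$ case. The only correction concerns the sign: nothing cancels against $\epsilon_{jia}$ (resp.\ $\epsilon_{ija}$). For $j<a<i$ the standard parity sequence changes parity only once, so $|j|+|a|$ and $|a|+|i|$ cannot both be odd and $\epsilon_{jia}=1$ outright; the paper uses this silently when it drops these signs.
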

\begin{proof}
We prove \eqref{RTT coproduct for eij0}; the proof of \eqref{RTT coproduct for fij0} is similar. Since $\tilde{g}_{j}^{+}=[z^{0}]t_{jj}(z)$, we have
\begin{align*}
\Delta^{\mathrm{rtt}}(\tilde{g}_{j}^{+})=[z^{0}]\Big(\sum_{a=1}^{m+n}\epsilon_{jja} t_{ja}(z)\otimes t_{aj}(z)\Big)=[z^{0}](\epsilon_{jjj}t_{jj}(z)\otimes t_{jj}(z))=\tilde{g}_{j}^{+}\otimes \tilde{g}_{j}^{+}.
\end{align*}
 Since $\tilde{g}_{j}^{+}\tilde{e}_{ji}^{(0)}=[z^{0}]t_{ji}(z)$, we have
 \begin{align*}
\Delta^{\mathrm{rtt}}(\tilde{g}_{j}^{+}\tilde{e}_{ji}^{(0)})&=[z^{0}]\Big(\sum_{a=1}^{m+n}\epsilon_{jia}t_{ja}(z)\otimes t_{ai}(z)\Big)
\\&=[z^{0}](\epsilon_{jij}t_{jj}(z)\otimes t_{ji}(z)+\epsilon_{jii}t_{ji}(z)\otimes t_{ii}(z)+\sum_{j<a<i}\epsilon_{jia}t_{ja}(z)\otimes t_{ai}(z))
\\&=\epsilon_{jij}\tilde{g}_{j}^{+}\otimes\tilde{g}_{j}^{+}\tilde{e}_{ji}^{(0)}+\epsilon_{ji i}\tilde{g}_{j}^{+}\tilde{e}_{ji}^{(0)}\otimes \tilde{g}_{i}^{+}+\sum_{j<a<i}\epsilon_{jia}\tilde{g}_{j}^{+}\tilde{e}_{ja}^{(0)}\otimes \tilde{g}_{a}^{+}\tilde{e}_{ai}^{(0)}
\\&=\tilde{g}_{j}^{+}\otimes\tilde{g}_{j}^{+}\tilde{e}_{ji}^{(0)}+\tilde{g}_{j}^{+}\tilde{e}_{ji}^{(0)}\otimes \tilde{g}_{i}^{+}+\sum_{j<a<i}\tilde{g}_{j}^{+}\tilde{e}_{ja}^{(0)}\otimes \tilde{g}_{a}^{+}\tilde{e}_{ai}^{(0)}.
\end{align*}
Hence,
\begin{align*}
\Delta^{\mathrm{rtt}}(\tilde{e}_{ji}^{(0)})=\Delta^{\mathrm{rtt}}(\tilde{g}_{j}^{+})^{-1}\Delta^{\mathrm{rtt}}(\tilde{g}_{j}^{+}\tilde{e}_{ji}^{(0)})=1\otimes \tilde{e}_{ji}^{(0)}+\tilde{e}_{ji}^{(0)}\otimes (\tilde{g}_{j}^{+})^{-1}\tilde{g}_{i}^{+}+\sum_{j<a<i}\tilde{e}_{ja}^{(0)}\otimes (\tilde{g}_{j}^{+})^{-1}\tilde{g}_{a}^{+}\tilde{e}_{ai}^{(0)}.
\end{align*}
\end{proof}

\begin{lemma}\label{RTT coproduct for Aji+,Aji-}
For an increasing chain $j=j_1<\cdots<j_{s+1}=i$, write
\[
E_{a,b}:=\tilde e_{j_a j_{a+1}}^{(0)}\cdots\tilde e_{j_b j_{b+1}}^{(0)},\qquad
F_{a,b}:=\tilde f_{j_{b+1}j_b}^{(0)}\cdots\tilde f_{j_{a+1}j_a}^{(0)},
\]
with $E_{a,b}=F_{a,b}=1$ when $a>b$. Then, for $1\leq j<i\leq m+n$,
\begin{align}
\Delta^{\mathrm{rtt}}(A_{ji}^{+})
&=\sum_{s\geq1}\sum_{j=j_1<\cdots<j_{s+1}=i}\sum_{r=1}^{s+1}
(-1)^{s-1}E_{r,s}\otimes E_{1,r-1}(\tilde g_{j_r}^{+})^{-1}\tilde g_i^{+},\label{RTT coproduct for Aji+}\\
\Delta^{\mathrm{rtt}}(A_{ij}^{-})
&=\sum_{s\geq1}\sum_{j=j_1<\cdots<j_{s+1}=i}\sum_{r=1}^{s+1}
(-1)^{s-1}\tilde g_i^{-}(\tilde g_{j_r}^{-})^{-1}F_{1,r-1}\otimes F_{r,s}.\label{RTT coproduct for Aij-}
\end{align}
\end{lemma}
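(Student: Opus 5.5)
The plan is to deduce Lemma~\ref{RTT coproduct for Aji+,Aji-} directly from Lemma~\ref{RTT coproduct for eji0, fij0}, using that $\Delta^{\mathrm{rtt}}$ is a superalgebra homomorphism. We treat \eqref{RTT coproduct for Aji+}; \eqref{RTT coproduct for Aij-} follows by the mirror argument with lower Gauss coefficients and the diagonal factors on the left.

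\textbf{Step 1: rewrite the coproduct of a single edge.} Write \eqref{RTT coproduct for eij0} in path form:
\[
\Delta^{\mathrm{rtt}}(\tilde e_{ab}^{(0)})=\sum_{a\le c\le b}\tilde e_{ac}^{(0)}\otimes(\tilde g_a^{+})^{-1}\tilde g_c^{+}\tilde e_{cb}^{(0)},
\]
with the conventions $\tilde e_{aa}^{(0)}=1$ and $\tilde e_{bb}^{(0)}$ replaced by $1$. This is exactly the three groups of terms in that formula.

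\textbf{Step 2: expand a monomial.} Apply this to each factor of the monomial $\tilde e_{j_1j_2}^{(0)}\cdots\tilde e_{j_sj_{s+1}}^{(0)}$ in the definition \eqref{def: Aji+,Aij-} of $A_{ji}^{+}$.
\begin{itemize}
\item In the first tensor factor, move every $\tilde g$ to the right using Lemma~\ref{Lemma: relations for g e f in RTT}(2).
\item The product of the second-factor pieces then becomes $(\tilde g_{j}^{+})^{-1}\tilde g_{c}^{+}$ times a word in $\tilde e^{(0)}$'s. Here $c$ is a suitable endpoint, and the weight factors $q_a^{\pm1}$ produced by the commutations must cancel telescopically.
\item Keep track of the super signs from the graded tensor product. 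Since $|\tilde e_{ab}|=|a|+|b|$, these signs should also telescope along the chain.
\end{itemize}

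\textbf{Step 3: resum over chains.} Substitute into the alternating sum over chains $j=j_1<\dots<j_{s+1}=i$. Each term of the expansion is indexed by the chain together with a choice of cut point $c_k\in[j_k,j_{k+1}]$ in each edge. Group terms by the refined chain obtained by inserting the cut points. By the alternating signs $(-1)^{s-1}$, all configurations except those in which the first factor is a terminal segment $E_{r,s}$ and the second factor is an initial segment $E_{1,r-1}$ cancel in pairs. This is an inclusion–exclusion identity, the same one underlying the recursions $A_{ji}^{+}=\tilde e_{ji}^{(0)}-\sum_{k}\tilde e_{jk}^{(0)}A_{ki}^{+}$. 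Carrying it out should yield \eqref{RTT coproduct for Aji+}, with the Cartan factor $(\tilde g_{j_r}^{+})^{-1}\tilde g_i^{+}$ assembled from the telescoping.

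\textbf{Alternative for Step 3.} One could instead prove the statement by induction on $i-j$ using the recursion $A_{ji}^{+}=\tilde e_{ji}^{(0)}-\sum_k A_{jk}^{+}\tilde e_{ki}^{(0)}$: assume the formula for shorter intervals, multiply by $\Delta^{\mathrm{rtt}}(\tilde e_{ki}^{(0)})$, and match terms.

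\textbf{Main obstacle.} The hard part is the bookkeeping that the $q$-powers and super signs cancel exactly when the Cartan elements are commuted past the root elements. We expect the weight relations of Lemma~\ref{Lemma: relations for g e f in RTT}(2) to make the factor $q_a^{\eta}$ from each edge cancel against its neighbour along the chain. We would check this first on chains of length two before attempting the general combinatorial cancellation.
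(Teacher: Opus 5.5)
Steps~1 and~3 contain the right idea, and once made precise they prove the lemma. They carry out the paper's chain-and-cut cancellation as one global expansion of $\Delta^{\mathrm{rtt}}(\tilde e^{(0)}_{j_1j_2})\cdots\Delta^{\mathrm{rtt}}(\tilde e^{(0)}_{j_sj_{s+1}})$. The paper instead inducts on $i-j$ through $A_{ji}^{+}=\tilde e_{ji}^{(0)}-\sum_{j<k<i}\tilde e_{jk}^{(0)}A_{ki}^{+}$; your ``alternative'' is that induction with the mirror recursion.

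\textbf{Step~2 is a red herring, and its expectation is false.} What makes Step~3 work is an exact factorization of the split term in \eqref{RTT coproduct for eij0}. Put $L_{ac}:=\tilde e_{ac}^{(0)}\otimes(\tilde g_a^{+})^{-1}\tilde g_c^{+}$ and $R_{cb}:=1\otimes\tilde e_{cb}^{(0)}$. Since the Cartan factor is even, $\tilde e_{ac}^{(0)}\otimes(\tilde g_a^{+})^{-1}\tilde g_c^{+}\tilde e_{cb}^{(0)}=L_{ac}R_{cb}$ holds on the nose, so
\[
\Delta^{\mathrm{rtt}}(\tilde e_{ab}^{(0)})=R_{ab}+L_{ab}+\sum_{a<c<b}L_{ac}R_{cb}.
\]
Consequences:
\begin{itemize}
\item Every term of the expansion is a product, in the graded tensor square, of a word in $L,R$ along a refined chain.
\item A split configuration and its unsplit refinement are literally the same element, with opposite signs $(-1)^{s-1}$.
\item You must group by the refined chain together with its $L/R$ labels. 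A cut at an endpoint adds no vertex but changes the label; for instance $R_{ji}$ and $L_{ji}$ share the refined chain $(j,i)$ and do not cancel.
\item Summing over subsets of the disjoint $LR$-adjacencies kills every labelled refined chain except those with word $R^{r-1}L^{s-r+1}$, where $s$ is the refined length.
\item That word equals $(1\otimes E_{1,r-1})\bigl(E_{r,s}\otimes(\tilde g_{j_r}^{+})^{-1}\tilde g_i^{+}\bigr)$, because the Cartan factors of consecutive $L$'s meet as $\tilde g_y^{+}(\tilde g_y^{+})^{-1}$.
\end{itemize}
So no $\tilde g$ is ever commuted past an $\tilde e$, and no $\tilde g$ ever appears in the first tensor factor. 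If you did normal-order Cartan factors to the left as Step~2 suggests, the $q$-powers would not cancel. Lemma~\ref{Lemma: relations for g e f in RTT}(2) gives $E_{1,r-1}(\tilde g_{j_r}^{+})^{-1}\tilde g_i^{+}=q_{j_r}(\tilde g_{j_r}^{+})^{-1}\tilde g_i^{+}E_{1,r-1}$ for $2\le r\le s$.

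\textbf{Signs.} The only graded sign is $(-1)^{|E_{1,r-1}||E_{r,s}|}$. It is trivial not by telescoping, but because an increasing chain crosses the parity wall at most once and so has at most one odd edge.

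\textbf{Matrix form.} Your route becomes a one-liner, which is what it buys over the paper's sketched induction. Let $e$ be the strictly upper triangular matrix with entries $\tilde e_{ab}^{(0)}$, and view $L,R$ as the strictly upper triangular matrices with entries $L_{ab},R_{ab}$. Then $A^{+}=1-(1+e)^{-1}$ and $1+\Delta^{\mathrm{rtt}}(e)=(1+L)(1+R)$. Hence $\Delta^{\mathrm{rtt}}(A^{+})=1-(1+R)^{-1}(1+L)^{-1}$. The $(R^{r-1}L^{s-r+1})_{ji}$ terms of the two geometric series are exactly the summands of \eqref{RTT coproduct for Aji+}.

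\textbf{Lower triangular case.} Formula \eqref{RTT coproduct for Aij-} follows the same way from \eqref{RTT coproduct for fij0}. Use $\tilde f^{(0)}_{ab}\otimes1$ and $\tilde g^-_a(\tilde g^-_b)^{-1}\otimes\tilde f^{(0)}_{ab}$ in the roles of $R$ and $L$, with the two factors of the product in the opposite order.
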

\begin{proof}
Use the recursion
\[
A_{ji}^{+}=\tilde e_{ji}^{(0)}-\sum_{j<k<i}\tilde e_{jk}^{(0)}A_{ki}^{+}
\]
and induct on $i-j$. Substituting Lemma~\ref{RTT coproduct for eji0, fij0} and the induction hypothesis, group the resulting terms by an increasing chain $j=j_1<\cdots<j_{s+1}=i$. For a fixed chain, every surviving contribution is determined by the unique cut $r$: the edges before the cut lie in the second tensor factor and the edges from the cut onward lie in the first. Contributions in which an intermediate vertex is inserted in two different ways cancel because the path sum defining $A_{ji}^{+}$ has alternating sign. Since the standard parity sequence has only one parity change, an increasing chain contains at most one odd edge; hence graded tensor multiplication introduces no additional cut sign. The surviving contribution is therefore
\[
(-1)^{s-1}\sum_{r=1}^{s+1}E_{r,s}\otimes E_{1,r-1}(\tilde g_{j_r}^{+})^{-1}\tilde g_i^{+}.
\]
Summing over all chains gives \eqref{RTT coproduct for Aji+}. The same chain-and-cut argument applied to the second formula of Lemma~\ref{RTT coproduct for eji0, fij0} gives \eqref{RTT coproduct for Aij-}.
\end{proof}

\begin{theorem}\label{th:explicit formula for gi1 and gi-1}
For type A with the standard parity sequence, assume $m,n>1$ and $m\ne n$.
For $1\leq i\leq m+n$, we have
\begin{align}
\begin{split}\label{formula for gi 1}
&\Delta^{\mathrm{rtt}}(\tilde{g}_{i}^{(1)})=\tilde{g}_{i}^{(1)}\otimes\tilde{g}_{i}^{+}+\tilde{g}_{i}^{+}\otimes\tilde{g}_{i}^{(1)}+\sum_{l>i}\epsilon_{iil}\tilde{g}_{i}^{+}\tilde{e}_{il}^{(0)}\otimes\tilde{f}_{li}^{(1)}\tilde{g}_{i}^{+}
\\&+\sum_{s\geq1}\sum_{j_{1}<\cdots<j_{s+1}=i}(-1)^{s}\tilde{g}_{i}^{+}\tilde{e}_{j_{1}j_{2}}^{(0)}\cdots\tilde{e}_{j_{s}j_{s+1}}^{(0)}\otimes\tilde{f}_{ij_{1}}^{(1)}\tilde{g}_{i}^{+}
\\&+\sum_{l>i}\sum_{s\geq1}\sum_{j_{1}<\cdots<j_{s+1}=i}(-1)^{s}\epsilon_{iil}\tilde{g}_{i}^{+}\tilde{e}_{il}^{(0)}\tilde{e}_{j_{1}j_{2}}^{(0)}\cdots\tilde{e}_{j_{s}j_{s+1}}^{(0)}\otimes\tilde{f}_{lj_{1}}^{(1)}\tilde{g}_{i}^{+},
\end{split}\\
\begin{split}\label{formula for gi -1}
&\Delta^{\mathrm{rtt}}(\tilde{g}_{i}^{(-1)})=\tilde{g}_{i}^{(-1)}\otimes\tilde{g}_{i}^{-}+\tilde{g}_{i}^{-}\otimes\tilde{g}_{i}^{(-1)}+\sum_{l>i}\epsilon_{iil}\tilde{g}_{i}^{-}\tilde{e}_{il}^{(-1)}\otimes\tilde{f}_{li}^{(0)}\tilde{g}_{i}^{-}
\\&+\sum_{s\geq1}\sum_{j_{1}<\cdots<j_{s+1}=i}(-1)^{s}\tilde{g}_{i}^{-}\tilde{e}_{j_{1}i}^{(-1)}
\otimes\tilde{f}_{j_{s+1}j_{s}}^{(0)}\cdots\tilde{f}_{j_{2}j_{1}}^{(0)}\tilde{g}_{i}^{-}\\&+\sum_{l>i}\sum_{s\geq1}\sum_{j_{1}<\cdots<j_{s+1}=i}(-1)^{s}\epsilon_{iil}\tilde{g}_{i}^{-}\tilde{e}_{j_{1}l}^{(-1)}\otimes
\tilde{f}_{j_{s+1}j_{s}}^{(0)}\cdots\tilde{f}_{j_{2}j_{1}}^{(0)}\tilde{f}_{li}^{(0)}\tilde{g}_{i}^{-}.
\end{split}
\end{align}
\end{theorem}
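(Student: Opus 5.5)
The plan is to compute $\Delta^{\mathrm{rtt}}(\tilde g_i^{(1)})$ from the matrix coproduct of $t_{ii}(z)$, using the Gauss expansion of $t_{ii}$ and then peeling off the lower-triangular contributions. First, I take the coefficient $[z^{-1}]$ of
\[
t_{ii}(z)=\tilde g_i^+(z)+\sum_{j<i}\epsilon_{iij}\tilde f_{ij}^+(z)\tilde g_j^+(z)\tilde e_{ji}^+(z).
\]
Since $\tilde f_{ij}^+$ starts in degree $z^{-1}$, this gives
\[
[z^{-1}]t_{ii}(z)=\tilde g_i^{(1)}+\sum_{j<i}\epsilon_{iij}\tilde f_{ij}^{(1)}\tilde g_j^+\tilde e_{ji}^{(0)}.
\]
Applying the entry formula for $\Delta^{\mathrm{rtt}}$ and extracting $[z^{-1}]$ from $\sum_k\epsilon_{iik}t_{ik}(z)\otimes t_{ki}(z)$ then splits the computation by the index $k$.

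\textbf{The terms with $k=i$} give $\tilde g_i^{(1)}\otimes\tilde g_i^+$ plus $\tilde g_i^+\otimes[z^{-1}]t_{ii}$, together with the terms from $[z^{-1}]t_{ii}\otimes\tilde g_i^+$.

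\textbf{The terms with $k>i$} involve $[z^0]t_{ik}=\tilde g_i^+\tilde e_{ik}^{(0)}$ plus lower-path corrections, tensored with $[z^{-1}]t_{ki}=\tilde f_{ki}^{(1)}\tilde g_i^+$ plus its own corrections.

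\textbf{The terms with $k<i$}: here $t_{ik}$ has no $z^0$ part, since $\tilde f^+$ starts at $z^{-1}$. So these contribute $[z^{-1}]t_{ik}\otimes[z^0]t_{ki}$, i.e. $(\tilde f_{ik}^{(1)}\tilde g_k^+ + \cdots)\otimes\tilde g_k^+\tilde e_{ki}^{(0)}$.

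Next I would subtract $\Delta^{\mathrm{rtt}}$ of the correction sum $\sum_{j<i}\epsilon_{iij}\tilde f_{ij}^{(1)}\tilde g_j^+\tilde e_{ji}^{(0)}$. This is done by induction on $i$, using Lemma~\ref{RTT coproduct for eji0, fij0} for $\Delta(\tilde e_{ji}^{(0)})$ and an analogous formula for $\Delta(\tilde f_{ij}^{(1)}\tilde g_j^+)=\Delta([z^{-1}]t_{ij})$, which again comes from the entry formula. The inverse of the upper unitriangular zero-mode matrix $(\delta_{ab}+\tilde e^{(0)}_{ab})$ appears naturally here. Its entries are exactly the alternating path sums $\sum(-1)^s\tilde e_{j_1j_2}^{(0)}\cdots\tilde e_{j_sj_{s+1}}^{(0)}$, matching the chain sums in the statement, together with the $A^+$ elements of Lemma~\ref{RTT coproduct for Aji+,Aji-}. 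This suggests a cleaner route: first write $\tilde g^{(1)}$ Gauss-theoretically via $T(z)=F G E$, so that $\tilde g_i^{(1)}$ is the $z^{-1}$ coefficient of the $(i,i)$ entry of $F^{-1}TE^{-1}$ at low order. Then push $\Delta$ through using that $F^{(1)}$ and $E^{(0)}$ have coproducts given by Lemma~\ref{RTT coproduct for Aji+,Aji-}. I would pursue this matrix-level formulation, since it makes the alternating chains $j_1<\cdots<j_{s+1}=i$ appear as entries of $(E^{(0)})^{-1}$.

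For \eqref{formula for gi -1} the argument is the mirror image. I use $s_{ii}(z)$, the coefficient $[z^1]$, and the modes $\tilde e^{(-1)}$, $\tilde f^{(0)}$. The roles of the tensor factors and of the chain products ($F$-chains now sit in the second factor) are swapped, as in \eqref{RTT coproduct for Aij-}.

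I expect the main obstacle to be the cancellation bookkeeping: verifying that all cross terms coming from $k<i$ and from the subtracted corrections cancel, except for the chain sums displayed. This must be done while tracking the graded signs $\epsilon_{iil}$ and the reordering of the even $\tilde g$ factors via Lemma~\ref{Lemma: relations for g e f in RTT}. I would handle it by working entirely with matrix identities, writing $\Delta(T)=T_{(1)}T_{(2)}$ and solving for the Gauss factors at orders $z^0$ and $z^{-1}$. In that form the signs are absorbed by the graded matrix multiplication, and only the single parity wall needs to be checked for extra signs, as in the proof of Lemma~\ref{RTT coproduct for Aji+,Aji-}.
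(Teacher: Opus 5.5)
Your final matrix-level route is essentially the paper's proof. You write $\tilde g_i^{(1)}=[z^{-1}]\bigl(T(z)\tilde E^{+}(z)^{-1}\bigr)_{ii}=c_{ii}^{+}-\sum_{j<i}\epsilon_{iij}c_{ij}^{+}A_{ji}^{+}$, where $c_{ab}^{+}=[z^{-1}]t_{ab}(z)$ and the off-diagonal entries of the inverse zero-mode matrix are $-A_{ji}^{+}$. You then apply the first-coefficient entry coproduct together with Lemma~\ref{RTT coproduct for Aji+,Aji-}, cancel by chains and cuts, and use the mirror argument with $A_{ij}^{-}$ for $\tilde g_i^{(-1)}$.

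There are two small corrections. First, no $F^{-1}$ factor and no coproduct of $F^{(1)}$ are needed, since $(T\tilde E^{+}(z)^{-1})_{ii}=\tilde g_i^{+}(z)$ already and the lemma only covers $A^{\pm}$. Second, for $j<i$ the coefficient $[z^{-1}]t_{ij}$ equals $\tilde f_{ij}^{(1)}\tilde g_j^{+}$ only up to the lower-path terms $\sum_{j'<j}\epsilon_{ijj'}\tilde f_{ij'}^{(1)}\tilde g_{j'}^{+}\tilde e_{j'j}^{(0)}$. That is exactly why the $T\tilde E^{-1}$ formulation is the right one to pursue.
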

\begin{proof}
We give the coefficient argument for both signs at once. For brevity, write $c_{ab}^{+}:=[z^{-1}]t_{ab}(z)$, $d_{ab}^{+}:=[z^{0}]t_{ab}(z)$, $c_{ab}^{-}:=[z]s_{ab}(z)$, and $d_{ab}^{-}:=[z^{0}]s_{ab}(z)$.
Taking the first nonconstant coefficient in the RTT coproduct gives, for either sign,
\begin{equation}\label{eq:first-coefficient-rtt-coproduct}
\Delta^{\mathrm{rtt}}(c_{ab}^{\pm})
 =\sum_{c=1}^{m+n}\epsilon_{abc}
 \bigl(c_{ac}^{\pm}\otimes d_{cb}^{\pm}
      +d_{ac}^{\pm}\otimes c_{cb}^{\pm}\bigr).
\end{equation}
The Gauss decomposition and the alternating path sums in \eqref{def: Aji+,Aij-} give
\begin{equation}\label{eq:first-gauss-diagonal-coefficients}
\tilde g_i^{(1)}=c_{ii}^{+}-\sum_{j<i}\epsilon_{iij}c_{ij}^{+}A_{ji}^{+},
\quad
\tilde g_i^{(-1)}=c_{ii}^{-}-\sum_{j<i}\epsilon_{iij}A_{ij}^{-}c_{ji}^{-}.
\end{equation}
Indeed, the corrected graded diagonal expansion above gives, for the positive series,
\[
c_{ii}^{+}=\tilde g_i^{(1)}+
\sum_{j<i}\epsilon_{iij}\tilde f_{ij}^{(1)}\tilde g_j^{+}\tilde e_{ji}^{(0)}.
\]
Together with the corresponding first-coefficient formulas for $c_{ij}^{+}$, triangular elimination expresses the second term as
$\sum_{j<i}\epsilon_{iij}c_{ij}^{+}A_{ji}^{+}$; this is exactly the first identity in \eqref{eq:first-gauss-diagonal-coefficients}.  The negative series gives the second identity in the same way, with the triangular order reversed.  Thus the signs and the diagonal index in \eqref{eq:first-gauss-diagonal-coefficients} agree with the graded Gauss expansion used here.

Apply $\Delta^{\mathrm{rtt}}$ to \eqref{eq:first-gauss-diagonal-coefficients}, use \eqref{eq:first-coefficient-rtt-coproduct}, and substitute Lemma~\ref{RTT coproduct for Aji+,Aji-}.  The resulting terms are indexed by an increasing chain together with a cut of that chain.  Terms for which the cut lies strictly inside the part eliminated by the alternating sum cancel in pairs.  The surviving terms are: the two diagonal terms; chains ending at $i$; terms with a first index $l>i$; and chains carrying such an $l>i$.  For the positive coefficient, the first class gives the first two summands of \eqref{formula for gi 1}; the terms with $l>i$ but no lower chain give its third summand; the chains ending at $i$ give its fourth summand with sign $(-1)^s$; and the chains carrying $l>i$ give the final double sum, with the same sign and the factor $\epsilon_{iil}$.  Since the chain and the cut determine the original RTT term uniquely, every surviving term occurs exactly once.  Thus \eqref{formula for gi 1} follows.  The same coefficient extraction with $s_{ab}(z)$ reverses the two triangular factors and gives precisely the three non-diagonal classes in \eqref{formula for gi -1}.  No additional RTT identities are used.
\end{proof}

For $1\leq i \leq m+n$, define $H_{i,\pm1}:=(\tilde{g}_{i}^{\pm})^{-1}\tilde{g}_{i}^{(\pm1)}\in\mathcal{U}_{q}^{\mathrm{rtt}}(\widehat{\mathfrak{sl}}_{m|n})$. Combining Theorem~\ref{th:explicit formula for gi1 and gi-1} with Lemma~\ref{Lemma: relations for g e f in RTT} and $\Delta^{\mathrm{rtt}}(\tilde{g}_{i}^{\pm})=\tilde{g}_{i}^{\pm}\otimes\tilde{g}_{i}^{\pm}$, we have
\begin{corollary}\label{RTT coproduct for Hi pm1}
For type A with the standard parity sequence, assume $m,n>1$ and $m\ne n$.
For $1\leq i\leq m+n$, we have
\begin{align*}
\begin{split}
\Delta^{\mathrm{rtt}}(H_{i,1})&=1\otimes H_{i,1}+H_{i,1}\otimes1 +q_{i}^{-1}\sum_{l>i}\epsilon_{iil}\tilde{e}_{il}^{(0)}\otimes\tilde{f}_{li}^{(1)}-q_{i}\sum_{j<i}A_{ji}^{+}\otimes\tilde{f}_{ij}^{(1)}\\&-\sum_{\substack{l>i\\ j<i}}\epsilon_{iil}\tilde{e}_{il}^{(0)}A_{ji}^{+}\otimes\tilde{f}_{lj}^{(1)},
\end{split}\\
\begin{split}
\Delta^{\mathrm{rtt}}(H_{i,-1})&=1\otimes H_{i,-1}+H_{i,-1}\otimes1+q_{i}\sum_{l>i}\epsilon_{iil}\tilde{e}_{il}^{(-1)}\otimes\tilde{f}_{li}^{(0)}-q_{i}^{-1}\sum_{j<i}\tilde{e}_{ji}^{(-1)}\otimes A_{ij}^{-}\\&-\sum_{\substack{l>i\\ j<i}}\epsilon_{iil}\tilde{e}_{jl}^{(-1)}\otimes A_{ij}^{-}\tilde{f}_{li}^{(0)}.
\end{split}
\end{align*}
\end{corollary}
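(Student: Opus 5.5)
The plan is to derive both formulas from Theorem~\ref{th:explicit formula for gi1 and gi-1} by left-multiplying with $\Delta^{\mathrm{rtt}}((\tilde g_i^{\pm})^{-1})=(\tilde g_i^{\pm})^{-1}\otimes(\tilde g_i^{\pm})^{-1}$. Under the graded tensor multiplication this gives
\[
\Delta^{\mathrm{rtt}}(H_{i,\pm1})=\bigl((\tilde g_i^{\pm})^{-1}\otimes(\tilde g_i^{\pm})^{-1}\bigr)\Delta^{\mathrm{rtt}}(\tilde g_i^{(\pm1)}),
\]
and no sign arises because the diagonal factors are even. I will then treat the summands of \eqref{formula for gi 1} and \eqref{formula for gi -1} one class at a time.

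\emph{Diagonal terms.} Since the $\tilde g$'s commute by Lemma~\ref{Lemma: relations for g e f in RTT}(1), $\tilde g_i^{(1)}\otimes\tilde g_i^{+}$ becomes $H_{i,1}\otimes1$. Likewise $\tilde g_i^{+}\otimes\tilde g_i^{(1)}$ becomes $1\otimes H_{i,1}$.

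\emph{Terms with $l>i$ and no chain.} In the first factor, $(\tilde g_i^+)^{-1}$ cancels the leading $\tilde g_i^+$, which leaves $\tilde e_{il}^{(0)}$. In the second factor we obtain $(\tilde g_i^+)^{-1}\tilde f_{li}^{(1)}\tilde g_i^+$. By Lemma~\ref{Lemma: relations for g e f in RTT}(5) with $a=j=i$, this equals $q_i^{-1}\tilde f_{li}^{(1)}$, which gives the coefficient $q_i^{-1}\epsilon_{iil}$.

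\emph{Chain terms.} These are indexed by $j_1<\dots<j_{s+1}=i$. Setting $j=j_1$ and summing over chains with fixed endpoints, the alternating sum $\sum_s(-1)^s\tilde e_{j_1j_2}^{(0)}\cdots\tilde e_{j_sj_{s+1}}^{(0)}$ equals $-A_{ji}^+$ by definition \eqref{def: Aji+,Aij-}. The second factor becomes $(\tilde g_i^+)^{-1}\tilde f_{ij}^{(1)}\tilde g_i^+=q_i\tilde f_{ij}^{(1)}$, now using (5) with $a=i$ at the first index. This produces $-q_i\sum_{j<i}A^+_{ji}\otimes\tilde f^{(1)}_{ij}$.

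\emph{Mixed terms.} For the double sum, the first factor again reduces to $\tilde e^{(0)}_{il}A^+_{ji}$ up to the sign $-1$. The second factor is $(\tilde g_i^+)^{-1}\tilde f_{lj}^{(1)}\tilde g_i^+$ with $l>i>j$. Since $i\notin\{l,j\}$, (5) gives no $q$-factor, and the coefficient is $-\epsilon_{iil}$ as claimed.

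\emph{The $H_{i,-1}$ formula.} The same bookkeeping applies. Here the conjugation acts on the first factor through Lemma~\ref{Lemma: relations for g e f in RTT}(4): $(\tilde g_i^-)^{-1}$ cancels against the leading $\tilde g_i^-$, while in the second factor the trailing $\tilde g_i^-$ cancels against the inverse, after commuting it past the $\tilde f^{(0)}$'s via (3). The resulting exponents are $q_i^{\pm1}$ and $1$, again according to whether $i$ is the first index, the second index, or neither.

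The main obstacle is getting these conjugation exponents and signs consistent, namely $q_a^{\mp\delta_{ai}\pm\delta_{aj}}$ applied with the correct placement of $\tilde g_i^\pm$ relative to the product, together with the $\epsilon_{iil}$ signs. I would check each class carefully against a single low-rank example, such as $i=2$ in $\mathfrak{gl}_{2|2}$, to fix the conventions.
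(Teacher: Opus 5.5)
Your proposal is correct and is exactly the paper's argument: left-multiply the formulas of Theorem~\ref{th:explicit formula for gi1 and gi-1} by $\Delta^{\mathrm{rtt}}((\tilde g_i^{\pm})^{-1})=(\tilde g_i^{\pm})^{-1}\otimes(\tilde g_i^{\pm})^{-1}$, conjugate the second-factor terms using Lemma~\ref{Lemma: relations for g e f in RTT}(3),(5), and collapse the alternating chain sums to $-A_{ji}^{+}$ and $-A_{ij}^{-}$. One small correction: in the last $H_{i,-1}$ class the factor $1$ does not arise because $i$ is absent. The index $i$ occurs twice in the second factor, in $\tilde f_{ij_s}^{(0)}$ and in $\tilde f_{li}^{(0)}$, and the trivial factor comes from $q_i^{-1}q_i=1$; also, neither Lemma~\ref{Lemma: relations for g e f in RTT}(1) nor (4) is needed, because the leading diagonal factors in the first tensor factor simply cancel.
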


The boundary root-current formulas use the following commutator identities.

\begin{lemma}\label{lem:uniform-xplus-coproduct-commutators}
Let $i\neq m$, $k<i$, and $l>i+1$. Then
\begin{align*}
&[F_{l,i+1}^{(0)},x_{i,0}^{+}]=[\widetilde F_{ik}^{(0)},x_{i,0}^{+}]=0, \quad [F_{li}^{(0)},x_{i,0}^{+}]=-F_{l,i+1}^{(0)}\phi_{i,0}^{-},\quad [\widetilde F_{i+1,k}^{(0)},x_{i,0}^{+}]=q_i^{-1}\widetilde F_{ik}^{(0)}\phi_{i,0}^{-},\\
&[[F_{l,i+1}^{(0)},F_{i+1,i}^{(0)}]_{q_i^{-3}},x_{i,0}^{+}]
=(q_i^{-1}+q_i^{-3})F_{l,i+1}^{(0)}\phi_{i,0}^{-}
 -q_i^{-1}F_{l,i+1}^{(0)}\phi_{i,0}^{+},\\
&[[\widetilde F_{ik}^{(0)},F_{i+1,i}^{(0)}]_{q_i^{-3}},x_{i,0}^{+}]
 =(q_i^{-1}+q_i^{-3})\widetilde F_{ik}^{(0)}\phi_{i,0}^{-}
 -q_i^{-1}\widetilde F_{ik}^{(0)}\phi_{i,0}^{+},\quad [E_{i+1,l}^{(-1)},x_{i,0}^{+}]_{q_i}=q_iE_{il}^{(-1)},\\
& [E_{ki}^{(-1)},x_{i,0}^{+}]_{q_i}=-q_iE_{k,i+1}^{(-1)},\quad [E_{il}^{(-1)},x_{i,0}^{+}]_{q_i^{-1}}=[E_{k,i+1}^{(-1)},x_{i,0}^{+}]_{q_i^{-1}}=0,
\quad [E_{kl}^{(-1)},x_{i,0}^{+}]=0.
\end{align*}
\end{lemma}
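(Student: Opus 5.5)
The plan is to work entirely in the unshifted Drinfeld realization of Definition~\ref{Def:Drinfeld realization for quantum affine superalgebra}, writing $\phi_{i,0}^{\pm}=k_i^{\pm1}$. Every identity will be proved by induction on the length of the nested root vector, pushing $x_{i,0}^{+}$ through the recursive definitions of $E^{(\cdot)}$, $F^{(\cdot)}$ and $\widetilde F^{(\cdot)}$ with the three-term identities \eqref{eqs:three terms}. The hypothesis $i\neq m$ makes $x_{i,0}^{+}$ even, so no parity sign arises from moving it. The only graded signs come from the odd vectors $F_{l,i+1}^{(0)}$, $\widetilde F_{ik}^{(0)}$ and $E^{(-1)}$ whose index range crosses $m$, and they enter only through the bracket conventions.

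\textbf{Step 1: the vanishing and linear identities.} The vanishing identities $[F_{l,i+1}^{(0)},x_{i,0}^{+}]=[\widetilde F_{ik}^{(0)},x_{i,0}^{+}]=0$ hold because these vectors are built from $x_{j,0}^{-}$ with $j\ne i$, and $[x_{i,0}^{+},x_{j,0}^{-}]=0$ for $j\neq i$ by \eqref{QTSArelation5}. For $[F_{li}^{(0)},x_{i,0}^{+}]$, expand $F_{li}^{(0)}=[x_{i,0}^{-},F_{l,i+1}^{(0)}]_{q_{i+1}}$. The first identity of \eqref{eqs:three terms} then leaves only the term
\[
[x_{i,0}^{-},x_{i,0}^{+}]=-(k_i-k_i^{-1})/(q_i-q_i^{-1}).
\]
The $k_i$-part cancels against the $q_{i+1}$-twist after applying $k_iF_{l,i+1}^{(0)}k_i^{-1}=q^{-A_{i,i+1}}F_{l,i+1}^{(0)}$ (only the node $i+1$ is adjacent to $i$). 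This leaves $-F_{l,i+1}^{(0)}k_i^{-1}$. The identity for $\widetilde F_{i+1,k}^{(0)}$ is handled the same way, expanding at its last factor $x_{i,0}^{-}$ (induction on $i-k$ through the recursion $\widetilde F_{i+1,k}^{(0)}=[x_{k,0}^{-},\widetilde F_{i+1,k+1}^{(0)}]_{q_{k+1}^{-1}}$, base case $k=i-1$). The two $q_i^{-3}$-bracket identities then follow by applying $[\,\cdot\,,x_{i,0}^{+}]$ to $[X,F_{i+1,i}^{(0)}]_{q_i^{-3}}$. Here $X\in\{F_{l,i+1}^{(0)},\widetilde F_{ik}^{(0)}\}$ commutes with $x_{i,0}^{+}$, so only $[F_{i+1,i}^{(0)},x_{i,0}^{+}]=-(k_i-k_i^{-1})/(q_i-q_i^{-1})$ contributes. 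Moving $k_i^{\pm1}$ past $X$ using the Cartan weight of $X$ collects the coefficients $(q_i^{-1}+q_i^{-3})$ and $-q_i^{-1}$.

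\textbf{Step 2: the $E^{(-1)}$ identities.} For $[E_{i+1,l}^{(-1)},x_{i,0}^{+}]_{q_i}=q_iE_{il}^{(-1)}$, first treat the base case $l=i+2$. The quadratic relation \eqref{QTSArelation7} with $(r,s)=(-1,-1)$ gives
\[
[x_{i,0}^{+},x_{i+1,-1}^{+}]_{q^{A_{i,i+1}}}=-[x_{i+1,0}^{+},x_{i,-1}^{+}]_{q^{A_{i,i+1}}},
\]
and rewriting both sides in the $q_i^{\pm1}$-bracket conventions yields the claim. For larger $l$, note that $E_{i+1,l}^{(-1)}$ and $E_{il}^{(-1)}$ are obtained from these by successive brackets with $x_{j,0}^{+}$, $j\ge i+2$. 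These commute with $x_{i,0}^{+}$ by \eqref{QTSArelation6}, so the identity propagates with the first identity of \eqref{eqs:three terms} and an unchanged coefficient. The identity for $[E_{ki}^{(-1)},x_{i,0}^{+}]_{q_i}$ is just the recursive definition of $E_{k,i+1}^{(-1)}$, rewritten by swapping the order in the $q$-bracket.

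\textbf{Step 3 (main obstacle): the $q_i^{-1}$-brackets and $[E_{kl}^{(-1)},x_{i,0}^{+}]=0$.} These are Serre-type consequences. For $[E_{k,i+1}^{(-1)},x_{i,0}^{+}]_{q_i^{-1}}=0$, write $E_{k,i+1}^{(-1)}=[x_{i,0}^{+},E_{ki}^{(-1)}]_{q_i^{-1}}$. The vector $E_{ki}^{(-1)}$ has $x_{i-1}^{+}$ as its only $i$-adjacent factor, so after commuting $x_{i,0}^{+}$ past the factors away from node $i$ the expression reduces to the cubic Serre relation \eqref{QTSArelation8} at node $i$ (legitimate since $i\neq m$), applied with $k_1=k_2=0$ and the neighbor in mode $-1$ or $0$. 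The identity for $E_{il}^{(-1)}$ follows symmetrically: expand at $x_{i+1,0}^{+}$ and use \eqref{QTSArelation7} with mixed modes together with the Serre relation toward $i+1$. Finally, for $E_{kl}^{(-1)}$ with $k<i<i+1<l$, write it as $[\,\cdots[x_{i+1,0}^{+},E_{k,i+1}^{(-1)}]\cdots]$ and apply the Jacobi-type identity \eqref{eqs:three terms}. The only dangerous terms are $[E_{k,i+1}^{(-1)},x_{i,0}^{+}]_{q_i^{-1}}$, which vanishes by the previous item, and $[x_{i+1,0}^{+},x_{i,0}^{+}]_{q^{\pm1}}$. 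The latter recombines into the $q_i$-twisted Serre combination, and it is exactly the $q$-powers of these recombinations that I expect to require careful bookkeeping. If a crossing of the parity wall at $m$ occurs inside $E_{kl}^{(-1)}$, the twists $q_{j}=q^{s_j}$ change sign of exponent. I would therefore first check the case where all indices lie on one side of $m$ and then verify that the odd factor $x_{m}^{+}$, being far from node $i$ or adjacent only through $A_{i,m}$, contributes only the parity sign already built into the bracket.
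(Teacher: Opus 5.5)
Your Steps 1 and 2 and the first two items of Step 3 are correct. They follow the same interval induction the paper uses: commute $x_{i,0}^{+}$ through the outermost simple factor, use the mixed relation at the endpoint, \eqref{QTSArelation7} at $(-1,-1)$, and the cubic Serre relation at the non-isotropic node $i$.

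\textbf{The gap.} It is the last identity, $[E_{kl}^{(-1)},x_{i,0}^{+}]=0$, and your Jacobi step does not reduce it. Peel off the outer factors $x_{j,0}^{+}$, $j\ge i+2$, which commute with $x_{i,0}^{+}$, and take $c=q_i^{-1}$ in \eqref{eqs:three terms}. Using $[E_{k,i+1}^{(-1)},x_{i,0}^{+}]_{q_i^{-1}}=0$ you get exactly
\[
[E_{k,i+2}^{(-1)},x_{i,0}^{+}]=q_i^{-1}\bigl[[x_{i+1,0}^{+},x_{i,0}^{+}]_{q_i},E_{k,i+1}^{(-1)}\bigr].
\]
So the ``dangerous term'' is just the original quantity rewritten, and nothing makes it vanish.

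Pushing further does not help either. Expanding $E_{k,i+1}^{(-1)}=[x_{i,0}^{+},E_{ki}^{(-1)}]_{q_i^{-1}}$ and using the Serre identity $[[x_{i+1,0}^{+},x_{i,0}^{+}]_{q_i},x_{i,0}^{+}]_{q_i^{-1}}=0$ turns it into $q_i^{-2}[x_{i,0}^{+},[x_{i+1,0}^{+},[x_{i,0}^{+},E_{ki}^{(-1)}]_{q_i}]_{q_i}]$. That is again a nested-commutation statement of the same kind. So the difficulty is not $q$-power bookkeeping: no single cubic Serre relation yields this commutation of nested root vectors, and an extra input is required.

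\textbf{How to close it.} The paper supplies that input from the RTT side. It takes the $z^1w^1$ coefficient of the $\mathsf E_{i,i+1}\otimes\mathsf E_{kl}$ component of the $TS$ relation and inducts on $k$ to get $[\tilde e_{i,i+1}^{(0)},\tilde e_{kl}^{(-1)}]=0$. This is then transported to the Drinfeld side via Corollary~\ref{eq:from matrix elements in RTT to drinfeld realization}.

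If you want to stay in the Drinfeld presentation, a rank-three computation also works.
\begin{itemize}
\item The inner block $E_{k,i-1}^{(-1)}$ (present when $k<i-1$) supercommutes with $x_{i,0}^{+}$ and $x_{i+1,0}^{+}$.
\item By linearity it therefore suffices to take $a=x_{i-1,r}^{+}$ with $r\in\{0,-1\}$, $b=x_{i,0}^{+}$, $c=x_{i+1,0}^{+}$, and $p=q_i=q_{i+1}$.
\item With $S_{by}=b^2y-(p+p^{-1})byb+yb^2$ and $[a,c]=ac-ca$, one checks in the free algebra that
\[
(p+p^{-1})\bigl[[c,[b,a]_{p^{-1}}]_{p^{-1}},b\bigr]
=p^{-2}S_{ba}c-cS_{ba}+S_{bc}a-p^{-2}aS_{bc}
+p^{-2}[a,c]b^{2}-(1+p^{-2})b[a,c]b+b^{2}[a,c].
\]
\item The right-hand side vanishes by \eqref{QTSArelation8} at node $i$, using the Serre relations toward both neighbours, together with \eqref{QTSArelation6}.
\end{itemize}
Since $b$ is even and at most one of $a,c$ is odd, no parity signs appear and the odd Serre relation is never used. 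This also settles your concern about crossing the parity wall.
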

\begin{proof}
Induct on the interval length in the recursive definitions \eqref{eq: Eji and Fij}.  Since $i\neq m$, the simple current $x_{i,0}^{+}$ is even, so the super Leibniz rule introduces no additional sign.  Commuting it through the outermost simple factor gives the two vanishing identities in the first line and the endpoint formulas
\[
[F_{li}^{(0)},x_{i,0}^{+}]=-F_{l,i+1}^{(0)}\phi_{i,0}^{-},\qquad
[\widetilde F_{i+1,k}^{(0)},x_{i,0}^{+}]=q_i^{-1}\widetilde F_{ik}^{(0)}\phi_{i,0}^{-}.
\]
Applying \eqref{eqs:three terms} once more gives the two double-commutator formulas. 
The identities involving $E^{(-1)}$ follow by the same interval induction on the positive root vectors, starting from $E_{i,i+1}^{(-1)}=x_{i,-1}^{+}$ and using the quadratic current relation at the endpoint.  This gives the four displayed $q_i$-commutator identities.

For the remaining interior commutator, take the $\mathsf E_{i,i+1}\otimes \mathsf E_{kl}$ coefficient of
\[
R_{23}(z,w)T_{12}(z)S_{13}(w)=S_{13}(w)T_{12}(z)R_{23}(z,w).
\]
Its $z^1w^1$ coefficient gives
\begin{align*}
[\tilde g_i^+\tilde e_{i,i+1}^{(0)},\,
 \tilde g_k^-\tilde e_{kl}^{(-1)}+
 \sum_{k'<k}\epsilon_{klk'}\tilde f_{kk'}^{(0)}\tilde g_{k'}^-\tilde e_{k'l}^{(-1)}]=0.
\end{align*}
Induction on $k$ yields $[\tilde e_{i,i+1}^{(0)},\tilde e_{kl}^{(-1)}]=0$, hence $[E_{kl}^{(-1)},x_{i,0}^{+}]=0$.
\end{proof}

\begin{lemma}\label{lem:odd-xplus-coproduct-commutators}
For $k<m-1$ and $l>m+1$,
\begin{align*}
&[F_{lm}^{(0)},x_{m,0}^{+}]=-F_{l,m+1}^{(0)}\phi_{m,0}^{-},\quad
[\widetilde F_{m-1,k}^{(0)},x_{m,0}^{+}]=0,\quad  [F_{l,m-1}^{(0)},x_{m,0}^{+}]=-[F_{m,m-1}^{(0)},F_{l,m+1}^{(0)}]\phi_{m,0}^{-},
\\ & [\widetilde F_{mk}^{(0)},x_{m,0}^{+}]=0,\quad [[F_{lm}^{(0)},F_{m,m-1}^{(0)}]_{q^{-3}},x_{m,0}^{+}]
=-q^{-1}[F_{l,m+1}^{(0)},F_{m,m-1}^{(0)}]_{q^{-2}}\phi_{m,0}^{-},\\
&[[\widetilde F_{m-1,k}^{(0)},F_{m,m-1}^{(0)}]_{q^{-3}},x_{m,0}^{+}]=0, \quad [E_{ml}^{(-1)},x_{m,0}^{+}]_{q^{-1}}=[E_{k,m-1}^{(-1)},x_{m,0}^{+}]
=[E_{m-1,l}^{(-1)},x_{m,0}^{+}]=0,\\
&[E_{km}^{(-1)},x_{m,0}^{+}]_{q}=-qE_{k,m+1}^{(-1)},\qquad
[E_{kl}^{(-1)},x_{m,0}^{+}]=[E_{k,m+1}^{(-1)},x_{m,0}^{+}]_{q}=0,\\
&[[x_{m,0}^{-},x_{m-1,0}^{-}]_{q^{-3}},x_{m,0}^{+}]
=(1+q^{-2})x_{m-1,0}^{-}\phi_{m,0}^{+}
 -q^{-2}x_{m-1,0}^{-}\phi_{m,0}^{-},\\
&[F_{m+1,m-1}^{(0)},x_{m,0}^{+}]=-qx_{m-1,0}^{-}\phi_{m,0}^{+},\qquad
[[x_{m,0}^{+},x_{m-1,-1}^{+}]_{q_m^{-1}},x_{m,0}^{+}]_{q}=0.
\end{align*}
\end{lemma}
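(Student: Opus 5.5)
The identities of Lemma~\ref{lem:odd-xplus-coproduct-commutators} will be proved by the same interval induction as in Lemma~\ref{lem:uniform-xplus-coproduct-commutators}, now keeping track of the fact that $x_{m,0}^{+}$ is odd. The inputs are:
\begin{itemize}
\item the super Leibniz rule in the form \eqref{eqs:three terms};
\item the mixed relation \eqref{SQASrelation51} at zero total mode, which gives $[x_{m,0}^{+},x_{m,0}^{-}]=(\phi_{m,0}^{+}-\phi_{m,0}^{-})/(q_m-q_m^{-1})$ with $q_m=q$;
\item the vanishing $[x_{j,r}^{+},x_{m,s}^{-}]=0$ for $j\ne m$;
\item the Cartan--root exchanges $\phi_{m,0}^{\pm}x_{j,0}^{-}=q^{\mp A_{m,j}}x_{j,0}^{-}\phi_{m,0}^{\pm}$, where $A_{m,m-1}=-1$, $A_{m,m+1}=1$ and $A_{m,m}=0$.
\end{itemize}

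\textbf{The $F$-type identities.} Write each root vector through its recursive definition \eqref{eq: Eji and Fij}, so that it contains at most one factor $x_{m,0}^{-}$. If that factor is absent, $x_{m,0}^{+}$ supercommutes with every simple factor; this gives $[\widetilde F_{m-1,k}^{(0)},x_{m,0}^{+}]=0$ at once. If the factor is present, exactly one contraction occurs. Hitting $x_{m,0}^{-}$ produces $\phi_{m,0}^{+}-\phi_{m,0}^{-}$, and each $\phi$ is then moved to the right through the remaining negative factors, picking up powers $q^{\pm1}$ only from the neighbours $m\pm1$. For $F_{lm}^{(0)}=[x_{m,0}^{-},F_{l,m+1}^{(0)}]_{q_{m+1}}$, the $\phi^{+}$ terms cancel because of the chosen $q$-bracket, leaving $-F_{l,m+1}^{(0)}\phi_{m,0}^{-}$. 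The cases $F_{l,m-1}^{(0)}$ and $\widetilde F_{mk}^{(0)}$ then follow by one more application of \eqref{eqs:three terms}, using the previous result together with $[x_{m-1,0}^{-},x_{m,0}^{+}]=0$; for $\widetilde F_{mk}^{(0)}$ the bracket $q_{m}^{\pm1}$ is chosen so that the $\phi^{+}$ and $\phi^{-}$ contributions cancel completely. The double commutators with bracket $q^{-3}$ are obtained by expanding with \eqref{eqs:three terms} and substituting the single-commutator results. The small identity $[[x_{m,0}^{-},x_{m-1,0}^{-}]_{q^{-3}},x_{m,0}^{+}]$ is a direct two-term computation, and $F_{m+1,m-1}^{(0)}=[x_{m-1,0}^{-},x_{m,0}^{-}]_{q_m}$ gives $-qx_{m-1,0}^{-}\phi_{m,0}^{+}$ in the same way.

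\textbf{The $E^{(-1)}$-type identities.} These stay inside the positive half, so no Cartan terms appear. The tools are the quadratic relations \eqref{SQASrelation21} between $x_{m,0}^{+}$ and $x_{m\pm1,-1}^{+}$, the odd relation $[x_{m,0}^{+},x_{m,-1}^{+}]=0$ from \eqref{SQASrelation31}, and, for longer intervals, the Serre relations \eqref{SQASrelation61} and \eqref{SQASrelation71}.
\begin{itemize}
\item When the interval does not touch nodes $m-1,m,m+1$ (as for $E_{kl}^{(-1)}$ and $E_{k,m-1}^{(-1)}$ away from the end), supercommutation is immediate.
\item $[E_{km}^{(-1)},x_{m,0}^{+}]_q=-qE_{k,m+1}^{(-1)}$ is essentially the recursive definition, after rewriting the bracket order using the antisymmetry $[v,u]_{q^{-a}}=-(-1)^{|u||v|}q^{-a}[u,v]_{q^a}$.
\item $[E_{m-1,l}^{(-1)},x_{m,0}^{+}]=0$ and $[E_{k,m+1}^{(-1)},x_{m,0}^{+}]_q=0$ contain $x_{m,0}^{+}$ twice, once in $E$ and once outside, together with both neighbours. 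Here one reduces to the quartic relation \eqref{SQASrelation71} at modes $(0,0;0,-1)$, combined with $(x_{m,0}^{+})^2=0$.
\item The last identity, $[[x_{m,0}^{+},x_{m-1,-1}^{+}]_{q^{-1}},x_{m,0}^{+}]_q=0$, is the cubic-type consequence of $(x_{m,0}^{+})^2=0$: expand the bracket and use $x_{m,0}^{+}x_{m,0}^{+}=0$.
\end{itemize}

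\textbf{Main obstacle.} The hardest step will be the two $E^{(-1)}$ identities that need the degree-four odd Serre relation. The plan is to expand both sides as words in $x_{m-1,-1}^{+}$, $x_{m,0}^{+}$ and $x_{m+1,0}^{+}$, to use $(x_{m,0}^{+})^2=0$ to drop the symmetrization in \eqref{SQASrelation71}, and then to match the resulting words coefficient by coefficient. The main risk is bookkeeping of the super signs coming from the odd factor, which is why the antisymmetry identity above should be used systematically when reordering brackets.
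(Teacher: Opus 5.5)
\textbf{The gap: $[E_{kl}^{(-1)},x_{m,0}^{+}]=0$.} Your plan is the paper's: interval induction on the recursive root vectors, the rank-two computations with $K=[x_{m,0}^+,x_{m,0}^-]$, and $(x_{m,0}^+)^2=0$ for the last identity. Your $F$-type computations and your reduction of $[E_{m-1,l}^{(-1)},x_{m,0}^+]=0$ to \eqref{SQASrelation71} at modes $(0,0;0,-1)$ are correct. The problem is that you file $[E_{kl}^{(-1)},x_{m,0}^{+}]=0$ under ``the interval does not touch nodes $m-1,m,m+1$, so supercommutation is immediate.'' That is false. For $k<m-1$ and $l>m+1$, $E_{kl}^{(-1)}$ is the nested bracket of $x_{k,-1}^+,x_{k+1,0}^+,\dots,x_{l-1,0}^+$. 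It therefore contains $x_{m-1,0}^+$, $x_{m,0}^+$ and $x_{m+1,0}^+$, and $x_{m,0}^+$ does not supercommute with its neighbours. The vanishing of this anticommutator is a genuinely quartic statement. The paper avoids the Drinfeld side here: it takes the $\mathsf E_{m,m+1}\otimes\mathsf E_{kl}$ component of the $TS$ RTT relation at $z^1w^1$ and inducts on $k$, as in Lemma~\ref{lem:uniform-xplus-coproduct-commutators}.

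\textbf{A Drinfeld-side repair.} You can stay in the positive half with tools you already use.
First reduce to $l=m+2$: the even outer factors $x_{l-1,0}^+,\dots,x_{m+2,0}^+$ commute with $x_{m,0}^+$.
Then write $E_{k,m+2}^{(-1)}=[x_{m+1,0}^+,[x_{m,0}^+,E_{km}^{(-1)}]_{q^{-1}}]_q$ and $E_{km}^{(-1)}=[x_{m-1,0}^+,E_{k,m-1}^{(-1)}]_{q^{-1}}$.
Let $X(a)$ denote the anticommutator of $[x_{m+1,0}^+,[x_{m,0}^+,a]_{q^{-1}}]_q$ with $x_{m,0}^+$. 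Since $E_{k,m-1}^{(-1)}$ commutes with $x_{m,0}^+$ and $x_{m+1,0}^+$, you get
$X(E_{km}^{(-1)})=X(x_{m-1,0}^+)E_{k,m-1}^{(-1)}-q^{-1}E_{k,m-1}^{(-1)}X(x_{m-1,0}^+)$.
Finally, $X(x_{m-1,0}^+)=0$ is, up to the factor $2$ from $\mathrm{Sym}$, relation \eqref{SQASrelation71} at zero modes. To see this, use $(x_{m,0}^+)^2=0$ and $[x_{m-1,0}^+,x_{m+1,0}^+]=0$: both bracket orders $(q^{-1},q)$ and $(q,q^{-1})$ then expand to the same word sum, since the parameters have product $1$ and sum $q+q^{-1}$.
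So this identity, not $E_{k,m+1}^{(-1)}$, is the second place where the quartic relation is needed.

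\textbf{Smaller inaccuracies.}
\begin{itemize}
\item $[E_{k,m+1}^{(-1)},x_{m,0}^+]_q=0$ involves no $x_{m+1}^+$. It follows from $(x_{m,0}^+)^2=0$ alone, exactly like the last identity.
\item $[E_{ml}^{(-1)},x_{m,0}^+]_{q^{-1}}=0$ is not addressed. Use \eqref{SQASrelation21} with $(i,j,r,s)=(m,m+1,-1,-1)$ to rewrite $E_{m,m+2}^{(-1)}=q\,x_{m+1,-1}^+x_{m,0}^+-x_{m,0}^+x_{m+1,-1}^+$. Then $(x_{m,0}^+)^2=0$ kills the bracket, and the outer factors commute as before.
\item $\widetilde F_{mk}^{(0)}$ has root $\alpha_k+\dots+\alpha_{m-1}$ and contains no $x_{m,0}^-$. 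Its commutator with $x_{m,0}^+$ vanishes trivially, not through a cancellation of $\phi^{\pm}$ contributions.
\end{itemize}
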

\begin{proof}
The identities are local with respect to the interval support of the root vectors, so we induct on the interval length using the recursive definitions in \eqref{eq: Eji and Fij} and the three-term identities \eqref{eqs:three terms}.  Away from the two endpoints, the additional simple current commutes with the shortened interval root vector; hence each induction step reduces to the adjacent rank-two calculation.

We record the two sign-sensitive calculations at the odd vertex.  Put
\[
a=x_{m,0}^{-},\qquad b=x_{m-1,0}^{-},\qquad c=x_{m,0}^{+},\qquad
K=[c,a]=\frac{\phi_{m,0}^{+}-\phi_{m,0}^{-}}{q-q^{-1}}.
\]
Here $a,c$ are odd, $b$ is even, and $[b,c]=0$.  Moreover
$\phi_{m,0}^{+}b=q\,b\phi_{m,0}^{+}$ and
$\phi_{m,0}^{-}b=q^{-1}b\phi_{m,0}^{-}$.  Therefore
\[
[[a,b]_{q^{-3}},c]=Kb-q^{-3}bK
=\frac{q-q^{-3}}{q-q^{-1}}b\phi_{m,0}^{+}
-\frac{q^{-1}-q^{-3}}{q-q^{-1}}b\phi_{m,0}^{-}=(1+q^{-2})b\phi_{m,0}^{+}
-q^{-2}b\phi_{m,0}^{-},
\]
which is the seventh displayed identity.  Since
$F_{m+1,m-1}^{(0)}=[b,a]_{q}$, the same computation gives
\[
[F_{m+1,m-1}^{(0)},c]=bK-qKb=-q\,b\phi_{m,0}^{+},
\]
which is the eighth identity.  The general formulas involving
$F_{lm}^{(0)}$, $F_{l,m-1}^{(0)}$, and the corresponding tilded vectors now follow by the interval induction just described.

The formulas with the $E$-vectors follow from the same recursive argument on the positive side.  The identity $[E_{kl}^{(-1)},x_{m,0}^{+}]=0$ is the $i=m$ instance of the RTT coefficient argument used in the proof of Lemma~\ref{lem:uniform-xplus-coproduct-commutators}.  Finally, $[x_{m,0}^{+},x_{m,0}^{+}]=0$ implies $(x_{m,0}^{+})^{2}=0$; expanding the two $q$-commutators therefore gives
\[
[[x_{m,0}^{+},x_{m-1,-1}^{+}]_{q^{-1}},x_{m,0}^{+}]_{q}=0.
\]
This proves all displayed identities.
\end{proof}

\begin{lemma}\label{lem:uniform-xminus-coproduct-commutators}
Let $i\neq m$, $k<i$, and $l>i+1$. Then
\begin{align*}
&[E_{i+1,l}^{(0)},x_{i,0}^{-}]=[\widetilde E_{ki}^{(0)},x_{i,0}^{-}]=0,\quad [E_{il}^{(0)},x_{i,0}^{-}]=q_i^{-1}E_{i+1,l}^{(0)}\phi_{i,0}^{+},\quad
[\widetilde E_{k,i+1}^{(0)},x_{i,0}^{-}]=-\widetilde E_{ki}^{(0)}\phi_{i,0}^{+},\\
&[[E_{i,i+1}^{(0)},E_{i+1,l}^{(0)}]_{q_i^{3}},x_{i,0}^{-}]
=-(q_i^{2}+1)E_{i+1,l}^{(0)}\phi_{i,0}^{+}
 +q_i^{2}E_{i+1,l}^{(0)}\phi_{i,0}^{-},\\
&[[E_{i,i+1}^{(0)},\widetilde E_{ki}^{(0)}]_{q_i^{3}},x_{i,0}^{-}]
=-(q_i^{2}+1)\widetilde E_{ki}^{(0)}\phi_{i,0}^{+}
 +q_i^{2}\widetilde E_{ki}^{(0)}\phi_{i,0}^{-},\quad [F_{l,i+1}^{(1)},x_{i,0}^{-}]_{q_i}=-F_{li}^{(1)},\\
& [F_{ik}^{(1)},x_{i,0}^{-}]_{q_i}=F_{i+1,k}^{(1)},\quad [F_{li}^{(1)},x_{i,0}^{-}]_{q_i^{-1}}=[F_{i+1,k}^{(1)},x_{i,0}^{-}]_{q_i^{-1}}=0,
\quad [F_{lk}^{(1)},x_{i,0}^{-}]=0.
\end{align*}
\end{lemma}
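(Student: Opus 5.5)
The plan mirrors the proof of Lemma~\ref{lem:uniform-xplus-coproduct-commutators}, transported to the negative side. We induct on interval length in the recursive definitions \eqref{eq: Eji and Fij}, using the three-term identities \eqref{eqs:three terms}. Since $i\neq m$, the current $x_{i,0}^{-}$ is even, so the super Leibniz rule introduces no extra sign; the only odd factors that can occur are inside the root vectors $E_{i+1,l}^{(0)}$, $\widetilde E_{ki}^{(0)}$ and $F^{(1)}$, and they only contribute through their own supercommutators, which are already fixed by the recursions.

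\emph{Vanishing identities.} For $[E_{i+1,l}^{(0)},x_{i,0}^{-}]=0$ and $[\widetilde E_{ki}^{(0)},x_{i,0}^{-}]=0$, observe that these root vectors are nested $q$-commutators of simple modes $x_{j,0}^{+}$ with $j\neq i$. By \eqref{SQASrelation51} (at zero shift) each such mode commutes with $x_{i,0}^{-}$, and the Jacobi identity finishes the argument. The same reasoning gives $[F_{lk}^{(1)},x_{i,0}^{-}]=0$ whenever $k<i<l$ lie strictly inside the interval and $x_{i,0}^{-}$ is not an endpoint factor. Here I would not compute directly. Instead I take the $\mathsf E_{kl}\otimes\mathsf E_{i+1,i}$ component of the $TS$ RTT relation at the coefficient $z^{0}w^{0}$. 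This is dual to the extraction used in Lemma~\ref{lem:uniform-xplus-coproduct-commutators}. Inducting on the left index $k$ then kills the correction terms $\tilde f_{kk'}$, and we transfer the result via Corollary~\ref{eq:from matrix elements in RTT to drinfeld realization}.

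\emph{Endpoint identities.} For $E_{il}^{(0)}=[x_{l-1,0}^{+},\dots[x_{i+1,0}^{+},x_{i,0}^{+}]_{q_{i+1}^{-1}}\dots]$, move $x_{i,0}^{-}$ inward to the innermost factor. There
\[
[x_{i,0}^{+},x_{i,0}^{-}]=\frac{\phi_{i,0}^{+}-\phi_{i,0}^{-}}{q_i-q_i^{-1}},
\]
and we commute the Cartan factor back out with $\phi_{i,0}^{\pm}x_{i+1,0}^{+}=q^{\mp1}x_{i+1,0}^{+}\phi_{i,0}^{\pm}$. The $\phi^{-}$ contribution cancels against the $q_{i+1}^{-1}$ twist, leaving $q_i^{-1}E_{i+1,l}^{(0)}\phi_{i,0}^{+}$. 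The tilded case $[\widetilde E_{k,i+1}^{(0)},x_{i,0}^{-}]$ goes the same way, now with the outermost factor $x_{i,0}^{+}$ and the $q_i$ twist, and gives $-\widetilde E_{ki}^{(0)}\phi_{i,0}^{+}$. The two $q_i^{3}$ double-commutator formulas then follow from one more application of \eqref{eqs:three terms}: combine the endpoint formula with $[E_{i,i+1}^{(0)},x_{i,0}^{-}]=K:=(\phi_{i,0}^{+}-\phi_{i,0}^{-})/(q_i-q_i^{-1})$ and with the weight of $E_{i+1,l}^{(0)}$ (respectively $\widetilde E_{ki}^{(0)}$) under $\phi_{i,0}^{\pm}$. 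This is exactly the rank-two computation displayed in the proof of Lemma~\ref{lem:odd-xplus-coproduct-commutators}, with $q_i^{2}$ in place of $q^{-2}$.

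\emph{$F^{(1)}$ identities.} These follow by interval induction starting from $F_{i+1,i}^{(1)}=x_{i,1}^{-}$ and its neighbours $F_{i+2,i+1}^{(1)}$, $F_{i,i-1}^{(1)}$. At the endpoint we use the quadratic relation \eqref{SQASrelation21} for the pairs $(i,i\pm1)$ and $(i,i)$ with modes $(0,0)$, $(1,0)$. The $q_i^{-1}$-commutator vanishings reduce to the cubic Serre relation \eqref{SQASrelation61} after expanding once.

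The main obstacle is the interior vanishing $[F_{lk}^{(1)},x_{i,0}^{-}]=0$. Here direct Drinfeld-side manipulation needs Serre relations at mixed loop degrees, and when the interval crosses $m$ it also needs the odd Serre relation; this is why I route it through the RTT coefficient extraction. The delicate point is checking that the graded signs $\epsilon_{klk'}$ match across the parity wall, using the same parity-wall argument as in Proposition~\ref{prop: formulas for eji0,eji-1,fij0,fij1}.
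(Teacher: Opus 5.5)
Your plan follows the paper's proof. The paper uses interval induction on the recursive root vectors with the three-term identities, and the mixed relation only at an endpoint. The $F^{(1)}$-identities come from the quadratic and cubic Serre relations; for $[F_{li}^{(1)},x_{i,0}^{-}]_{q_i^{-1}}=0$ you need Serre together with $x_{i,1}^{-}x_{i,0}^{-}=q_i^{-2}x_{i,0}^{-}x_{i,1}^{-}$, which you list. The interior vanishing $[F_{lk}^{(1)},x_{i,0}^{-}]=0$ comes from a lower-Gauss RTT coefficient argument dual to Lemma~\ref{lem:uniform-xplus-coproduct-commutators}. Your endpoint and $q_i^{3}$ double-commutator computations check out.

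Three details need correcting.

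First, the exchange you quote should read $\phi_{i,0}^{\pm}x_{i+1,0}^{+}=q_i^{\mp1}x_{i+1,0}^{+}\phi_{i,0}^{\pm}$. With $q^{\mp1}$ it is right only for $i<m$. For $i>m$ the $\phi_{i,0}^{-}$ term would then fail to cancel against the $q_{i+1}^{-1}$-twist.

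Second, and more importantly, the RTT extraction is mis-indexed as written. The $\mathsf E_{kl}\otimes\mathsf E_{i+1,i}$ component involves the upper entry $t_{kl}(z)$. At $z^{0}w^{0}$ it yields $[[z^{-1}]t_{kl}(z),\tilde f_{i+1,i}^{(0)}\tilde g_i^{-}]=0$, which is a statement about $\tilde e_{kl}$-modes. The corrections $\tilde f_{kk'}$ you propose to kill are those of the positive lemma.

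Use instead the $\mathsf E_{lk}\otimes\mathsf E_{i+1,i}$ component of $R_{23}T_{12}S_{13}=S_{13}T_{12}R_{23}$ at $z^{0}w^{0}$. Both $R$-matrix cross terms carry a factor $w$, so their $z^{0}w^{0}$ coefficients vanish. This leaves $[[z^{-1}]t_{lk}(z),\tilde f_{i+1,i}^{(0)}\tilde g_i^{-}]=0$, where
\[
[z^{-1}]t_{lk}(z)=\tilde f_{lk}^{(1)}\tilde g_k^{+}+\sum_{k'<k}\epsilon_{lkk'}\tilde f_{lk'}^{(1)}\tilde g_{k'}^{+}\tilde e_{k'k}^{(0)}.
\]
Induct on the column index $k$. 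Use that $\tilde e_{k'k}^{(0)}$ commutes with $\tilde f_{i+1,i}^{(0)}$ for $k'<k<i$, together with the weight relations of Lemma~\ref{Lemma: relations for g e f in RTT}. This gives $[\tilde f_{lk}^{(1)},\tilde f_{i+1,i}^{(0)}]=0$, which Corollary~\ref{eq:from matrix elements in RTT to drinfeld realization} transfers to the Drinfeld side.

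Third, drop the sentence asserting that the ``same reasoning'' gives $[F_{lk}^{(1)},x_{i,0}^{-}]=0$. The vector $F_{lk}^{(1)}$ contains $x_{i,0}^{-}$ and its neighbours, so the commutation argument used for $E_{i+1,l}^{(0)}$ and $\widetilde E_{ki}^{(0)}$ does not apply there.
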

\begin{proof}
Induct on the interval length in \eqref{eq: Eji and Fij}.  For the first two lines, commute $x_{i,0}^{-}$ through the outermost simple factor and use the mixed relation only when the interval endpoint is $i$ or $i+1$; all other terms vanish.  This gives
\[
[E_{il}^{(0)},x_{i,0}^{-}]=q_i^{-1}E_{i+1,l}^{(0)}\phi_{i,0}^{+},
\qquad
[\widetilde E_{k,i+1}^{(0)},x_{i,0}^{-}]
=-\widetilde E_{ki}^{(0)}\phi_{i,0}^{+},
\]
and the two vanishing identities in the first line.  Applying the three-term identity \eqref{eqs:three terms} once more to these endpoint formulas gives the two double-commutator identities.
Thus no additional parity sign occurs when $i\neq m$.

The formulas involving $F^{(1)}$ follow by the same induction, starting from $F_{i+1,i}^{(1)}=x_{i,1}^{-}$.  At an endpoint the quadratic current relation produces the indicated factor $q_i$; an interior simple root gives zero by the corresponding Serre relation.  In particular the induction yields
$[F_{li}^{(1)},x_{i,0}^{-}]_{q_i^{-1}}=[F_{i+1,k}^{(1)},x_{i,0}^{-}]_{q_i^{-1}}=0$.  For the final identity, the same lower-Gauss RTT coefficient argument as in Lemma~\ref{lem:uniform-xplus-coproduct-commutators} gives
$[F_{lk}^{(1)},x_{i,0}^{-}]=0$ when $k<i< i+1<l$.  This proves the lemma.
\end{proof}

\begin{lemma}\label{lem:odd-xminus-coproduct-commutators}
For $k<m-1$ and $l>m+1$,
\begin{align*}
&[E_{ml}^{(0)},x_{m,0}^{-}]=-qE_{m+1,l}^{(0)}\phi_{m,0}^{+},\quad
[\widetilde E_{k,m-1}^{(0)},x_{m,0}^{-}]=0,\quad [E_{m-1,l}^{(0)},x_{m,0}^{-}]=-[E_{m+1,l}^{(0)},E_{m-1,m}^{(0)}]\phi_{m,0}^{+},\\
& [\widetilde E_{km}^{(0)},x_{m,0}^{-}]=0,\quad [[E_{m-1,m}^{(0)},E_{ml}^{(0)}]_{q^{3}},x_{m,0}^{-}]
=-q[E_{m-1,m}^{(0)},E_{m+1,l}^{(0)}]_{q^{2}}\phi_{m,0}^{+},\\
&[[E_{m-1,m}^{(0)},\widetilde E_{k,m-1}^{(0)}]_{q^{3}},x_{m,0}^{-}]=0,\quad [F_{lm}^{(1)},x_{m,0}^{-}]_{q^{-1}}=[F_{m-1,k}^{(1)},x_{m,0}^{-}]
=[F_{l,m-1}^{(1)},x_{m,0}^{-}]=0,\\
&[F_{mk}^{(1)},x_{m,0}^{-}]_{q}=F_{m+1,k}^{(1)},\qquad
[F_{lk}^{(1)},x_{m,0}^{-}]=[F_{m+1,k}^{(1)},x_{m,0}^{-}]_{q}=0,\\
&[[x_{m-1,0}^{+},x_{m,0}^{+}]_{q^{3}},x_{m,0}^{-}]
=-qx_{m-1,0}^{+}\phi_{m,0}^{+}
 +(q^{3}+q)x_{m-1,0}^{+}\phi_{m,0}^{-},\\
&[E_{m-1,m+1}^{(0)},x_{m,0}^{-}]=-x_{m-1,0}^{+}\phi_{m,0}^{-},\qquad
[[x_{m-1,1}^{-},x_{m,0}^{-}]_{q},x_{m,0}^{-}]_{q}=0.
\end{align*}
\end{lemma}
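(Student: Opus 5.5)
The plan mirrors the proofs of Lemmas~\ref{lem:uniform-xplus-coproduct-commutators}--\ref{lem:uniform-xminus-coproduct-commutators}. We induct on the interval length in the recursive definitions \eqref{eq: Eji and Fij}, and at each step reduce to a rank-two computation at the isotropic vertex. The simple current $x_{m,0}^{-}$ is odd, so the super Leibniz rule now carries signs. We keep the parities explicit: $E_{ab}^{(r)}$ and $F_{ba}^{(r)}$ are odd exactly when $a\le m<b$.

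\textbf{Basic rank-two identities.} We first establish the two local identities involving only $x_{m-1,0}^{+}$, $x_{m,0}^{\pm}$ and $\phi_{m,0}^{\pm}$. Put $a=x_{m,0}^{+}$, $b=x_{m-1,0}^{+}$, $c=x_{m,0}^{-}$, and
\[
K=[a,c]=\frac{\phi_{m,0}^{+}-\phi_{m,0}^{-}}{q-q^{-1}},
\]
with $a,c$ odd, $b$ even, and $[b,c]=0$. From \eqref{SQASrelation41} with $A_{m,m-1}=-1$ we have $\phi_{m,0}^{\pm}b=q^{\mp1}b\phi_{m,0}^{\pm}$. Expanding $[[b,a]_{q^{3}},c]$ with \eqref{eqs:three terms} gives $\pm(bK-q^{3}Kb)$ up to the graded sign. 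This evaluates to the stated combination $-qb\phi_{m,0}^{+}+(q^{3}+q)b\phi_{m,0}^{-}$. The identity for $E_{m-1,m+1}^{(0)}=[x_{m,0}^{+},b]_{q^{-1}}$ follows in the same way.

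The last identity, $[[x_{m-1,1}^{-},x_{m,0}^{-}]_{q},x_{m,0}^{-}]_{q}=0$, follows from $(x_{m,0}^{-})^{2}=0$, which is a consequence of \eqref{SQASrelation31}. The two $q$-commutators expand to $x_{m-1,1}^{-}c^{2}\mp q^{2}c^{2}x_{m-1,1}^{-}$ plus cross terms, and these cross terms cancel by the odd sign.

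\textbf{Endpoint and interior formulas.} For $E_{ml}^{(0)}$, $E_{m-1,l}^{(0)}$ and the tilded vectors, we commute $x_{m,0}^{-}$ through the outermost simple factor using \eqref{SQASrelation51} and \eqref{SQASrelation31}. The mixed relation contributes only at the interval endpoint $m$. An interior simple factor $x_{j,0}^{+}$ with $j\neq m$ commutes with $x_{m,0}^{-}$, so the induction step reduces to the base interval.

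The vanishing identities for $\widetilde E_{k,m-1}^{(0)}$ and $\widetilde E_{km}^{(0)}$ need a different argument, because here the mixed term at $x_{m-1,0}^{+}$ is absent. They follow from the computation for $\widetilde E_{km}^{(0)}=[x_{m-1,0}^{+},\widetilde E_{k,m-1}^{(0)}]_{q}$. There the commutator with $c$ hits no $m$-factor, since $\widetilde E_{km}$ is supported on $k,\dots,m-1$.

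Once the endpoint formulas are in hand, the double-commutator identities follow by one more application of \eqref{eqs:three terms}. For these we use the $\phi_{m,0}^{\pm}$-weights of $E_{m+1,l}^{(0)}$, which commutes with $x_{m,0}^{-}$.

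\textbf{The $F^{(1)}$ formulas.} These follow by the same induction, starting from $F_{m,m-1}^{(1)}=x_{m-1,1}^{-}$ and $F_{m+1,m}^{(1)}=x_{m,1}^{-}$. The endpoint factors $q^{\pm1}$ come from the quadratic relation \eqref{SQASrelation21} with $A_{m,m\pm1}=\mp1$ for the $-$ currents. The identity $[F_{lm}^{(1)},x_{m,0}^{-}]_{q^{-1}}=0$ reduces to the vanishing of $[x_{m,1}^{-},x_{m,0}^{-}]$, i.e.\ to \eqref{SQASrelation31} at the isotropic node, together with the odd Serre relation \eqref{SQASrelation71} for the interior step through $m\pm1$. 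The interior vanishing $[F_{lk}^{(1)},x_{m,0}^{-}]=0$ is obtained, as in Lemma~\ref{lem:uniform-xplus-coproduct-commutators}, from the $\mathsf E_{lk}\otimes\mathsf E_{m+1,m}$ coefficient of the $TS$ relation, by induction on $k$.

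\textbf{Main obstacle.} The delicate step is the interior cases passing through the odd node, namely $[F_{l,m-1}^{(1)},x_{m,0}^{-}]=0$ and $[F_{m+1,k}^{(1)},x_{m,0}^{-}]_{q}=0$. Here $x_{m,0}^{-}$ meets both $x_{m,\ast}^{-}$ and a neighbour, and a naive induction requires the quartic odd Serre relation \eqref{SQASrelation71} with its signs tracked carefully. For these cases we would first try the RTT coefficient route: extract the corresponding component of the $TS$ or $SS$ relation, transport it through $\gamma^{-1}$ via Corollary~\ref{eq:from matrix elements in RTT to drinfeld realization}, and so avoid the Serre computation entirely.
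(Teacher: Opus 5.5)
Your proposal is correct and follows essentially the paper's route. Like the paper, you use interval induction on the recursive root vectors, the two sign-sensitive rank-two computations with $K=[x_{m,0}^{+},x_{m,0}^{-}]$, and the identity $(x_{m,0}^{-})^{2}=0$ for the last formula. For the nested vanishing $[F_{lk}^{(1)},x_{m,0}^{-}]=0$ you use the $TS$-coefficient extraction, as in Lemmas~\ref{lem:uniform-xplus-coproduct-commutators} and~\ref{lem:uniform-xminus-coproduct-commutators}.

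Your ``main obstacle'' is milder than you expect. The identity $[F_{l,m-1}^{(1)},x_{m,0}^{-}]=0$ is just the case $k=m-1$ of that same extraction. The other one reduces to $(x_{m,0}^{-})^{2}=0$:
\[
[F_{m+1,k}^{(1)},x_{m,0}^{-}]_{q}=[[F_{mk}^{(1)},x_{m,0}^{-}]_{q},x_{m,0}^{-}]_{q}=F_{mk}^{(1)}(x_{m,0}^{-})^{2}-q^{2}(x_{m,0}^{-})^{2}F_{mk}^{(1)}=0 .
\]
So the odd Serre relation is never needed. In particular, the base case of $[F_{lm}^{(1)},x_{m,0}^{-}]_{q^{-1}}=0$ uses only the quadratic relation and $(x_{m,0}^{-})^{2}=0$. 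Note also that $A_{m,m\pm1}=\pm1$, not $\mp1$.
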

\begin{proof}
As in Lemma~\ref{lem:odd-xplus-coproduct-commutators}, use induction on the interval length.  Only the endpoint containing the odd simple root produces a nonzero mixed commutator, and the three-term identities \eqref{eqs:three terms} move that endpoint through the nested root vector.

For the sign-sensitive rank-two calculation put
\[
a=x_{m,0}^{+},\qquad b=x_{m-1,0}^{+},\qquad c=x_{m,0}^{-},\qquad
K=[a,c]=\frac{\phi_{m,0}^{+}-\phi_{m,0}^{-}}{q-q^{-1}}.
\]
Then $a,c$ are odd, $b$ is even, $[b,c]=0$,
$\phi_{m,0}^{+}b=q^{-1}b\phi_{m,0}^{+}$, and
$\phi_{m,0}^{-}b=q\,b\phi_{m,0}^{-}$.  Hence
\[
[[b,a]_{q^{3}},c]=bK-q^{3}Kb
=\frac{1-q^{2}}{q-q^{-1}}b\phi_{m,0}^{+}
-\frac{1-q^{4}}{q-q^{-1}}b\phi_{m,0}^{-}=-qb\phi_{m,0}^{+}
+(q^{3}+q)b\phi_{m,0}^{-},
\]
which is the seventh identity.  Since
$E_{m-1,m+1}^{(0)}=[a,b]_{q^{-1}}$, we also obtain
\[
[E_{m-1,m+1}^{(0)},c]=Kb-q^{-1}bK=-b\phi_{m,0}^{-},
\]
the eighth identity.  The remaining $E$- and $F$-identities follow by the same interval induction.  Finally $[x_{m,0}^{-},x_{m,0}^{-}]=0$ implies $(x_{m,0}^{-})^{2}=0$, and direct expansion gives
$[[x_{m-1,1}^{-},x_{m,0}^{-}]_{q},x_{m,0}^{-}]_{q}=0$.
\end{proof}

We now collect the zero-shift finite-generator formulas.
\begin{proposition}\label{prop:explicit-finite-generator-coproduct}
Assume $m,n>1$ and $m\ne n$.  After the zero-shift central reduction of Proposition~\ref{prop:central-leading-reduction}, transport the RTT matrix coproduct through the unshifted RTT--Drinfeld isomorphism and denote the resulting coproduct on $\mathcal U_q(\widehat{\mathfrak{sl}}_{m|n})$ by $\Delta$.  Its values on \(h_{i,\pm1}\), \(x_{i,-1}^{+}\), and \(x_{i,1}^{-}\) are as follows:

\begin{align*}
&\Delta(h_{i,1})=1\otimes h_{i,1}+h_{i,1}\otimes1-(q^{2}-q^{-2})E_{i,i+1}^{(0)}\otimes F_{i+1,i}^{(1)}+(q-q^{-1})\sum_{l>i+1}\epsilon_{iil}E_{i+1,l}^{(0)}\otimes F_{l,i+1}^{(1)}\\&+(q-q^{-1})\sum_{k<i}q^{k-i+1}\widetilde{E}_{ki}^{(0)}\otimes F_{ik}^{(1)}+q^{-2}(q-q^{-1})\sum_{l>i+1}\epsilon_{iil}[E_{i,i+1}^{(0)},E_{i+1,l}^{(0)}]_{q^{3}}\otimes F_{li}^{(1)}\\&-(q-q^{-1})\sum_{k<i}q^{k-i-1}[E_{i,i+1}^{(0)},\widetilde{E}_{ki}^{(0)}]_{q^{3}}\otimes F_{i+1,k}^{(1)}\\&+(q-q^{-1})^{2}\sum_{\substack{l>i+1\\ k<i}}\epsilon_{iil}q^{k-i}(E_{il}^{(0)}\widetilde{E}_{ki}^{(0)}-E_{i+1,l}^{(0)}\widetilde{E}_{k,i+1}^{(0)})\otimes F_{lk}^{(1)}\quad\text{for $i<m$},
\end{align*}
\begin{align*}
&\Delta(h_{m,1})=1\otimes h_{m,1}+h_{m,1}\otimes1+(q-q^{-1})\sum_{l>m+1}E_{m+1,l}^{(0)}\otimes F_{l,m+1}^{(1)}\\&+(q-q^{-1})\sum_{k<m}q^{k-m+1}\widetilde{E}_{km}^{(0)}\otimes F_{mk}^{(1)}-(q-q^{-1})\sum_{l>m+1}[E_{m,m+1}^{(0)},E_{m+1,l}^{(0)}]_{q}\otimes F_{lm}^{(1)}\\&-(q-q^{-1})\sum_{k<m}q^{k-m+1}[E_{m,m+1}^{(0)},\widetilde{E}_{km}^{(0)}]_{q^{-1}}\otimes F_{m+1,k}^{(1)}\\&-(q-q^{-1})^{2}\sum_{\substack{l>m+1\\ k<m}}q^{k-m}(E_{ml}^{(0)}\widetilde{E}_{km}^{(0)}-E_{m+1,l}^{(0)}\widetilde{E}_{k,m+1}^{(0)})\otimes F_{lk}^{(1)},
\\&\Delta(h_{i,1})=1\otimes h_{i,1}+h_{i,1}\otimes1+(q^{2}-q^{-2})E_{i,i+1}^{(0)}\otimes F_{i+1,i}^{(1)}-(q-q^{-1})\sum_{l>i+1}E_{i+1,l}^{(0)}\otimes F_{l,i+1}^{(1)}\\&-q^{-2m+i-1}(q-q^{-1})\sum_{k<i}\tau_{k}\widetilde{E}_{ki}^{(0)}\otimes F_{ik}^{(1)}-q^{2}(q-q^{-1})\sum_{l>i+1}[E_{i,i+1}^{(0)},E_{i+1,l}^{(0)}]_{q^{-3}}\otimes F_{li}^{(1)}
\\&+q^{-2m+i+1}(q-q^{-1})\sum_{k<i}\tau_{k}[E_{i,i+1}^{(0)},\widetilde{E}_{ki}^{(0)}]_{q^{-3}}\otimes F_{i+1,k}^{(1)}\\&+q^{-2m+i}(q-q^{-1})^{2}\sum_{\substack{l>i+1\\ k<i}}\tau_{k}(E_{il}^{(0)}\widetilde{E}_{ki}^{(0)}-E_{i+1,l}^{(0)}\widetilde{E}_{k,i+1}^{(0)})\otimes F_{lk}^{(1)}\quad\text{for $i>m$},\\&
\Delta(h_{i,-1})=1\otimes h_{i,-1}+h_{i,-1}\otimes1+(q^{2}-q^{-2})E_{i,i+1}^{(-1)}\otimes F_{i+1,i}^{(0)}-(q-q^{-1})\sum_{l>i+1}\epsilon_{iil}E_{i+1,l}^{(-1)}\otimes F_{l,i+1}^{(0)}\\&-(q-q^{-1})\sum_{k<i}q^{i-k-1}E_{ki}^{(-1)}\otimes\widetilde{F}_{ik}^{(0)}-q^{2}(q-q^{-1})\sum_{l>i+1}\epsilon_{iil}E_{il}^{(-1)}\otimes[F_{l,i+1}^{(0)},F_{i+1,i}^{(0)}]_{q^{-3}}\\&+(q-q^{-1})\sum_{k<i}q^{i-k+1}E_{k,i+1}^{(-1)}\otimes[\widetilde{F}_{ik}^{(0)},F_{i+1,i}^{(0)}]_{q^{-3}}\\&-(q-q^{-1})^{2}\sum_{\substack{l>i+1\\ k<i}}\epsilon_{iil}q^{i-k}E_{kl}^{(-1)}\otimes(\widetilde{F}_{i+1,k}^{(0)}F_{l,i+1}^{(0)}-\widetilde{F}_{ik}^{(0)}F_{li}^{(0)})\quad\text{for $i<m$},\\&
\Delta(h_{m,-1})=1\otimes h_{m,-1}+h_{m,-1}\otimes1-(q-q^{-1})\sum_{l>m+1}E_{m+1,l}^{(-1)}\otimes F_{l,m+1}^{(0)}\\&-(q-q^{-1})\sum_{k<m}q^{m-k-1}E_{km}^{(-1)}\otimes\widetilde{F}_{mk}^{(0)}+(q-q^{-1})\sum_{l>m+1}E_{ml}^{(-1)}\otimes[F_{l,m+1}^{(0)},F_{m+1,m}^{(0)}]_{q^{-1}}\\&+(q-q^{-1})\sum_{k<m}q^{m-k-1}E_{k,m+1}^{(-1)}\otimes[\widetilde{F}_{mk}^{(0)},F_{m+1,m}^{(0)}]_{q}\\&+(q-q^{-1})^{2}\sum_{\substack{l>m+1\\ k<m}}q^{m-k}E_{kl}^{(-1)}\otimes(\widetilde{F}_{m+1,k}^{(0)}F_{l,m+1}^{(0)}-\widetilde{F}_{mk}^{(0)}F_{lm}^{(0)}),
\\&
\Delta(h_{i,-1})=1\otimes h_{i,-1}+h_{i,-1}\otimes1-(q^{2}-q^{-2})E_{i,i+1}^{(-1)}\otimes F_{i+1,i}^{(0)}+(q-q^{-1})\sum_{l>i+1}E_{i+1,l}^{(-1)}\otimes F_{l,i+1}^{(0)}\\&+q^{2m-i+1}(q-q^{-1})\sum_{k<i}\tau_{k}^{-1}E_{ki}^{(-1)}\otimes\widetilde{F}_{ik}^{(0)}+q^{-2}(q-q^{-1})\sum_{l>i+1}E_{il}^{(-1)}\otimes[F_{l,i+1}^{(0)},F_{i+1,i}^{(0)}]_{q^{3}}\\&-q^{2m-i-1}(q-q^{-1})\sum_{k<i}\tau_{k}^{-1}E_{k,i+1}^{(-1)}\otimes[\widetilde{F}_{ik}^{(0)},F_{i+1,i}^{(0)}]_{q^{3}}\\&-q^{2m-i}(q-q^{-1})^{2}\sum_{\substack{l>i+1\\ k<i}}\tau_{k}^{-1}E_{kl}^{(-1)}\otimes(\widetilde{F}_{i+1,k}^{(0)}F_{l,i+1}^{(0)}-\widetilde{F}_{ik}^{(0)}F_{li}^{(0)})\quad\text{for $i>m$},
\end{align*}
\begin{align*}
&\Delta(x_{i,-1}^{+})=1\otimes x_{i,-1}^{+}+x_{i,-1}^{+}\otimes\phi_{i,0}^{-}-(q-q^{-1})\sum_{l>i+1}\epsilon_{iil}E_{il}^{(-1)}\otimes F_{l,i+1}^{(0)}\phi_{i,0}^{-}\\&+(q-q^{-1})\sum_{k<i}q^{i-k-1}E_{k,i+1}^{(-1)}\otimes\widetilde{F}_{ik}^{(0)}\phi_{i,0}^{-}\\&-(q-q^{-1})^{2}\sum_{\substack{l>i+1\\ k<i}}\epsilon_{iil}q^{i-k-1}E_{kl}^{(-1)}\otimes\widetilde{F}_{ik}^{(0)}F_{l,i+1}^{(0)}\phi_{i,0}^{-}\quad\text{for $i<m$},\\&
\Delta(x_{m,-1}^{+})=1\otimes x_{m,-1}^{+}+x_{m,-1}^{+}\otimes\phi_{m,0}^{-}+(q-q^{-1})E_{m-1,m+1}^{(-1)}\otimes F_{m,m-1}^{(0)}\phi_{m,0}^{-}\\&+(q-q^{-1})\sum_{l>m+1}E_{ml}^{(-1)}\otimes F_{l,m+1}^{(0)}\phi_{m,0}^{-}+(q-q^{-1})^{2}\sum_{l>m+1}E_{m-1,l}^{(-1)}\otimes F_{l,m+1}^{(0)}F_{m,m-1}^{(0)}\phi_{m,0}^{-}\\&+(q-q^{-1})\sum_{k<m-1}q^{m-k-1}E_{k,m+1}^{(-1)}\otimes\widetilde{F}_{mk}^{(0)}\phi_{m,0}^{-}\\&+(q-q^{-1})^{2}\sum_{\substack{l>m+1\\ k<m-1}}q^{m-k-1}E_{kl}^{(-1)}\otimes\widetilde{F}_{mk}^{(0)}F_{l,m+1}^{(0)}\phi_{m,0}^{-},
\\&\Delta(x_{i,-1}^{+})=1\otimes x_{i,-1}^{+}+x_{i,-1}^{+}\otimes\phi_{i,0}^{-}+(q-q^{-1})\sum_{l>i+1}E_{il}^{(-1)}\otimes F_{l,i+1}^{(0)}\phi_{i,0}^{-}\\&-q^{2m-i+1}(q-q^{-1})\sum_{k<i}\tau_{k}^{-1}E_{k,i+1}^{(-1)}\otimes\widetilde{F}_{ik}^{(0)}\phi_{i,0}^{-}\\&-q^{2m-i+1}(q-q^{-1})^{2}\sum_{\substack{l>i+1\\ k<i}}\tau_{k}^{-1}E_{kl}^{(-1)}\otimes\widetilde{F}_{ik}^{(0)}F_{l,i+1}^{(0)}\phi_{i,0}^{-}\quad\text{for $i>m$},
\\&\Delta(x_{i,1}^{-})=x_{i,1}^{-}\otimes1+\phi_{i,0}^{+}\otimes x_{i,1}^{-}+q^{-1}(q-q^{-1})\sum_{l>i+1}\epsilon_{iil}E_{i+1,l}^{(0)}\phi_{i,0}^{+}\otimes F_{li}^{(1)}\\&-(q-q^{-1})\sum_{k<i}q^{k-i}\widetilde{E}_{ki}^{(0)}\phi_{i,0}^{+}\otimes F_{i+1,k}^{(1)}\\&-(q-q^{-1})^{2}\sum_{\substack{l>i+1\\ k<i}}\epsilon_{iil}q^{k-i-1}E_{i+1,l}^{(0)}\widetilde{E}_{ki}^{(0)}\phi_{i,0}^{+}\otimes F_{lk}^{(1)}\quad \text{for $i<m$},\\&
\Delta(x_{m,1}^{-})=x_{m,1}^{-}\otimes1+\phi_{m,0}^{+}\otimes x_{m,1}^{-}-q^{-1}(q-q^{-1})E_{m-1,m}^{(0)}\phi_{m,0}^{+}\otimes F_{m+1,m-1}^{(1)}\\&-q(q-q^{-1})\sum_{l>m+1}E_{m+1,l}^{(0)}\phi_{m,0}^{+}\otimes F_{lm}^{(1)}+(q-q^{-1})^{2}\sum_{l>m+1}E_{m-1,m}^{(0)}E_{m+1,l}^{(0)}\phi_{m,0}^{+}\otimes F_{l,m-1}^{(1)}\\&-(q-q^{-1})\sum_{k<m-1}q^{k-m}\widetilde{E}_{k,m}^{(0)}\phi_{m,0}^{+}\otimes F_{m+1,k}^{(1)}\\&+(q-q^{-1})^{2}\sum_{\substack{l>m+1\\ k<m-1}}q^{k-m+1}E_{m+1,l}^{(0)}\widetilde{E}_{km}^{(0)}\phi_{m,0}^{+}\otimes F_{lk}^{(1)},\\&
\Delta(x_{i,1}^{-})=x_{i,1}^{-}\otimes1+\phi_{i,0}^{+}\otimes x_{i,1}^{-}-q(q-q^{-1})\sum_{l>i+1}E_{i+1,l}^{(0)}\phi_{i,0}^{+}\otimes F_{li}^{(1)}\\&+q^{-2m+i}(q-q^{-1})\sum_{k<i}\tau_{k}\widetilde{E}_{ki}^{(0)}\phi_{i,0}^{+}\otimes F_{i+1,k}^{(1)}\\&-q^{-2m+i+1}(q-q^{-1})^{2}\sum_{\substack{l>i+1\\ k<i}}\tau_{k}E_{i+1,l}^{(0)}\widetilde{E}_{ki}^{(0)}\phi_{i,0}^{+}\otimes F_{lk}^{(1)}\quad\text{for $i>m$},
\end{align*}
\end{proposition}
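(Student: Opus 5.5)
The plan is to compute each value by translating the generator into Gauss coordinates, applying the RTT matrix coproduct, and translating the result back to Drinfeld generators. Throughout we work in $\mathcal U_q^{\mathrm{rtt}}(\widehat{\mathfrak{sl}}_{m|n})$ via the isomorphism $\gamma$ of Corollary~\ref{coro: iso from Drinfeld realization to RTT realization}, which intertwines $\Delta$ and $\Delta^{\mathrm{rtt}}$.

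\textbf{Step 1: the Cartan modes $h_{i,\pm1}$.} From $\phi_i^\pm(z)\mapsto\tilde g_{i+1}^\pm(\tau_iz)\tilde g_i^\pm(\tau_iz)^{-1}$ and the exponential definition of $h_{i,\pm1}$, we read off
\[
\gamma\bigl((q_i-q_i^{-1})h_{i,\pm1}\bigr)=\tau_i^{\mp1}\bigl(H_{i+1,\pm1}-H_{i,\pm1}\bigr).
\]
Corollary~\ref{RTT coproduct for Hi pm1} then gives $\Delta^{\mathrm{rtt}}(H_{a,\pm1})$ for $a=i,i+1$. We subtract the two expressions and pull back each tensor leg using the following dictionary:
\begin{itemize}
\item Corollary~\ref{eq:from matrix elements in RTT to drinfeld realization} for $\tilde e^{(0)}$, $\tilde e^{(-1)}$, $\tilde f^{(0)}$, $\tilde f^{(1)}$;
\item Corollary~\ref{eq:from matrix elements in RTT to drinfeld realization2} for $A^\pm$.
\end{itemize}
The cancellation is organized by sorting terms by the pair of root intervals. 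Terms with $k<i$, $l>i+1$ occur in both $H_{i+1}$ and $H_i$ and combine into the difference $E_{il}^{(0)}\widetilde E_{ki}^{(0)}-E_{i+1,l}^{(0)}\widetilde E_{k,i+1}^{(0)}$. Terms with an endpoint at $i$ or $i+1$ are rewritten with the recursions in Proposition~\ref{prop: formulas for eji0,eji-1,fij0,fij1} and \eqref{equation for A+}. This is where the $q^{3}$-commutators $[E_{i,i+1}^{(0)},E_{i+1,l}^{(0)}]_{q^{3}}$ and the powers $q^{k-i\pm1}$ arise: they come from the ratios $\tau_k/\tau_i$, via \eqref{formula for tau i}.

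\textbf{Step 2: the root modes $x_{i,-1}^+$ and $x_{i,1}^-$.} We use the identities
\[
x_{i,-1}^+=-\frac{1}{[s_iA_{i,i}]_{q_i}}[h_{i,-1},x_{i,0}^+]\quad(i\ne m),
\]
and, at the isotropic node, the same identity with $h_{m-1,-1}$ in place of $h_{i,-1}$ (cf.\ \eqref{SQASArelation3}). The coproduct of $x_{i,0}^+$ is known exactly from Lemma~\ref{RTT coproduct for eji0, fij0} with $j=i$: it equals $1\otimes x_{i,0}^++x_{i,0}^+\otimes\phi_{i,0}^+$, the interior sum being empty. We therefore compute
\[
[\Delta(h_{\cdot,-1}),\,\Delta(x_{i,0}^+)].
\]
Each term of $\Delta(h_{\cdot,-1})$ from Step 1 is commuted against the two legs using the commutator tables:
\begin{itemize}
\item Lemma~\ref{lem:uniform-xplus-coproduct-commutators} for $i\neq m$;
\item Lemma~\ref{lem:odd-xplus-coproduct-commutators} at $i=m$.
\end{itemize}
In these tables most interval terms vanish, and the endpoint terms produce $\phi_{i,0}^-$ factors. $\Delta(x_{i,1}^-)$ is treated in the same way, using $\Delta(x_{i,0}^-)=x_{i,0}^-\otimes1+\phi_{i,0}^-\otimes x_{i,0}^-$ together with Lemmas~\ref{lem:uniform-xminus-coproduct-commutators} and \ref{lem:odd-xminus-coproduct-commutators}.

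The sign-sensitive points are the graded tensor rule and the $\epsilon_{iil}$ signs, which are nontrivial only when $l$ crosses the parity wall. We split into the three regimes $i<m$, $i=m$, $i>m$ because $q_i=q^{s_i}$ and $\tau_i$ change form across the wall.

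\textbf{Main obstacle.} I expect the hardest part to be the isotropic case $i=m$ in Step 2. There, $h_{m,\pm1}$ cannot propagate $x_m$, so we must use $h_{m-1,\pm1}$, whose coproduct carries many terms. Only after the vanishing identities of Lemma~\ref{lem:odd-xplus-coproduct-commutators}, notably $[E_{kl}^{(-1)},x_{m,0}^+]=0$ and the nilpotency $(x_{m,0}^+)^2=0$, do the surviving terms collapse to the short list displayed, including the extra $E_{m-1,m+1}^{(-1)}\otimes F_{m,m-1}^{(0)}\phi^-_{m,0}$ term. I would first verify this case in the smallest example $m=2$, $n=3$, fix the normalizing constants there, and then run the interval induction.
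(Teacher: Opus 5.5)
Your route is the paper's own proof. You write $\gamma(h_{i,\pm1})$ through $H_{i+1,\pm1}-H_{i,\pm1}$, expand with Corollary~\ref{RTT coproduct for Hi pm1} and the two Gauss--Drinfeld dictionaries, and recombine the $k=i$, $l=i+1$ endpoint terms. You then get $\Delta(x_{i,-1}^{+})$ and $\Delta(x_{i,1}^{-})$ by commuting with $\Delta(x_{i,0}^{\pm})$ using Lemmas~\ref{lem:uniform-xplus-coproduct-commutators}--\ref{lem:odd-xminus-coproduct-commutators}, with $h_{m-1,\pm1}$ at the isotropic node.

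However, both normalizations involving $h_{\cdot,-1}$ have the wrong sign. In Definition~\ref{Def:Drinfeld realization for quantum affine superalgebra}, $\phi_i^-(z)=k_i^{-1}\exp\bigl(-(q_i-q_i^{-1})\sum_{r>0}h_{i,-r}z^{r}\bigr)$, while $\tilde g_a^-(w)=\tilde g_a^-\,(1+H_{a,-1}w+\cdots)$. Comparing $z^1$-coefficients of the logarithms gives
\[
\gamma\bigl((q_i-q_i^{-1})h_{i,-1}\bigr)=\tau_i\bigl(H_{i,-1}-H_{i+1,-1}\bigr).
\]
So the correct uniform formula is $\gamma\bigl((q_i-q_i^{-1})h_{i,\pm1}\bigr)=\pm\tau_i^{\mp1}\bigl(H_{i+1,\pm1}-H_{i,\pm1}\bigr)$. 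Likewise, \eqref{QTSArelation4} with $r=-1$ and $[-k]_q=-[k]_q$ gives $[h_{u,-1},x_{j,s}^{+}]=[s_uA_{u,j}]_{q_u}x_{j,s-1}^{+}$. Hence
\[
x_{i,-1}^{+}=[2]_q^{-1}[h_{i,-1},x_{i,0}^{+}]\quad(i\neq m),\qquad x_{m,-1}^{+}=-[h_{m-1,-1},x_{m,0}^{+}],
\]
which is exactly what \eqref{SQASArelation1} and \eqref{SQASArelation3} say. Your extra minus sign contradicts the recursion you cite.

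Together the two slips amount to replacing $h_{i,-1}$ by $-h_{i,-1}$ throughout. They therefore cancel in Step~2, and your $\Delta(x_{i,-1}^{+})$ would come out right. For the same reason, no consistency check between Steps~1 and~2 will detect them; the signs must be fixed directly from the conventions of Definition~\ref{Def:Drinfeld realization for quantum affine superalgebra}.

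Step~1 as written, though, produces $\Delta(h_{i,-1})$ with every correction term negated. For instance, take $i<m$. The $l=i+1$ term of $\Delta^{\mathrm{rtt}}(H_{i,-1})$ and the $j=i$ term of $\Delta^{\mathrm{rtt}}(H_{i+1,-1})$ combine in $H_{i,-1}-H_{i+1,-1}$ to $(q+q^{-1})\tilde e_{i,i+1}^{(-1)}\otimes\tilde f_{i+1,i}^{(0)}$. After translation this is $+(q^{2}-q^{-2})E_{i,i+1}^{(-1)}\otimes F_{i+1,i}^{(0)}$, as stated; your normalization yields $-(q^{2}-q^{-2})$ instead. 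With these two signs corrected, the rest of your plan goes through as in the paper.
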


\begin{proof}[Proof of Proposition~\ref{prop:explicit-finite-generator-coproduct}]
We first prove the formula for $\Delta(h_{i,1})$. By Corollary~\ref{coro: iso from Drinfeld realization to RTT realization}, we have
\begin{align*}
\gamma(h_{i,1})=\frac{1}{\tau_{i}(q_{i}-q_{i}^{-1})}(H_{i+1,1}-H_{i,1}),
\end{align*}
where $\tau_{i}$ is defined in~\eqref{formula for tau i}.

Combining Corollaries~\ref{eq:from matrix elements in RTT to drinfeld realization},~\ref{eq:from matrix elements in RTT to drinfeld realization2}, and~\ref{RTT coproduct for Hi pm1} with the fact that $\gamma$ intertwines $\Delta$ and $\Delta^{\mathrm{rtt}}$, we obtain
\begin{align*}
&\Delta(h_{i,1})-1\otimes h_{i,1}-h_{i,1}\otimes1=\tau_{i}^{-1}(q_{i}-q_{i}^{-1})^{-1}\times\\&\Big(\tau_{i+1}q_{i+1}^{-1}(q_{i+1}-q_{i+1}^{-1})^{2}\sum_{l>i+1}\epsilon_{i+1,i+1,l}E_{i+1,l}^{(0)}\otimes F_{l,i+1}^{(1)}-q_{i+1}\sum_{k<i+1}\tau_{k}(q_{k}-q_{k}^{-1})^{2}\widetilde{E}_{k,i+1}^{(0)}\otimes F_{i+1,k}^{(1)}\\&-(q_{i+1}-q_{i+1}^{-1})\sum_{\substack{l>i+1\\ k<i+1}}\epsilon_{i+1,i+1,l}\tau_{k}(q_{k}-q_{k}^{-1})^{2}E_{i+1,l}^{(0)}\widetilde{E}_{k,i+1}^{(0)}\otimes F_{lk}^{(1)}-\tau_{i}q_{i}^{-1}(q_{i}-q_{i}^{-1})^{2}\sum_{l>i}\epsilon_{iil}E_{il}^{(0)}\\&\otimes F_{li}^{(1)}+q_{i}\sum_{k<i}\tau_{k}(q_{k}-q_{k}^{-1})^{2}\widetilde{E}_{ki}^{(0)}\otimes F_{ik}^{(1)}+(q_{i}-q_{i}^{-1})\sum_{\substack{l>i\\ k<i}}\epsilon_{iil}\tau_{k}(q_{k}-q_{k}^{-1})^{2}E_{il}^{(0)}\widetilde{E}_{ki}^{(0)}\otimes F_{lk}^{(1)}\Big).
\end{align*}

The preceding identity is already uniform in $i$.  Indeed,
\[
\tau_{i+1}=\tau_i q_{i+1},\quad
q_a-q_a^{-1}=s_a(q-q^{-1}),
\]
so the three formulas in the statement differ only through the values of
$q_i,q_{i+1}$ and the corresponding signs $\epsilon_{iil}$.  For $i\neq m$
the two adjacent simple roots have the same parity.  Separating the terms
with $k=i$ and $l=i+1$ and using
\begin{equation}\label{eq: tilde E and E}
\widetilde{E}_{k,i+1}^{(0)}=[E_{i,i+1}^{(0)},\widetilde{E}_{ki}^{(0)}]_{q_i},\quad
E_{il}^{(0)}=[E_{i+1,l}^{(0)},E_{i,i+1}^{(0)}]_{q_{i+1}^{-1}},
\end{equation}
puts the correction terms into the form displayed in the proposition.  More explicitly, for $i<m$ one substitutes
$q_i=q_{i+1}=q$ and $\tau_a=q^a$ for $a\le m$; for $i>m$ one substitutes
$q_i=q_{i+1}=q^{-1}$ and $\tau_{i+1}=q^{-1}\tau_i$, leaving the factors
$\tau_k$ in the sums unchanged.  These substitutions give exactly the
$i<m$ and $i>m$ formulas in the proposition.

At the isotropic vertex the adjacent parameters have opposite signs, so we
record the simplification.  Substituting
$q_m=q$, $q_{m+1}=q^{-1}$, $\tau_m=q^m$, and $\tau_{m+1}=q^{m-1}$ into the
uniform identity gives
\begin{align}\label{eq: hi1 for i=m}
\begin{split}
&\Delta(h_{m,1})-1\otimes h_{m,1}-h_{m,1}\otimes1\\
&=(q-q^{-1})\sum_{l>m+1}E_{m+1,l}^{(0)}\otimes F_{l,m+1}^{(1)}-(q-q^{-1})\sum_{k<m+1}q^{-m+k-1}\widetilde{E}_{k,m+1}^{(0)}\otimes F_{m+1,k}^{(1)}\\
&\quad +(q-q^{-1})^{2}\sum_{\substack{l>m+1\\ k<m+1}}q^{-m+k}E_{m+1,l}^{(0)}\widetilde{E}_{k,m+1}^{(0)}\otimes F_{lk}^{(1)}+q^{-1}(q-q^{-1})\sum_{l>m}E_{ml}^{(0)}\otimes F_{lm}^{(1)}\\
&\quad +(q-q^{-1})\sum_{k<m}q^{-m+k+1}\widetilde{E}_{km}^{(0)}\otimes F_{mk}^{(1)}-(q-q^{-1})^{2}\sum_{\substack{l>m\\ k<m}}q^{-m+k}E_{ml}^{(0)}\widetilde{E}_{km}^{(0)}\otimes F_{lk}^{(1)}.
\end{split}
\end{align}
The double sums are first separated according to whether the interval crosses
$m$ or $m+1$.  This gives
\begin{align*}
&\sum_{k<m+1<l}q^{-m+k}E_{m+1,l}^{(0)}\widetilde{E}_{k,m+1}^{(0)}\otimes F_{lk}^{(1)}-
\sum_{k<m<l}q^{-m+k}E_{ml}^{(0)}\widetilde{E}_{km}^{(0)}\otimes F_{lk}^{(1)}\\
&\quad=-\sum_{k<m}q^{-m+k}E_{m,m+1}^{(0)}\widetilde{E}_{km}^{(0)}\otimes F_{m+1,k}^{(1)}
+\sum_{l>m+1}E_{m+1,l}^{(0)}\widetilde{E}_{m,m+1}^{(0)}\otimes F_{lm}^{(1)}\\
&\qquad-\sum_{\substack{l>m+1\\ k<m}}q^{-m+k}
(E_{ml}^{(0)}\widetilde{E}_{km}^{(0)}-E_{m+1,l}^{(0)}\widetilde{E}_{k,m+1}^{(0)})\otimes F_{lk}^{(1)}.
\end{align*}
Using \eqref{eq: tilde E and E} for $i=m$, the remaining two boundary
contributions become
\begin{align*}
&-\sum_{k<m+1}q^{-m+k-1}\widetilde{E}_{k,m+1}^{(0)}\otimes F_{m+1,k}^{(1)}
-(q-q^{-1})\sum_{k<m}q^{-m+k}E_{m,m+1}^{(0)}\widetilde{E}_{km}^{(0)}\otimes F_{m+1,k}^{(1)}\\
&\quad=-q^{-1}E_{m,m+1}^{(0)}\otimes F_{m+1,m}^{(1)}
-\sum_{k<m}q^{-m+k+1}[E_{m,m+1}^{(0)},\widetilde{E}_{km}^{(0)}]_{q^{-1}}\otimes F_{m+1,k}^{(1)},\\
&q^{-1}\sum_{l>m}E_{ml}^{(0)}\otimes F_{lm}^{(1)}
+(q-q^{-1})\sum_{l>m+1}E_{m+1,l}^{(0)}\widetilde{E}_{m,m+1}^{(0)}\otimes F_{lm}^{(1)}\\
&\quad=q^{-1}E_{m,m+1}^{(0)}\otimes F_{m+1,m}^{(1)}
-\sum_{l>m+1}[E_{m,m+1}^{(0)},E_{m+1,l}^{(0)}]_{q}\otimes F_{lm}^{(1)}.
\end{align*}
Substitution into \eqref{eq: hi1 for i=m} gives the stated formula for
$\Delta(h_{m,1})$.

Next, we prove the formula for $\Delta(h_{i,-1})$. By Corollary~\ref{coro: iso from Drinfeld realization to RTT realization}, we have
\begin{align*}
\gamma(h_{i,-1})=\frac{1}{\tau_{i}^{-1}(q_{i}-q_{i}^{-1})}(H_{i,-1}-H_{i+1,-1}),
\end{align*}
where $\tau_{i}$ is defined in~\eqref{formula for tau i}.
Combining Corollaries~\ref{eq:from matrix elements in RTT to drinfeld realization},~\ref{eq:from matrix elements in RTT to drinfeld realization2}, and~\ref{RTT coproduct for Hi pm1} with the fact that $\gamma$ intertwines $\Delta$ and $\Delta^{\mathrm{rtt}}$, we obtain
\begin{align*}
&\Delta(h_{i,-1})-1\otimes h_{i,-1}-h_{i,-1}\otimes1=\tau_{i}(q_{i}-q_{i}^{-1})^{-1}\times\Big(\tau_{i}^{-1}q_{i}(q_{i}-q_{i}^{-1})^{2}\sum_{l>i}\epsilon_{iil}E_{il}^{(-1)}\otimes\\& F_{li}^{(0)}-q_{i}^{-1}\sum_{k<i}\tau_{k}^{-1}(q_{k}-q_{k}^{-1})^{2}E_{ki}^{(-1)}\otimes \widetilde{F}_{ik}^{(0)}+(q_{i}-q_{i}^{-1})\sum_{\substack{l>i\\ k<i}}\epsilon_{iil}\tau_{k}^{-1}(q_{k}-q_{k}^{-1})^{2}E_{kl}^{(-1)}\otimes\widetilde{F}_{ik}^{(0)}F_{li}^{(0)}\\&-\tau_{i+1}^{-1}q_{i+1}(q_{i+1}-q_{i+1}^{-1})^{2}\sum_{l>i+1}\epsilon_{i+1,i+1,l}E_{i+1,l}^{(-1)}\otimes F_{l,i+1}^{(0)}+q_{i+1}^{-1}\sum_{k<i+1}\tau_{k}^{-1}(q_{k}-q_{k}^{-1})^{2}E_{k,i+1}^{(-1)}\otimes\widetilde{F}_{i+1,k}^{(0)}\\&-(q_{i+1}-q_{i+1}^{-1})\sum_{\substack{l>i+1\\ k<i+1}}\epsilon_{i+1,i+1,l}\tau_{k}^{-1}(q_{k}-q_{k}^{-1})^{2}E_{kl}^{(-1)}\otimes\widetilde{F}_{i+1,k}^{(0)}F_{l,i+1}^{(0)}\Big).
\end{align*}

As above, this identity is uniform in $i$.  For $i\neq m$, separate the
terms with $k=i$ and $l=i+1$ and use
\begin{equation}\label{eq: tilde F and F}
[F_{i+1,i}^{(0)},F_{l,i+1}^{(0)}]_{q_{i+1}}=F_{li}^{(0)},\quad
[\widetilde{F}_{ik}^{(0)},\widetilde{F}_{i+1,i}^{(0)}]_{q_i^{-1}}=\widetilde{F}_{i+1,k}^{(0)}.
\end{equation}
For $i<m$, substituting $q_i=q_{i+1}=q$ and $\tau_a=q^a$ gives the first
formula for $\Delta(h_{i,-1})$ in the proposition.  For $i>m$, substituting
$q_i=q_{i+1}=q^{-1}$ and $\tau_{i+1}=q^{-1}\tau_i$ gives the third formula.
No additional identities are needed.

For completeness, we again display the calculation at the isotropic vertex.
Substituting $q_m=q$, $q_{m+1}=q^{-1}$, $\tau_m=q^m$, and
$\tau_{m+1}=q^{m-1}$ gives
\begin{align}\label{eq: hi-1 for i=m}
\begin{split}
&\Delta(h_{m,-1})-1\otimes h_{m,-1}-h_{m,-1}\otimes1\\
&=-q(q-q^{-1})\sum_{l>m}E_{ml}^{(-1)}\otimes F_{lm}^{(0)}-(q-q^{-1})\sum_{k<m}q^{m-k-1}E_{km}^{(-1)}\otimes\widetilde{F}_{mk}^{(0)}\\
&\quad -(q-q^{-1})^{2}\sum_{\substack{l>m\\ k<m}}q^{m-k}E_{kl}^{(-1)}\otimes\widetilde{F}_{mk}^{(0)}F_{lm}^{(0)}-(q-q^{-1})\sum_{l>m+1}E_{m+1,l}^{(-1)}\otimes F_{l,m+1}^{(0)}\\
&\quad +(q-q^{-1})\sum_{k<m+1}q^{m-k+1}E_{k,m+1}^{(-1)}\otimes\widetilde{F}_{m+1,k}^{(0)}+(q-q^{-1})^{2}\sum_{\substack{l>m+1\\ k<m+1}}q^{m-k}E_{kl}^{(-1)}\otimes\widetilde{F}_{m+1,k}^{(0)}F_{l,m+1}^{(0)}.
\end{split}
\end{align}
Separating the two double sums at the interval endpoints yields
\begin{align*}
&\sum_{k<m+1<l}q^{m-k}E_{kl}^{(-1)}\otimes\widetilde{F}_{m+1,k}^{(0)}F_{l,m+1}^{(0)}-
\sum_{k<m<l}q^{m-k}E_{kl}^{(-1)}\otimes\widetilde{F}_{mk}^{(0)}F_{lm}^{(0)}\\
&\quad=-\sum_{k<m}q^{m-k}E_{k,m+1}^{(-1)}\otimes\widetilde{F}_{mk}^{(0)}F_{m+1,m}^{(0)}
+\sum_{\substack{l>m+1\\ k<m}}q^{m-k}E_{kl}^{(-1)}\otimes
(\widetilde{F}_{m+1,k}^{(0)}F_{l,m+1}^{(0)}-\widetilde{F}_{mk}^{(0)}F_{lm}^{(0)})\\
&\qquad+\sum_{l>m+1}E_{ml}^{(-1)}\otimes\widetilde{F}_{m+1,m}^{(0)}F_{l,m+1}^{(0)}.
\end{align*}
Using \eqref{eq: tilde F and F} for $i=m$, the two boundary terms are
\begin{align*}
&\sum_{k<m+1}q^{m-k+1}E_{k,m+1}^{(-1)}\otimes\widetilde{F}_{m+1,k}^{(0)}
-(q-q^{-1})\sum_{k<m}q^{m-k}E_{k,m+1}^{(-1)}\otimes\widetilde{F}_{mk}^{(0)}F_{m+1,m}^{(0)}\\
&\quad=qE_{m,m+1}^{(-1)}\otimes F_{m+1,m}^{(0)}
+\sum_{k<m}q^{m-k-1}E_{k,m+1}^{(-1)}\otimes[\widetilde{F}_{mk}^{(0)},F_{m+1,m}^{(0)}]_{q},\\
&-q\sum_{l>m}E_{ml}^{(-1)}\otimes F_{lm}^{(0)}
+(q-q^{-1})\sum_{l>m+1}E_{ml}^{(-1)}\otimes\widetilde{F}_{m+1,m}^{(0)}F_{l,m+1}^{(0)}\\
&\quad=-qE_{m,m+1}^{(-1)}\otimes F_{m+1,m}^{(0)}
+\sum_{l>m+1}E_{ml}^{(-1)}\otimes[F_{l,m+1}^{(0)},F_{m+1,m}^{(0)}]_{q^{-1}}.
\end{align*}
Substitution into \eqref{eq: hi-1 for i=m} gives the stated formula for
$\Delta(h_{m,-1})$.

For $i\neq m$, use $x_{i,-1}^{+}=[2]_q^{-1}[h_{i,-1},x_{i,0}^{+}]$ and Lemma~\ref{lem:uniform-xplus-coproduct-commutators}; for $i=m$, use
$x_{m,-1}^{+}=-[h_{m-1,-1},x_{m,0}^{+}]$ and Lemma~\ref{lem:odd-xplus-coproduct-commutators}.  Substitution into the already established formulas for $\Delta(h_{i,-1})$ gives exactly the three formulas for $\Delta(x_{i,-1}^{+})$ stated in the proposition.  No additional identities are used.

For $i\neq m$, use $x_{i,1}^{-}=-[2]_q^{-1}[h_{i,1},x_{i,0}^{-}]$ and Lemma~\ref{lem:uniform-xminus-coproduct-commutators}; for $i=m$, use
$x_{m,1}^{-}=[h_{m-1,1},x_{m,0}^{-}]$ and Lemma~\ref{lem:odd-xminus-coproduct-commutators}.  Substituting the formulas for $\Delta(h_{i,1})$ gives the stated formulas for $\Delta(x_{i,1}^{-})$.

This proves the proposition.
\end{proof}

\section{Finite-generator formulas for one-sided antidominant coproducts}
\label{app:antidominant-coproduct-formulas}

This appendix derives the boundary-generator formulas used in Proposition~\ref{prop:explicit-strict-antidominant-one-sided-coproduct}.  We transport the shifted RTT coproduct and use the zero-shift coefficient calculations of Appendix~\ref{app:rtt-coproduct-proof}.

Let $\mu_1,\mu_2\in\Lambda^{-}$ be strictly antidominant and set
\[
l_{a,i}:=\alpha_i(\mu_a)\quad(a=1,2),\qquad l_i:=l_{1,i}+l_{2,i}.
\]
Choose one-sided weakly decreasing lifts
$\mathbf d_a^{+}=0$ and $\mathbf d_a^{-}$ with
$d_{a,i+1}^{-}-d_{a,i}^{-}=l_{a,i}$.
By Theorem~\ref{thm:shifted-super-df-isomorphism}, the one-sided coproduct of
Theorem~\ref{thm:two-sided-antidominant-current-coproduct} is obtained by
transporting the shifted RTT matrix coproduct:
\[
\Delta_{\mu_1,\mu_2}
=
(\Gamma_{\mathbf0,\mathbf d_1^-}^{-1}\otimes
 \Gamma_{\mathbf0,\mathbf d_2^-}^{-1})
\circ
\Delta^{\mathrm{rtt}}_{\mathbf d_1,\mathbf d_2}
\circ
\Gamma_{\mathbf0,\mathbf d_1^-+\mathbf d_2^-}.
\]
The off-diagonal Gauss half-currents have the zero-shift mode supports; only the leading degrees of the negative diagonal currents change.  Thus the positive first coefficients are unchanged, while the negative calculation replaces the leading Cartan ratio in the second tensor factor by $\phi_{i,l_{2,i}}^{-}$ and the corresponding source mode by $x_{i,l_{2,i}-1}^{+}$.
\begin{proposition}\label{prop:explicit-antidominant-coproduct-formulas}
With the notation above, the images of the boundary generators
\(x_{i,l_{2,i}-1}^{+}\), \(x_{i,1}^{-}\), and \(h_{i,\pm1}\) are given by the following formulas:
\begin{gather*}
\Delta_{\mu_{1},\mu_{2}}(x_{i,l_{2,i}-1}^{+})=1\otimes x_{i,l_{2,i}-1}^{+}+x_{i,-1}^{+}\otimes\phi_{i,l_{2,i}}^{-}-(q-q^{-1})\sum_{l>i+1}\epsilon_{iil}E_{il}^{(-1)}\otimes F_{l,i+1}^{(0)}\phi_{i,l_{2,i}}^{-}\\+(q-q^{-1})\sum_{k<i}q^{i-k-1}E_{k,i+1}^{(-1)}\otimes\widetilde{F}_{ik}^{(0)}\phi_{i,l_{2,i}}^{-}\\-(q-q^{-1})^{2}\sum_{\substack{l>i+1\\ k<i}}\epsilon_{iil}q^{i-k-1}E_{kl}^{(-1)}\otimes\widetilde{F}_{ik}^{(0)}F_{l,i+1}^{(0)}\phi_{i,l_{2,i}}^{-}\quad\text{for $i<m$},\\
\Delta_{\mu_{1},\mu_{2}}(x_{m,l_{2,m}-1}^{+})=1\otimes x_{m,l_{2,m}-1}^{+}+x_{m,-1}^{+}\otimes\phi_{m,l_{2,m}}^{-}+(q-q^{-1})E_{m-1,m+1}^{(-1)}\otimes F_{m,m-1}^{(0)}\phi_{m,l_{2,m}}^{-}\\+(q-q^{-1})\sum_{l>m+1}E_{ml}^{(-1)}\otimes F_{l,m+1}^{(0)}\phi_{m,l_{2,m}}^{-}+(q-q^{-1})^{2}\sum_{l>m+1}E_{m-1,l}^{(-1)}\otimes F_{l,m+1}^{(0)}F_{m,m-1}^{(0)}\phi_{m,l_{2,m}}^{-}\\+(q-q^{-1})\sum_{k<m-1}q^{m-k-1}E_{k,m+1}^{(-1)}\otimes\widetilde{F}_{mk}^{(0)}\phi_{m,l_{2,m}}^{-}\\+(q-q^{-1})^{2}\sum_{\substack{l>m+1\\ k<m-1}}q^{m-k-1}E_{kl}^{(-1)}\otimes\widetilde{F}_{mk}^{(0)}F_{l,m+1}^{(0)}\phi_{m,l_{2,m}}^{-},\\
\Delta_{\mu_{1},\mu_{2}}(x_{i,l_{2,i}-1}^{+})=1\otimes x_{i,l_{2,i}-1}^{+}+x_{i,-1}^{+}\otimes\phi_{i,l_{2,i}}^{-}+(q-q^{-1})\sum_{l>i+1}E_{il}^{(-1)}\otimes F_{l,i+1}^{(0)}\phi_{i,l_{2,i}}^{-}\\-q^{2m-i+1}(q-q^{-1})\sum_{k<i}\tau_{k}^{-1}E_{k,i+1}^{(-1)}\otimes\widetilde{F}_{ik}^{(0)}\phi_{i,l_{2,i}}^{-}\\-q^{2m-i+1}(q-q^{-1})^{2}\sum_{\substack{l>i+1\\ k<i}}\tau_{k}^{-1}E_{kl}^{(-1)}\otimes\widetilde{F}_{ik}^{(0)}F_{l,i+1}^{(0)}\phi_{i,l_{2,i}}^{-}\quad\text{for $i>m$},\\
\Delta_{\mu_{1},\mu_{2}}(x_{i,1}^{-})=x_{i,1}^{-}\otimes1+\phi_{i,0}^{+}\otimes x_{i,1}^{-}+q^{-1}(q-q^{-1})\sum_{l>i+1}\epsilon_{iil}E_{i+1,l}^{(0)}\phi_{i,0}^{+}\otimes F_{li}^{(1)}\\-(q-q^{-1})\sum_{k<i}q^{k-i}\widetilde{E}_{ki}^{(0)}\phi_{i,0}^{+}\otimes F_{i+1,k}^{(1)}\\-(q-q^{-1})^{2}\sum_{\substack{l>i+1\\ k<i}}\epsilon_{iil}q^{k-i-1}E_{i+1,l}^{(0)}\widetilde{E}_{ki}^{(0)}\phi_{i,0}^{+}\otimes F_{lk}^{(1)}\quad \text{for $i<m$},\\
\Delta_{\mu_{1},\mu_{2}}(x_{m,1}^{-})=x_{m,1}^{-}\otimes1+\phi_{m,0}^{+}\otimes x_{m,1}^{-}-q^{-1}(q-q^{-1})E_{m-1,m}^{(0)}\phi_{m,0}^{+}\otimes F_{m+1,m-1}^{(1)}\\-q(q-q^{-1})\sum_{l>m+1}E_{m+1,l}^{(0)}\phi_{m,0}^{+}\otimes F_{lm}^{(1)}+(q-q^{-1})^{2}\sum_{l>m+1}E_{m-1,m}^{(0)}E_{m+1,l}^{(0)}\phi_{m,0}^{+}\otimes F_{l,m-1}^{(1)}
\end{gather*}
\begin{gather*}
-(q-q^{-1})\sum_{k<m-1}q^{k-m}\widetilde{E}_{k,m}^{(0)}\phi_{m,0}^{+}\otimes F_{m+1,k}^{(1)}\\+(q-q^{-1})^{2}\sum_{\substack{l>m+1\\ k<m-1}}q^{k-m+1}E_{m+1,l}^{(0)}\widetilde{E}_{km}^{(0)}\phi_{m,0}^{+}\otimes F_{lk}^{(1)},\\
\Delta_{\mu_{1},\mu_{2}}(x_{i,1}^{-})=x_{i,1}^{-}\otimes1+\phi_{i,0}^{+}\otimes x_{i,1}^{-}-q(q-q^{-1})\sum_{l>i+1}E_{i+1,l}^{(0)}\phi_{i,0}^{+}\otimes F_{li}^{(1)}\\+q^{-2m+i}(q-q^{-1})\sum_{k<i}\tau_{k}\widetilde{E}_{ki}^{(0)}\phi_{i,0}^{+}\otimes F_{i+1,k}^{(1)}\\-q^{-2m+i+1}(q-q^{-1})^{2}\sum_{\substack{l>i+1\\ k<i}}\tau_{k}E_{i+1,l}^{(0)}\widetilde{E}_{ki}^{(0)}\phi_{i,0}^{+}\otimes F_{lk}^{(1)}\quad\text{for $i>m$},\\
\Delta_{\mu_{1},\mu_{2}}(h_{i,1})=1\otimes h_{i,1}-(q^{2}-q^{-2})E_{i,i+1}^{(0)}\otimes F_{i+1,i}^{(1)}+(q-q^{-1})\sum_{l>i+1}\epsilon_{iil}E_{i+1,l}^{(0)}\otimes F_{l,i+1}^{(1)}\\+(q-q^{-1})\sum_{k<i}q^{k-i+1}\widetilde{E}_{ki}^{(0)}\otimes F_{ik}^{(1)}+q^{-2}(q-q^{-1})\sum_{l>i+1}\epsilon_{iil}[E_{i,i+1}^{(0)},E_{i+1,l}^{(0)}]_{q^{3}}\otimes F_{li}^{(1)}\\+h_{i,1}\otimes1-(q-q^{-1})\sum_{k<i}q^{k-i-1}[E_{i,i+1}^{(0)},\widetilde{E}_{ki}^{(0)}]_{q^{3}}\otimes F_{i+1,k}^{(1)}\\+(q-q^{-1})^{2}\sum_{\substack{l>i+1\\ k<i}}\epsilon_{iil}q^{k-i}(E_{il}^{(0)}\widetilde{E}_{ki}^{(0)}-E_{i+1,l}^{(0)}\widetilde{E}_{k,i+1}^{(0)})\otimes F_{lk}^{(1)}\quad\text{for $i<m$},\\
\Delta_{\mu_{1},\mu_{2}}(h_{m,1})=1\otimes h_{m,1}+h_{m,1}\otimes1+(q-q^{-1})\sum_{l>m+1}E_{m+1,l}^{(0)}\otimes F_{l,m+1}^{(1)}\\+(q-q^{-1})\sum_{k<m}q^{k-m+1}\widetilde{E}_{km}^{(0)}\otimes F_{mk}^{(1)}-(q-q^{-1})\sum_{l>m+1}[E_{m,m+1}^{(0)},E_{m+1,l}^{(0)}]_{q}\otimes F_{lm}^{(1)}\\-(q-q^{-1})\sum_{k<m}q^{k-m+1}[E_{m,m+1}^{(0)},\widetilde{E}_{km}^{(0)}]_{q^{-1}}\otimes F_{m+1,k}^{(1)}\\-(q-q^{-1})^{2}\sum_{\substack{l>m+1\\ k<m}}q^{k-m}(E_{ml}^{(0)}\widetilde{E}_{km}^{(0)}-E_{m+1,l}^{(0)}\widetilde{E}_{k,m+1}^{(0)})\otimes F_{lk}^{(1)},\\
\Delta_{\mu_{1},\mu_{2}}(h_{i,1})=1\otimes h_{i,1}+(q^{2}-q^{-2})E_{i,i+1}^{(0)}\otimes F_{i+1,i}^{(1)}-(q-q^{-1})\sum_{l>i+1}E_{i+1,l}^{(0)}\otimes F_{l,i+1}^{(1)}\\-q^{-2m+i-1}(q-q^{-1})\sum_{k<i}\tau_{k}\widetilde{E}_{ki}^{(0)}\otimes F_{ik}^{(1)}-q^{2}(q-q^{-1})\sum_{l>i+1}[E_{i,i+1}^{(0)},E_{i+1,l}^{(0)}]_{q^{-3}}\otimes F_{li}^{(1)}
\\+h_{i,1}\otimes1+q^{-2m+i+1}(q-q^{-1})\sum_{k<i}\tau_{k}[E_{i,i+1}^{(0)},\widetilde{E}_{ki}^{(0)}]_{q^{-3}}\otimes F_{i+1,k}^{(1)}\\+q^{-2m+i}(q-q^{-1})^{2}\sum_{\substack{l>i+1\\ k<i}}\tau_{k}(E_{il}^{(0)}\widetilde{E}_{ki}^{(0)}-E_{i+1,l}^{(0)}\widetilde{E}_{k,i+1}^{(0)})\otimes F_{lk}^{(1)}\quad\text{for $i>m$},\\
\Delta_{\mu_{1},\mu_{2}}(h_{i,-1})=1\otimes h_{i,-1}+(q^{2}-q^{-2})E_{i,i+1}^{(-1)}\otimes F_{i+1,i}^{(0)}-(q-q^{-1})\sum_{l>i+1}\epsilon_{iil}E_{i+1,l}^{(-1)}\otimes F_{l,i+1}^{(0)}\\-(q-q^{-1})\sum_{k<i}q^{i-k-1}E_{ki}^{(-1)}\otimes\widetilde{F}_{ik}^{(0)}-q^{2}(q-q^{-1})\sum_{l>i+1}\epsilon_{iil}E_{il}^{(-1)}\otimes[F_{l,i+1}^{(0)},F_{i+1,i}^{(0)}]_{q^{-3}}\\+h_{i,-1}\otimes1+(q-q^{-1})\sum_{k<i}q^{i-k+1}E_{k,i+1}^{(-1)}\otimes[\widetilde{F}_{ik}^{(0)},F_{i+1,i}^{(0)}]_{q^{-3}}\\-(q-q^{-1})^{2}\sum_{\substack{l>i+1\\ k<i}}\epsilon_{iil}q^{i-k}E_{kl}^{(-1)}\otimes(\widetilde{F}_{i+1,k}^{(0)}F_{l,i+1}^{(0)}-\widetilde{F}_{ik}^{(0)}F_{li}^{(0)})\quad\text{for $i<m$},
\end{gather*}
\begin{gather*}
\Delta_{\mu_{1},\mu_{2}}(h_{m,-1})=1\otimes h_{m,-1}+h_{m,-1}\otimes1-(q-q^{-1})\sum_{l>m+1}E_{m+1,l}^{(-1)}\otimes F_{l,m+1}^{(0)}\\-(q-q^{-1})\sum_{k<m}q^{m-k-1}E_{km}^{(-1)}\otimes\widetilde{F}_{mk}^{(0)}+(q-q^{-1})\sum_{l>m+1}E_{ml}^{(-1)}\otimes[F_{l,m+1}^{(0)},F_{m+1,m}^{(0)}]_{q^{-1}}\\+(q-q^{-1})\sum_{k<m}q^{m-k-1}E_{k,m+1}^{(-1)}\otimes[\widetilde{F}_{mk}^{(0)},F_{m+1,m}^{(0)}]_{q}\\+(q-q^{-1})^{2}\sum_{\substack{l>m+1\\ k<m}}q^{m-k}E_{kl}^{(-1)}\otimes(\widetilde{F}_{m+1,k}^{(0)}F_{l,m+1}^{(0)}-\widetilde{F}_{mk}^{(0)}F_{lm}^{(0)}),\\
\Delta_{\mu_{1},\mu_{2}}(h_{i,-1})=1\otimes h_{i,-1}-(q^{2}-q^{-2})E_{i,i+1}^{(-1)}\otimes F_{i+1,i}^{(0)}+(q-q^{-1})\sum_{l>i+1}E_{i+1,l}^{(-1)}\otimes F_{l,i+1}^{(0)}\\+q^{2m-i+1}(q-q^{-1})\sum_{k<i}\tau_{k}^{-1}E_{ki}^{(-1)}\otimes\widetilde{F}_{ik}^{(0)}+q^{-2}(q-q^{-1})\sum_{l>i+1}E_{il}^{(-1)}\otimes[F_{l,i+1}^{(0)},F_{i+1,i}^{(0)}]_{q^{3}}\\+h_{i,-1}\otimes1-q^{2m-i-1}(q-q^{-1})\sum_{k<i}\tau_{k}^{-1}E_{k,i+1}^{(-1)}\otimes[\widetilde{F}_{ik}^{(0)},F_{i+1,i}^{(0)}]_{q^{3}}\\-q^{2m-i}(q-q^{-1})^{2}\sum_{\substack{l>i+1\\ k<i}}\tau_{k}^{-1}E_{kl}^{(-1)}\otimes(\widetilde{F}_{i+1,k}^{(0)}F_{l,i+1}^{(0)}-\widetilde{F}_{ik}^{(0)}F_{li}^{(0)})\quad\text{for $i>m$},
\end{gather*}
Here the root vectors are those defined in \eqref{eq: Eji and Fij}.
\end{proposition}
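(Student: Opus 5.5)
The plan is to transport the zero-shift computation of Appendix~\ref{app:rtt-coproduct-proof}, coefficient by coefficient, through the shifted Ding--Frenkel isomorphisms $\Gamma_{\mathbf0,\mathbf d_a^-}$ of Theorem~\ref{thm:shifted-super-df-isomorphism}. The first step is a support comparison. Take the one-sided lifts $\mathbf d_a^+=0$. The positive RTT matrices $\mathsf T^+$ in all three algebras then have exactly the zero-shift supports, and the off-diagonal half-currents $e^\pm_{ab}$, $f^\pm_{ba}$ have the zero-shift mode ranges. Only the negative diagonal series change: $g_a^-(z)$ now starts at $z^{-d_a^-}$. Consequently:
\begin{itemize}
\item Every identity of Appendix~\ref{app:rtt-coproduct-proof} that is extracted only from the $TT$ relation, or from the $ST$/$TS$ relations at coefficients not involving the leading negative diagonal, remains valid verbatim. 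This covers Lemma~\ref{Lemma: relations for g e f in RTT}(1),(2),(5), Proposition~\ref{prop: formulas for eji0,eji-1,fij0,fij1}(1),(3), and the coproduct formula \eqref{RTT coproduct for eij0}.
\item The identities involving $s$-coefficients must be redone. Their extraction coefficients should be replaced by $[z^{d_a^-}]$ and $[z^{d_a^-+1}]$ in place of $[z^0]$ and $[z^1]$.
\end{itemize}
The recovery formulas for $e^{(-1)}_{ji}$ and $f^{(0)}_{ij}$ are already known in all modes by \eqref{eq:shifted-all-mode-e-recovery}--\eqref{eq:shifted-all-mode-f-recovery}. The root-vector translations of Corollaries~\ref{eq:from matrix elements in RTT to drinfeld realization} and~\ref{eq:from matrix elements in RTT to drinfeld realization2} therefore hold with the same constants $\tau_i$. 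The one change is that $\tilde g^-_i$ is replaced by the leading coefficient $g^-_{i,d_i^-}$, so the Cartan ratio $\phi^-_{i,0}$ becomes $\phi^-_{i,l_{a,i}}$ in the $a$-th factor.

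\textbf{The positive-side generators $h_{i,1}$ and $x^-_{i,1}$.} These are built only from first coefficients of $\mathsf T^+$ and from the constant-term upper Gauss modes $e^{(0)}$. Theorem~\ref{th:explicit formula for gi1 and gi-1}, Corollary~\ref{RTT coproduct for Hi pm1}, and the formulas for $\Delta(h_{i,1})$ and $\Delta(x^-_{i,1})$ in Proposition~\ref{prop:explicit-finite-generator-coproduct} therefore carry over literally. This explains why those entries of Proposition~\ref{prop:explicit-antidominant-coproduct-formulas} coincide with the zero-shift ones. The only point to check is that $x^-_{i,1}$ is still obtained as $\mp[2]_q^{-1}[h_{i,1},x^-_{i,0}]$, or as $[h_{m-1,1},x^-_{m,0}]$ at $i=m$. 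This is legitimate because $x^-_{i,0}$ lies in the generating range $[l_{1,i},1]$. Its coproduct $x^-_{i,0}\otimes1$, which holds by strict antidominance, is verified by the same leading-coefficient extraction. The commutator Lemmas~\ref{lem:uniform-xminus-coproduct-commutators}--\ref{lem:odd-xminus-coproduct-commutators} are identities inside a single factor and are shift-independent.

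\textbf{The negative-side generators $h_{i,-1}$ and $x^+_{i,l_{2,i}-1}$.} Here I redo Theorem~\ref{th:explicit formula for gi1 and gi-1} for $\tilde g_i^{(-1)}$, now meaning the first subleading coefficient $g^-_{i,d_i^-+1}$. The matrix coproduct is
\[
\Delta^{\mathrm{rtt}}(s_{ab})=\sum_c\epsilon_{abc}\, s_{ac}\otimes s_{cb},
\]
and its first two coefficients now split by $d^-_{1,\cdot}$ and $d^-_{2,\cdot}$. Strict antidominance ($d^-$ strictly decreasing) is what makes the cross terms land exactly in the degrees of $e^{(-1)}\otimes f^{(0)}$-type products, with the same chain-and-cut cancellation as in Lemma~\ref{RTT coproduct for Aji+,Aji-}. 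This gives the $h_{i,-1}$ formulas unchanged.

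For $x^+_{i,l_{2,i}-1}$, I use the Levendorskii recursion the other way round: apply $\operatorname{ad}h_{i,-1}$ (respectively $\operatorname{ad}h_{m-1,-1}$) to $x^+_{i,l_{2,i}}$. The coproduct of $x^+_{i,l_{2,i}}$ is $1\otimes x^+_{i,l_{2,i}}$ when $l_{2,i}<0$. Alternatively, extract directly the coefficient of $e_{i,i+1}^-$ at its lowest new mode. Either route should reproduce the zero-shift $\Delta(x^+_{i,-1})$ with $\phi^-_{i,0}$ replaced by $\phi^-_{i,l_{2,i}}$ and with $x^+_{i,-1}$ in the first factor, using Lemmas~\ref{lem:uniform-xplus-coproduct-commutators}--\ref{lem:odd-xplus-coproduct-commutators}.

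\textbf{Main obstacle.} I expect the hardest part to be this last bookkeeping step: verifying that the leading negative Cartan ratio of the second factor, and not that of the first, multiplies every correction term. I would attack it first by a direct Gauss factorization of $\mathsf T^-_{(1)}\mathsf T^-_{(2)}$ restricted to the $2\times2$ block $\{i,i+1\}$. From there I would extend to the remaining indices by the interval induction already used in the appendix, checking the isotropic vertex $i=m$ separately through the signs $\epsilon_{iil}$.
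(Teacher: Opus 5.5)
Your plan is correct in outline and is essentially the paper's argument. With $\mathbf d_a^+=0$ the positive half of the RTT coproduct is literally unshifted, which gives $\Delta_{\mu_1,\mu_2}(h_{i,1})$ and $\Delta_{\mu_1,\mu_2}(x^-_{i,1})$. On the negative half the only new datum is the leading coefficient of the second factor's diagonal ratio.

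The one real variation is that you obtain $\Delta_{\mu_1,\mu_2}(x^+_{i,l_{2,i}-1})$ as $[2]_q^{-1}[\Delta_{\mu_1,\mu_2}(h_{i,-1}),1\otimes x^+_{i,l_{2,i}}]$ (resp.\ $-[\Delta_{\mu_1,\mu_2}(h_{m-1,-1}),1\otimes x^+_{m,l_{2,m}}]$). This is the shifted analogue of the recursion the paper uses at zero shift, and it works. Every commutator then lives in the second factor, where $[x^+_{i,l_{2,i}},x^-_{i,0}]=-\phi^-_{i,l_{2,i}}/(q_i-q_i^{-1})$ because $\phi^+_{i,l_{2,i}}=0$. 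So this route already disposes of what you call the main obstacle.

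Note, however, that the commutator lemmas of Appendix~\ref{app:rtt-coproduct-proof} are not literally shift-independent. You must re-read them with $x^+_{i,l_{2,i}}$ in place of $x^+_{i,0}$, i.e.\ with $\phi^+_{i,0}\mapsto 0$ and $\phi^-_{i,0}\mapsto\phi^-_{i,l_{2,i}}$. Likewise, in the $x^-_{i,1}$ recursion $\phi^-_{i,0}=0$ in the first factor, and $\Delta(x^-_{i,0})$ loses its zero-shift term $\phi^-_{i,0}\otimes x^-_{i,0}$. Both changes remove only terms that cancel among themselves at zero shift, so the output is unchanged.

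Your justification of $\Delta_{\mu_1,\mu_2}(h_{i,-1})$ is misplaced. Redoing Theorem~\ref{th:explicit formula for gi1 and gi-1} entrywise is not the zero-shift computation. When $d^-_{i-1}>d^-_i$, the Gauss summand $f^-_{i,i-1}g^-_{i-1}e^-_{i-1,i}$ of the $(i,i)$ entry of $\mathsf T^-(z)$ starts at $z^{1-d^-_{i-1}}$. That is at or below the leading degree $z^{-d^-_i}$ of $g^-_i$, so strict antidominance makes the entrywise bookkeeping harder, not easier.

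What works, and what the paper sketches, is the factorization $\mathsf T^-_{(1)}\mathsf T^-_{(2)}=F_{(1)}(G_{(1)}\overline{F}G_{(1)}^{-1})(G_{(1)}\overline{G}G_{(2)})(G_{(2)}^{-1}\overline{E}G_{(2)})E_{(2)}$ from the proof of Theorem~\ref{thm:shifted-super-rtt-coproduct}. Here $\overline{G}$ comes from $E^-_{(1)}F^-_{(2)}$, which has unshifted supports. After dividing by the leading diagonal coefficients, the first logarithmic modes therefore obey the zero-shift formula for any weakly decreasing lifts.
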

\begin{proof}
Transport the shifted RTT matrix coproduct through the three shifted
Ding--Frenkel isomorphisms as in the display preceding the proposition.
For the ``$+$'' half, all diagonal valuations are zero in the one-sided family.
Thus the coefficient extraction giving
$\Delta(x_{i,1}^{-})$ and $\Delta(h_{i,1})$ is literally the zero-shift
calculation of Appendix~\ref{app:rtt-coproduct-proof}; the Gauss coefficients
translate to the same root vectors
$E^{(0)},\widetilde E^{(0)},F^{(1)}$ and the same leading coefficient
$\phi_{i,0}^{+}$.

For the ``$-$'' half, write the simple Gauss ratio as
\[
\mathsf\phi_i^{-}(z)
\in z^{-l_i}\,
\mathscr U^{\mathrm{rtt}}_{\mathbf0,\mathbf d_1^-+\mathbf d_2^-}[[z]],
\]
with invertible leading coefficient.  Under RTT matrix multiplication the
leading degree splits as $l_i=l_{1,i}+l_{2,i}$.  In the coefficient
calculation for the first term beyond the leading degree, the contribution
from the second tensor factor therefore pairs its leading Cartan coefficient
$\phi_{i,l_{2,i}}^{-}$ with the mode $x_{i,-1}^{+}$ in the first factor,
and the corresponding source mode is $x_{i,l_{2,i}-1}^{+}$.  All
off-diagonal Gauss coefficients occurring in the correction terms have the
same supports as at zero shift, so the calculation of
$\Delta(x_{i,-1}^{+})$ in Appendix~\ref{app:rtt-coproduct-proof} carries over
with exactly this change of the diagonal valuation.  This gives the three
displayed formulas for
$\Delta_{\mu_1,\mu_2}(x_{i,l_{2,i}-1}^{+})$.

Finally, factor the leading monomial and leading coefficient from each
negative diagonal Gauss series.  The normalized diagonal series then have
constant term $1$, so their first logarithmic coefficients are multiplied
by the same Gauss-elimination formula as in the zero-shift computation of
$\Delta(h_{i,-1})$.  The only shifted datum entering that calculation is the
leading negative Cartan coefficient just described.  Consequently the
zero-shift formulas for $\Delta(h_{i,-1})$ transport coefficientwise to the
three displayed shifted formulas.  This proves all assertions.
\end{proof}

\end{document}